\documentclass[12pt]{cpres}

\setlength{\textwidth}{6.10in}
\setlength{\textheight}{8.10in}
\setlength{\topmargin}{0in}
\setlength{\oddsidemargin}{.25in}
\setlength{\evensidemargin}{.25in}

\title{Cusp Volumes of Alternating Knots on Surfaces} 
\author{Brandon Bavier}
\thanks{The author is supported by NSF Grants DMS-1404754, DMS-1708249, DMS-2004155, and the Dr. Williams L. Harkness Endowed Fellowship}

\address{Department of Mathematics, Michigan State University, East Lansing, MI 48824, USA}
\email{bavierbr@msu.edu} 

\graphicspath{{Pictures/}}

\begin{document}

\begin{abstract}
We study the geometry of hyperbolic knots that admit alternating projections on embedded surfaces in closed 3-manifolds. We show that, under mild hypothesis, their cusp area admits two sided bounds in terms of the twist number of the alternating projection and the genus of the projection surface. As a result, we derive diagrammatic estimates of slope lengths and give applications to Dehn surgery. These generalize results of Lackenby and Purcell  about alternating knots in the 3-sphere. 

Using a result of Kalfagianni and Purcell, we  point out  that alternating knots on surfaces of higher genus, 
can have arbitrarily small cusp density, in contrast to alternating knots on spheres whose cusp densities are bounded away from zero due to Lackenby and Purcell. 

\end{abstract}

\maketitle
\setcounter{tocdepth}{1}
\tableofcontents
\section{Introduction}

Given a knot $K$ in a closed 3-manifold $Y,$ we get a 3-manifold by looking at the complement, $Y\setminus K$. By Thurston ~\cite{TDGT}, this manifold admits a canonical decomposition into geometric pieces. Often, $Y\setminus K$ is hyperbolic---that is, it admits a Riemannian metric of constant curvature $-1$ and has finite volume. By Mostow rigidity ~\cite{MPRIGID}, geometric measurements in the hyperbolic metric are also topological invariants of $Y\setminus K$.

In $S^3$, we  study knots through their projections onto a 2-sphere, and in this setting,
 a goal of modern knot theory is to relate the geometry of the knot complement to diagrammatic quantities and invariants. 
Lackenby has shown that the volume of  hyperbolic alternating links in $S^3$ admits two sided bounds in terms of the twist number of any alternating diagram ~\cite{VHLC}, while Agol and Thurston, in the appendix of ~\cite{VHLC}, improved the upper bound for all hyperbolic links. This upper bound was then improved by Adams ~\cite{BPB}. Futer, Kalfagianni, and Purcell have extended the lower bounds to more general classes of links ~\cite{DFVJP,SLCS, SHKT}. Other diagrammatic lower volume bounds have been obtained by looking at the guts of state surfaces of semi-adequate links ~\cite{Guts, HSAL}.

In the study of knots through their projections, it is also  natural and important to search for diagrammatic criteria for recognizing hyperbolicity. 
A classical result of Menasco ~\cite{CISAKC} states that prime alternating  knots  in $S^3$ are either hyperbolic or $(2, p)$ torus knots. Several authors have looked at generalizations of alternating links and were able to extract geometric information from
their projections.
Adams, for example studied knots and links projected onto a torus ~\cite{TAKL}, and found conditions for when these were hyperbolic. Hayashi ~\cite{Hayashi} and Ozawa ~\cite{NTGAK} also extended these for knots on higher genus closed surfaces in $S^3$. Adams, Albors-Riera, Haddock, Li, Nishida, Reinoso, and Wang ~\cite{HypLinksTS} studied  alternating links in thickened surfaces, and Champanerkar, Kofman, and Purcell ~\cite{GeoBAltLinks} looked at alternating links in thickened tori arising from biperiodic alternating links. Howie and Purcell studied a different generalization of alternating knots, called weakly generalized alternating knots ~\cite{WGA} (see Definition ~\ref{def:wgak}), which  are projected onto a general surface in any 3-manifold. They showed that such knots  are hyperbolic and found lower bounds for their volume, under the condition that the regions of the projection are disks.

An important geometric invariant of hyperbolic knots, which is much less understood than the volume of the knot complement, is the cusp volume.  Futer, Kalfagianni, and Purcell ~\cite{CAFM}  gave two-sided diagrammatic bounds for cusp volumes of closed 3-braids and of 2-bridge knots.  More recently,
Lackenby and Purcell gave  two-sided linear  bounds for the cusp volume of usual alternating knots  in terns of   twist numbers of alternating projections~\cite{CVAK}.

In this paper,  we generalize the work of Lackenby and Purcell for weakly generalized alternating knots  in irreducible 3-manifolds. Our main result is the following theorem  that gives two sided bounds on the cusp volume of such knots, under mild restrictions. The terms
involved in the statement of the theorem are defined in Section 2.
These restrictions are given in terms  of two quantities, the edge representativity $e(\pi(K),F)$ and the representativity $r(\pi(K),F)$, that are naturally associated to weakly generalized alternating knots. We note that the hypothesis on $r(\pi(K),F)$ is automatically satisfied in the case that the projection surface is incompressible in $Y$ (Definition ~\ref{def:rep}).

\begin{restatable}{thm}{mainclaim}
\label{thm:main}Let $Y$ be a closed, irreducible 3-manifold and  $F\subset Y$ a closed surface with $\chi(F)\le0$
and such that  $Y\setminus N(F)$ is atoroidal.
Suppose that $K$ is  a weakly generalized alternating knot with a projection $\pi(K)\subset F$ that  is weakly twist reduced,  and we have $e(\pi(K),F)>2$, and $r(\pi(K),F)>4$.
Then $Y\setminus K$ is hyperbolic, and we have

$$ \frac{5.8265514\times 10^{-7}}{2\  (120-\chi(F))^6} \ (\tw_{\pi}(K)-\chi(F))\le CV(K)\le \frac{(360 \tw_{\pi}(K) - 3\chi(F))^2}{\tw_{\pi}(K)},$$

where $CV(K)$  is the cusp volume of $K$ and  $\tw_{\pi}(K)$  the twist number of ${\pi}(K)$.
\end{restatable}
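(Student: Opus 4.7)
The plan follows the broad strategy of Lackenby--Purcell \cite{CVAK} for usual alternating knots in $S^3$, but each step must be adapted to the setting of a higher-genus projection surface $F$ sitting in an arbitrary closed irreducible 3-manifold $Y$. First I would establish hyperbolicity of $Y\setminus K$: the hypotheses that $Y\setminus N(F)$ is atoroidal, that the diagram is weakly twist reduced, and that $e(\pi(K),F)>2$ and $r(\pi(K),F)>4$ should, possibly after extending the Howie--Purcell machinery of \cite{WGA} beyond the disk-region case, suffice to rule out essential spheres, tori, and annuli in the complement.

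For the upper bound, the idea is to manufacture short slopes on the cusp torus directly from the diagrammatic structure. Each twist region in $\pi(K) \subset F$ supports an essential twice-punctured disk (a ``crossing disk'') whose boundary components are meridians of $K$, forcing a meridian to have length bounded above by a universal constant of order $6$, in the spirit of the $6$-theorem. Summing over the $\tw_\pi(K)$ twist regions, accounting for a correction coming from $\chi(F)$, and invoking the fact that the cusp area is bounded above by the product of lengths of two linearly independent slopes should give the quadratic upper bound of the stated form.

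For the lower bound I would select a maximal embedded horocusp $H$ about $K$ and exploit the checkerboard surfaces $\Sigma_W, \Sigma_B$ associated to $\pi(K) \subset F$. Under the hypotheses on $r(\pi(K),F)$, these surfaces are essential in $Y\setminus K$; after putting them into a suitable normal form, their polygonal decomposition contains at least one piece per twist region, and each piece contributes a definite amount to the area of $\partial H$ via standard horoball-packing estimates. A careful accounting then yields a lower bound on the cusp area that is linear in $\tw_\pi(K)$, with the polynomial factor $(120-\chi(F))^6$ arising from iterating several geometric estimates whose dependence on $\chi(F)$ must be tracked explicitly through the normalization procedure.

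The main obstacle will be the lower bound. In the classical $S^3$ setting, checkerboard surfaces come from $S^2$, and quantitative control of their essential pieces and cusp intersections is available in the literature. For a higher-genus $F$ embedded in an arbitrary $Y$, both the essentiality and the normalization become substantially more delicate: essentiality demands that one extract enough combinatorial control from the representativity hypothesis, while the propagation of a linear lower bound through the polygonal decomposition requires that the $\chi(F)$-dependence remain polynomial rather than exponential, lest the linear-in-$\tw_\pi(K)$ behavior of the bound be destroyed.
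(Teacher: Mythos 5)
Your high-level plan (follow Lackenby--Purcell, use Howie--Purcell for hyperbolicity, get the lower bound from essential checkerboard surfaces and horoball packing) is in the right orbit, but it misses the single idea on which the whole argument turns: the paper does \emph{not} work with the ordinary checkerboard surfaces of $\pi(K)$. It works with the \emph{twisted} checkerboard surfaces $S_{B,2}$ and $S_{R,2}$, obtained by augmenting the highly-twisted regions, deleting crossings in pairs, and re-capping with annuli or M\"obius bands. The reason is numerical and unavoidable: the Euler characteristic of the ordinary checkerboard surfaces grows with the crossing number of $\pi(K)$, so any bound built from them is linear in crossing number, not in twist number. The twisted surfaces satisfy $|\chi(S_{B,2})|+|\chi(S_{R,2})| = \mathrm{cr}(K_2) + 2\,\mathrm{tw}_N(K) - \chi(F) \le 120\,\tw_\pi(K)-\chi(F)$ precisely because $K_2$ has at most $120$ crossings per twist region. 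Proving these immersed surfaces are $\pi_1$-injective and $\partial\pi_1$-injective (Theorem~\ref{thm:essential}) is the technical heart of the paper, occupying Sections~4--7; your sketch does not engage with it at all.

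Your upper bound route is also different from the paper's and has a gap as stated. You propose to produce short meridians from crossing disks and ``sum over twist regions,'' but the $6$-theorem only gives a universal constant on the meridian; nothing in your sketch bounds the longitude (or a second slope) linearly in $\tw_\pi(K)$, which is what you would need for a quadratic-over-$\tw_\pi(K)$ area bound. The paper instead invokes the Burton--Kalfagianni estimate $\mathrm{Area}(\partial C) \le 18\bigl(|\chi(S_1)|+|\chi(S_2)|\bigr)^2 / i(\partial S_1,\partial S_2)$ applied to $S_{B,2}, S_{R,2}$, using $i(\partial S_{B,2},\partial S_{R,2})\ge \tw_\pi(K)$; the quadratic form and the explicit constants fall out immediately. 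For the lower bound, the paper's actual mechanism is also more structured than ``standard horoball packing'': one pleats $S=S_{B,2}\sqcup S_{R,2}$, finds an embedded cusp of area $\ge 2^{5/4}\tw_\pi(K)$, applies Theorem~2.7 of \cite{CVAK} to extract many bounded-length geodesic arcs in $S$, pushes them into $Y\setminus K$, and then must divide by the number of arcs that can collapse to the same homotopy class there. That last division is controlled by Theorem~\ref{thm:homarcs} (distinct arcs in $S_{B,2}$ with homotopic images come from a single twist subsurface, giving the factor $2(3N-2)=722$), which again relies on the analysis of the twisted surfaces. Without the twisted surfaces and the arc-homotopy theorem, your lower bound sketch cannot produce a linear-in-$\tw_\pi(K)$ estimate.
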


In the case that $F$ is a Heegaard torus in a Lens space (including in $S^3$) Theorem ~\ref{thm:main} has the following  topological application.

\begin{restatable}{thm}{gromov}
\label{thm:gromov} Suppose that  $Y$ is a Lens space or $S^3$ and that $F$ is a Heegaard torus in $Y$.
For any closed 3-manifold $M$ with  Gromov norm at least  $2.3984\times 10^{23}$,
there are finitely many weakly generalized alternating knots  $K\subset Y$, with projections $\pi(K)$ as in Theorem ~\ref{thm:main},  and slopes $\sigma$ such that $M$ is obtained by  Dehn filling $Y\setminus K$ along $\sigma$.
\end{restatable}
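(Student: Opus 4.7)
The approach parallels Lackenby--Purcell's Dehn surgery application for alternating knots in $S^{3}$ \cite{CVAK}, combining Gromov's monotonicity of simplicial volume under Dehn filling with the cusp volume lower bound of Theorem~\ref{thm:main} and the Futer--Kalfagianni--Purcell volume change inequality for Dehn fillings. Fix a pair $(K,\sigma)$ with $(Y\setminus K)(\sigma)=M$. By Theorem~\ref{thm:main} the complement $Y\setminus K$ is hyperbolic, so Gromov--Thurston monotonicity of simplicial volume gives $\|Y\setminus K\|\ge \|M\|\ge 2.3984\times 10^{23}$, and in particular $\mathrm{Vol}(Y\setminus K)=v_{3}\|Y\setminus K\|$ is enormous. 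Since $F$ is a Heegaard torus, $\chi(F)=0$, and the lower bound of Theorem~\ref{thm:main} reduces to $CV(K)\ge c\,\tw_{\pi}(K)$ with $c=5.8265514\times 10^{-7}/(2\cdot 120^{6})$, so a large twist number forces a large cusp area.

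The key step is to bound $\tw_{\pi}(K)$ from above by a constant depending only on $M$, via a dichotomy on the length $\ell(\sigma)$ of the filling slope on the maximal cusp of $Y\setminus K$. If $\ell(\sigma)$ exceeds a fixed threshold $\ell_{0}>2\pi$, the Futer--Kalfagianni--Purcell Dehn filling inequality
\[\mathrm{Vol}(M)\ge \bigl(1-(2\pi/\ell(\sigma))^{2}\bigr)^{3/2}\,\mathrm{Vol}(Y\setminus K)\]
bounds $\mathrm{Vol}(Y\setminus K)$ from above in terms of $\mathrm{Vol}(M)$; combining this with the universal cusp density bound $CV(K)\le(\sqrt{3}/2)\,\mathrm{Vol}(Y\setminus K)$ and the lower bound on $CV(K)$ above forces $\tw_{\pi}(K)\le C_{1}\mathrm{Vol}(M)$. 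If instead $\ell(\sigma)\le \ell_{0}$, a Euclidean lattice count on the cusp torus bounds the number of primitive slopes of length at most $\ell_{0}$ by $C_{2}\ell_{0}^{2}/CV(K)$, and the existence of at least one such slope then forces $\tw_{\pi}(K)\le C_{2}\ell_{0}^{2}/c$.

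Once $\tw_{\pi}(K)$ is bounded uniformly in $M$, I would finish by observing that there are only finitely many isotopy classes of $4$-valent graphs on the torus $F$ carrying a bounded number of twist regions, hence only finitely many weakly generalized alternating knots $K\subset Y$ with projections as in Theorem~\ref{thm:main}. For each such $K$, Thurston's hyperbolic Dehn surgery theorem implies that only finitely many slopes can yield a filling isometric to the fixed hyperbolic manifold $M$: the volumes of Dehn fillings of $Y\setminus K$ accumulate only at $\mathrm{Vol}(Y\setminus K)$, each specific volume is attained by at most finitely many slopes by Jorgensen--Thurston, and so only finitely many slopes achieve $\mathrm{Vol}(M)$. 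Putting the two finiteness statements together yields the claim.

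The principal obstacle is quantitative rather than conceptual: the minuscule cusp volume constant $c\sim 10^{-20}$ produced by Theorem~\ref{thm:main} is ultimately responsible for the huge threshold $2.3984\times 10^{23}$, and one has to carefully choose $\ell_{0}$ and track the FKP denominator $(1-(2\pi/\ell_{0})^{2})^{3/2}$ against the universal constants in the cusp density inequality and in the Euclidean lattice-point count so that the two branches of the dichotomy yield a common twist number bound exactly at this numerical threshold.
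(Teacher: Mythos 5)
Your proposal takes a genuinely different and more laborious route than the paper, and it has a real gap. The paper proves the theorem in three quick steps: (i) since Gromov norm does not increase under Dehn filling, $\|Y\setminus K\|\geq 2.3984\times10^{23}$, so $\mathrm{vol}(Y\setminus K)\geq v_3\cdot 2.3984\times 10^{23}$; (ii) the Kalfagianni--Purcell \emph{upper} bound $\mathrm{vol}(Y\setminus K)\leq 10v_3\,\tw_\pi(K)$ (valid precisely because $F$ is a Heegaard torus in a lens space or $S^3$) then forces $\tw_\pi(K)\geq 2.3984\times10^{22}$; (iii) Corollary~\ref{cor:slopelength} converts this into $\ell(\sigma)>6.5>2\pi$ for every non-meridional slope, and the Cooper--Lackenby finiteness theorem (for any fixed $\epsilon>0$ and closed $M$, only finitely many cusped hyperbolic $X$ and slopes of length $\geq 2\pi+\epsilon$ fill to give $M$) finishes the argument in one stroke, producing finiteness of pairs $(K,\sigma)$ simultaneously. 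You never invoke Cooper--Lackenby and instead try to reconstruct the finiteness of knots and of slopes separately; that is where the gap appears.

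The gap is concrete: bounding $\tw_\pi(K)$ from above does \emph{not} yield finitely many weakly generalized alternating knots. A twist-reduced diagram with a fixed underlying $4$-valent graph on the torus, and hence a fixed number of twist regions, still permits an arbitrary number of crossings inside each twist region; these give infinitely many pairwise non-isotopic knots $K$ sharing the same $\tw_\pi(K)$. So the sentence ``there are only finitely many isotopy classes of $4$-valent graphs on the torus $F$ carrying a bounded number of twist regions, hence only finitely many weakly generalized alternating knots'' is a non sequitur; the graph does not determine the knot. To close this, you would need an additional uniform upper bound on the total crossing number (or some other quantity that separates the knots), which your dichotomy on $\ell(\sigma)$ does not supply: both branches control cusp area and hence $\tw_\pi(K)$ but not $\mathrm{cr}(K)$. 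The paper's use of Cooper--Lackenby is exactly what absorbs this difficulty, since it bounds the set of \emph{cusped manifolds} directly rather than the set of diagrams, avoiding any need to count knots. Notice also that the specific numerical threshold $2.3984\times10^{23}$ is calibrated to the Kalfagianni--Purcell volume upper bound and the slope-length constant in Corollary~\ref{cor:slopelength}, neither of which appears in your argument; without them you have no way to certify that the hypothesis on $\|M\|$ is strong enough to put you in the $\ell(\sigma)>2\pi+\epsilon$ regime that Cooper--Lackenby requires.
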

Theorem ~\ref{thm:gromov}, which generalizes ~\cite[Theorem 1.3]{ETS}, is related to Problem 3.6 (D) in Kirby's list ~\cite{95problems} which asks if there are any 3-manifolds which can be obtained by surgery along an infinite number of distinct knots in $S^3$. While Osoinach ~\cite{MOSIK} proved that
such manifolds exist, by Theorem ~\ref{thm:gromov}, when we restrict to certain weakly generalized alternating knots  we can't find such manifolds of large Gromov norm.
We will prove this theorem  in Section 1.3.

\subsection{Slope Lengths on the Cusp}
Given a hyperbolic knot complement $M=Y\setminus K$ there is a well defined notion of  its  maximal cusp $C$.
The boundary of C, denoted by $\bndry C$, inherits a Euclidean structure from the hyperbolic metric.
Given an essential simple closed curve $\sigma \subset \partial C$ (a.k.a. a slope), the length of $\sigma$ is  the Euclidean length of the unique geodesic in the homotopy call of $\sigma$. We well denote this length by  $\ell(\sigma)$.
\begin{defn}
\label{def:meridian}
If $K$ is a  knot in a 3-manifold $Y$, take a regular torus neighborhood of $K$. The meridian $m$ is the curve that bounds a disk in $Y$ that intersects $K$ exactly once. 
\end{defn}

The 6-Theorem  of Agol and Lackenby ~\cite{BEDF,WHDS}, implies that every hyperbolic knot in a non-hyperbolic 3-manifold has the meridian with length at most 6. 
 Adams, Colestock, Fowler, Gillam, and Katerman   ~\cite{CSBSS} for hyperbolic knots in $S^3$, the meridian has length  strictly less than 6. The techniques of  ~\cite{CSBSS}  where extended in
~\cite{GESS} to obtain upper bounds on slope lengths  of hyperbolic knots in 3-manifolds in terms of spanning surfaces of the knots.

By Thurston ~\cite{TDGT}, the length of any slope with respect to the maximal cusp is bounded below by 1, but more effective lower bounds are hard to find. For knots in $S^3$ with projections that have 
 at least 145
 crossings per region, Purcell showed that  the length of non-meridian slopes is bounded below by a constant times the number of twist number ~\cite{CSCD}. Futer, Kalfagianni, and Purcell  ~\cite{CAFM} found a similar bound
 for two-bridge knots. This was  generalized  by Lackenby and Purcell in ~\cite{CVAK} for all alternating hyperbolic knots in $S^3$.
Here we will discuss slope length bounds for alternating knots on surfaces.

The cusp area, denoted by $\mathrm{Area}(\bndry C)$, is the Euclidean area of $\partial C,$ 
 and it is also twice the cusp volume $CV(K)$.
Given a non meridian slope  $\sigma \subset \partial C$,  as in ~\cite{DSNCM}, 
we see that, 
\[\ell(\mu)\ell(\sigma)\ge \mathrm{Area}(\bndry C)\Delta(\mu,\sigma),\]
where $\Delta(\mu,\sigma)$ is the geometric intersection number of our slopes on $\bndry C$.

Using Theorem ~\ref{thm:main} and a result of Burton and Kalfagianni ~\cite{GESS}, we obtain the following corollary, the proof of which is given in Section 3.

\begin{restatable}{cor}{slopelength}
\label{cor:slopelength}
Let $Y$ be a closed, irreducible 3-manifold and  $F\subset Y$ a closed surface with $\chi(F)\le0$
and such that  $Y\setminus N(F)$ is atoroidal.
Suppose that $K$ is  a weakly generalized alternating knot with a projection $\pi(K)\subset F$ that  is weakly twist reduced,  and we have $e(\pi(K),F)>2$, and $r(\pi(K),F)>4$.
Let $\mu$ be the meridian and $\sigma$ any non-meridian slope on maximal cusp of $Y\setminus K$.
Then, we have
\begin{align*}
\ell(\mu)&\le \frac{720 \ (\tw_\pi(K)-\chi(F))}{\tw_\pi(K)}\\
\ell(\sigma)&\ge 
\left( \frac{5.8265514\times 10^{-7}}{720 \  (120-\chi(F))^6}\right) \  \tw_\pi(K),
\end{align*}
where $\tw_\pi(K)$ is the twist number of $\pi(K)$.
\end{restatable}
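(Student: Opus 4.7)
The plan is to prove the two inequalities in the order in which they appear, with the meridian bound doing the real work and the slope length bound following as a short algebraic consequence of the meridian bound together with Theorem \ref{thm:main}.

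For the meridian bound, the natural spanning surfaces of a weakly generalized alternating projection $\pi(K)\subset F$ are the two checkerboard surfaces $B_1,B_2$ obtained by $2$-coloring the complementary regions of $\pi(K)$ in $F$ and attaching a twisted band at each crossing. My plan is to verify, under the standing hypotheses on $e(\pi(K),F)$, $r(\pi(K),F)$, the weakly twist-reduced condition, and the atoroidality of $Y\setminus N(F)$, that these surfaces are essential in $Y\setminus K$ and satisfy the assumptions required by \cite{GESS}; this should follow from the machinery of Howie--Purcell \cite{WGA}, adapted where needed by arguments from earlier sections of the paper. An Euler characteristic count on $F$, the analogue of the classical identity $\chi(B_1)+\chi(B_2)=2-c(K)$ for $S^3$-alternating projections but corrected to $\chi(B_1)+\chi(B_2)=2\chi(F)-c(\pi(K))$ on a surface of non-positive Euler characteristic, then controls $-\chi(B_i)$ in terms of $\tw_\pi(K)-\chi(F)$. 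Feeding this data into Burton and Kalfagianni's slope length estimate, with $\tw_\pi(K)$ controlling the boundary or denominator term, produces the stated bound
$$\ell(\mu)\ \le\ \frac{720\,(\tw_\pi(K)-\chi(F))}{\tw_\pi(K)}.$$

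With the meridian bound in hand, the slope length inequality follows from the Euclidean inequality recalled just before the corollary,
$$\ell(\mu)\,\ell(\sigma)\ \ge\ \mathrm{Area}(\partial C)\,\Delta(\mu,\sigma)\ \ge\ 2\,CV(K),$$
where $\Delta(\mu,\sigma)\ge 1$ since $\sigma$ is a non-meridional simple closed slope. Substituting the lower bound on $CV(K)$ from Theorem \ref{thm:main} and dividing by the meridian bound above, the factor $\tw_\pi(K)-\chi(F)$ cancels between numerator and denominator, leaving exactly
$$\ell(\sigma)\ \ge\ \frac{5.8265514\times 10^{-7}}{720\,(120-\chi(F))^6}\,\tw_\pi(K),$$
as required.

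The main obstacle is the meridian bound. In the classical $S^3$ setting the essentiality and non-degeneracy of the checkerboard surfaces is well established and the Euler characteristic count is straightforward; in the present weakly generalized alternating setting one must check that the same structural picture survives on a positive-genus surface inside an arbitrary closed irreducible $Y$, and track the appearance of $\chi(F)$ carefully so that the constants $720$ and $120$ emerge in the form required to make the two bounds compatible through the cancellation above. Once these diagrammatic-topological verifications are in place, the remainder of the proof is a single line of algebra.
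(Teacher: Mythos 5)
Your plan diverges from the paper in one essential respect: you propose to run the Burton--Kalfagianni estimate on the \emph{ordinary} checkerboard surfaces $B_1,B_2$ of $\pi(K)$, whereas the paper runs it on the \emph{twisted} checkerboard surfaces $S_{B,2},S_{R,2}$ that are the subject of Theorem~\ref{thm:essential} and Lemma~\ref{lem:eulchr}. This matters, and your description glosses over the consequences.

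For the ordinary checkerboard surfaces the quantities appearing in Theorem~\ref{thm:gess} are both controlled by the \emph{crossing} number, not the twist number: the surfaces meet once in each crossing arc, so $i(\partial B_1,\partial B_2)=2c(\pi(K))$, and (once the regions of $F\setminus\pi(K)$ are known to be disks via Lemma~\ref{lem:disk_regions}) the Euler characteristic identity is $\chi(B_1)+\chi(B_2)=\chi(F)-c(\pi(K))$, not $2\chi(F)-c(\pi(K))$; the ``$2$'' in the classical $S^3$ formula is $\chi(S^2)$, not a free constant. Your assertion that ``$\tw_\pi(K)$ controls the boundary or denominator term'' is therefore not true for these surfaces. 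Plugging $B_1,B_2$ into Theorem~\ref{thm:gess} yields a bound of the shape $\ell(\mu)\le 3-3\chi(F)/c(\pi(K))$, which does \emph{not} exhibit the factor $\tw_\pi(K)-\chi(F)$ that you need to cancel against the lower bound on $\mathrm{Area}(\partial C)$ from Theorem~\ref{thm:main}. The clean cancellation you describe is the paper's argument, and it is tied to the specific form $720(\tw_\pi(K)-\chi(F))/\tw_\pi(K)$; obtaining that from the crossing-number bound requires an additional comparison step (using $-\chi(F)\ge 0$ and $c(\pi(K))\ge\tw_\pi(K)$), which you do not mention. The point of the twisted surfaces is exactly that they make this unnecessary: Lemma~\ref{lem:eulchr} gives $|\chi(S_{B,2})|+|\chi(S_{R,2})|=\mathrm{cr}(K_2)+2\tw_N(K)-\chi(F)\le 120\tw_\pi(K)-\chi(F)\le 120(\tw_\pi(K)-\chi(F))$ and $i(\partial S_{B,2},\partial S_{R,2})\ge\tw_\pi(K)$, so Theorem~\ref{thm:gess} delivers the meridian bound directly in the form needed for the single-line cancellation in the second inequality.

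In short: your route through ordinary checkerboard surfaces \emph{can} be made to work (indeed it gives a numerically sharper meridian bound), but as written it contains an Euler-characteristic error, a misidentification of which diagrammatic quantity controls the denominator, and a missing comparison $c(\pi(K))\ge\tw_\pi(K)$ together with the weakening needed to put the meridian bound into the form that the second half of the argument requires. The paper avoids all of this by reusing the twisted-surface computations already in hand from the proof of Theorem~\ref{thm:main}.
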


\subsection{Dehn Filling Applications}
The bounds on slopes from Corollary ~\ref{cor:slopelength} have applications for Dehn surgery along knots that admit weakly generalized alternating projections. By the 6-Theorem of Agol ~\cite{BEDF} and Lackenby ~\cite{WHDS}, if $M$ is a hyperbolic 3-manifold, then filling $M$ along a slope $\sigma$ of length strictly greater than 6, on a torus cusp, produces a hyperbolic manifold. If the length is bigger than $2\pi$, then we get a bound on the volume, by Futer, Kalfagianni, and Purcell:

\begin{thm}[Theorem 1.1 ~\cite{DFVJP}]
\label{thm:FKP}
Let $M$ be a complete, finite-volume hyperbolic manifold with a single cusp $C.$ Let $\sigma$ be a slope on $\bndry C$ with length $\ell(\sigma)$ greater than $2\pi,$ and let  $M(\sigma)$  denote  the 3-manifold obtained by Dehn filling $M$ along $\sigma$. Then, $M(\sigma)$ is hyperbolic and we have
\[\mathrm{vol}(M(\sigma)) \ge \left(1-\left(\frac{2\pi}{\ell(\sigma)}\right)^2\right)^{3/2}\mathrm{vol}(M).\]
\end{thm}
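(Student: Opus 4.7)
The plan is to combine the Gromov--Thurston $2\pi$ theorem with Perelman's monotonicity of hyperbolic volume. First, place the maximal horocusp $C$ in $M$ and consider the compact manifold $M_0 = M\setminus \mathrm{int}(C)$; its single torus boundary $\partial C$ carries a Euclidean structure in which the slope $\sigma$ is realized by a closed geodesic of length $\ell(\sigma)>2\pi$. Topologically, $M(\sigma)$ is obtained from $M_0$ by gluing a solid torus $V$ so that $\sigma$ bounds a meridian disk.

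Next, I would build an explicit Riemannian metric $g$ on $M(\sigma)$ that agrees with the hyperbolic metric on $M_0$ and, on $V$, is a warped product
\[g = dr^2 + f(r)^2\, d\theta^2 + h(r)^2\, d\phi^2,\]
where $r$ is distance to the core geodesic, $\theta$ is the angular coordinate around the meridian, and $\phi$ is along a dual longitude. The functions $f, h$ are chosen so that $f(0)=0$ and $f'(0)=1$ (smoothness at the core), so that $g$ matches the hyperbolic cusp metric at $r=r_0$, and so that all sectional curvatures of $g$ lie in $[-1,0]$. Boundary matching forces $f(r_0)=\ell(\sigma)/(2\pi)$, and the hypothesis $\ell(\sigma)>2\pi$ is precisely the condition $f(r_0)>1$ needed for the Riccati-type ODE constraints encoding the curvature inequality to admit a smooth solution. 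This is the quantitative form of the $2\pi$ theorem, and in particular it implies that $M(\sigma)$ is hyperbolic.

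The quantitative step is to integrate the warped volume form under the optimized $f, h$ and compare it with what was removed from $M$. A direct calculation yields
\[\mathrm{vol}(V, g)\ \ge\ \left(1-\left(\frac{2\pi}{\ell(\sigma)}\right)^2\right)^{3/2}\mathrm{vol}(C),\]
with the exponent $3/2$ arising from the three--dimensional scaling of the warped volume form against the curvature constraint. Combining with $\mathrm{vol}(M(\sigma), g) = \mathrm{vol}(M) - \mathrm{vol}(C) + \mathrm{vol}(V, g)$ and the elementary inequality $\mathrm{vol}(C)\le \mathrm{vol}(M)$ gives $\mathrm{vol}(M(\sigma), g)\ge \left(1 - (2\pi/\ell(\sigma))^2\right)^{3/2}\mathrm{vol}(M)$.

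Finally, I invoke Perelman's monotonicity theorem: on the closed hyperbolic $3$-manifold $M(\sigma)$, any Riemannian metric with sectional curvature $\ge -1$ has volume at least the hyperbolic volume. Applying it to our $g$ gives $\mathrm{vol}(M(\sigma))\le \mathrm{vol}(M(\sigma), g)$, which chained with the previous inequality yields the stated bound. The hard part will be the volume estimate: producing warping functions that simultaneously satisfy smoothness at the core, the boundary match, and the curvature bound, while giving the sharp factor $(1-(2\pi/\ell(\sigma))^2)^{3/2}$ rather than a weaker one. The remaining inputs (the $2\pi$ theorem, existence of maximal cusps, Perelman monotonicity) are then used essentially as black boxes.
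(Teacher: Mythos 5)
The paper does not prove this theorem; it is imported as a black box, cited as Theorem~1.1 of Futer--Kalfagianni--Purcell~\cite{DFVJP}. So there is no ``paper's own proof'' to compare against, and I assess your proposal on its own terms.

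Your outline follows the right broad template for FKP's result: build a controlled Riemannian metric on $M(\sigma)$ via a warped-product solid torus (the Gromov--Thurston $2\pi$ construction) and then compare its volume with the hyperbolic volume of $M(\sigma)$ using a general volume-rigidity theorem. However, the final chaining step contains a direction error that breaks the argument. You invoke a result asserting that any metric with sectional curvature $\ge -1$ on a closed hyperbolic $3$-manifold has volume \emph{at least} the hyperbolic volume, i.e., $\mathrm{vol}(M(\sigma))\le \mathrm{vol}(M(\sigma),g)$. Earlier you established $\mathrm{vol}(M(\sigma),g)\ge \bigl(1-(2\pi/\ell(\sigma))^2\bigr)^{3/2}\,\mathrm{vol}(M)$. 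These two inequalities do not combine to give $\mathrm{vol}(M(\sigma))\ge \bigl(1-(2\pi/\ell(\sigma))^2\bigr)^{3/2}\,\mathrm{vol}(M)$: from $A\le B$ and $B\ge C$ one cannot conclude $A\ge C$. To close the argument you would need the \emph{opposite} comparison, namely a metric $g$ with $\mathrm{vol}(M(\sigma),g)\le\mathrm{vol}(M(\sigma))$ --- for instance one with sectional curvature $\le -1$, together with the Gromov--Thurston simplicial-volume inequality that the hyperbolic metric \emph{maximizes} volume among such metrics --- or, as in the actual FKP argument, a comparison built on the Besson--Courtois--Gallot theorem (extended to the finite-volume setting by Boland--Connell--Souto), applied with the roles arranged so the inequality points the correct way. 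As written, your curvature hypothesis and your comparison theorem both point in the direction that yields only an \emph{upper} bound on $\mathrm{vol}(M(\sigma))$.

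Two smaller issues. First, ``Perelman's monotonicity theorem'' is not a standard name for any volume-rigidity statement here; the theorem you want (hyperbolic minimizes volume among metrics with $\mathrm{Ric}\ge -(n-1)g$) is due to Besson--Courtois--Gallot, and it requires a Ricci bound, not a sectional one --- a real distinction even in dimension $3$, since the warped-product constructions in the $2\pi$ theorem typically give Ricci control more readily than pointwise sectional control. Second, the estimate $\mathrm{vol}(V,g)\ge \bigl(1-(2\pi/\ell(\sigma))^2\bigr)^{3/2}\mathrm{vol}(C)$ is asserted without justification; it is not the intermediate inequality FKP actually prove, and it is not obviously stable under the choice of horocusp (shrinking $C$ sends both sides to $0$ but at different rates). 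You flag the volume computation as ``the hard part,'' and it is, but the direction error above must be fixed before that computation can even be deployed.
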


Combining this with Theorem ~\ref{thm:main}, Theorem ~\ref{thm:wga_main} (from ~\cite{WGA}), and Theorem ~\ref{thm:FKP} and Thurston ~\cite{TDGT}, we get the following result:

\begin{thm}
Let $K$ be a weakly generalized alternating knot on a surface $F\neq S^2$ in a closed manifold $Y$ with conditions as in Theorem ~\ref{thm:main}. Suppose that
\[\tw_{\pi}(K)> \left(2.32928\times 10^{10}\right)  \ \left(120-\chi(F)\right)^6.\]
Then, any manifold $M$ obtained from non-meridional surgery along $K$ is hyperbolic and 
\[\frac{v_8}{4}\left(\tw_{\pi}(K)-\chi(F)\right)\le\frac{1}{2}\mathrm{vol}(Y\setminus K)\le \mathrm{vol}(M) < \mathrm{vol}(Y\setminus K).\]
Here $v_8\approx 3.66386$ is the volume of a regular hyperbolic ideal octahedron.
\end{thm}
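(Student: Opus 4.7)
The plan is to chain together three black boxes: the slope length bound of Corollary~\ref{cor:slopelength} (to force non-meridional slopes to be long), the Dehn filling volume estimate of Theorem~\ref{thm:FKP} (to control $\mathrm{vol}(M)$ from below), and the diagrammatic volume bound of Theorem~\ref{thm:wga_main} of~\cite{WGA} (to convert $\mathrm{vol}(Y\setminus K)$ into a function of the twist number).

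First I would unpack the hypothesis on $\tw_\pi(K)$. By Corollary~\ref{cor:slopelength}, every non-meridional slope $\sigma$ on the maximal cusp of $Y\setminus K$ satisfies
\[
\ell(\sigma)\ge \frac{5.8265514\times 10^{-7}}{720\,(120-\chi(F))^6}\,\tw_\pi(K).
\]
A direct calculation shows that the stated constant $2.32928\times 10^{10}$ is calibrated exactly so that the assumption $\tw_\pi(K)>(2.32928\times 10^{10})(120-\chi(F))^6$ pushes the right-hand side strictly past $6\pi$. In particular $\ell(\sigma)>2\pi$, so Theorem~\ref{thm:FKP} applies: $M=(Y\setminus K)(\sigma)$ is hyperbolic and
\[
\mathrm{vol}(M)\ge \left(1-\left(\tfrac{2\pi}{\ell(\sigma)}\right)^2\right)^{3/2}\mathrm{vol}(Y\setminus K)\ge \left(\tfrac{8}{9}\right)^{3/2}\mathrm{vol}(Y\setminus K)>\tfrac{1}{2}\,\mathrm{vol}(Y\setminus K),
\]
where the middle inequality uses $\ell(\sigma)>6\pi$.

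To complete the left-hand inequality, I would invoke Theorem~\ref{thm:wga_main}, which provides the diagrammatic lower bound $\mathrm{vol}(Y\setminus K)\ge \tfrac{v_8}{2}(\tw_\pi(K)-\chi(F))$. Plugging this into the previous display yields
\[
\tfrac{v_8}{4}(\tw_\pi(K)-\chi(F))\le \tfrac{1}{2}\mathrm{vol}(Y\setminus K)\le \mathrm{vol}(M).
\]
The strict upper bound $\mathrm{vol}(M)<\mathrm{vol}(Y\setminus K)$ is Thurston's theorem that hyperbolic Dehn filling strictly decreases volume, which completes the inequality string.

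The only genuine difficulty is bookkeeping: confirming that the numerical constant $2.32928\times 10^{10}$ is exactly what is needed so that Corollary~\ref{cor:slopelength} delivers $\ell(\sigma)>6\pi$, and checking that the resulting factor $(8/9)^{3/2}$ does indeed exceed $1/2$. No new conceptual input is required beyond a careful assembly of the ingredients already cited.
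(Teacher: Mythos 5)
Your proposal is correct and follows essentially the same route as the paper: Corollary~\ref{cor:slopelength} forces non-meridional slopes past $6\pi$ (your explicit calibration of $2.32928\times 10^{10}$ is right, and the resulting factor $(8/9)^{3/2}>1/2$ is the clean way to extract the middle inequality, where the paper's sketch is vaguer, only mentioning "$2\pi$"), Theorem~\ref{thm:FKP} then controls $\mathrm{vol}(M)$, Theorem~\ref{thm:wga_main} supplies the diagrammatic lower bound, and Thurston's volume-decrease-under-filling gives the strict right-hand inequality. The one small step you omit is that the paper first invokes Lemma~\ref{lem:disk_regions} (using $e(\pi(K),F)>2$, which forces $e\ge 4$ by evenness) to confirm the regions of $F\setminus\pi(K)$ are disks, which is a hypothesis of Theorem~\ref{thm:wga_main} and should be stated explicitly.
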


To get the leftmost inequality, we can first use Lemma ~\ref{lem:disk_regions} to get that the regions of $F\setminus K$ are all disks, which then allows us to use Theorem ~\ref{thm:wga_main} to get a lower bound on volume.  Then, for the center inequality, by taking our number of twist regions to be high enough, we get that the lengths of any slopes must be at least $2\pi$, and so we can apply the Theorem ~\ref{thm:FKP}. Finally, the rightmost inequality comes from Thurston and the fact that volume must decrease under surgery ~\cite{TDGT}.

\subsection{Weakly Generalized Alternating Knots in Lens spaces}

While the geometry of these generalized knots has several features in common with alternating knots, there are still several key differences. In particular, because we can change which projection surface we are working with, we must often consider the topology of $F$ in our formulas. For example, in the case of volume, every alternating knot in $S^3$ has an upper and lower bound on volume based solely on the number of twist regions of a diagram on $S^2$, by work of Lackenby, and improved by Agol and Thurston ~\cite{VHLC}.  Kalfagianni and Purcell obtained a similar result for weakly generalized alternating knots on Heegaard tori of Lens spaces. Specifically,  ~\cite[Corollary 1.5]{ALSVB} states the following: 
  Suppose $K$ has a weakly generalized alternating projection $\pi(K)$ to a Heegaard torus $F$ in $Y=S^3$, or in $Y$ a lens space. Suppose that $\pi(K)$ is twist reduced, the regions of $F-\pi(K)$ are disks, and the representativity satisfies $r(\pi(K),F)>4$. Then $Y-K$ is hyperbolic, and we have
$\mathrm{vol}(Y\setminus K) \le 10v_3\tw_{\pi}(K),$
  where $v_3$ is the volume of an ideal hyperbolic tetrahedron. We will  now use this result to prove Theorem ~\ref{thm:gromov}.

\begin{proof}{\rm {(of  Theorem ~\ref{thm:gromov})}}
We proceed as in ~\cite{ETS}, and will need a theorem of Cooper and Lackenby ~\cite{DSNCM}, which states that, for any $\epsilon > 0$ and any closed 3-manifold $M$, there are at most finitely many cusped hyperbolic manifolds $X$ and slopes $\sigma$ on  cusps $\bndry X$, with length at least $2\pi + \epsilon$, such that $M$ is obtained by Dehn filling $X$ along the slopes $\sigma$. 
Set $B(F)=\displaystyle{ \left( \frac{5.8265514\times 10^{-7}}{720 \  (120-\chi(F))^6}\right)},$ the quantity appearing in the lower bound for $\ell(\sigma)$ in  Corollary ~\ref{cor:slopelength}.

In addition, we will use Corollary ~\ref{cor:slopelength} in the case that $F$ is a torus, so $\chi(F)=0$ by a direct calculation we get
$\frac{B(F)}{720} \ge 2.710139\times 10^{-22}$. This means that any non-meridian slope $\sigma$ on the maximal cusp of $X=Y\setminus K$ we have  $\ell(\sigma)\geq 2.710139\times 10^{-22} \tw_{\pi}(K)$.

Now let $K$ be a weakly generalized alternating knot $K$ on the torus $F$, with $r(\pi(K),F)>4$ and $e(\pi(K),F)>2$ as in the statement of the corollary.
Also let  $M$ be a 3-manifold with Gromov norm at least $2.3984\times 10^{23}$, and suppose that $M$ is obtained by Dehn filling from  $X = Y\setminus K$, along a slope $\sigma$.
 As the Gromov norm does not increase under Dehn filling, $X$ must also have Gromov norm at least $2.3984\times 10^{23}$. As hyperbolic volume and Gromov norm are proportional, 
 this tells us that
$$v_3 2.3984\times 10^{23} \le \mathrm{vol}(X) \le 10 v_3\tw_{\pi}(K),$$
with the last inequality coming from ~\cite[Corollary, 1.5]{ALSVB} mentioned above.
So then $\tw_{\pi}(K)$ must be at least $2.3984\times 10^{22}$.
 
Finally, by Corollary ~\ref{cor:slopelength} in the case $F$ is a torus, we have the length $\ell(\sigma)$  is at least $(2.3984\times 10^{22}) (2.710139\times 10^{-22}) > 6.5$. If we take $\epsilon = 6.5-2 \pi$, then using the theorem of Cooper and Lackenby, we are done, and get the result of Theorem ~\ref{thm:gromov}.
\end{proof}

The proof of Theorem ~\ref{thm:gromov} doesn't work for weakly  generalized alternating knots on  higher genus surfaces. This is due to a lack of an upper bound on volume in terms of  $\tw_{\pi}(K)$.
Indeed, Kalfagianni and Purcell ~\cite{ALSVB} found a family of weakly generalized alternating knots in $S^3$ with  constant $\tw_{\pi}(K)$ on a Heegaard surface of genus two, whose volumes are arbitrarily large. Since the cusp density of a knot
$K\subset Y$ is the ratio of the cusp volume of $K$ over the volume of $Y\setminus K$,
as a consequence of this, and the upper bound of Theorem ~\ref{thm:main}, one obtains the following.

\begin{cor}[Corollary 1.5 ~\cite{ALSVB}] \label{density} There exist
weakly generalized alternating knots in $S^3$ with arbitrarily small cusp density.
\end{cor}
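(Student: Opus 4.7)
The plan is to combine the two ingredients explicitly referenced in the paragraph preceding the statement: the Kalfagianni--Purcell family from \cite{ALSVB} and the upper bound half of Theorem \ref{thm:main}. Concretely, I would take the family $\{K_n\}_{n\ge 1}$ of weakly generalized alternating knots in $S^3$, projected onto a genus-two Heegaard surface $F$, with $\tw_{\pi}(K_n)$ equal to some fixed constant $t$ for every $n$, while $\mathrm{vol}(S^3\setminus K_n)\to\infty$. The cusp density of $K_n$ is $CV(K_n)/\mathrm{vol}(S^3\setminus K_n)$, so controlling the numerator while letting the denominator blow up will produce the conclusion.

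Before invoking Theorem \ref{thm:main}, I would first verify its hypotheses for this family: $S^3$ is closed and irreducible, $F$ is a genus-two Heegaard surface so $\chi(F)=-2\le 0$ and $S^3\setminus N(F)$ is a union of two handlebodies, hence atoroidal, and by construction the projections $\pi(K_n)\subset F$ are weakly twist reduced with $e(\pi(K_n),F)>2$ and $r(\pi(K_n),F)>4$. Assuming these conditions are indeed built into the family of \cite{ALSVB} (they are needed for Theorem \ref{thm:main} to apply), the upper bound of Theorem \ref{thm:main} gives
\[
CV(K_n)\;\le\;\frac{(360\,\tw_{\pi}(K_n)-3\chi(F))^{2}}{\tw_{\pi}(K_n)}\;=\;\frac{(360\,t+6)^{2}}{t},
\]
which is a constant $C=C(t)$ independent of $n$.

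Dividing by $\mathrm{vol}(S^3\setminus K_n)$ then yields
\[
\frac{CV(K_n)}{\mathrm{vol}(S^3\setminus K_n)}\;\le\;\frac{C}{\mathrm{vol}(S^3\setminus K_n)}\;\longrightarrow\;0
\]
as $n\to\infty$, proving the corollary. The only real obstacle is the verification step for the hypotheses of Theorem \ref{thm:main} on the Kalfagianni--Purcell family; since \cite{ALSVB} works in precisely the same weakly generalized alternating setting and the upper volume bound quoted in the paragraph above already presupposes analogous conditions, this check should reduce to a direct citation from \cite{ALSVB} rather than any new argument.
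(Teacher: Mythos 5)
Your proposal is correct and matches the paper's argument: the paper simply cites the Kalfagianni--Purcell family on a genus-two Heegaard surface with constant twist number and unbounded volume, notes that the upper bound of Theorem~\ref{thm:main} then keeps $CV(K_n)$ bounded, and divides. Your only addition is an explicit (and appropriate) verification that the family satisfies the hypotheses of Theorem~\ref{thm:main}, which the paper leaves implicit.
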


Corollary ~\ref{density} shows that weakly generalized alternating knots even in $S^3$ are geometrically genuinely different than usual alternating knots.
Indeed,  due to Lackenby and Purcell ~\cite{CVAK}, ordinary alternating knots are known to have cusp densities that are bounded below uniformly away from zero.

\subsection{Organization and outline}
The remainder of this paper is split into six sections. In Section 2, we introduce several relevant definitions and constructions, including weakly generalized alternating knots, twisted checkerboard surfaces, and cusp area. 
The twisted surfaces are obtained by  appropriate modifications of the checkerboard spanning surfaces associated with a weakly generalized alternating knot projection.
We finish the section with the statement of Theorem 
~\ref{thm:essential}, that, under appropriate hypotheses, the twisted checkerboard surfaces are $\pi_1$-injective in the knot complement. This theorem and results we obtain during the course of its proof,  are key ingredients for the proof of Theorem ~\ref{thm:main}. The  proof
Theorem 
~\ref{thm:essential} occupies
Sections 4- 7 of the paper.

In Section 3 we prove Theorem ~\ref{thm:main}, assuming Theorem ~\ref{thm:main} and Theorem ~\ref{thm:homarcs}. The proof of the later result  is also postponed till Section 7, as it relies on the machinery developed for the proof of 
Theorem ~\ref{thm:main}. 
There are several geometric results we use from Lackenby and Purcell ~\cite{CVAK}, especially those relating essential surfaces to geodesics and area. While we will mention the relevant results and their proofs, including why they hold for our general case, for the complete picture it is recommended to look at the original paper.

In Section 4, we talk more in depth about weakly generalized alternating knots, focusing particularly on what happens to them under augmentation (removing or adding crossings in pairs). In sections 5-7, we generalize ~\cite{ETS}, focusing on how disks in the knot complement might intersect the twisted checkerboard  surfaces. The combinatorics of these intersections are encoded by certain planar graphs. The proof of Theorem ~\ref{thm:essential} relies on a careful analysis 
of properties of these graphs that lead us to
understand how a potential disk might intersect the twisted surface, as well as other surfaces arising from the knot projection. 

\subsection{Acknowledgments}
The author wishes to thank his advisor, Efstratia Kalfagianni, for guidance and advice. This material, which will be part of the author's upcoming PhD dissertation at Michigan State University, is based on research partially supported by NSF Grants DMS-1404754, DMS-1708249, DMS-2004155, and the Dr. Williams L. Harkness Endowed Fellowship.

\section{Definitions}

Before we begin, we'll introduce some definitions and tools we will use throughout this paper. We will start with a generalization of alternating knots, called weakly generalized alternating knots. Then, we will introduce the surfaces we will be examining. Finally, we will summarize a few of the proofs from the original paper that can be used for generalized knots with little to no changes.

\subsection{Weakly Generalized Alternating Knots}

These definitions are covered in more detail, and slightly more generality, in ~\cite{WGA}. Let $Y$ be a compact, orientable, irreducible 3-manifold, with a closed, orientable surface $F$, which we call a \textit{projection surface}. Given a knot $K\subset F\times I\subset Y$, we can get a projection $\pi(K)$ by flattening $F\times I$ down to just $F$, and keeping track of where the knot goes. We call $\pi(K)$ a \textit{generalized diagram}. As we are not working with $F=S^2\subset S^3$, we need to modify some of the usual definitions we have for knots. In the setting of $F=S^2$, one uses the fact that all curves in $S^2$ bound disks to define primeness. Because we do not have this property for general surfaces, we modify as below:

\begin{defn}
\label{def:wprime}
We say that $\pi(K)\subset F$ is \textit{weakly prime} if, whenever $D\subset F$ is a disk with $\bndry D$ intersecting $\pi(K)$ transversely exactly twice, then either
\begin{itemize}
	\item $F=S^2$, and either $\pi(K)\cap D$ is a single arc or $\pi(K)\cap (F\setminus D)$ is;
	\item $F$ has positive genus, and $\pi(K)\cap D$ is a single arc.
\end{itemize}
\end{defn}

In order to generalize the proofs that follow, we will want to make sure that our knots are sufficiently complex. To do this, we will introduce two conditions on the knot:

\begin{defn}
\label{def:rep}
The \textit{edge representativity} $e(\pi(K),F)$ is the minimum number of intersections between $\pi(K)$ and any essential curve on $F$. The \textit{representativity} $r(\pi(K),F)$ is the minimum number of intersections between $\pi(K)$ and a compression disk of $F$, taken over all compressions disks of $F$. If there are no essential curves, we set $e(\pi(K),F)$ to be $\infty$, while if there are no compression disks, we set $e(\pi(K),F)$ and $r(\pi(K),F)$ to be $\infty$. 
\end{defn}

While the definition of a weakly generalized alternating knot only uses the representativity,, many of our proofs will also need the edge representativity to be high enough to deal with some edge cases. Now, though, we have enough to give a definition for a weakly generalized alternating knot.

\begin{defn}
\label{def:wgak}
Let $F\subset Y$ be a projection surface as above. Then the diagram $\pi(K)$ on $F$ of a knot $K$ is \textit{reduced alternating} if
\begin{enumerate}[(1)]
	\item $\pi(K)$ is alternating on $F$,
	\item $\pi(K)$ is weakly prime,
	\item $\pi(K)\cap F \neq \emptyset$,
	\item $\pi(K)$ has at least one crossing on $F$.
\end{enumerate}
If, in addition, we also have
\begin{enumerate}[(1)]
	\setcounter{enumi}{4}
	\item $\pi(K)$ is checkerboard colorable on $F$,
	\item $r(\pi(K),F)\ge 4$
\end{enumerate}
then $\pi(K)$ is a \textit{weakly generalized alternating diagram}, and $K$ is a \textit{weakly generalized alternating knot}.
\end{defn}

These knots have been studied in several other papers, as well as knots that satisfy subsets of this definition. These were introduced by Howie and Rubenstein ~\cite{HR_WGA} in $S^3$ as generalizations of Hayashi ~\cite{LADPosGen} and Ozawa ~\cite{NTGAK}, and generalized further to other manifolds $Y$ by Howie and Purcell ~\cite{WGA}.

If we ignore the final two conditions of the above definition, and just look at reduced alternating, we get as a subset generalized alternating knots in $S^3$, as studied by Ozawa in ~\cite{NTGAK}, with the additional restrictions that $Y=S^3$ and $\pi(K)$ is strongly prime, which implies weakly prime. These knots are also checkerboard colorable, although. the representativity is only at least 2 ~\cite[Theorem 2.2]{NTGAK}. If we don't require $\pi(K)$ to be weakly prime, and let $F$ to be a Heegaard torus, we get toroidally alternating knots, as studied by Adams ~\cite{TAKL}. More broadly, this category also includes alternating knots on a Heegaard surface $F$, as studied by Hayashi ~\cite{LADPosGen}. This also fits the alternating projection of a knot $\pi(K)$ onto it's Turaev surface $F$ which also has the property that $F\setminus \pi(K)$ are disks ~\cite{JPGraphSurf}. By contrast, for reduced alternating and weakly generalized alternating knots, $F$ does not need to be a Heegaard surface, nor does $F\setminus \pi(K)$ need to be disks. In addition, the requirement that $r(\pi(K),F)$ be large enough also often guarantees that our diagram will be sufficiently complicated.

In this paper, we will narrow our definition by requiring the representativity $r(\pi(K),F)$ to be strictly greater than 4, and adding in that the edge representativity $e(\pi(K),F)$ is at least 4. These restrictions will allow us to later rule out certain troublesome cases caused by working on a surface with genus. We will also define twist reduced in this general case, following Howie and Purcell's definition ~\cite[Definition 6.3]{WGA}:

\begin{defn}
\label{def:wtwred}
A reduced alternating knot diagram $\pi(K)$ on $F$ is \textit{weakly twist reduced} if every disk $D$ in $F$, with $\bndry D$ meeting $\pi(K)$ in exactly two crossings, either contains only bigon faces of $F\setminus \pi(K)$ or $F\setminus D$ contains a disk $D'$, where $\bndry D'$ meets $\pi(K)$ in the same two crossings and bounds only bigon disks. 
\end{defn}

We will need the following result of Howie and Purcell  that is that allows to determine when a knot (or link) is hyperbolic based on from a generalized alternating projection
and bounds the volume with diagrammatic quantities. 

\begin{thm}[Theorem 1.1, ~\cite{WGA}]
\label{thm:wga_main}
Let $\pi(L)$ be a weakly generalized alternating projection of a link $L$ onto a generalized projection surface $F$ in a 3-manifold $Y$. Suppose $Y$ is compact, orientable, irreducible, and has empty boundary. Finally, suppose $Y\setminus N(F)$ is atoroidal. If $F$ has genus at least one, the regions in the complement of $\pi(L)$ on $F$ are disks, and the representativity $r(\pi(L),F)>4$, then
\begin{enumerate}
	\item $Y\setminus L$ is hyperbolic
	\item $Y\setminus L$ admits two checkerboard surfaces that are esssential and quasifuchsian.
	\item The hyperbolic volume of $Y\setminus L$ is bounded below by a function of the twist number of $\pi(L)$ and the Euler characteristic of $F$:
	\[\mathrm{vol}(Y\setminus L) \ge \frac{v_8}{2} (\tw_{\pi}(L)-\chi(F))\]
\end{enumerate}
\end{thm}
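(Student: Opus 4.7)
The plan is to treat the three conclusions as a cascade: essentiality of the two checkerboard surfaces drives everything, hyperbolicity follows from essential surfaces together with the atoroidality hypothesis, and the volume bound comes from a polyhedral decomposition guided by those same surfaces. The first step is to build the two spanning surfaces $B, W\subset Y\setminus L$. Since $\pi(L)$ is alternating and the regions of $F\setminus \pi(L)$ are disks, a checkerboard coloring of $F$ exists; plumbing twisted bands at each crossing in the standard way yields the two surfaces, each transverse to $L$. Note that when $F$ has positive genus these are properly immersed but not a priori essential, and they carry topology coming from $F$ itself.

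The main obstacle, and the heart of the proof, is showing $B$ and $W$ are essential in $Y\setminus L$. I would mimic the classical Menasco argument but adapted to a projection surface of positive genus. Suppose $D$ is a compressing or boundary-compressing disk for, say, $B$. Put $D$ in general position with respect to $F$, and study the graph of arcs and circles $D\cap F$ on $D$. Innermost circles in $F$ that bound disks in $Y$ would give compressing disks of $F$, which by the hypothesis $r(\pi(L),F)>4$ must meet $\pi(L)$ in at least five points; this is used to rule out standard innermost disk reductions. Any torus component that appears after simplification of $D\cap F$ is ruled out by the atoroidality of $Y\setminus N(F)$. The remaining arcs decompose $D$ into sub-disks that meet the alternating projection; the weakly twist reduced and weakly prime conditions then allow a Menasco-style isotopy that concludes there is no innermost bigon producing a valid compression, a contradiction. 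The same graph-of-arcs argument, with crossing arcs replacing boundary arcs, rules out boundary compressions. This is the step where I expect the bulk of the work to lie and where the hypotheses on $F$, $r(\pi(L),F)$, and atoroidality all genuinely enter.

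With $B$ and $W$ essential and at least one of them non-planar (owing to positive genus of $F$), hyperbolicity of $Y\setminus L$ follows from Thurston's characterization: $Y$ being irreducible plus $Y\setminus N(F)$ atoroidal forces $Y\setminus L$ to be irreducible and atoroidal, the essential non-fiber surfaces preclude Seifert fibrations, and no small Seifert structure survives. To promote essentiality to quasifuchsian, one shows neither surface carries an accidental parabolic. Any such accidental essential arc $\alpha\subset B$ would cobound, with a cusp arc, a disk which one can put in general position with $F$ and analyze with the same graph-of-arcs machinery; the alternating structure and $r(\pi(L),F)>4$ let one trace $\alpha$ through twist regions until one reaches a contradiction with weakly prime.

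Finally, for the volume bound, I would decompose $Y\setminus L$ along $B\cup W$ into cells and further into ideal polyhedra. A right-angled decomposition, modeled on the Lackenby/Agol approach extended to higher-genus $F$, assigns a regular ideal octahedron to each pair of non-bigon vertex structures coming from twist regions, counted together with the combinatorics of the projection on $F$. The Euler characteristic of $F$ appears through the count: the number of ideal octahedra one extracts is bounded below by $\tfrac{1}{2}(\tw_{\pi}(L)-\chi(F))$, by standard counts of faces, edges, vertices of $\pi(L)$ on $F$ after collapsing bigon chains to single twist regions. Each such octahedron, being right-angled ideal, has volume $v_{8}$, and a straightening/pleating argument (Miyamoto, as used in this literature) gives the claimed lower bound $\tfrac{v_{8}}{2}(\tw_{\pi}(L)-\chi(F))$ on $\mathrm{vol}(Y\setminus L)$.
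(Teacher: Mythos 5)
This theorem is quoted by the paper from Howie and Purcell's work \cite{WGA}; the present paper does not prove it, so there is no internal proof to compare against. Measured against the published Howie--Purcell argument, your outline has the right cascade (build the checkerboard surfaces, show they are essential, deduce hyperbolicity, then extract a volume lower bound), but the mechanism differs at two points. Howie and Purcell establish essentiality, atoroidality, and the absence of accidental parabolics through their \emph{chunk decomposition} -- the combinatorial generalization of Menasco's polyhedral decomposition that this paper records later as Theorem~\ref{thm:WGA_ChunkDecomp} -- equipped with an angled structure, and a normal-surface/combinatorial Gauss--Bonnet analysis in that framework; this is Menasco in spirit but not the same as a direct general-position study of $D\cap F$ for a hypothetical compressing disk, and it is the chunk machinery that simultaneously controls essential spheres, tori, and annuli. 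For the volume bound, the actual route goes through the guts of the two checkerboard surfaces and the Agol--Storm--Thurston inequality (with Miyamoto's theorem as an input), not an explicit assignment of a regular ideal octahedron to each twist region -- the per-twist-region octahedral picture is closer in flavor to the Agol--Thurston \emph{upper}-bound construction for augmented links than to this lower bound. Finally, one small but real slip: the theorem as stated does not assume the diagram is twist reduced, only that it is a weakly generalized alternating projection with $r(\pi(L),F)>4$ and disk regions, so the ``weakly twist reduced'' hypothesis you invoke in the compressing-disk step is not available here; the essentiality argument has to get by on weak primeness, checkerboard colorability, the representativity bound, and atoroidality of $Y\setminus N(F)$ alone.
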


In order to use this, we must show that, if $e(\pi(K), F) \ge 4$, then $F\setminus \pi(K)$ are disks:

\begin{lem}
\label{lem:disk_regions}
Let $\pi(K)$ be a weakly generalized alternating projection of $K$ onto a generalized projection surface $F$. If $e(\pi(K), F)\ge 4$, then the regions of $F\setminus \pi(K)$ are all disks.
\end{lem}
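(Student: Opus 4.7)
The plan is to argue by contrapositive: assuming a region $R$ of $F\setminus \pi(K)$ is not a disk, I will produce a simple closed curve on $F$ that is essential in $F$ yet disjoint from $\pi(K)$, contradicting $e(\pi(K),F)\ge 4$. The only input I need from the hypothesis that $K$ is a knot is that the 4-valent graph $\pi(K)\subset F$ is connected.

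Let $\overline{R}$ denote the closure of $R$ in $F$. It is a compact orientable subsurface whose boundary lies in $\pi(K)$, and it satisfies $\chi(\overline{R})\le 0$ because $\overline{R}$ is not a disk. Hence $\overline{R}$ either has positive genus, or at least two boundary components, or both. In the first case I choose $\gamma\subset R$ to be a non-separating simple closed curve in $\overline{R}$; in the second I choose $\gamma\subset R$ to be a simple closed curve separating two boundary components of $\overline{R}$. In both cases $\gamma$ fails to bound a disk in $\overline{R}$: a disk-bounding curve is separating, and the side that is a disk carries no boundary components of $\overline{R}$.

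The heart of the argument is to show that $\gamma$ must be essential in $F$, in which case $\gamma\cap \pi(K)=\emptyset$ gives an immediate contradiction with $e(\pi(K),F)\ge 4$. Suppose for contradiction that $\gamma$ bounds a disk $D\subset F$. Since $\pi(K)$ is connected and disjoint from $\gamma$, either $\pi(K)\cap D=\emptyset$ or $\pi(K)\subset D$. If $\pi(K)\cap D=\emptyset$, then $D\subset F\setminus \pi(K)$ and $\partial D=\gamma\subset R$ force $D\subset R\subset\overline{R}$, so $\gamma$ bounds a disk in $\overline{R}$, contradicting our choice. If instead $\pi(K)\subset D$, then $F\setminus D\subset R$ is a compact subsurface with single boundary $\gamma$ and genus equal to $\mathrm{genus}(F)$; when $F=S^2$ this subsurface is itself a disk and we again contradict the choice of $\gamma$, and when $F$ has positive genus I pick a simple closed curve $\beta\subset F\setminus D$ representing a handle generator of $\pi_1(F)$, obtaining an essential curve in $F$ disjoint from $\pi(K)\subset D$, once more contradicting $e(\pi(K),F)\ge 4$.

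The only delicate step is the initial choice of $\gamma$ ensuring that it does not bound a disk in $\overline{R}$; after that, the connectivity of $\pi(K)$ together with the topology of $F\setminus D$ do all of the work.
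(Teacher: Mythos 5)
Your proposal takes a genuinely different and, in several respects, cleaner route than the paper's proof. The paper asserts as a dichotomy that a non-disk region either contains an annulus whose core is \emph{essential in $F$}, or meets itself at a crossing, and then reads off $e(\pi(K),F)\le 0$ or $\le 2$ from those two pictures. Neither branch of the paper's argument actually verifies that the resulting curve is essential in $F$ — it quietly jumps from ``essential in the region'' to ``essential in $F$'', and for the crossing case it never addresses essentiality at all, even though $e(\pi(K),F)$ is a minimum over essential curves. Your argument fills precisely this gap: you take $\gamma$ non-disk-bounding in the (abstract) region, and when $\gamma$ is inessential in $F$ you use connectivity of $\pi(K)$ to force $\pi(K)$ entirely inside or entirely outside the disk $D$, deriving in each subcase either that $\gamma$ bounds a disk in the region (contradiction) or that a handle generator of $F$ lives in $F\setminus D$, disjoint from $\pi(K)$. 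That is a complete proof that $e(\pi(K),F)>0$ already forces each region to be an open disk.

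Two caveats. First, the phrase ``$\overline{R}$ is a compact orientable subsurface'' is not quite right: if the region touches itself along $\pi(K)$, the literal closure in $F$ has pinch points and is not a subsurface. Your whole argument goes through verbatim with the path-completion $\widehat{R}$ of $R$ in place of the closure, and you should phrase it that way; the lift of the disk $D$ into $\widehat{R}$ in your first subcase is then the precise justification for ``$\gamma$ bounds a disk in the region''. Second, your proof rules out $\widehat{R}$ failing to be a disk, but it does not rule out the configuration where $\widehat{R}$ is a disk yet the region touches itself at a crossing — that is exactly the second branch of the paper's dichotomy, which the paper handles (imperfectly) by producing a curve through the crossing meeting $\pi(K)$ twice. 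If ``the regions of $F\setminus\pi(K)$ are all disks'' is read intrinsically (each region is an open $2$-disk), your proof is complete and uses only $e(\pi(K),F)>0$, which is strictly less than the paper's $e\ge 4$ hypothesis. If instead one needs the stronger, embedded statement (each closure is an embedded disk, as in a cellular decomposition — which is arguably what the downstream chunk decomposition requires), then you still need to dispose of the self-touching-at-a-crossing case, and there the $e>2$ bound genuinely enters. It is worth being explicit about which of these two readings you are proving, since they use different amounts of the hypothesis.
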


\begin{proof}
Suppose not. Then one of the regions of $F\setminus \pi(K)$ either contains an annulus whose core is essential or meets itself at a crossing. If the region has an annulus, take $\gamma$ to be the core. Then $\gamma$ doesn't intersect $\pi(K)$, and so $e(\pi(K),F)=0$, a contradiction. If a region meets itself at a crossing, let $\gamma$ be the curve that meets this crossing and then connects back to itself in the region. Then $\gamma$ intersects $\pi(K)$ exactly twice (at just the crossing), so $e(\pi(K),F)\le 2$, a contradiction. Thus if $e(\pi(K),F)\ge 4$, then the regions of $F\setminus \pi(K)$ are all disks.
\end{proof}

\subsection{Twist Number Remarks}
The twist number of a knot diagram is the number of twist regions in the diagram $D$, often labeled $\tw(D)$. In the traditional case, working with projection surface $S^2$ inside $S^3$, we can look at the minimum twist number over all diagrams, and get the twist number of the knot, $\tw(K)$. This is an invariant of the knot. If we require our diagram to be prime, reduced, twist reduced, and alternating, we get that $\tw(D)=\tw(K)$, and so the twist number of this diagram is an invariant. 

In the case of knots projected onto a surface that isn't $S^2$, however, we don't necessarily have an invariant. Howie was able to find two different weakly generalized alternating diagrams of the $9_29$ knot on a Heegaard torus in $S^3$ with two different twist numbers.

It is not known, however, if the twist number of a twist-reduced diagram of a weakly generalized alternating knot is an invariant in other settings. For example, when we look at knots in thickened surfaces $S\times I$, we know the crossing number of a reduced alternating diagram is an invariant ~\cite{CrossNoAltK}. As such, it's natural to ask if the twist number is invariant here. 

\subsection{Twisted Surfaces}

Now that we have a knot, we can look at different spanning surfaces for $K$. Two immediate surfaces we get are the checkerboard surfaces, which we can color red and blue, and label $R$ and $B$ respectively, both of which are essential in $Y\setminus\pi(K)$. While we could, as in the first half of ~\cite{CVAK}, proceed with just $R$ and $B$, this will give us a lower bound based on both the twist number as well as the number of crossings. Instead, we will follow the second half, and use the twisted checkerboard surfaces of Lackenby and Purcell ~\cite{ETS}, which we will now describe how to construct.

Let $K$ be a weakly generalized alternating hyperbolic knot on a surface $F$. Before we build the twisted surfaces for our knot, we must first augment our link. Technically, we will be working with the projected diagram $\pi(K)\subset F$ as opposed to the knot itself, however we will abuse notation and refer to this diagram as $K$. 

(1) Let $N_{tw}$  be some number. While we will eventually set this number to be at least 121, for definitions it can be as large or as small as we want. (2) Around each twist region with at least $N_{tw}$ crossings in it, add a crossing circle, and call the augmented link $L$. (3) For each encircled twist region, remove pairs of crossings until only one or two remain. We call this new link $L_2$. Note that $Y\setminus L$ is homeomorphic to $Y\setminus L_2$ by twisting along the neighborhoods of the crossing circles. (4) Let $K_2$ be the diagram where we have removed all of the crossing circles from $L_2$. 

\begin{figure}
	\centering
		\def\svgwidth{\columnwidth}
		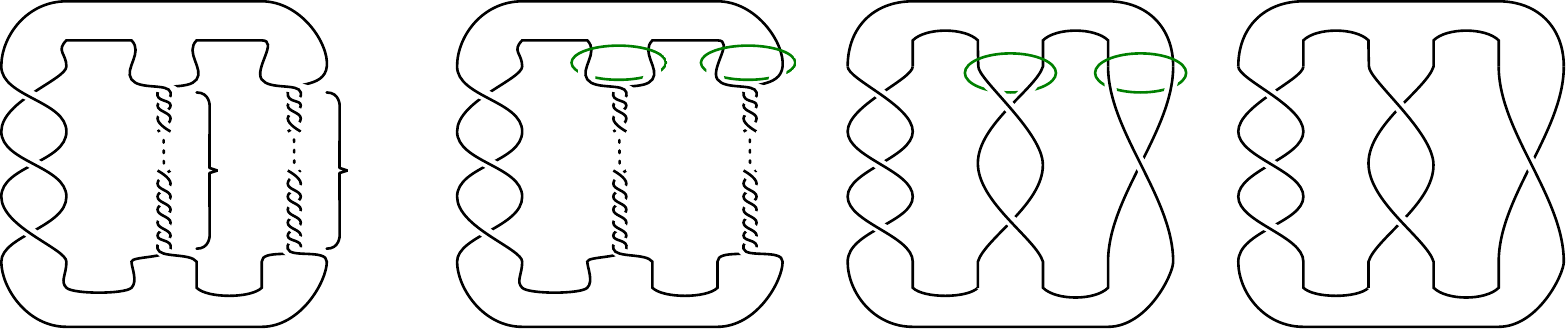

		\caption{The steps of augmenting a link. From left to right, a knot $K$ with three twist regions, only two of which are highly twisted; adding crossing circles to get $L$; removing all but one or two crossings to get $L_2$; Removing crossing circles to get $K_2$.}
\end{figure}

This augmented knot, $K_2$, remains weakly generalized alternating, which we shall prove in section four. As such, as long as $Y\setminus K$ is hyperbolic, so is $Y\setminus K_2$.

In order to get our twisted surfaces for $K$, we will start with surfaces for $K_2$, and work our way back up the chain. As $K_2$ is weakly generalized alternating, it has two checkerboard surfaces on $F$, which we can color blue and red. Next, when we go back to $L_2$, that is, when we put our crossing circles back in, each crossing circle will intersect either the red or the blue surface. Furthermore, when we look at $Y\setminus L_2$, each crossing circle will have a regular neighborhood that intersects either the red surface or the blue surface in a meridian disk. So then define the blue and red surfaces in the exterior of $L_2$ to be the blue and red surfaces of $K_2$ punctured by the crossing circles of $L_2$. We will denote these by $B_2$ and $R_2$, respectively.

Now we need to send $B_2$ and $R_2$ back to $Y\setminus L$. Adding back in the crossings is the same as twisting along the crossing circles. Most of $B_2$ and $R_2$ will twist to give us the usual checkerboard surface of $L$. Things will change, however, where the surfaces meet the crossing circle. If we removed $2n$ crossings from a twist region to get to $L_2$, we will need to twist a full $n$ times around in order to get them back. That is, the meridian curves where our surface intersects our surfaces will go to $\pm 1/n$ curves on the boundary of the crossing circle, with sign chosen appropriately.

Finally, to get back to $Y\setminus K$, we need to fill in the crossing circles with a meridian Dehn filling. So in order to get our twisted surfaces, we will need to complete them in a way that makes sense with this filling. Note that each (meridional) cross-section of the crossing circle intersects the punctured surfaces $2n$ times on the boundary. We then connect opposite points by an interval running between them. When we do this for the whole crossing circle, we will either be attaching a single annulus or two M\"{o}bius bands, depending on the value of $n$. In either case, however, we now have two immersed surfaces, which we can still color blue and red, and which we call \textit{twisted checkerboard surfaces}. We will denote them $S_{B,2}$ and $S_{R,2}$ appropriately.

\begin{figure}
	\centering
		\def\svgwidth{\columnwidth}
		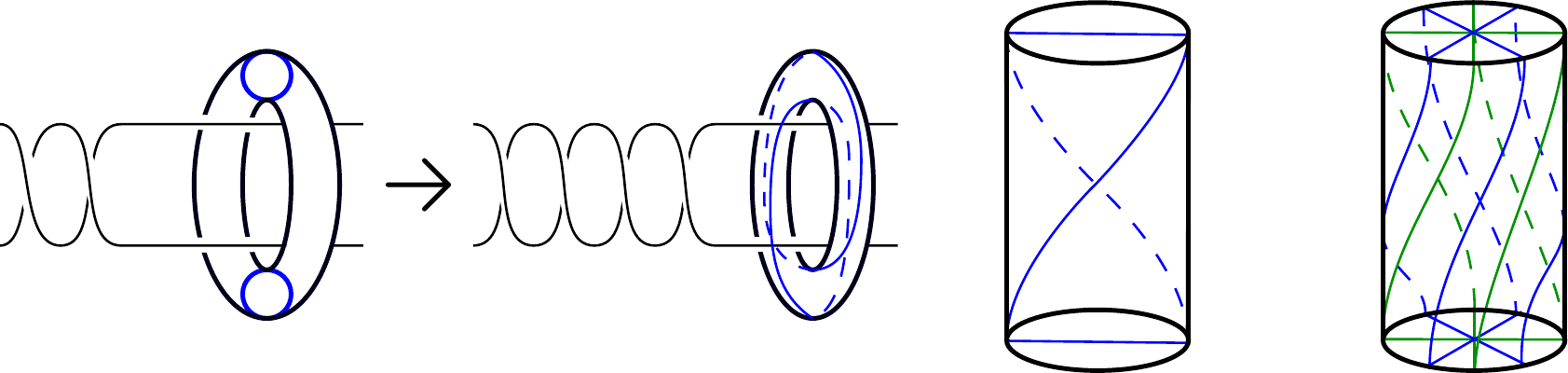

		\caption{\textit{(Left)} $L_2$ with the torus around the twist region. Blue circles mark where the torus punctures the blue surface. \textit{(Center Left)} Twisting around the torus gives us back $L$. In this case, we are only adding in one full twist for simplicity. \textit{(Center Right)} To complete the twisted surface, attach opposite sides of the torus with a band, as represented on top and bottom. Here this amounts to adding an M\"{o}bius band. \textit{(Right)} When we have more twists, things can become more complicated. In addition, we might be adding two annuli instead.}
\end{figure}

Our goal with these surfaces is to prove the following:

\begin{restatable}{thm}{essentialthm}
\label{thm:essential}
Let $f:S_{B,2}\to Y\setminus K$ be the immersion of our blue twisted surface $S_{B,2}$ into our knot complement $Y\setminus K$, where $Y$ has no boundary components. Then this immersion is $\pi_1$-injective and boundary
$\pi_1$-injective provided that $N_{tw}\ge 121$.
\end{restatable}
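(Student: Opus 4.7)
The approach is to argue by contradiction, adapting the strategy of Lackenby and Purcell \cite{ETS} from alternating knots in $S^3$ to the weakly generalized alternating setting on a general projection surface $F$. Assume $f:S_{B,2}\to Y\setminus K$ fails to be $\pi_1$-injective or boundary $\pi_1$-injective; then there exists an essential compressing or boundary compressing disk $D$ for the immersed surface $S_{B,2}$ in $Y\setminus K$. The goal is to extract a combinatorial contradiction from how $D$ meets the projection surface $F$, the red twisted surface $S_{R,2}$, and the crossing tori attached along the augmented twist regions.

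First I would transfer the problem to the augmented link exterior $Y\setminus L_2$. Since $Y\setminus L$ is obtained from $Y\setminus L_2$ by twisting along the crossing tori and $Y\setminus K$ is the meridional Dehn filling of $Y\setminus L$ at the crossing circles, after transverse adjustment and innermost-disk arguments on $D\cap T_i$ (where the $T_i$ are the crossing tori), the disk $D$ yields an essential disk $D'$ in $Y\setminus L_2$ whose boundary lies on $B_2\cup\bigcup_i T_i$. The point of passing to $L_2$ is that $K_2$ remains weakly generalized alternating, a fact to be established in Section 4, so Theorem \ref{thm:wga_main} applies to $Y\setminus K_2$ and, together with a local analysis near the crossing-circle neighborhoods, yields essentiality of the punctured checkerboard surfaces $R_2$ and $B_2$ in $Y\setminus L_2$.

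Next, put $D'$ in general position with respect to $F$ and $R_2$, and consider the planar graphs cut out on $D'$ by $D'\cap F$ and $D'\cap R_2$. Standard innermost-circle and outermost-arc reductions, using essentiality of $F$ in $Y\setminus N(F)$ (the atoroidality hypothesis) and of $R_2$ in $Y\setminus L_2$, remove trivial intersections. What remains is controlled by three ingredients: weak primeness and the weakly twist-reduced condition restrict how arcs may enter and exit bigon and non-bigon regions of $F\setminus \pi(K_2)$; the bounds $r(\pi(K),F)>4$ and $e(\pi(K),F)>2$ rule out essential curves and compression disks of $F$ meeting $\pi(K_2)$ too few times, which is precisely the phenomenon absent in the $F=S^2$ case of \cite{ETS}; and the long bigon chains produced in $F\setminus \pi(K_2)$ by removing pairs of crossings at each augmented twist region provide a rigid combinatorial skeleton along which arcs of $D'\cap F$ must travel. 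A face-and-angle count on $D'$ then yields an inequality that must violate $\chi(D')=1$, provided sufficiently many bigons are present at each augmented twist region.

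The quantitative hypothesis $N_{tw}\ge 121$ enters exactly in this final counting step: it guarantees that every augmented twist region contributes a bigon chain long enough to absorb the loss of Euler characteristic coming from the genus of $F$ and from exceptional local configurations, thereby forcing the face count to close. The principal obstacle, compared to the spherical case of \cite{ETS}, will be the analysis of arcs of $D'\cap F$ that are essential in $F$; such arcs have no analogue when $F=S^2$, and handling them is what should occupy Sections 5--7, relying crucially on $e(\pi(K),F)>2$ and on a careful case-by-case study of how the induced graphs on $F$ interact with the twist structure of $\pi(K_2)$.
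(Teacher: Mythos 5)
Your proposal captures the global shape of the argument — argue by contradiction from a compressing or boundary-compressing disk, study its intersection pattern with surfaces associated to $\pi(K_2)$, use the representativity conditions to supply the disk-bounding properties that are automatic when $F=S^2$, and use $N_{tw}\ge 121$ to close a quantitative count — but several central mechanisms are misidentified, and the proposal as written would not compile into a proof.

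First, the combinatorics is organized around the wrong object. The paper does not primarily study $D'\cap F$; instead it studies the graphs $\Gamma_B=\phi^{-1}(f(S_{B,2}))$, $\Gamma_{BR}$, and $\Gamma_{BRG}$ on the disk, i.e.\ the pullbacks of the blue twisted surface, the red checkerboard surface $R_2$, and the green crossing disks. The ``bigons'' that drive the argument are bigons \emph{of these graphs on the disk $D$} (pairs of adjacent blue edges meeting at two vertices), not bigon regions of $F\setminus\pi(K_2)$. Relatedly, your claim that removing pairs of crossings produces ``long bigon chains'' is backwards: the augmentation \emph{removes} crossings, and the relevant largeness comes from the high valence ($\ge 2n_j$, with $2n_j\ge 120$) of the vertices of $\Gamma_B$, which are where $\phi(D)$ crosses the crossing circles.

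Second, the contradiction is not a single ``face-and-angle count'' on $D'$. It is a collision between a \emph{lower bound} and an \emph{upper bound} on the number of adjacent non-trivial blue bigons in $\Gamma_B$. The lower bound — more than $(R_{tw}/18)-1\ge 5$ adjacent non-trivial bigons (or, in the boundary case, that many adjacent triangles with an edge on $\phi^{-1}(\bndry N(K))$) — is a purely graph-theoretic valence/Euler-characteristic statement (Lemmas~\ref{lem:minbig} and~\ref{lem:mintri}, imported verbatim from~\cite{ETS}) and has nothing to do with the genus of $F$. The upper bound — that $\Gamma_B$ cannot contain five adjacent non-trivial bigons (Proposition~\ref{lem:NoFiveBigons}) — is the genuinely new and hard part. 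It proceeds by showing that non-trivial blue bigons must be crossed by red (and possibly green) edges, reducing to blue-blue-red triangles and then triangle--square pairs, and then ruling out five adjacent triangle--square pairs (Lemma~\ref{lem:fivepairs}). This is where the weakly prime, weakly (blue) twist-reduced, $r(\pi(K),F)>4$, and $e(\pi(K),F)\ge 4$ hypotheses are actually used: repeatedly one forms closed curves on $F$ from edges of $\Gamma_{BR}$ and must show they bound disks in $F$, which in the spherical case of~\cite{ETS} is free. Your description of the ``principal obstacle'' as arcs of $D'\cap F$ essential in $F$ is close in spirit but not what the paper does; it is closed curves built from graph edges, not arcs of $D'\cap F$.

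Third, $N_{tw}\ge 121$ does not ``absorb the loss of Euler characteristic coming from the genus of $F$.'' It forces $R_{tw}\ge 120$, hence more than $5$ adjacent bigons from the lower-bound lemma; the genus of $F$ never enters that count. (The paper even remarks that $\pi_1$-injectivity alone needs only $N_{tw}\ge 91$; the larger threshold is chosen to make the same constant work for Theorem~\ref{thm:homarcs}.) Finally, you state but do not develop the boundary-$\pi_1$-injective case: there the same lower bound also allows the alternative of five adjacent triangles with an edge on $\phi^{-1}(\bndry N(K))$, and a separate (parallel) upper-bound argument rules those out. Without the bigon/triangle/triangle--square analysis of Sections~5--6 — which is the actual content of the proof — the ``face-and-angle count'' you describe is not a proof but a placeholder.
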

Switching the roles of blue and red surfaces Theorem ~\ref{thm:essential} implies that the immersed surfaces  $S_{R ,2}$  are  $\pi_1$-injective and boundary
$\pi_1$-injective, provided that $N_{tw}\ge 121$.

The proof of Theorem  ~\ref{thm:essential}  will be given in Section 6. However in Section 3, we will assume Theorem  ~\ref{thm:essential}, to derive Theorem ~\ref{thm:main}.

\subsection{Cusp Volume and Area}

Let $M$ be a finite-volume hyperbolic manifold with $\bndry M$ a collection of tori. Then the ends of $M$ are of the form $T^2\times[1,\infty)$. As $M$ is hyperbolic, we have some covering map $\rho:\bH^3\to M$. Then if we look at the pre image of these ends, they look like horoballs in $\bH^3$.

For each end, we can choose a horoball representative $H_i$. The image of this representative, $\rho(H_i)=C_i$ is called a \textit{cusp} of $M$. In actuality, we get a family of such cusps as we shrink or expand $H_i$. In our case (a knot complement), there is only a single cusp --- the place where we removed the knot. When we only have a single cusp, we can expand it as much as possible, until it becomes tangent to itself. In $\bH^3$, this is expanding each horoball pre-image until they begin to become tangent to each other. Once we have done this, we get what is called the \textit{maximal cusp} of $M$. In the case of a single cusp, this expansion is unique. We can, of course, generalize this to multi-cusped manifolds, however it is not quite as neat. Depending on the order we expand the cusps, we can get several maximal cusps, and each can have different properties.

Now, once we have our choice of cusp, $C$, we can start analyzing it. As our manifold $M$ has some (hyperbolic) geometric structure to it, $C$ will naturally inherit a geometry from it. And because we know the pre-image of $C$ is a horoball in $\bH^3$, it must inherit a hyperbolic structure and it's boundary must be Euclidean. We can then ask about the geometric properties of the cusp.

\begin{defn}
\label{def:cvol}
The \textit{volume} of a cusp $C$, $\mathrm{vol}(C)$, is the Euclidean volume of $C$. The \textit{area} of a cusp, $\mathrm{area}(\bndry C)$, the Euclidean area of $\bndry C$.
In the case that $M=Y\setminus K$ we will use $CV(K)$ to denote the volume of the maximal cusp of $M$. 
\end{defn}

As we can view the boundary of a cusp as $T^2\times{1}$, by some simple calculus, we can see that $\frac{1}{2}\mathrm{area}(\bndry C) = \mathrm{vol}(C)$. As the volume can be found from the area, we will often focus on just finding a single value, usually the area, and seeing what that gives us.

We have several methods to help us calculate the cusp area. First, if we have a triangulation of $M$, we can find the area by direct computation, as has been implemented in SnapPy ~\cite{SnapPy}. When we work with fully augmented links, where all crossings of a knot have been removed and crossing circles have been added in, we can completely determine the geometry by a circle packing. In this case, we can use the circle packing to find the cusp area ~\cite{LNES}.

\begin{defn}
\label{def:cshape}
Let $\mu$ be the meridian of our knot, and $\lambda$ the shortest longitude. Then the lengths of these curves, $\ell(\mu)$ and $\ell(\lambda)$, determine the similarity class of the Euclidean structure on $\bndry C$. We call this similarity class the \textit{cusp shape}.

Knowing the cusp shape can tell us about the area of the cusp, as we have $\mathrm{Area}(\bndry C) \le \ell(\mu)\ell(\lambda)$. 

\end{defn}

Adams, Colestock, Fowler, Gillam, and Katerman found upper bounds of slope lengths and cusp shapes of a knot based on it's crossing number ~\cite{CSBSS}. Burton and Kalfagianni
found upper bounds coming from essential spanning surfaces of knots ~\cite{GESS}. We need to recall their result ~\cite[Theorem 4.1]{GESS} as we will use it in the next section.

 \begin{thm}[Theorem 4.1 ~\cite{GESS}]
\label{thm:gess}
Let $K$ be a hyperbolic knot with maximal cusp $C$, in an irreducible 3-manifold $Y$. Suppose that $S_1$ and $S_2$ are essential spanning surfaces in $M=Y\setminus K$, and let $i(\bndry S_1, \bndry S_2)\neq 0$ denote the minimal intersection number of $\bndry S_1$ and $\bndry S_2$ in $\bndry C$. Let $\ell(\mu)$ and $\ell(\lambda)$ denote the lengths of the meridian and the shortest longitude of $K$, respectively. Then:
\[\ell(\mu)\le \frac{6|\chi(S_1)|+6|\chi(S_2)|}{i(\bndry S_1, \bndry S_2)} \quad \quad \ell(\lambda)\le 3|\chi(S_1)|+3|\chi(S_2)|,\]
and
\[\mathrm{area}(\bndry C) \le \frac{18(|\chi(S_1)|+|\chi(S_2)|)^2}{i(\bndry S_1, \bndry S_2)}.\]

\end{thm}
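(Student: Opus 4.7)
The plan is to combine pleated-surface techniques with the Euclidean geometry of the maximal cusp torus $\partial C$, in the spirit of Adams--Colestock--Fowler--Gillam--Katerman \cite{CSBSS}. First, I would replace each essential spanning surface $S_i$ by a pleated representative $f_i\colon S_i \to M$ with geodesic boundary on $\partial C$. Since $S_i$ is essential (both $\pi_1$-injective and boundary $\pi_1$-injective), such a pleating exists, preserves $\chi(S_i)$, and by Gauss--Bonnet has hyperbolic area $-2\pi\chi(S_i) = 2\pi|\chi(S_i)|$, the geodesic boundary and the ideal pleating locus contributing no curvature.

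Next, I would analyze $f_i^{-1}(C)$. For a pleated surface with geodesic boundary on a cusp torus, the preimage of $C$ in $S_i$ is a disjoint union of embedded horocyclic annular neighborhoods, one per boundary component. A direct computation in the upper half-space model shows that each such annulus has area equal to the length of its boundary geodesic on $\partial C$. Since the annuli are disjoint in $S_i$, the sum of these boundary lengths is at most the total area; for a connected spanning surface of a knot, whose single boundary component is a longitudinal slope $\alpha_i$ on $\partial C$, this yields
\[\ell(\alpha_i) \leq 2\pi|\chi(S_i)|.\]

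Finally, I would pass to the flat torus $\partial C$. Since $n := i(\partial S_1,\partial S_2) > 0$, the slopes $\alpha_1,\alpha_2$ form a basis of $H_1(\partial C;\mathbb{R})$; and since each $\alpha_i$ is a longitude of $K$, it has $|\alpha_i \cdot \mu| = 1$. Writing $\mu = c_1\alpha_1 + c_2\alpha_2$ and pairing with each $\alpha_j$ yields $|c_1| = |c_2| = 1/n$, so the Euclidean triangle inequality gives
\[\ell(\mu) \leq \frac{\ell(\alpha_1)+\ell(\alpha_2)}{n}.\]
Because each $\alpha_i$ is itself a longitude, $\ell(\lambda) \leq \min\{\ell(\alpha_1),\ell(\alpha_2)\} \leq \tfrac{1}{2}(\ell(\alpha_1)+\ell(\alpha_2))$. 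The area estimate then follows from $\mathrm{area}(\partial C) = \ell(\mu)\ell(\lambda)\sin\theta \leq \ell(\mu)\ell(\lambda)$, combining the two slope-length bounds.

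The hard part will be sharpening the Gauss--Bonnet constants $2\pi$ and $\pi$ appearing above to the advertised $6$ and $3$. This requires a more careful area accounting inside $C$, for example by isotoping the two pleatings so that their cuspidal annuli in $C$ are mutually disjoint and recovering the saved area factor, or by a more direct horocyclic estimate in the cusp. A secondary technical point is handling non-orientable (checkerboard-type) spanning surfaces: one should pleat the orientation double cover and track areas with multiplicity so that Gauss--Bonnet and the embedding of the cuspidal annuli remain valid.
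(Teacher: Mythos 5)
Your overall strategy mirrors the one in the cited source \cite{GESS} (and its antecedents \cite{CSBSS, CAFM}): bound the boundary--slope lengths $\ell(\partial S_i)$ on the maximal cusp in terms of $|\chi(S_i)|$ via pleated surfaces, then do elementary Euclidean geometry on the flat torus $\partial C$. The flat-torus half of your argument is correct and is essentially the argument in the source: writing $\mu=c_1\alpha_1+c_2\alpha_2$ with $|c_i|=1/n$ via the intersection pairing, applying the triangle inequality, using $\ell(\lambda)\le\min_i\ell(\alpha_i)\le\tfrac12(\ell(\alpha_1)+\ell(\alpha_2))$, and bounding $\mathrm{area}(\partial C)\le\ell(\mu)\ell(\lambda)$. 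The orientability caveat you raise at the end (pleating the orientation double cover and tracking multiplicities) is also the right thing to say.

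The gap you flag, however, is genuine and is not a matter of bookkeeping. Two points. First, your setup is internally inconsistent: to have $\mathrm{Area}(S_i)=2\pi|\chi(S_i)|$ from Gauss--Bonnet you need a complete pleated surface with ideal (spun) ends, whereas your cusp-annulus computation treats $\partial S_i$ as a compact boundary curve lying on the horotorus $\partial C$. In the latter picture $\partial S_i$ is a horocycle-type curve and has nonzero geodesic curvature in the induced metric on $S_i$, so Gauss--Bonnet acquires a boundary term and the area is \emph{not} $2\pi|\chi(S_i)|$; in the former picture you must still justify that the preimage of $C$ near the cusp of the spun surface is an embedded horocyclic neighborhood whose boundary horocycle has length at least $\ell(\partial S_i)$, which requires an argument (the horocycle can a priori fail to be embedded). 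Second, and more importantly, even granting all of that, the pure area comparison yields only $\ell(\partial S_i)\le 2\pi|\chi(S_i)|$, which is \emph{weaker} than the needed $6|\chi(S_i)|$ since $6<2\pi$; your bound would give constants $2\pi,\ \pi,\ 2\pi^2\approx 19.7$ rather than $6,\ 3,\ 18$. Neither of your proposed fixes plausibly improves $2\pi$ to $6$: making the cuspidal collars of $S_1$ and $S_2$ disjoint inside $C$ cannot change the intrinsic area of either $S_i$ (and is in any case obstructed by the $n$ intersection points of $\partial S_1$ with $\partial S_2$ on $\partial C$). The sharper constant $6$ in \cite{GESS} is imported from a Thurston/Agol-style combinatorial refinement (as in \cite{CAFM} and the proof of the $6$-theorem): triangulate the spun pleated $S_i$ by $2|\chi(S_i)|$ ideal triangles, observe that this produces $6|\chi(S_i)|$ ideal corners at the cusp of $S_i$, and use the disjointness of the lifted horoballs of the maximal cusp $C$ to bound the horocyclic arc at each corner that lies in $C$ by a universal constant $1$; summing gives a curve on $S_i$ lying in $C$, in the free homotopy class of $\partial S_i$, of length at most $6|\chi(S_i)|$. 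That corner-counting step is the key idea your proposal is missing, and it is a genuinely different mechanism from the area comparison you use.
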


Note that the authors in ~\cite{CSBSS} state their result for links in $S^3$. However, as noted in
While the original theorem of Burton and Kalfagianni is stated in $S^3$, the results work for any manifold $Y$, as long as our knot is hyperbolic.

When looking at weakly generalized alternating knots, we can look at the two checkerboard surfaces, obtained by ``checkerboard coloring'' $F\setminus \pi(K)$, and attaching same-colored adjacent pieces by a twisted band at crossings.  In the case of the weakly generalized alternating knots we are studying, these surfaces are essential ~\cite[Theorem 1.1]{WGA}. Furthermore, with the proof that our twisted surfaces are essential in Theorem ~\ref{thm:essential}, we get another upper
bound based on $\tw_{\pi}(K)$ and $\chi(F)$, which we state and prove in Theorem ~\ref{thm:main} at the end of the next section.

\section{Cusp Volumes of Weakly Generalized Alternating Knots}

Now, assuming Theorem ~\ref{thm:essential}, which shows twisted surfaces are essential, we will prove Theorem ~\ref{thm:main}, and get a lower bound for the cusp area based on the twist number of our knot and $\chi(F)$. For the lower bound we will the approach of Lackenby and Purcell ~\cite{CVAK} to our setting. We note that some of the results we  reference from  ~\cite{CVAK}, and used in there for knot complements in $S^3$,  apply immediately to knot complements in any closed $Y$.
For the convenience of the reader we will restate them and we will review them and their proofs briefly below. 

 As a reminder of our goal, we'll restate Theorem ~\ref{thm:main} here, but will postpone it's proof until the end of this section:

\mainclaim*

Before we start the proofs, there is an important definition that we will use throughout. Most geodesics in a hyperbolic manifold are bi-infinite, but we will want to be able to talk about their lengths. As such, we use the following definition to get around this complication:

\begin{defn}
\label{def:geolength}
Let $\gamma$ be a (bi-)infinite geodesic in a hyperbolic surface $R$, with both ends in a horoball neighborhood of the cusps of $R$, $H$. Then $\gamma\cap H$ is a collection of closed intervals (if $\gamma$ enters and exits $H$) along with two helf-open intervals (the ends). The \textit{length of $\gamma$ with respect to $H$} is the hyperbolic length of $\gamma$ minus the two half-open intervals.

If $\alpha$ is an arc with boundary on $\bndry M$, we first find a geodesic $\gamma$ such that $\alpha$ is homotopic to $\gamma$ in $M$, and define the length of $\alpha$ with respect to $H$ to be the length of $\gamma$ with respect to $H$.
\end{defn}

The next result concerns an estimate of geodesic arcs on hyperbolic surfaces. We will apply the result specifically to our twisted surfaces, but it works in more general settings.

\begin{thm}[Theorem 2.7 ~\cite{CVAK}]
\label{cvak:2.7}
Let $S$ be a (possibly disconnected) finite-area hyperbolic surface. Let $H_S$ be an embedded horoball neighborhood of the cusps of $S$. Let $k = \mathrm{Area}(H_S)/\mathrm{Area}(S)$ and let $d>0$. Then there is a collection of at least
\[\frac{(ke^d-1)\pi}{(e^d-1)(\sinh(d)+2\pi)} \left|\chi(S)\right|\]
embedded disjoint bi-infinite geodesic arcs, each with both ends in $H_S$, and each having length at most $2d$ with respect to $H_S$.
\end{thm}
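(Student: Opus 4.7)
My plan is to carry out a horoball-expansion and area-counting argument on $S$. Let $H_d \subset S$ denote the neighborhood obtained by pushing $\partial H_S$ outward a hyperbolic distance $d$ along the normal direction. In the universal cover $\widetilde S \subset \mathbb{H}^2$, each horoball $B$ in the preimage of $H_S$ is replaced by the concentric horoball $B'$ whose boundary lies distance $d$ outside $\partial B$; a direct upper-half-plane computation, using the area element $dx\,dy/y^2$, shows that $\mathrm{Area}(B'\cap D) = e^{d}\,\mathrm{Area}(B\cap D)$ for any fundamental domain $D$ of the corresponding cusp. Summing over cusps, the total area of $H_d$ counted with multiplicity is $e^{d}\,\mathrm{Area}(H_S) = k\,e^{d}\,\mathrm{Area}(S)$.

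Since $H_d \subset S$ has embedded area at most $\mathrm{Area}(S)$, the ``excess area'' (area counted with multiplicity minus embedded area) is at least $(ke^{d}-1)\,\mathrm{Area}(S)$, provided $ke^d > 1$. Any point of this excess lifts to a point of $\mathbb{H}^2$ lying in two distinct horoballs $B_1', B_2'$ in the preimage of $H_d$; the common perpendicular between the original horoballs $B_1, B_2$ is then a geodesic segment of length strictly less than $2d$, and projecting it (extended into the two cusps) gives a bi-infinite geodesic arc $\gamma$ in $S$ with both ends in $H_S$ and length at most $2d$ with respect to $H_S$. Each homotopy class of such arc corresponds to a single orbit of intersecting pairs $(B_1, B_2)$ under the covering group. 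I would then show that the total overlap area contributed by one such orbit is at most $2(e^{d}-1)(\sinh d + 2\pi)$, with the factor $2$ reflecting the two lens-shaped overlaps at the two endpoints of $\gamma$. Dividing the excess area $(ke^{d}-1)\mathrm{Area}(S)$ by this per-arc bound and invoking Gauss--Bonnet $\mathrm{Area}(S) = 2\pi|\chi(S)|$ yields exactly the stated count.

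The main obstacle is the per-arc area estimate: one must carefully bound, by an explicit upper-half-plane calculation, the area of the lens formed by two horoballs after each is expanded by distance $d$, and then verify that summation over the covering-group orbit of a fixed arc class produces the factor $(e^{d}-1)(\sinh d + 2\pi)$. A secondary technical point is that the argument above enumerates homotopy classes, while the theorem requires \emph{embedded} and pairwise \emph{disjoint} arcs. I would address this by taking the length-minimizing geodesic representative in each class and arguing that any transverse intersection of two such short arcs could be resolved by a surgery-and-straightening move producing a strictly shorter arc in one of the two classes, contradicting length minimality.
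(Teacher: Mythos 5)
The paper you are working from does not actually prove this statement; it quotes Theorem~2.7 of Lackenby--Purcell~\cite{CVAK} verbatim and uses it as a black box, so there is no in-paper argument to compare against. Your expansion-and-overlap strategy is a sensible way to attack such a result and is broadly the kind of argument~\cite{CVAK} uses, but there are two genuine gaps. First, the per-arc bound. When two horoballs $B_1, B_2$ in the preimage of $H_S$ are each expanded by distance $d$, they overlap in a \emph{single} lens $B_1' \cap B_2'$ sitting on the common perpendicular between $B_1$ and $B_2$, not in ``two lens-shaped overlaps at the two endpoints of $\gamma$'' as you describe: the arc is the common perpendicular and the lens is in the middle of it, once per orbit of pairs. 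Since the heuristic that produces your factor $2(e^d-1)(\sinh d + 2\pi)$ rests on the wrong picture, the key estimate is unsupported. For the record, if the two horoballs are at distance $2\delta < 2d$, the lens area comes out to $4\bigl(\sqrt{e^{2(d-\delta)}-1} - \arctan\sqrt{e^{2(d-\delta)}-1}\bigr)$, and one can check (comparing derivatives from $0$) that this is always strictly less than $2(e^d-1)(\sinh d + 2\pi)$, so the count you want \emph{does} follow --- but you have to carry out this computation, and it does not produce the stated constant in a natural way, which suggests the source argues somewhat differently.

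Second, and more seriously, the embeddedness and disjointness step fails as written. If two geodesic arcs $\gamma_1, \gamma_2$ perpendicular to the horocycles meet transversally at a point $p$, cutting and reconnecting at $p$ produces two \emph{new} arcs in \emph{new} relative homotopy classes whose lengths merely sum to $|\gamma_1|+|\gamma_2|$; neither is a competitor in the homotopy class of $\gamma_1$ or of $\gamma_2$, so there is nothing to contradict length-minimality of the geodesic representatives. Short orthogeodesic arcs into the cusps of a hyperbolic surface genuinely can cross one another and can even self-intersect, so passing from your count of homotopy classes to a collection that is embedded and pairwise disjoint requires a real argument --- for instance a collar-width estimate for short orthogeodesics, or a counting scheme that builds a disjoint family directly (e.g.\ by a maximality-and-cutting induction) rather than by first enumerating homotopy classes and then trying to separate the representatives. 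This is precisely where the particular constant in the theorem is negotiated, and it is the part your sketch leaves unresolved.
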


The next lemma that holds without change deals with estimating the cusp area of a manifold, based on essential arcs of bounded length:

\begin{lem}
\label{cvak:2.8}
Suppose that a one-cusped hyperbolic 3-manifold $M$ contains at least $p$ homotopically distinct essential arcs, each with length at most $L$ measured with respect to the maximal cusp $H$ of $M$. Then the cusp area $\mathrm{Area}(\bndry H)$ is at least $p\sqrt{3}e^{-2L}$.
\end{lem}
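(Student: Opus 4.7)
The plan is to pass to the universal cover $\bH^3$ and run a horoball packing argument. Lift the maximal cusp $H$ to its $\pi_1(M)$-invariant preimage, a disjoint union of horoballs in $\bH^3$. Fix one such lift $\tilde H_0$ and work in the upper half-space model with $\tilde H_0 = \{y \ge 1\}$, so that the parabolic stabilizer of $\tilde H_0$ is a rank-two lattice acting on the horosphere $\bndry \tilde H_0$ by Euclidean translations with fundamental domain of area $\mathrm{Area}(\bndry H)$.

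For each of the $p$ arcs $\alpha$, I would take its geodesic representative and lift it so that one chosen endpoint lies on $\bndry \tilde H_0$; its other end then lies on some other horoball $\tilde H_\alpha$. Doing this once for each of the two endpoints of each arc produces $2p$ target horoballs, and using the homotopic distinctness of the arcs together with the uniqueness of the common perpendicular between two disjoint horoballs in $\bH^3$, these yield $2p$ distinct orbits under the parabolic stabilizer of $\tilde H_0$.

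The key geometric input is the standard horoball height estimate: the common-perpendicular distance from $\bndry \tilde H_0$ to $\bndry \tilde H_\alpha$ equals $-\log h_\alpha$ in our model, where $h_\alpha$ is the Euclidean diameter of $\tilde H_\alpha$. Since the arc has length at most $L$ with respect to $H$, this distance is at most $L$, giving $h_\alpha \ge e^{-L}$. Vertical projection of $\tilde H_\alpha$ onto $\bndry \tilde H_0$ then yields a Euclidean disk of diameter $h_\alpha$ and area at least $\pi e^{-2L}/4$; because the horoballs are pairwise disjoint, so are these shadow disks.

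Finally, I would invoke Thue's bound that any collection of pairwise disjoint disks in the Euclidean plane has density at most $\pi/(2\sqrt{3})$. Applied within a fundamental domain for the parabolic stabilizer of $\tilde H_0$, the $2p$ distinct orbits of shadow disks contribute total area at least $2p\cdot \pi e^{-2L}/4$ per fundamental domain, so
$$\mathrm{Area}(\bndry H) \ge \frac{2p \cdot \pi e^{-2L}/4}{\pi/(2\sqrt{3})} = p\sqrt{3}\, e^{-2L},$$
as required. The step most likely to require care is the orbit-counting in the second paragraph: verifying that the $2p$ targets descend to $2p$ distinct orbits is precisely where homotopic distinctness of the $p$ arcs is essential, via the uniqueness of the common perpendicular.
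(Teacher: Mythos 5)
Your proposal matches the paper's sketch and the original argument in ~\cite{CVAK}: lift the maximal cusp to a horoball packing in the upper half-space model with $\tilde H_0 = \{y\ge 1\}$, lift each arc's geodesic representative so one end lies on $\tilde H_0$, bound the Euclidean diameter $h$ of each target horoball below by $e^{-L}$ via the length-with-respect-to-$H$ estimate, project to shadow disks on $\bndry \tilde H_0$, and apply the $\pi/(2\sqrt{3})$ disk-packing density per fundamental domain. This is the same route the paper takes.

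One step as you wrote it is not quite right and needs the standard repair. Disjointness of two horoballs $\tilde H_\alpha,\tilde H_\beta$ with Euclidean diameters $h_\alpha,h_\beta$ and base points $p_\alpha,p_\beta$ gives only $|p_\alpha - p_\beta| \ge \sqrt{h_\alpha h_\beta}$, which by AM--GM is \emph{weaker} than the inequality $|p_\alpha - p_\beta| \ge (h_\alpha+h_\beta)/2$ needed for the full shadow disks of diameters $h_\alpha,h_\beta$ to be disjoint, so the sentence ``because the horoballs are pairwise disjoint, so are these shadow disks'' can fail (take $h_\alpha$ close to $1$ and $h_\beta$ close to $e^{-L}$). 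Separately, Thue's $\pi/(2\sqrt{3})$ bound is a statement about disks of \emph{equal} radius; disks of varying radii can be packed more densely. Both problems vanish at once if you replace each shadow by the concentric disk of uniform radius $e^{-L}/2$: since $h_\alpha,h_\beta\ge e^{-L}$ you get $|p_\alpha-p_\beta|\ge\sqrt{h_\alpha h_\beta}\ge e^{-L}$, so these equal disks are pairwise disjoint, each still has area $\pi e^{-2L}/4$, and Thue applies to give exactly your displayed inequality. The paper's sketch shares this imprecision, so the repair is worth making explicit. With that in place, the only remaining delicate step is the one you correctly flag at the end---verifying the $2p$ distinct orbits---which, beyond homotopic distinctness and uniqueness of the common perpendicular, also uses that the deck group acts freely on $\bH^3$ to rule out the two ends of a single arc descending to the same orbit of the parabolic stabilizer.
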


\begin{sproof} The proof is similar to that of Lemma 2.8 in ~\cite{CVAK}:
Each arc in our collection has a geodesic representative in $M$, which then lifts to two geodesics in the universal cover $\bH^3$. Look at the ``shadows'' of these geodesics on the lift of the maximal cusp $\bndry H$ (that is, a neighborhood of where the geodesic intersects $\bndry H$). For the maximal cusp to be embedded, each of these neighborhoods must be disjoint, and, because the length is at most $L$, the diameter of this neighborhood must be at least $e^{-L}$. This means that the total area of the disks must be $p \pi e^{-2L}/2$. Finally, combining with a disk packing argument, we get the area of the cusp must be at least $p\sqrt{3}e^{-2L}$.
\end{sproof}

While these two lemmas suggest that we should use them one after the other, we must be careful. The cusp in Theorem ~\ref{cvak:2.7} is a two-dimensional cusp, while the cusp in Lemma ~\ref{cvak:2.8} is three-dimensional. In addition, the geodesic arcs of Theorem ~\ref{cvak:2.7} live on a surface, while the essential arcs of Lemma ~\ref{cvak:2.8} are in the whole manifold. To get from one to the other, we will need the following theorem, the proof of which is postponed till Section 7. The proof occupies subsection 7.2.
\begin{restatable}{thm}{homotopicarcs}
\label{thm:homarcs}
Let $S$ be the disjoint union of our twisted surfaces $S_{B,2} \sqcup S_{R,2}$. Suppose that two distinct essential arcs in the surface $S_{B,2}$ have homotopic images in $Y\setminus K$, but not $S_{B,2}$. Then the two arcs are homotopic in $S_{B,2}$ into the same subsurface associated with some twist region of $K_2$.
\end{restatable}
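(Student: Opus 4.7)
The plan is to obstruct a homotopy between two essential, non-homotopic arcs on $S_{B,2}$ by showing that the only mechanism that permits such a homotopy in a pair of $\pi_1$-injective spanning surfaces is a detour through a solid-torus twist region that got Dehn-filled in passing from $L_2$ back to $K_2$. Let $\alpha_1,\alpha_2 \subset S_{B,2}$ be the two essential arcs, and realize the hypothesized homotopy as a map $h : R \to Y\setminus K$ of a rectangle $R=I\times I$, with $h(\{0\}\times I)=\alpha_1$, $h(\{1\}\times I)=\alpha_2$, and $h(I\times\partial I)\subset \partial N(K)$. Choose $h$ to minimize intersections with the red twisted surface $S_{R,2}$ in its homotopy class rel boundary, and put it in general position with $S_{R,2}$.

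The first main step is to simplify $h^{-1}(S_{R,2})\subset R$. By Theorem \ref{thm:essential} applied to $S_{R,2}$, the red surface is $\pi_1$- and $\partial\pi_1$-injective, so an innermost-disk argument eliminates all simple closed curve components of the preimage, and an outermost-arc exchange removes any arc component with both endpoints on the same edge of $\partial R$. What remains is a disjoint collection of properly embedded arcs in $R$, each running between the top and bottom edges (on $\partial N(K)$) or between the two side edges (which lie in $S_{B,2}$), or between one of each.

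The second and central step is to analyze the sub-rectangles that these arcs cut $R$ into. Each sub-rectangle maps into $Y\setminus K$ with boundary on $S_{B,2}\cup S_{R,2}\cup \partial N(K)$, and the intersection pattern of $S_{B,2}$ with $S_{R,2}$ near the knot and near each Dehn-filled crossing circle is locally standard. This is exactly the planar-graph set-up the outline advertises for Sections 5--7: each sub-rectangle becomes a face in a combinatorial disk diagram, and one classifies faces according to whether they record (i) a genuine intersection of the two colored surfaces away from the crossing circles or (ii) a detour through the annulus/M\"obius band that was glued into $S_{B,2}$ around some crossing circle. The high-representativity hypotheses $e(\pi(K),F)>2$ and $r(\pi(K),F)>4$, together with weak twist-reducedness and the atoroidality of $Y\setminus N(F)$, are used here to rule out faces that would otherwise permit essential curves on $F$ to bound disks in $Y$ or produce essential tori. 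The same representativity conclusions pass to the augmented diagram $K_2$ by Section~4, so the standard face classification from \cite{ETS} applies.

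The final step is to combine the face analysis: faces of type (i) can be eliminated by a further homotopy using essentiality of both $S_{B,2}$ and $S_{R,2}$, while faces of type (ii) are by construction supported in a neighborhood of a single crossing circle. A connectivity argument across $R$ then forces all surviving faces to be associated with the \emph{same} crossing circle, for otherwise one would assemble a non-trivial loop in $S_{B,2}$ or an essential annulus into $S_{R,2}$ contradicting Theorem \ref{thm:essential}. Since the twist-region subsurface of $S_{B,2}$ at that crossing circle is exactly the glued-in annulus or M\"obius band, one reads off a homotopy of $\alpha_1$ on $S_{B,2}$ into that subsurface, and symmetrically for $\alpha_2$. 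The main obstacle I expect is the second step: translating the arc pattern on $R$ faithfully into the planar graph framework of Sections 5--7 and then checking that the representativity inequalities are sharp enough to exclude the pathological faces that can appear only because $F$ has positive genus, rather than being $S^2$ as in \cite{ETS}.
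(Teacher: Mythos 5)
Your overall setup --- realizing the homotopy as a map of a rectangle $R$ with two sides on $S_{B,2}$ and two on $\partial N(K)$, then analyzing preimages of the twisted surfaces as a planar diagram --- does match the paper's Lemma~\ref{lem:ETS7.2}. But the route you take from there misidentifies the mechanism that produces the twist-region conclusion, and this misidentification is not a cosmetic difference: it would make the final step fail.

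You describe the surviving faces of your sub-rectangle decomposition as ``detours through the annulus/M\"obius band that was glued into $S_{B,2}$ around some crossing circle,'' and your last paragraph says the twist-region subsurface is ``exactly the glued-in annulus or M\"obius band.'' The paper's proof does the opposite: Lemma~\ref{lem:ETS7.6} (the analogue of \cite[Lemma 7.6]{ETS}) shows, using the full bigon/triangle machinery of Sections 5--6 together with the hypothesis $N_{tw}\geq 121$, that $\phi(D)$ meets \emph{no} crossing circles at all, i.e.\ $\Gamma_B$ has no blue vertices. This is the step that actually requires most of the paper's combinatorial work, and your Step~1 --- an innermost-disk/outermost-arc cleanup against $S_{R,2}$ using essentiality --- does not address it, since the obstruction is interior intersection of the disk with the immersed \emph{blue} surface $S_{B,2}$ near its immersed (crossing-circle) locus. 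Nothing in your write-up eliminates those interior blue arcs and vertices; they are not simple-closed-curve preimages of $S_{R,2}$ and cannot be removed by the essentiality of the red surface.

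Once blue vertices are ruled out, the twist-region conclusion is reached by an entirely different argument: $\Gamma_{BR}$ consists of parallel arcs running between opposite edges of the rectangle (Lemma~\ref{lem:ETS7.8}), the resulting blue--red--blue--red sub-rectangles induce disks on $F$ meeting exactly two crossings, and it is \emph{weak (blue) twist-reducedness}, amplified by the chunk decomposition argument in Lemma~\ref{lem:ETS7.12}, that forces those two crossings into a common twist region of $K_2$. Your proposal invokes twist-reducedness only to ``rule out faces,'' not as the positive engine that identifies the target subsurface, and your closing ``connectivity argument'' that all faces are associated to a single crossing circle has no counterpart in the paper precisely because the paper shows that no face meets a crossing circle. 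As written, your plan would require a non-existent face classification, and it omits the Lemma~\ref{lem:ETS7.6}-style estimate that is the real content of the theorem.
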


Here, a \textit{subsurface associated to a twist region} is the intersection of one of the checkerboard surfaces with a regular neighborhood of the twist region.  See Figure
~\ref{subsurface}.

\begin{figure}
	\centering
		\def\svgwidth{.4\columnwidth}
		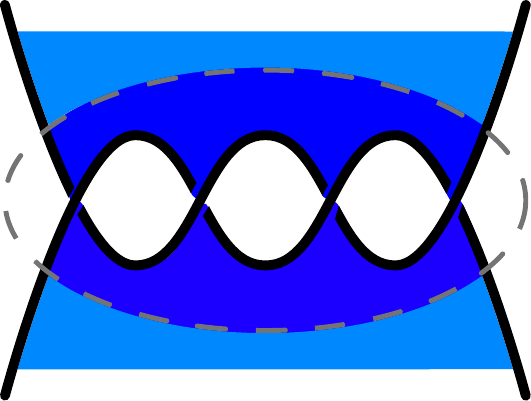

		\caption{The subsurface associated to the twist region. When it comes from checkerboard surfaces, the subsurface can be colored red and blue, with one component (blue in the picture) being two disks attached with M\"{o}bius bands at each crossing, and the other being a twisted disk.}
\label{subsurface}
\end{figure}

 As a corollary to Theorem ~\ref{thm:homarcs} we get the following:

\begin{cor}
\label{cvak:3.4}
Suppose $N\ge 121$. Let $\mathcal{C}$ be a collection of disjoint embedded essential non-parallel arcs in $S_{B,2}\sqcup S_{R,2}$ which are all homotopic in $Y\setminus K$. Then the number of arcs in $\mathcal{C}$ is at most $2(3N-2)$.
\end{cor}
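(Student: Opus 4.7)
The plan is to apply Theorem~\ref{thm:homarcs} to confine the arcs of $\mathcal{C}$ to a bounded-complexity piece of each of the two twisted surfaces, and then to invoke the standard linear bound on the number of disjoint non-parallel essential arcs in a compact surface.

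First, I would split $\mathcal{C}=\mathcal{C}_B\sqcup \mathcal{C}_R$ according to whether each arc lies on $S_{B,2}$ or on $S_{R,2}$. By hypothesis, any two arcs in $\mathcal{C}_B$ are disjoint and non-parallel in $S_{B,2}$ but homotopic in $Y\setminus K$, so Theorem~\ref{thm:homarcs} applies to each pair and places both arcs in a common twist-region subsurface of $S_{B,2}$. A transitivity argument, using the fact that distinct twist-region subsurfaces are essentially disjoint and that an arc has a well-defined homotopy class in $S_{B,2}$, then lets me homotope all arcs of $\mathcal{C}_B$ into a single subsurface $\Sigma_B\subset S_{B,2}$ associated with one twist region of $K_2$. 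The symmetric version of Theorem~\ref{thm:homarcs} (obtained by interchanging the roles of blue and red) does the same job for $\mathcal{C}_R$, producing a subsurface $\Sigma_R\subset S_{R,2}$. After these homotopies, each of $\mathcal{C}_B$ and $\mathcal{C}_R$ is a system of disjoint, embedded, pairwise non-parallel essential arcs sitting inside $\Sigma_B$ or $\Sigma_R$ respectively.

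Second, I would bound the size of such an arc system by analyzing the topology of a twist-region subsurface. From the construction of the twisted surfaces in Section~2.3, the subsurface $\Sigma_B$ is built from one or two disks of the $K_2$-checkerboard together with the bands, either one annulus or two M\"obius bands according to the parity of $n$, that encode the $n$ twists restored when passing from $L_2$ back to $L$. In either parity the Euler characteristic of $\Sigma_B$ is linear in the number of crossings of the corresponding twist region of $K$, which is itself at least $N$ in the augmented case and at most $N-1$ in the unaugmented case. Combining this explicit description with the classical inequality $\#\{\text{arcs}\}\le -3\chi(\Sigma)$ for disjoint non-isotopic essential arcs in a compact surface $\Sigma$ yields $|\mathcal{C}_B|\le 3N-2$ and similarly $|\mathcal{C}_R|\le 3N-2$. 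Summing these two inequalities proves $|\mathcal{C}|\le 2(3N-2)$.

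The main obstacle is the second step: the precise bookkeeping of $\chi(\Sigma_B)$ in both the annular and the M\"obius-band cases, together with checking that an arc homotoped into $\Sigma_B$ remains essential and pairwise non-parallel in $\Sigma_B$, and not merely in the ambient surface $S_{B,2}$. A smaller but real subtlety appears in the transitivity step, where one must use the disjointness of distinct twist-region subsurfaces, plus the essentiality of the arcs, to rule out a pathological configuration in which two different pairs of arcs are assigned to two different twist regions.
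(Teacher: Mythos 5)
Your approach follows the paper's outline: use Theorem~\ref{thm:homarcs} (and its blue/red symmetric version) to confine the arcs of $\mathcal{C}_B$ and $\mathcal{C}_R$ each to a common twist-region subsurface, bound the arc systems there, and add the two contributions. The paper cites the arc bound ``at most $3c-2$'' directly, with $c$ the number of crossings of the relevant twist region of $K_2$, so $c\le N$; you propose instead to re-derive it from the standard inequality bounding the number of disjoint non-isotopic essential arcs in a compact surface $\Sigma$ by a linear function of $-\chi(\Sigma)$.

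The Euler-characteristic step contains a concrete gap. You claim that $\chi(\Sigma_B)$ is linear in the number of crossings of the corresponding twist region of $K$, a number which in the augmented case is at least $N$ and can be arbitrarily large. If that were so, $-3\chi(\Sigma_B)$ would be unbounded and the conclusion $|\mathcal{C}_B|\le 3N-2$ would fail exactly at the highly twisted regions, which are the ones we most need to control. The claim is also inconsistent with your own description of $\Sigma_B$: near an augmented twist region it consists of the $K_2$-checkerboard pieces, which carry only one or two crossings, together with a single annulus or two M\"{o}bius bands, and since both the annulus and each M\"{o}bius band have Euler characteristic zero, $\chi(\Sigma_B)$ is bounded independently of the number $n$ of restored twists. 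The quantity that actually governs the arc count is the crossing number of the twist region \emph{in $K_2$} (at most $2$ at an augmented region and at most $N-1$ otherwise), which is precisely what the paper's $c\le N$ records; you would also need to treat the unaugmented subsurfaces, which your description skips entirely. Replacing the $K$-crossing count by the $K_2$-crossing count and covering both the augmented and unaugmented regions would bring your argument in line with the paper's. The transitivity step you flag --- passing from the pairwise conclusion of Theorem~\ref{thm:homarcs} to a single common subsurface for all of $\mathcal{C}_B$ --- is also taken for granted in the paper's proof and does deserve a sentence, but it is not the main obstruction.
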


\begin{proof}
By Theorem ~\ref{thm:homarcs}, all such arcs in $\mathcal{C}$ that lie in $S_{B,2}$ can be homotoped in $S_{B,2}$ to lie in a subsurface associated with a twist region of $K_2$. This homotopy keeps theses arcs disjoint, essential, and non-parallel. Because of this, we can use the fact that the number of disjoint, essential, and non-parallel arcs in a subsurface is at most $3c-2$, where $c$ is the number of crossings in the twist region associated to that subsurface. Then, as we are working in $K_2$, $c\le N$, so there are at most $3N-2$ such arcs in the blue twisted surface. By switching out blue for red, we get that there are also at most $3N-2$ such arcs in $S_{R,2}$. Adding these together, we get that there are at most $2(3N-2)$ such arcs total.
\end{proof}

From Corollary ~\ref{cvak:3.4} we can then limit how many disjoint arcs we get from Theorem ~\ref{cvak:2.7}. 

In addition to these, ~\cite[Proposition 3.5]{CVAK} gives us an embedded cusp in our twisted surfaces with a lower bound on area, which we will use with Theorem ~\ref{cvak:2.7}:

\begin{lem}[Proposition 3.5 ~\cite{CVAK}]
\label{cvak:3.5}
If $S$ is the disjoint union of our twisted surfaces with an immersion $f:S\to Y\setminus K$, and $H$ is the maximal cusp for $Y\setminus K$, then there is an embedded cusp $H_S$ for $S$ continued in $f^{-1}(H)$ with area at least $2^{5/4}\tw_\pi(K)$, as long as $K$ is neither the figure-eight knot or the $5_2$ knot.
\end{lem}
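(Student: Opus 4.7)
The plan is to adapt the proof of Proposition 3.5 of Lackenby--Purcell ~\cite{CVAK} to the weakly generalized setting, using Theorem ~\ref{thm:essential} to supply the essentialness input that ~\cite{CVAK} derives in the $S^3$ case. First I would observe that, because $f : S \to Y \setminus K$ is $\pi_1$-injective and boundary $\pi_1$-injective (Theorem ~\ref{thm:essential}, using $N_{tw} \geq 121$), the preimage $f^{-1}(H)$ splits into embedded horocyclic neighborhoods, one for each cusp of $S$. The task then reduces to producing horocyclic neighborhoods of the cusps of $S$ that are pairwise disjoint inside $f^{-1}(H)$, with total $2$-dimensional area at least $2^{5/4}\tw_\pi(K)$.

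Next I would count cusps and apply the local Lackenby--Purcell area estimate. Each twist region of $K$ contributes at least two meridional cusps to $S = S_{B,2} \sqcup S_{R,2}$: one arising from the blue twisted surface and one from the red, via the pattern in which these surfaces meet the knot near the augmented twist region. Cases split on whether the twist region had $\ge N_{tw}$ crossings (so that the cusps come from the meridional arcs produced by filling in the crossing-circle tori with bands) or fewer (so that the cusps come directly from the checkerboard structure on $K_2$), but in either case two cusps per twist region appear. This gives at least $2\tw_\pi(K)$ cusps of $S$. For each such cusp, the explicit picture of a twist region in ~\cite{CVAK} yields an embedded horocyclic neighborhood in $f^{-1}(H)$ of area at least $2^{1/4}$, and summing produces $\mathrm{area}(H_S) \geq 2 \cdot 2^{1/4} \cdot \tw_\pi(K) = 2^{5/4}\tw_\pi(K)$. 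Because this local estimate only concerns the shape of horoballs in $\mathbb{H}^3$ near a meridional cusp of a twisted checkerboard surface, it is insensitive to the topology of $Y$ or to the genus of $F$ and transfers verbatim.

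The main obstacle is checking that in our generalized setting the horocyclic neighborhoods of distinct cusps of $S$ really are disjoint inside $f^{-1}(H)$. Without boundary $\pi_1$-injectivity of $f$, two cusps of $S$ could map to the same horoball component in the universal cover of $H$, collapsing the area sum; this is the single place where the $S^3$-specific argument of ~\cite{CVAK} has to be replaced, and Theorem ~\ref{thm:essential} is designed precisely to do so. The exceptions for the figure-eight knot and the $5_2$ knot are the familiar small-volume cases inherited from ~\cite{CVAK}; under the hypotheses $e(\pi(K),F) > 2$ and $r(\pi(K),F) > 4$ together with $N_{tw} \geq 121$, these knots, together with their standard alternating diagrams, are automatically excluded from our theorem, so the exceptions do not impose further conditions beyond those already present.
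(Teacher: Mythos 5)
Your proposal correctly identifies the two structural inputs — the need for Theorem~\ref{thm:essential} to replace the $S^3$-specific essentialness argument, and the role of the waist-size exceptions (figure-eight and $5_2$) — but the core of your estimate is not the argument the paper uses, and the key counting step in it does not hold up. You claim that ``each twist region of $K$ contributes at least two meridional cusps to $S$,'' giving $2\tw_\pi(K)$ cusps of area $\ge 2^{1/4}$ each. That count is incorrect: the cusps of the pleated surface $S = S_{B,2}\sqcup S_{R,2}$ correspond to boundary components of the surfaces (running along $\partial N(K)$), of which there are only a bounded number independent of $\tw_\pi(K)$. The bands and M\"obius strips filling in the crossing-circle punctures lie in the interior of the surface and do not create extra ends; a twist region does not contribute a cusp. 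The local ``area $\ge 2^{1/4}$ per cusp'' bound is also not the estimate that appears in the paper.

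Where the factor $\tw_\pi(K)$ actually enters is the intersection pattern of the boundary curves on the cusp torus, not the number of cusps. The paper pleats $S$, takes $H_S$ to be the union of the cusps of $S_{B,2}$ and $S_{R,2}$, and observes that $\mathrm{area}(H_S)$ is controlled by the lengths of $\partial S_{B,2}$ and $\partial S_{R,2}$ on $\partial H$. These two boundary curves intersect $2\,\mathrm{cr}(K_2)\ge\tw_\pi(K)$ times; resolving the intersections produces a collection of closed curves on $\partial H$ with the same total length, and (away from the figure-eight and $5_2$) each such curve has length at least $2^{5/4}$ by Adams' waist-size theorem ~\cite{WaistSize2}. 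Assembling these pieces gives $\mathrm{area}(H_S)\ge 2^{5/4}\tw_\pi(K)$. So the constant $2^{5/4}$ is a per-curve length bound, not a per-cusp area bound, and the multiplicity comes from the boundary intersection number rather than from a count of surface ends. Your proof would need to be rewritten around this intersection-number argument to be correct.
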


We can find such a cusp by pleating $S$ (such a pleating exists because our twisted surfaces are essential by Theorem ~\ref{thm:essential}) and giving it a hyperbolic structure, and then letting the embedded cusp be the union of the cusps of the two twisted surfaces. The lower bound on area then comes from carefully examining the boundaries of our twisted surfaces. By resolving any intersections between the boundaries, we will get a number of curves with length the same as the union of the two boundaries. The exact number is based on the intersection number of our surfaces; in our case, $S_{B,2}$ and $S_{R,2}$ intersect $2\mathrm{cr}(K_2)\ge \tw_\pi(K)$ times. Then, as long as $K$ isn't the figure-eight knot or the $5_2$ knots, by Adams ~\cite{WaistSize2}, we get that each curve must have length at least $2^{5/4}$. Putting this together, we then get the desired lower bound on area.

Finally, we will need the following lemma computing the Euler characteristic of the disjoint union of our twisted surfaces:

\begin{lem}\label{lem:eulchr}
Suppose $K$ is a weakly generalized alternating Knot on a projection surface $F$. Let $\mathrm{tw_N}(K)$ denote the number of twist regions of $\pi(K)$ with at least $N$ crossings, and let $\mathrm{cr}(K_2)$ be the number of crossings of the diagram of $\pi(K_2)$, the knot where we remove all but 1 or 2 crossings from twist regions with more than $N$ crossings in even pairs. Then the Euler characteristics of the blue and red twisted surfaces satisfy
$$|\chi(S_{R,2}) + \chi(S_{B,2})| = \mathrm{cr}(K_2) + 2 \mathrm{tw_N}(K) - \chi(F).$$
\end{lem}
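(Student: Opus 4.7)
The plan is to compute the total Euler characteristic by tracking each stage of the twisted-surface construction described in Section~2, starting from the ordinary checkerboard surfaces $B'$ and $R'$ of $K_2$ on $F$.

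Viewing $K_2$ as a $4$-valent graph embedded in $F$, with $\mathrm{cr}(K_2)$ vertices and $2\,\mathrm{cr}(K_2)$ edges, additivity of Euler characteristic gives $\sum_{\rho}\chi(\rho)=\chi(F)+\mathrm{cr}(K_2)$, where the sum ranges over the complementary regions $\rho$ of $F\setminus\pi(K_2)$. Each checkerboard surface is obtained by taking the closures of the regions of one color and attaching a half-twisted rectangular band at each crossing; such a band is a disk glued along two arcs and so contributes $1-2=-1$ to the Euler characteristic. Summing both colors,
\[\chi(B')+\chi(R')=\sum_{\rho}\chi(\rho)-2\,\mathrm{cr}(K_2)=\chi(F)-\mathrm{cr}(K_2).\]
Note that this computation does not require the regions to be disks, but if one wants a cleaner count one can appeal to Lemma~\ref{lem:disk_regions} in the presence of sufficient edge representativity.

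Next I would account for the punctures coming from the crossing circles of $L_2$. For each of the $\mathrm{tw_N}(K)$ crossing circles, a local analysis of the residual twist region (with one or two remaining crossings) shows that the crossing circle meets the projection surface $F$ in exactly two points which lie in the same checkerboard color class, and is disjoint from the twisted bands. Consequently each crossing circle intersects exactly one of $B',R'$ transversely in two points, and passing to $Y\setminus L_2$ removes two open disks from that surface. Summing over all crossing circles,
\[\chi(B_2)+\chi(R_2)=\chi(F)-\mathrm{cr}(K_2)-2\,\mathrm{tw_N}(K).\]

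Finally, the homeomorphism $Y\setminus L_2\cong Y\setminus L$ obtained by untwisting preserves Euler characteristic, and completing the twisted surfaces inside each Dehn-filled solid torus amounts to gluing on either an annulus or a pair of M\"obius bands (each with $\chi=0$) along a boundary circle (also with $\chi=0$), so by inclusion--exclusion this step does not change Euler characteristic either. Thus $\chi(S_{B,2})+\chi(S_{R,2})=\chi(F)-\mathrm{cr}(K_2)-2\,\mathrm{tw_N}(K)$; since this quantity is nonpositive (using $\chi(F)\leq 0$ in our setting), taking absolute values yields the claimed formula. The only real subtlety is the puncture count: one must verify that the two points where a crossing circle meets $F$ always lie in the same checkerboard color class, which is an immediate consequence of the alternating coloring around the one or two residual crossings of the twist region in $L_2$.
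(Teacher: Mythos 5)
Your proposal is correct and follows essentially the same three-stage computation as the paper's proof: establish $\chi(B')+\chi(R')=\chi(F)-\mathrm{cr}(K_2)$ for the ordinary checkerboard surfaces of $K_2$, subtract $2\,\mathrm{tw}_N(K)$ for the two punctures per crossing circle, and observe that twisting and attaching annuli or M\"obius bands leave $\chi$ unchanged. The only cosmetic difference is in the first step, where you use additivity of $\chi$ over the regions of $F\setminus\pi(K_2)$ together with the band count, while the paper instead doubles the crossing arcs in $R'\cup B'$; both yield the same intermediate formula.
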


\begin{proof}
Note that the union of the two (non-twisted) checkerboard surfaces in $K_2$ will have each crossing arc of $K_2$ appear twice, once in the red surface, and once in the blue surface. Then, by splitting the crossing arcs of $K_2$ into two homotopic arcs and attaching the blue regions to one arc and the red regions to the other, we get that the Euler characteristic for the disjoint union of these non-twisted surfaces is $\mathrm{cr}(K_2)-\chi(F)$. To obtain $R_2$ and $B_2$, the punctured surfaces, we remove two disks from each crossing circle, and get $|\chi(B_2) + \chi(R_2)| = \mathrm{cr}(K_2)+2\tw_N(K)-\chi(F)$. Then, finally, to obtain the twisted surfaces, we connect these punctures by annuli or M\"{o}bius bands. This doesn't change the Euler characteristic, and so we get our desired result.
\end{proof}

We are now ready to give the proof of Theorem ~\ref{thm:main}.

\begin{proof}
We will begin with the upper bound, using Theorem ~\ref{thm:gess} (from ~\cite{GESS}) to almost immediately get our result. We know that $|\chi(S)|= \mathrm{cr}(K_2) + 2\tw_N(K)-\chi(F) \le 120\tw_{\pi}(K)-\chi(F)$. Then, note that, by construction, our twisted surfaces must intersect at least once for every twist region in our projection, so $i(\bndry S_{B,2}, \bndry S_{R,2}) \ge \tw_{\pi}(K)$. Finally, as $\chi(S_{R,2})$ and $\chi(S_{B,2})$ are both negative, we get 
\[|\chi(S_{B,2}| + |\chi(S_{R,2}|= |\chi(S_{B,2}) + \chi(S_{R,2})| \le 120\tw_{\pi}(K) -\chi(F).\]
So, combining with Theorem ~\ref{thm:gess}, we get:
\[\mathrm{Area}(\bndry C) \le \frac{18 (|\chi(S_{B,2})| + |\chi(S_{R,2})|)^2}{i(\bndry S_{B,2}, \bndry S_{R,2})} \le \frac{18(120\tw_{\pi}(K)-\chi(F))^2}{\tw_{\pi}(K)},\]

which finishes the proof of the upper bound.

To proceed with the proof of the lower bound define

$$A(F) = \frac{18^{1/4}}{5776 \pi^4}\;\;  \frac{1}{2^{3/4}\pi (120-\chi(F))^6 + 4(120-\chi(F))^5}.$$
To prove the lower bound of $CV(K)$ in Theorem ~\ref{thm:main} we will show 
 that
$$A(F)\ (\tw_{\pi}(K)-\chi(F))\le \mathrm{area}(\bndry C) , \  \ {\rm and} \ \  A(F)\geq  \frac{5.8265514\times 10^{-7}}{(120-\chi(F))^6}.$$
Since  $2CV(K)=\mathrm{area}(\bndry C)$, we obtain the desired  bound.

Let $S$ be the disjoint union of the twisted blue and red surfaces, $S_{B,2}\sqcup S_{R,2}$, with an immersion $f$ into $Y\setminus K$. Then let $H$ is the maximal cusp of $Y\setminus K$, and $H_S$ the embedded cusp for $S$ contained in $f^{-1}(H)$ with area at least $2^{5/4}\tw_\pi(K)$ we get from Lemma ~\ref{cvak:3.5}. Note that $\Cr(K_2)$ is less than or equal to 120 times the number of twist regions with less than 121 twists. Thus $\Cr(K_2)+2\tw_N(K) \le 120\tw_\pi(K)$. Then, by Lemma ~\ref{lem:eulchr}, we have
$$\mathrm{Area}(S) = 2\pi (\Cr(K_2))+2\tw_N(K) - \chi(F)) \le 2\pi (120 \tw_{\pi}(K)-\chi(F)).$$
Recall that 
$$k=\mathrm{Area}(H_S)/\mathrm{Area}(S) \ge \frac{\sqrt[4]{2}}{\pi}\frac{\tw_{\pi}(K)}{120\tw_{\pi}(K)-\chi(F)}.$$
The inequality comes from combining our upper bound on $\mathrm{Area}(S)$ and the lower bound on $\mathrm{Area}(H_S)$ of $2^{5/4}\tw_\pi(K)$ from Lemma ~\ref{cvak:3.5}. Then, let $d=\log(2/k)$, so $ke^d = 2$, $\sinh(d) = \frac{1}{k} - \frac{k}{4}$, and $e^d = \frac{2}{k}$. Note that $\frac{1}{k}\le 2^{-1/4}\pi(120-\chi(F))$, so we can say the following:

\begin{align*}
\frac{2}{k} - 1 &< 2^{3/4}\pi\left(120-\frac{\chi(F)}{\tw_{\pi}(K)}\right) = 2^{3/4}\pi O(F,K)\\
\sinh(d)+2\pi &\le 2^{-1/4}\left(120-\frac{\chi(F)}{\tw_{\pi}(K)}\right)-\frac{k}{4}+2\pi\\
	&< 2^{-1/4} O(F,K) + 2\pi\\
\left(\frac{2}{k}-1\right)(\sinh(d)+2\pi) &< 2^{-1/4} \pi (2^{3/4}\pi O(F,K)^2 + 4 O(F,K)),
\end{align*}

where $O(F,K) = 120 - \frac{\chi(F)}{\tw_{\pi}(K)}$. As a side note, while we will leave in $O(F,K)$ for our calculations, once we choose a projection surface, we immediately get an upper and lower bound: $O(F,K)$ is between $120$ and $120-\chi(F)$.\\

Putting this together, by Theorem ~\ref{cvak:2.7}, we have at least
\begin{align*}
\frac{(ke^d-1)\pi}{(e^d-1)(\sinh(d)+2\pi)} |\chi(S)| &> \frac{2^{1/4}}{2^{3/4}\pi O(F,K)^2 + 4 O(F,K)} |\chi(S)|
\end{align*}
embedded disjoint bi-infinite geodesics, with both ends in $H_S$, of length at most $2d$. Now we can map these arcs into $Y\setminus K$. As $S$ is essential, each arc remains essential under this mapping. Furthermore, the length of these arcs can only decrease, so the upper bound of length $2d$ remains. Some of these arcs may be homotopic now in $Y\setminus K$, however, by Corollary ~\ref{cvak:3.4}, there are at most $2(3N-2)=722$ such arcs. So, dividing out, we have at least
\[\frac{2^{1/4}}{722 (2^{3/4}\pi  O(F,K)^2 + 4 O(F,K))} |\chi(S)|\]
disjoint embedded geodesics with both ends in $H$ and length at most $2d$ with respect to $H$.

Now we can use Lemma ~\ref{cvak:2.8} to say that, if the above number is $p$,
\[ \mathrm{Area}(\bndry H) \ge \sqrt{3}e^{-2 (2d)}.\]
Before we put all the math together, to make the numbers look nicer, we will find an upper bound for $e^{-4d}$:
\begin{align*}
e^{4d} &= \left(\frac{2}{k}\right)^{4}\\
&\le \left(2^{3/4}\pi O(F,K)\right)^{4} = 8\pi^4 O(F,K)^4\\
e^{-4d} &\ge \frac{1}{8\pi^4  O(F,K)^4}
\end{align*}
Now, combining this, with all of our other bounds, we get that our cusp volume satisfies
\begin{align*}
\mathrm{Area}(\bndry H) &\ge \frac{2^{1/4}\sqrt{3}|\chi(S)|}{722 (2^{3/4}\pi O(F,K)^2 + 4 O(F,K))} e^{-4d}\\
	&\ge \frac{18^{1/4}}{722 (2^{3/4}\pi  O(F,K)^2 + 4 O(F,K))} \;\; \frac{1}{8\pi^4 O(F,K)^4}  \left(\tw_{\pi}(K)-\chi(F)\right).
\end{align*}
Because ~\cite{CVAK} gives us a lower bound for the case when $F=S^2$, we will focus on the other cases. When $\chi(F)\neq 2$, $O(F,K)\le 120 - \chi(F)$, so we can replace $O(F,K)$ in our equation with $120-\chi(F)$, and we get a lower bound of $A(F)(\tw_\pi(K)-\chi(F))$, and so are done with the lower bound, and our proof.
\end{proof}

\begin{rem}
In the course of proving the lower bound in Theorem ~\ref{thm:main}, we proved that we can replace $A(F)$ 
a function $E(F,K) $  depending on  the Euler characteristic of $F$ and and $\tw_\pi(K)$. This is
that also 
$$E(F,K) = \frac{18^{1/4}}{5776 \pi^4}\;\;  \frac{1}{2^{3/4}\pi O(F,K)^6 + 4 O(F,K)^5}\ \ {\rm{ where }} \ \ O(F,K)=120-\frac{\chi(F)}{\tw_{\pi}(K)}$$
We can get a better lower bound if we replace $A(F)$ with $E(F,K)$. Below are some value for $E(F,K)$ and $A(F)$ for different values of $\chi(F)$ and $\tw_\pi(K)$, showing that, while as $\chi(F)$ decreases, both our bounds get worse, as $\tw_\pi(K)$, the bound $E(F,K)$ starts to improve. \newline
\begin{center}
\begin{tabular}{c||c|c|c}
$\chi(F)$ & $A(F)$                & $E(F,K)$ when $\tw_\pi(K)=10$ & $E(F,K)$ when $\tw_\pi(K) = 100$\\\hline
$0$    & $2.3059 \times 10^{-19}$ & $2.3059 \times 10^{-19}$      & $2.3059 \times 10^{-19}$        \\
$-2$   & $2.0884 \times 10^{-19}$ & $2.2830 \times 10^{-19}$      & $2.3036 \times 10^{-19}$        \\
$-4$   & $1.8945 \times 10^{-19}$ & $2.2604 \times 10^{-19}$      & $2.3013 \times 10^{-19}$        \\
$-100$ & $6.0903 \times 10^{-21}$ & $1.4272 \times 10^{-19}$      & $2.1941 \times 10^{-19}$
\end{tabular}
\end{center}
\end{rem}

Next we apply Theorem ~\ref{thm:main} to obtain estimates of slope lengths on the maximal cusp of $K$. 
We discuss an upper bound on the length on meridian curves for K and lower length bounds
for non-meridian curves slopes.
\slopelength*

\begin{proof}
As shown in the proof of Theorem ~\ref{thm:main} above, if $S_{B,2}$ and $S_{R,2}$ are our two twisted essential surfaces, then:
\[|\chi(S_{B,2})| + |\chi(S_{R,2})|\le 120\tw_\pi(K) - \chi(F) \le 120(\tw_\pi(K) - \chi(F)),\]
where the right inequality comes from the fact that $\chi(F)\le 0$, so $-\chi(F)\le -120\chi(F)$. Also, $i(\bndry S_{B,2}, \bndry S_{R,2})\ge \tw_\pi(K)$. Using these two results with Burton and Kalfagianni ~\cite{GESS} on upper bounds for slope lengths, we get our desired upper bound on $\ell(\mu)$:
\begin{align*}
\ell(\mu) &\le \frac{6|\chi(S_{B,2})| + |\chi(S_{R,2})|}{i(\bndry S_{B,2}, \bndry S_{R,2})}\\
		  &\le \frac{6(120\tw_\pi(K)-\chi(F))}{\tw_\pi(K)}\\
		  &\le \frac{720(\tw_\pi(K)-\chi(F))}{\tw_\pi(K)}.
\end{align*}
Next, Theorem ~\ref{thm:main} tells us that $A(F)(\tw_\pi(K)-\chi(F))\le \mathrm{Area}(\bndry C)$. Then, as in ~\cite{DSNCM}, 
\[\ell(\mu)\ell(\sigma)\ge \mathrm{Area}(\bndry C)\Delta(\mu, \sigma).\]
Then, using our bound for $\mathrm{Area}(\bndry C)$, and that $\mu$ and $\sigma$ must intersect at least once, we get
\[\ell(\mu)\ell(\sigma) \ge A(F)(\tw_\pi(K)-\chi(F)).\]
Next, we use our upper bound for $\ell(\mu)$ to get:
\[\frac{720(\tw_\pi(K) - \chi(F))}{\tw_\pi(K)} \ell(\sigma) \ge \ell(\mu) \ell(\sigma) \ge A(F)(\tw_\pi(K)-\chi(F)).\]
Finally, we rearrange:
\begin{align*}
\ell(\sigma) &\ge A(F) \tw_\pi(K)-\chi(F)\frac{\tw_\pi(K)}{720(\tw_\pi(K)-\chi(F))}\\
			 &= A(F) \frac{\tw_\pi(K)}{720},
\end{align*}
giving the desired inequality.
\end{proof}

\section{Weakly Generalized Alternating Knots and Augmentation}

 In this section, our main goal is to show that, if we start with a weakly generalized alternating link, with some additional properties, and then augment it by removing crossings, we will still have a weakly generalized alternating link with those same properties.

Let $K$ be a weakly generalized alternating link with a closed, orientable projection surface $F$ in a 3-manifold $Y$. Fix a (twist-reduced) diagram $\pi(K)$ on $F$ with edge representativity, $e(\pi(K),F)\ge 4$,
and representativity $r(\pi(K),F)>$, (as defined in Definition ~\ref{def:rep}). We can then identify twist regions of $\pi(K)$. Let $L$ be the link obtained from $K$ where we add a crossing circle to some number of  twist regions. Also let  $L'$ the link where we remove, in pairs, all but one or two crossings from twist regions with a crossing circle in $L$, and $K'$ the knot obtained from $L'$ by removing the crossing circles.
Our first goal is to prove the following which assures that the altered
knot $K'$ continues to be a weakly generalized alternating and has the same properties as $K$.

\begin{restatable}{prop}{augmentprop}
\label{prop:augment}
If $K$ is a weakly generalized alternating knot with diagram $\pi(K)$ on a projection surface $F$ in a 3-manifold $Y$, with $e(\pi(K),F)\ge 4$ and $r(\pi(K), F)>4$, then $K'$ is a weakly generalized alternating knot on the surface $F$ in $Y$ with $e(\pi(K'),F)\ge 4$ and $r(\pi(K'),F)>4$.
\end{restatable}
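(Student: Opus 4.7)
The plan is to verify each of the six defining conditions of a weakly generalized alternating diagram (Definition~\ref{def:wgak}), together with the two strengthened representativity bounds. The first four conditions pass to $\pi(K')$ automatically: removing crossings in pairs inside a twist region is a local operation that preserves the alternating over/under pattern along each strand (two consecutive crossings in a twist region are of opposite types, so removing them preserves the alternation of the survivors), preserves the existing checkerboard coloring of $F\setminus\pi(K)$, keeps $\pi(K')\cap F\ne\emptyset$, and leaves at least one crossing in each modified twist region by construction.

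For the weakly prime property I will argue by contradiction. Assume that $\pi(K')$ is not weakly prime, so there exists a disk $D\subset F$ with $|\partial D\cap\pi(K')|=2$ whose interior contains a self-intersection of $\pi(K')$. We reconstruct $K$ from $K'$ by a sequence of Reidemeister II moves that insert pairs of crossings into each modified twist region, each move performed inside a small bigon disk $E_j$ contained in the relevant twist region. After perturbing $\partial D$ into general position with each modified twist region $T_i$, the intersection $\partial D\cap T_i$ is a finite collection of arcs and $\partial D\cap \pi(K') \cap T_i$ has at most two points. One can therefore choose each bigon disk $E_j\subset T_i$ to be disjoint from $\partial D$ and from the two points of $\partial D\cap\pi(K')$. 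Carrying out the full sequence of Reidemeister II moves inside these disks yields $\pi(K)$ with $|\partial D\cap\pi(K)|=2$, while $\pi(K)\cap D$ still contains the original self-intersection; this contradicts weak primeness of $\pi(K)$.

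For the two representativity bounds I will prove the stronger equality
\[ i(\gamma,\pi(K')) = i(\gamma,\pi(K)) \qquad \text{for every simple closed curve } \gamma\subset F, \]
where $i(\gamma,\cdot)$ denotes the minimal geometric intersection number over the homotopy class of $\gamma$. Taking the minimum over essential curves on $F$ (respectively, over boundaries of compression disks for $F$ in $Y$) then yields $e(\pi(K'),F)=e(\pi(K),F)\ge 4$ and $r(\pi(K'),F)=r(\pi(K),F)>4$. Outside the modified twist regions $\pi(K)$ and $\pi(K')$ coincide, so the proof reduces to the following local sub-lemma: inside a disk $T$ with four marked points $p_1,\ldots,p_4$ on $\partial T$ and two strands in a $\sigma_1^n$-configuration with $n\ge 1$, the minimal number of transverse intersections of an arc $\beta\subset T$ with the two strands, over isotopies of $\beta$ rel its endpoints in $\partial T\setminus\{p_1,\ldots,p_4\}$, depends only on which of the four components $a_1,\ldots,a_4$ of $\partial T\setminus\{p_1,\ldots,p_4\}$ contain the two endpoints of $\beta$, and equals $0$, $1$, or $2$ according as these two endpoints lie on the same arc, on cyclically adjacent arcs, or on opposite arcs. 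Applying this sub-lemma arc-by-arc to $\gamma\cap T_i$ (which is permitted because isotopies rel endpoints inside $T_i$ lift to isotopies of $\gamma$ on $F$ within its homotopy class) and summing over all twist regions yields the claimed equality.

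The hardest step will be the local sub-lemma for $n\ge 2$. For such $n$ the complement $T\setminus(\text{two strands})$ acquires one or more bigon regions between consecutive crossings, and one must verify that these bigons do not create a shortcut that would shorten the cyclic distance between opposite outer regions below $2$. This reduces to a direct case analysis based on the parity of $n$: the pairing of the $p_i$'s by the strands is $(p_1p_3)(p_2p_4)$ for $n$ odd and $(p_1p_2)(p_3p_4)$ for $n$ even (and the parities of $n$ and $n'$ agree in each twist region since we remove crossings in pairs); in both cases one checks directly that every bigon is adjacent to exactly one pair of opposite outer regions, leaving the minimum distance between the other opposite pair at $2$.
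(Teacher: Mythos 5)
Your proposal is correct in outline and, for the representativity bounds, takes a genuinely different route from the paper. The paper (Lemma~\ref{lem:edgerep} and the body of Proposition~\ref{prop:augment}) argues entirely by contradiction: it assumes a low-intersection curve $\ell$ (or a low-intersection compressing curve $\gamma$) exists for $\pi(K')$, uses a parity/checkerboard argument to rule out odd counts, and then does a case analysis on how $\ell$ meets twist regions, pushing $\ell$ out of the modified twist region to produce a contradicting curve for $\pi(K)$. You instead prove the considerably cleaner and stronger statement $i(\gamma,\pi(K'))=i(\gamma,\pi(K))$ for \emph{every} simple closed curve $\gamma$, reducing to a local sub-lemma inside each twist-region disk. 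Your sub-lemma is correct: the minimal intersection of an arc with the two strands inside a $\sigma_1^n$ box depends only on the cyclic positions of the endpoints (0, 1, or 2), and the $n-1$ bigons for $n\geq 2$ are always edge-adjacent to the ``side'' pair of outer regions, so they never shorten the opposite-region distance below 2; in fact this conclusion holds without reference to the parity of $n$, so the parity case split you flag is unnecessary. What you gain is a single uniform argument that handles $e$ and $r$ simultaneously (a curve bounding a compressing disk for $F$ in $Y$ still bounds one after the twist reduction, since that is purely a property of $F\subset Y$), and it does not invoke checkerboard colorability at all, whereas the paper's parity argument does. What you gloss over, and should note explicitly, is the passage from the local sub-lemma to the global equality: you need that a globally minimal-position representative $\gamma_0$ achieves the local minimum on each arc of $\gamma_0\cap T_i$ (this follows because a local reduction would be a global one), and that the local minimizing isotopies of the several arcs of $\gamma_0\cap T_i$ can be performed simultaneously while keeping them disjoint (true, since disjoint arcs in a disk with endpoints in the same four boundary components have nested/parallel endpoint patterns, so their minimal representatives can be chosen nested/parallel). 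Your treatment of the first five conditions and of weak primeness agrees in substance with the paper's Lemmas~\ref{lem:redalt} and 4.3; the Reidemeister-II language you use is a fine reformulation of the paper's ``isotope $D$ disjoint from crossing circles.''
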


 Proposition ~\ref{prop:augment}  allows us to often replace $K$ with the simpler knot $K'.$ The proof of the proposition requires a few lemmas that we will prove next.

\begin{lem}\label{lem:redalt}
$\pi(K')$ is reduced alternating on $F$.
\end{lem}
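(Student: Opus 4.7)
The plan is to verify each of the four defining conditions of a reduced alternating diagram (Definition \ref{def:wgak}) for $\pi(K')$. Recall that $K'$ is obtained from $K$ by removing pairs of crossings from the twist regions with at least $N_{tw}$ crossings, leaving at least one crossing in each; elsewhere, $\pi(K')$ agrees with $\pi(K)$. Conditions (1), (3), and (4) are straightforward: removing two adjacent crossings from an alternating twist region leaves it alternating, so $\pi(K')$ is alternating on $F$; the set $\pi(K') \cap F = \pi(K')$ is nonempty since $K' \neq \emptyset$; and at least one crossing remains in each modified twist region (as well as every crossing in the unmodified ones), so $\pi(K')$ has at least one crossing.

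The content of the lemma lies in condition (2), weak primeness. The key observation is a \emph{parity principle}: removing crossings in pairs preserves the parity of the number of crossings in each modified twist region, and therefore preserves the external connectivity of the twist region, that is, the pairing of the four endpoints on its boundary induced by the two interior strands of the diagram. Let $T_1, \dots, T_m \subset F$ be small disk neighborhoods of the modified twist regions. Outside $\bigcup_i T_i$ the diagrams $\pi(K)$ and $\pi(K')$ are identical, and inside each $T_i$ their underlying $1$-manifolds are isotopic rel $\partial T_i$.

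Now suppose $D \subset F$ is a disk with $\partial D$ meeting $\pi(K')$ transversely in exactly two points. I would isotope $\partial D$ in two steps so that it becomes disjoint from every $T_i$ while maintaining exactly two transverse intersections with $\pi(K')$. First, if one of the intersection points lies in some $T_i$, slide it along the strand of $\pi(K')$ containing it until it exits $T_i$; this is a local homotopy of $\partial D$ through regular position. Second, each remaining arc of $\partial D \cap T_i$ has its endpoints on $\partial T_i$ and therefore lies in a component of $T_i \setminus \pi(K')$ that meets $\partial T_i$, so it cannot be trapped in any ``lens'' region of $T_i \setminus \pi(K')$ (a region bounded only by strand arcs). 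Each such arc can therefore be pushed outward across $\partial T_i$ without crossing $\pi(K')$, giving an isotopy of $\partial D$ that removes it from $T_i$ without creating new intersections with $\pi(K')$.

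After these isotopies, $\partial D$ is disjoint from $\bigcup_i T_i$, so $\partial D \cap \pi(K) = \partial D \cap \pi(K')$ is the same two-point set. Weak primeness of $\pi(K)$ then gives that $\pi(K) \cap D$ is a single arc (or, in the $F = S^2$ case, that one of $\pi(K) \cap D$ or $\pi(K) \cap (F \setminus D)$ is). By the parity principle, $\pi(K') \cap D$ has the same underlying $1$-manifold structure as $\pi(K) \cap D$ --- the same arcs outside the $T_i$ glued to $\partial T_i$ with the same endpoint pairings inside each $T_i \subset D$ --- so it is also a single arc. This yields weak primeness of $\pi(K')$ and completes the verification. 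The main obstacle is the two-step isotopy argument; the essential input is the no-trapping observation, which reduces the push-out to a local move preserving the intersection count with $\pi(K')$.
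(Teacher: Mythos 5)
Your proof is correct and follows essentially the same route as the paper's: isotope $\partial D$ off the modified twist regions, then compare $\pi(K')$ and $\pi(K)$ on the disk $D$. (The paper runs this through the contrapositive --- assume a bad disk for $\pi(K')$ and show it produces a bad disk for $\pi(K)$ --- while you argue directly, but these are equivalent phrasings.) Your isotopy step is spelled out in more detail than the paper's terse ``isotope $D$ away from crossing circles,'' which is a welcome clarification.

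One inefficiency worth pointing out: the ``parity principle'' is a detour you don't actually need. Once $\partial D$ is disjoint from $\bigcup_i T_i$ and weak primeness of $\pi(K)$ forces $\pi(K)\cap D$ to be a single \emph{embedded} arc, that arc has no crossings; but every modified twist region $T_i$ (like every twist region) contains at least one crossing of $\pi(K)$, so no $T_i$ can lie inside $D$. Consequently $\pi(K')\cap D = \pi(K)\cap D$ \emph{exactly}, not merely ``with the same underlying $1$-manifold structure.'' There is therefore never any $T_i\subset D$ whose internal strand pairing you would need to track, and the parity observation does no work in the final step. It is not wrong, but it adds machinery the argument doesn't call on. Finally, a small technical caveat in your first isotopy move: sliding an intersection point of $\partial D$ with $\pi(K')$ along a strand past a crossing of the twist region requires a little care to avoid creating a spurious tangency or intersection with the other strand at the crossing; a slightly cleaner formulation is to isotope $\partial D$ in an annular collar of $\partial T_i$ so that all intersections of $\partial D$ with $\pi(K')$ occur outside $T_i$, and then apply your no-trapping argument to the residual arcs. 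These are presentation-level remarks; the substance of the proof is sound.
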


\begin{proof}
There are four conditions we need to show about $\pi(K')$ for it to be reduced alternating. First, as $\pi(K)$ is alternating on $F$, we must also have $\pi(K')$ alternating on $F$---we removed crossings in pairs, and so could not have introduced any non-alternating into our diagram. Next, we show that $\pi(K')$ is weakly prime, as stated in Definition ~\ref{def:wprime}. If it isn't, then we can find a disk $D\subset F$ such that $\bndry D$ intersects $\pi(K')$ exactly twice and $\pi(K')\cap D$ is not a single embedded arc. We can isotope $D$ to be disjoint from crossing circles. When we put crossings back in to $\pi(K')$, we won't add any crossings to $D$---we only add crossings at crossing circles. But then we have a disk $D$ whose boundary intersects $\pi(K)$ exactly twice with $\pi(K)\cap D$ not a single embedded arc. As $\pi(K)$ is reduced alternating, this is a contradiction.

Finally, we must show $\pi(K')\cap F \neq \emptyset$, and that there is at least one crossing on $F$. However, this follows from construction---$\pi(K')$ still lives on $F$, so the first condition is satisfied. For the second condition, as we never remove all crossings from a twist region, if $\pi(K')$ doesn't have a crossing on $F$, neither will $\pi(K)$, once again a contradiction. So then $\pi(K')$ must be reduced alternating on $F$.
\end{proof}

\begin{lem}
$\pi(K')$ is checkerboard colorable.
\end{lem}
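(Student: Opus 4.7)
The plan is to realize the passage $\pi(K)\rightsquigarrow\pi(K')$ as a finite sequence of Reidemeister II moves supported in the augmented twist regions, and then to verify that each such move preserves checkerboard colorability on an arbitrary projection surface $F$.

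For the first step, observe that in any twist region of $\pi(K)$ with at least two crossings, two consecutive crossings are joined by a bigon on one side. Removing such a pair together with the bigon between them is precisely a Reidemeister II move on the diagram. Since the augmentation procedure described earlier removes crossings in pairs within each augmented twist region (and any pair of crossings inside a twist region can be made adjacent by Reidemeister III slides along the twist, which likewise preserve colorability), the passage from $\pi(K)$ to $\pi(K')$ factors as a finite composition of such R2 moves.

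For the second step, fix a checkerboard coloring $c:F\setminus\pi(K)\to\{B,R\}$, which exists by the hypothesis that $\pi(K)$ is checkerboard colorable. An R2 move is supported in a disk $D\subset F$, and outside $D$ the diagrams agree so we induce $c'$ directly from $c$. Inside $D$, before the move, the two crossings together with the bigon between them divide $D$ into five faces: the bigon, the face above the upper crossing, the face below the lower crossing, and the two faces on either side of the bigon. The checkerboard condition at each of the two crossings rigidly forces the bigon, the upper face, and the lower face to share a common color (call it $B$), while the two side faces share the opposite color $R$. After the R2 move, $D$ contains no crossings and is divided into three faces: the two side faces (unchanged) and a merged central face obtained from the union of the former bigon, upper, and lower faces. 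Coloring this merged face with the common color $B$ yields a valid checkerboard coloring on $D\setminus\pi(K')$, which glues with $c$ outside $D$ to give a checkerboard coloring $c'$ of $F\setminus\pi(K')$.

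The main point requiring care is the local coloring analysis at each R2 move, namely verifying that the alternation of colors around each of the two crossings forces the upper, bigon, and lower faces to share a color so that merging them introduces no conflict. This is a finite check that follows directly from the checkerboard alternation, and no additional hypotheses on $F$ or $\pi(K)$ are needed. Iterating the argument across all R2 moves in the chain $\pi(K)\rightsquigarrow\pi(K')$ produces the desired checkerboard coloring of $\pi(K')$.
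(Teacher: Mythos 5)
Your proof is correct and uses essentially the same idea as the paper: the paper observes that removing a pair of crossings from a twist region amounts to removing a bigon whose color agrees with the two faces it merges into, and your argument is just a more explicit, move-by-move version of this local color check. One small imprecision: the parenthetical appeal to Reidemeister III moves ``inside a twist region'' is unnecessary and not quite right (a two-strand twist region does not support R3 moves); since the augmentation removes crossings in consecutive pairs, iterated R2 moves on adjacent crossings already suffice, so the argument goes through with that aside deleted.
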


\begin{proof}
As $\pi(K)$ is weakly generalized alternating, we know that $\pi(K)$ must be checkerboard colorable. We can use this coloring to obtain a coloring of $\pi(K')$. Because we remove all but one or two crossings from a region, we can obtain $\pi(K')$ from $\pi(K)$ by removing bigons from the same twist region. These bigons must have the same color, so removing them will not change the checkerboard colorability of the diagram. Using this coloring will then give us a coloring of $\pi(K')$.
\end{proof}

\begin{lem}\label{lem:edgerep}
If $e(\pi(K),F)\ge 4$, then $e(\pi(K'),F)\ge 4$.
\end{lem}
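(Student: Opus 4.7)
The plan is to show that any essential simple closed curve $\gamma$ on $F$ has the same geometric intersection number with $\pi(K)$ as with $\pi(K')$, from which $e(\pi(K'),F)\ge 4$ follows directly from $e(\pi(K),F)\ge 4$. The key observation is that $\pi(K)$ and $\pi(K')$ agree outside the augmented twist regions, and inside each such twist region disk $D\subset F$ the two tangles $\pi(K)\cap D$ and $\pi(K')\cap D$ share the same four punctures on $\bndry D$: since crossings are removed in pairs, $n$ and $n'\in\{1,2\}$ have the same parity, so both tangles connect the four punctures in the same pattern.

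First I would establish the following local fact by a region-counting argument inside $D$: for any twist-region tangle with $n\ge 1$ crossings, the minimum number of intersections of an arc in $D$ with the tangle, over homotopies rel endpoints on $\bndry D$, is $0$ if both endpoints lie on the same arc of $\bndry D\setminus\pi(K)$, $1$ if they lie on adjacent arcs (sharing a puncture), and $2$ if they lie on opposite arcs. The reason is that $D\setminus(\pi(K)\cap D)$ has exactly four ``corner'' regions, each adjacent to one specific arc of $\bndry D\setminus\pi(K)$; the remaining complementary regions are interior bigons that cannot be used to shortcut a boundary-to-boundary arc. Applying this to both tangles shows that their local minima agree.

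Given the local fact, let $\gamma$ be any essential simple closed curve on $F$, chosen in minimum position with respect to $\pi(K')$. Each arc of $\gamma\cap D$ then realizes its local minimum with $\pi(K')\cap D$. Homotoping each such arc within $D$, keeping its endpoints on $\bndry D$ fixed, to realize the (equal) local minimum with $\pi(K)\cap D$ produces a simple closed curve $\gamma^*\subset F$ homotopic to $\gamma$ in $F$ (and hence essential) with $|\gamma^*\cap\pi(K)|=|\gamma\cap\pi(K')|$. The hypothesis $e(\pi(K),F)\ge 4$ then forces $|\gamma\cap\pi(K')|=|\gamma^*\cap\pi(K)|\ge 4$ for every essential $\gamma$, giving $e(\pi(K'),F)\ge 4$.

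The main obstacle is the local region-counting step: verifying that for every $n\ge 1$ there are exactly four complementary regions of the twist-region tangle adjacent to $\bndry D$, paired with the four arcs of $\bndry D\setminus\pi(K)$ as described, and that opposite arcs of $\bndry D$ cannot both be reached from the same corner region without crossing a tangle strand. A secondary technicality in the arc-by-arc homotopies is ensuring that the re-routed arcs of $\gamma^*\cap D$ remain disjoint; this can be arranged by performing the homotopies inside disjoint collar neighborhoods of fixed canonical minimal-position model arcs in $D$.
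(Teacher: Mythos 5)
Your proposal is correct and takes a genuinely different (and arguably cleaner) route from the paper's. The paper argues by contradiction and does a small case analysis on the hypothetical value $e(\pi(K'),F)\in\{0,1,2,3\}$: the odd cases are ruled out because $\pi(K')$ is checkerboard colorable, and in the $0$ and $2$ cases the offending curve $\ell$ is explicitly isotoped off the twist-region disks (or off the crossing disks within them) and then ``lifted back'' to a curve meeting $\pi(K)$ at most twice, contradicting $e(\pi(K),F)\ge 4$. You instead prove a uniform local statement: inside a twist-region disk $D$, the minimal intersection number of a boundary-to-boundary arc with the tangle (rel endpoints) is $0$, $1$, or $2$ according to whether its endpoints lie on the same, adjacent, or opposite arcs of $\bndry D\setminus\pi(K)$, \emph{independent of the number $n\ge 1$ of crossings} in the tangle; you then re-route a minimum-position curve disk-by-disk. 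The two approaches rely on the same underlying geometric fact --- that within a twist-region disk, lowering $n$ to $1$ or $2$ does not change local minimal intersection counts --- but you isolate it as a self-contained lemma, which buys you a stronger conclusion (your argument is symmetric in $K$ and $K'$, so it actually shows $e(\pi(K'),F)=e(\pi(K),F)$, not just the inequality) and avoids the checkerboard-coloring step entirely. The two points you should make explicit if writing this up in full are the ones you already flag: (i) the dual-graph computation showing that the $n-1$ interior bigons of the tangle are adjacent only to the two ``side'' corner regions and hence provide no shortcut between opposite boundary arcs; and (ii) that overall minimum position of $\gamma$ with $\pi(K')$ forces each arc of $\gamma\cap D$ to realize its local minimum (a backtracking arc yields a homotopy reducing $|\gamma\cap\pi(K')|$, and any resulting self-intersections can be surgered away while keeping an essential component), so the re-routed $\gamma^*$ really is a simple essential curve homotopic to $\gamma$.
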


\begin{proof}
Suppose not. Then we can find an essential curve $\ell$ in $F$ that crosses our knot diagram, $\pi(K')$, a minimal number of times---zero, one, two, or three times. By isotopy, we can assume $\ell$ intersects $\pi(K')$ transversely away from crossings. First, we show that the number of intersections can't be odd. Because $\pi(K')$ is checkerboard colorable, we can look at the colored regions $\ell$ crosses through. Every time $\ell$ crosses $\pi(K')$, it must switch colored regions: either from red to blue or from blue to red. If $\ell$ intersects $\pi(K')$ an odd number of times, we would then have a curve that started in one color and ended in the other color. As it is a closed curve and our diagram is checkerboard colorable, this is impossible, so $e(\pi(K'), F)$ must be zero or two.

If $e(\pi(K'), F)=0$, then $\ell$ is disjoint from $\pi(K')$. This means, in particular, that we can isotope $\ell$ to be disjoint from any twist region without changing the number of intersections. Each twist region in $\pi(K')$ has at least one crossing; if $\ell$ enters the twist region through one side, it must exit through the same side in order to not intersect $\pi(K')$. But then, as $\pi(K)$ differs from $\pi(K')$ only in twist regions, when we put the twists back in our diagram, we will have an essential curve that does not intersect $\pi(K)$, contradicting the fact that $e(\pi(K),F)=4$.

If $e(\pi(K'), F) = 2$, then $\ell$ must intersect $\pi(K')$ exactly twice. First, suppose $\ell$ doesn't intersect a twist regions. Then, as before, when we go back to $\pi(K)$, we will have an essential curve intersecting it twice, a contradiction. So then $\ell$ must intersect $\pi(K')$ in a twist region. By the definition of $e(\pi(K'),F)$, we assume that $\ell$ intersects $\pi(K')$ transversely away from crossings. As such, in order for $\ell$ to intersect a twist region, it must intersect the twist region exactly twice, so we will focus on just this region. There are two options for how $\ell$ intersects the strands in this region: either $\ell$ intersects the same strand twice, or $\ell$ intersects both strands once. First, suppose it is the former. Then the arc of $\ell$ intersection the bigon of the twist region bounds a disk with the strand $\ell$ intersects. Using this disk, we can isotope $\ell$ away from the twist region, giving us an essential curve that intersects $\pi(K')$ zero times, a contradiction.

On the other hand, if $\ell$ intersects both strands of the twist region, we may isotope $\ell$ away from any crossing disks. Once we do this, when we put crossings back in to get $\pi(K)$, we will have an essential arc $\ell$ intersecting $\pi(K)$ exactly twice (as $\ell$ does not cross any crossing disks). This is a contradiction, and so we are done.
\end{proof}

We are now ready to give the proof of  Proposition ~\ref{prop:augment} .

\begin{proof}
All but the last part follows directly from the previous lemmas. Thus, all that we need to show is that $r(\pi(K'),F)>4$. As any curve that bounds a compression disk must be essential, we can say, by Lemma ~\ref{lem:edgerep}, that we at least have $r(\pi(K'),F)\ge 4$. So suppose we can find some $\gamma\subset F$ such that $\gamma$ bounds a compression disk and intersects $\pi(K')$ exactly four times. When we put the crossings back to get $\pi(K)$, we must have $\gamma$ intersect more times, and so $\gamma$ must be between a crossing region. However, if $\gamma$ doesn't intersect the knot strands associated to the crossing region, then we have a region with zero crossings, contradicting our construction of $K'$. So at least two of the intersections of $\gamma$ with $\pi(K')$ are part of the same crossing region. But then, no matter how they intersect, we can then homotope $\gamma$ to be outside of the crossing region without introducing any more intersections (but possibly reducing them), as follows. Because we are working on a twist region, there must be a disk on $F$ such that the boundary passes through each of the crossings of our twist region. We can then use part of that disk to homotope away from the compression disk, as shown in the picture above.
\begin{figure}
	\centering
		\def\svgwidth{.75\columnwidth}
		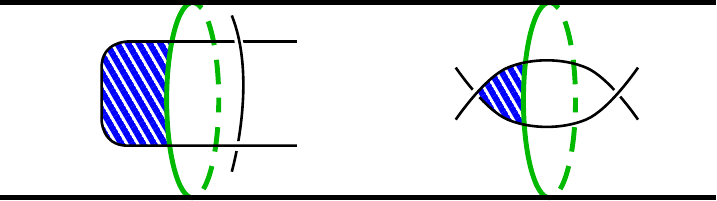

		\caption{Two possible configurations to have $r(\pi(K'),F)\le 4$. On the left, the blue disk can be used to homotope the knot away from the compression disk, reducing the number of intersections. On the right, homotopying along the blue disk will cause the compression disk to not intersect a crossing region.}
\end{figure}
But then, by putting crossings back in, we must have $\gamma$ intersect $\pi(K)$ four or less times, a contradiction to $r(\pi(K),F)>4$, and so we are done.
\end{proof}

Now we specify the knots $K'$ to which we will apply  Proposition ~\ref{prop:augment}.
The construction $K'$ is the same as this in Section 3 of  ~\cite{ETS}  

\begin{itemize}

\item First recall the links $L$, $L_2$ and $K_2$ and the surfaces $B_2$ and $R_2$ constructed from the weakly generalized alternating projection $\pi(K)$ in Subsection. 2.1.

\item Denote by  $L_B$ the link  obtained by  adding in the crossing circle to twist regions  of $\pi(K)$ with $N_{tw}\geq 121$, but only when the crossing circle intersects the blue region.

\item Obtain $L_{B,2}$ by removing pairs of crossings encircled by each crossing circle of $L_B$, leaving either one or 2 crossings.

\item Obtain the knot $K_{B,2}$ by removing crossing. circles from $L_{B,2}$.

\item The surface $B_2$ embedded in the complement of $L_2$ can also be embedded the complement of $L_{B,2}$. The red surface $R_2$ in $S^3 \setminus L{B,2}$ is homeomorphic to the red checkerboard surface of $K$, and to the red checkerboard surface of $K_{B,2}$.
\end{itemize}

We will generalize the  approach of ~\cite{ETS} to prove that $B_2$ and $R_2$ are essential. Ai we will discuss in the remaining sections many of the arguments in  ~\cite{ETS} require little to no modifications to work in the general case. This is because several of the these arguments take place in only a small portion of either our ambient manifold $Y$ or our projection surface $F$, or come mostly from definition.

We will finish up this section by using Proposition ~\ref{prop:augment} to prove two simple, but important, facts about $K_{B,2}$, as we now know some nice properties stay under augmentation. 
The first result is the following.

\begin{lem}
\label{lem:opagree}
Label the regions of the complement of the diagram $\pi(K_{B,2})$ blue or red, depending on whether they meet the blue or red surface.
\begin{enumerate}[(1)]
	\item The blue regions on opposite sides of a crossing of $L_{B,2}$ cannot agree.
	\item The red regions on opposite sides of a crossing of $L_{B,2}$ cannot agree.
	\item The two blue regions that meet a single crossing circle of $L_{B,2}$ cannot agree.
\end{enumerate}
\end{lem}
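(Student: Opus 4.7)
My plan is to prove each of the three parts by contradiction with a uniform strategy: assuming the given pair of regions agree, construct a simple closed curve $\gamma\subset F$ meeting $\pi(K_{B,2})$ transversely in exactly two points, then apply weak primeness together with the edge representativity bound $e(\pi(K_{B,2}),F)\geq 4$. Both hypotheses hold for $\pi(K_{B,2})$ by Proposition \ref{prop:augment}, since $K_{B,2}$ is of the form $K'$ from that proposition.

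For part (1), at a crossing $c$ of $K_{B,2}$, if the two opposite blue corners at $c$ lie in a common blue region $B$, I take $\alpha\subset B$ to be an embedded arc joining those two blue corners, and $\beta$ to be a short arc through a neighborhood of $c$ crossing both strands. Then $\gamma = \alpha\cup\beta$ meets $\pi(K_{B,2})$ at exactly two points, both on $\beta$. Part (2) is identical with colors swapped. For part (3), the two blue regions meeting a crossing circle $C$ of $L_{B,2}$ are the two blue regions of $\pi(K_{B,2})$ on the two ends of the augmented twist region $T$ encircled by $C$. Assuming these agree in a common blue region $B$, I take $\alpha\subset B$ to be an arc joining the two ends of $T$ around the outside of $T$, and $\beta$ to be an arc across $T$ crossing only two strands: through the unique crossing if $T$ has one crossing, or through one of the red side regions of $T$ if $T$ has two crossings. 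Either way $\gamma$ meets $\pi(K_{B,2})$ in exactly two points.

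Once $\gamma$ is in place the rest of the argument is uniform. Since $\gamma$ has only two intersections with $\pi(K_{B,2})$, the hypothesis $e(\pi(K_{B,2}),F)\geq 4$ prevents $\gamma$ from being essential in $F$, so $\gamma$ bounds a disk $D\subset F$. By weak primeness, one side of $\gamma$ intersects $\pi(K_{B,2})$ in a single embedded arc (when $F$ has positive genus the side must be $D$; when $F=S^2$ it is $D$ or $F\setminus D$). I split into two sub-cases based on whether the crossing $c$ in parts (1) and (2), or the twist region $T$ in part (3), lies on the single-arc side. If it does, that side contains a self-intersection of $\pi(K_{B,2})$, which is impossible for a single embedded arc. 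If it does not, the single arc on that side must trivially connect the two strand endpoints near $c$ (or at the two ends of $T$); this forces a simplification of the diagram that is incompatible with $K_{B,2}$ being a hyperbolic knot, because in part (3) with a two-crossing twist region the shortcut arc would split the knot into two components, and in parts (1) and (2) the shortcut together with the conditions $r(\pi(K_{B,2}),F)>4$ and $e(\pi(K_{B,2}),F)\geq 4$ rules out the configuration.

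The main subtlety I anticipate is in part (3) when the twist region has two crossings: the arc $\beta$ must be routed carefully through one of the red side regions of $T$ so that the total number of intersections of $\gamma$ with $\pi(K_{B,2})$ remains exactly two. Once $\gamma$ is constructed the case analysis is uniform across all three parts, reducing everything to the same weak-primeness-versus-hyperbolicity dichotomy.
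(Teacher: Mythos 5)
Your construction of $\gamma$ and the appeal to $e(\pi(K_{B,2}),F)\geq 4$ and weak primeness are the same as the paper's (which, after Proposition \ref{prop:augment}, applies exactly these two hypotheses), and your first sub-case---$c$ or $T$ inside the disk $D$, forcing a crossing into $D$ so that $\pi(K_{B,2})\cap D$ cannot be a single embedded arc---is handled correctly.

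The genuine gap is in your second sub-case, and it has two concrete problems. First, the assertion in part (3) that for a two-crossing twist region the shortcut arc in $D$ ``would split the knot into two components'' is false: the shortcut arc in $D$ joined to the strand running through $T$ in $F\setminus D$ is still a single closed loop, so no component count changes. Second, for parts (1) and (2), appealing to $r(\pi(K_{B,2}),F)>4$, $e(\pi(K_{B,2}),F)\geq 4$, and hyperbolicity without a derivation is not an argument. The contradiction that is actually available here is combinatorial, not geometric: if $D$ contains no crossings, then $\pi(K_{B,2})\cap D$ is an embedded arc whose two endpoints $P_1$ and $P_2$ lie on two \emph{distinct} edges of the diagram graph (the two strands that $\beta$ crosses, which bound the same red face near $c$ or the bigon of $T$ but belong to different edges). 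A crossing-free sub-arc of $\pi(K_{B,2})$ joining two distinct edges of the diagram graph cannot exist, since any path along $\pi(K_{B,2})$ from one edge to a different edge passes through at least one $4$-valent vertex, i.e.\ a crossing, and $D$ has none. Filling in this step closes the gap and is in fact what the paper's terse ``we contradict the fact that $\pi(K_{B,2})$ is weakly prime'' is silently encompassing.
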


\begin{proof}
For the most part, the argument follows this of the proof of ~\cite[Lemma 3.2]{ETS}.  
To start, we assume that blue regions on the opposite side of a crossing do agree. Then we can create a simple closed curve on our surface $F$ by drawing an arc in the blue region from one side of the crossing to the other, then drawing an arc across the red region near this crossing. Now we have a simple closed curve that intersects $\pi(K_{B,2})$ exactly twice. If this curve bounds a disk in $F$, then we contradict the fact that $\pi(K_{B,2})$ is weakly prime. If it doesn't bound a disk, then it is an essential curve intersecting our diagram exactly twice, contradicting $e(\pi(K_{B,2}),F)\ge 4$. A similar proof works to prove the second statement by switching the blue and red regions.

Now, to show the third statement, assume that two blue regions meeting a single crossing circle do agree. Then we can draw an arc entirely in the blue region from one side of the crossing circle to the other. We then finish it into a simple closed curve by attaching the arc of intersection of the crossing disk. As above, we have a curve intersecting our diagram exactly twice, contradicting either $\pi(K_{B,2})$ being weakly prime or having edge representativity at least 4.
\end{proof}

\begin{rem} Lemma 3.9 of  ~\cite{ETS} has a fourth part stating that if distinct blue regions meeting a single crossing circle meet at the same crossing of $K_{B,2}$, then that crossing is associated with the crossing circle. This part
does not carry through to the generalization of weakly generalized alternating knots.
To prove part four the authors in ~\cite{ETS} rely on their Lemma 3.1 which states that, in the case of usual alternating knots,
$K_2$  $K_{B,2}$ are prime. In our case
$K_2$ or $K_{B,2}$ are not prime,
while this case can give us a simple closed curve that goes through both the crossing circle and the crossing, because it intersects our knot exactly 4 times. This curve it could very well be an essential curve, and thus tell us nothing about how the crossing relates to the crossing circle. 
\end{rem}

With $K_{B,2}$ defined in our general context, we need to introduce a new definition to generalize a diagram being blue twist reduced. As we already have a notion of weakly twist reduced, we will model our definition off of this.

\begin{defn}
A diagram of a link is \textit{weakly blue twist reduced} if every disk $D$ in $F$ with $\bndry D$ meeting $\pi(L)$ in exactly two crossings, with sides on the blue checkerboard surface, either bounds a string of red bigons, or there is a disk $D'$ in $F$ meeting the diagram at the same crossings that bounds a string of red bigons.
\end{defn}

Next, we will generalize Lemma 3.4 of ~\cite{ETS}, using this adapted definition instead of the usual blue twist reduced definition.
\begin{lem}
\label{lem:wBluTwistReduced}
$\pi(K_{B,2})$ is weakly blue twist reduced. 
\end{lem}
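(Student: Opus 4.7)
The plan is to reduce the weakly blue twist reduced property of $\pi(K_{B,2})$ to the weakly twist reduced hypothesis of $\pi(K)$, via a lifting argument followed by a coloring analysis of twist regions. Suppose $D \subset F$ is any disk whose boundary meets $\pi(K_{B,2})$ transversely in exactly two crossings $c_1, c_2$ with the two arcs of $\partial D \setminus \{c_1, c_2\}$ lying on the blue checkerboard surface. My first step is to realize $\partial D$ as a curve in $F$ that also meets $\pi(K)$ transversely in exactly $\{c_1, c_2\}$. This is automatic away from the modified twist regions, since $\pi(K_{B,2})$ and $\pi(K)$ agree there. Inside each disk $T_j \subset F$ housing a modified twist region, the four strands on $\partial T_j$ and the two outer ``side'' regions are common to both diagrams, so any arc of $\partial D$ entering $T_j$ without carrying a $\pi(K_{B,2})$-crossing can be isotoped to remain in a side region, hence picks up no new crossings of $\pi(K)$. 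In the remaining case where some $c_i$ lies inside $T_j$, the constraint that only $1$ or $2$ crossings of $\pi(K_{B,2})$ survive inside $T_j$, together with the fact that the end crossings of the underlying $\pi(K)$-twist region lie in a common $T_j$, forces both $c_1, c_2$ into the same $T_j$, leaving a direct verification.

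With $\partial D$ so lifted to $\pi(K)$, the weakly twist reduced hypothesis produces either $D$ itself or a disk $D' \subset F \setminus D$ with $\partial D'$ passing through $c_1, c_2$, whose interior consists only of bigon faces of $F \setminus \pi(K)$. This bigon strip forms a single twist region of $\pi(K)$ with $c_1$ and $c_2$ as its endpoint crossings. Applying the standard checkerboard analysis to an alternating twist region, all interior bigons share one color, while the two side faces running the length of the twist region share the opposite color. Since the two arcs of $\partial D$ lie on the blue checkerboard surface and, being disjoint from the strands of $\pi(K)$, must pass through exactly these side faces to connect $c_1$ and $c_2$, the sides are blue, forcing the interior bigons to be red.

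Finally, returning from $\pi(K)$ to $\pi(K_{B,2})$, the augmentation removes pairs of crossings in this twist region via Reidemeister II moves, which delete bigons in pairs while preserving the color of the survivors. At least one bigon remains in the reduced twist region, because both $c_1$ and $c_2$ must survive to $\pi(K_{B,2})$ as distinct crossings. The corresponding disk in $\pi(K_{B,2})$ (either $D$ or $D'$) therefore has interior forming a nonempty string of red bigons, which is exactly the weakly blue twist reduced conclusion. The main obstacle I expect is the lifting step in the first paragraph, specifically ensuring that arcs of $\partial D$ inside a modified twist region disk $T_j$ indeed lie in (or can be isotoped to lie in) side regions without introducing spurious $\pi(K)$-crossings. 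I plan to handle this using the standing hypotheses $e(\pi(K),F) \ge 4$ and $r(\pi(K),F) > 4$, inherited by $\pi(K_{B,2})$ via Proposition ~\ref{prop:augment}, together with Lemma ~\ref{lem:opagree} controlling how blue and red regions meet at the crossings adjacent to $T_j$.
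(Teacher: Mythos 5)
Your overall strategy matches the paper's: lift $\partial D$ from $\pi(K_{B,2})$ to $\pi(K)$, invoke the weakly twist reduced hypothesis on $\pi(K)$, argue the bigons must be red because the arcs of $\partial D$ lie in blue regions, and then observe that de-augmentation leaves at least one red bigon because $c_1, c_2$ both survive as distinct crossings. The middle paragraph (coloring of the bigon strip) and the final paragraph (the ``at least one bigon remains'' count) are both correct and are essentially what the paper does.

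The lifting step in your first paragraph, however, contains a claim that is both unjustified and not needed. You assert that if some $c_i$ lies inside a modified twist region disk $T_j$, then ``the constraint that only $1$ or $2$ crossings of $\pi(K_{B,2})$ survive inside $T_j$, together with the fact that the end crossings of the underlying $\pi(K)$-twist region lie in a common $T_j$, forces both $c_1, c_2$ into the same $T_j$.'' Neither stated ``fact'' implies this conclusion; nothing so far prevents one arc of $\partial D$ from exiting $T_j$ through a blue side region and traveling to a crossing $c_2$ far away. What actually makes the lift go through is a simple color observation that you essentially already have in hand: the crossing circles of $L_B$ meet the blue surface, so the bigons of each modified twist region $T_j$ are \emph{red} and the two long side regions are \emph{blue}. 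The arcs of $\partial D$, lying in blue faces, are therefore automatically confined to the side regions near $T_j$, and the reverse Reidemeister~II moves that rebuild $\pi(K)$ occur entirely in the red bigon column between the strands. After a small isotopy pushing $\partial D$ off the thin neighborhood in which those moves are performed, $\partial D$ is disjoint from the red surface of $\pi(K)$ and meets $\pi(K)$ only at $c_1, c_2$ --- this is exactly the paper's terse ``isotope $\partial D$ away from any crossing disk,'' and it works uniformly, with no case split on whether $c_2$ lies in $T_j$. The standing hypotheses $e(\pi(K),F)\ge 4$ and $r(\pi(K),F)>4$ and Lemma~\ref{lem:opagree} are not needed here, so the resolution you foreshadow in your last sentence is misdirected. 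If you delete the false ``forces both $c_1,c_2$ into the same $T_j$'' sentence and instead spell out the color argument above, the lifting is clean and your proof matches the paper's.
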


\begin{proof}
Suppose a disk $D$ in $F$ with $\bndry D$ meeting only the blue regions, and intersecting $\pi(K_{B,2})$ in exactly two places. We can isotope $\bndry D$ away from any crossing disk so that when we add back in crossings to get $\pi(K)$, $\bndry D$ remains disjoint from the red surface. Because $\pi(K)$ is weakly twist reduced, either $D$ bounds a string of bigons, or there is another disk $D'$ in $F - D$, with $\bndry D'$ meeting the diagram in the same two crossings, that bounds a string of bigons. If it is the later, then note that $\bndry D'$ also only meets the blue region. In either case, these bigons must all be red bigons. Without loss of generality, suppose $D$ contains the red bigons.

Because $\pi(K)$ is weakly twist reduced, all of the red bigons must come from the same twist region. When we remove some bigons to get back to $\pi(K_{B,2})$, we either don't touch any of the bigons inside $D$, in which case we are done, or we remove all but one or two crossings from the region. If this is the case, then we have to have removed all but two crossings - $D$ intersects two distinct crossings, so there must be at least two distinct crossings left in the region. But then $D$ contains a single red bigon, and we are done.
\end{proof}

\section{Colored Graphs and Twisted Surfaces}

In this section,  we will prove a series of technical lemmas that we will need for the proof of Theorem ~\ref{thm:essential}. The lemmas aim to analyze   how certain disks might intersect our twisted surfaces. The combinatorics of these intersections will be encoded by certain colorer graphs planar graphs. 
Before we can begin our analysis, we need to establish some notation and terminology.
  
\subsection{Blue and red graphs}

Recall that we need to prove that the immersed blue and red surfaces $S_{B,2}$ and  $S_{R,2}$ are $\pi_1$-injective and boundary $\pi_1$-injective in $Y\setminus K$.

Working with both $S_{B,2}$ and  $S_{R,2}$  simultaneously will be difficult. Instead, we will take advantage of the fact that the construction of one surface doesn't involve the other. That is, we could have constructed, say, $S_{B,2}$, the blue twisted surface, without ever having mentioned the red surface. The only part that would change is where we put the crossing circles---in order to not disturb the red surface, a crossing circle can only be added to a highly twisted region if it will intersect the blue surface. Thus in our proofs, we will often be working with the $S_{B,2}$ and $R$, where $R$ is the usual red checkerboard surface for $\pi(K)$.

Recall the map $f:S_{B,2}\to Y\setminus K$ from the statement of Theorem ~\ref{thm:essential}.
 If $f$  is not  $\pi_1-$injective, then we have a disk $\phi:D\to Y\setminus K$ such that $\phi|_{\bndry D} = f\circ l$, where $l$ is an essential loop on $S_{B,2}$.
 
\begin{defn}
Let $\Gamma_B = \phi^{-1}(f(S_{B,2}))$ the pre-image of $f(S_{B,2})$ under $\phi$. 
\end{defn}

By working with $\Gamma_B$ will study the intersection $S_{B,2}$ intersects the disk $\phi(D)$.
By  Lemma 2.1 of ~\cite{ETS}, which also applies in our situation without changes, we have the following: 
\begin{itemize}
\item 
$\Gamma_{B}$ consists of a graph and a collection of simple closed curves. 
\item The vertices of the graph are points mapped to crossing circles of $K$. 
\item Each interior vertex of the graph  has valence a multiple of $2n_j,$ where $2n_j$ is the number of crossings removed from the twist region of $\pi(K)$ that corresponds to the twist region associated to the relevant crossing circle.
Each boundary vertex has valence $n_j+1$.
\end{itemize}

While the arguments from ~\cite{ETS}
are given in there for $Y=S^3$ and $F=S^2$, as long as our $Y$ has no boundary components, many of the proofs require no modification. 
This is because, in many cases, the proofs are ``local,`' and only reference a small neighborhood of where we are working. In this paper, instead of reproving everything, we will instead focus on the cases where the same proof doesn't work, requiring us to do something different, often either a completely different proof, or showing that the cases that occur when $Y\neq S^3$ or $F\neq S^2$ cannot happen.

To continue, recall the links $L_B$, $L_{B,2}$, and $K_{B,2}$ that we introduced in Section 4 before the statement of Lemma ~\ref{lem:opagree}. We now have three surfaces to consider---$B_2$ and $S_{B,2}$ in $Y\setminus L_{B,2}$ and $Y\setminus K$, respectively; $R_2$, the red checkerboard surface for $K_{B,2}$; and the crossing disks bounded by the crossing circles of $L_{B,2}$, which we color green. This also gives us three graphs to look at.

\begin{defn}
 First, $\Gamma_B$ is the same as defined above. Next, if we remove the vertices of $\Gamma_B$ (which map to crossing circles) from the disk $D$, we get an embedding of the punctured disk
  $\phi':D'\to Y\setminus L_{B,2}$. Let $\Gamma_{BRG}$ denote the pull-back of the union of all three surfaces via $\phi'$.
  Finally, by ignoring the edges and vertices of $\Gamma_{BRG}$ coming from the green surface, we get $\Gamma_{BR}$. Note that $\Gamma_B\subset \Gamma_{BR}\subset \Gamma_{BRG}$.
\end{defn}

\begin{figure}
	\centering
		\def\svgwidth{\columnwidth}
		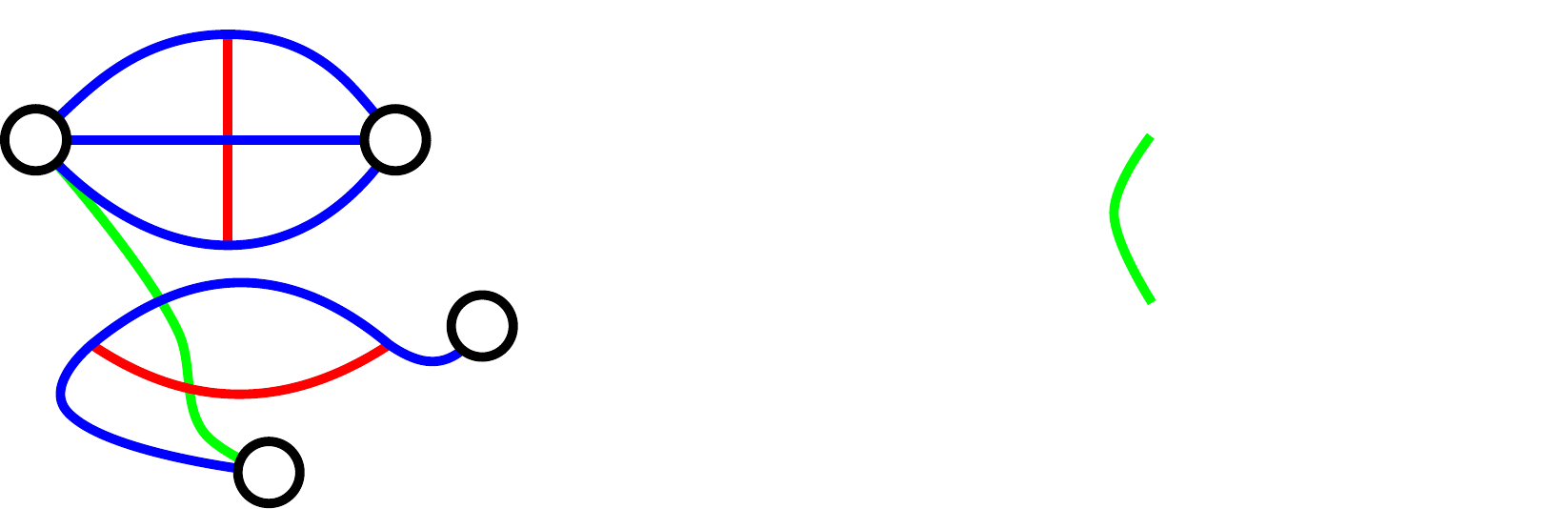

		\caption{\textit{(Left)} A possible configuration for $\Gamma_{BRG}$. \textit{(Right)} How $\Gamma_{BRG}$ would look when imposed on the diagram. The labeled blue edges in the graph map to the corresponding labeled arcs in the diagram.}
\end{figure}

We will be looking for what are called trivial bigons in  defined as follows in ~\cite{ETS}:

\begin{defn}
An edge of $\Gamma_{BRG}$ is \textit{trivial} if it is a blue arc disjoint from the red edges with endpoints distinct vertices of $\Gamma_B$ corresponding to the same crossing circle in $L_B$.
\end{defn}

An important lemma relating to trivial arcs is the following:

\begin{lem}[Lemma 4.11 ~\cite{ETS}]
If all but one of the edges of a region in $\Gamma_B$ are trivial, then the remaining edge is also trivial
\end{lem}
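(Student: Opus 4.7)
\medskip

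\noindent\textbf{Proof plan.} The plan is to verify directly that the remaining edge $e$ satisfies the two defining conditions of a trivial edge: its endpoints sit at distinct vertices of $\Gamma_B$ mapping to a common crossing circle of $L_B$, and it is disjoint from all red edges of $\Gamma_{BR}$.

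First I would record the combinatorial structure. Let $R$ be the region of $\Gamma_B$ in question; it is a subdisk of $D$ whose boundary walk reads off as an alternating sequence of vertices $v_0, v_1,\ldots,v_{k-1}$ and edges, with $e_i$ joining $v_{i-1}$ and $v_i$ for $1 \le i \le k-1$ (all trivial) and $e$ joining $v_{k-1}$ and $v_0$. Since each $e_i$ is trivial, the vertices $v_{i-1}$ and $v_i$ correspond to the same crossing circle of $L_B$. Chaining this through $i=1,\ldots,k-1$, all vertices $v_0,\ldots,v_{k-1}$ correspond to the same crossing circle $C$. In particular the endpoints of $e$ lie on $C$, and they are distinct vertices of $\Gamma_B$ since they are distinct points of $\partial R \subset D$. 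This gives the second condition in the definition of a trivial edge.

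Next I would show that $e$ is disjoint from the red edges. Assume for contradiction that some red edge of $\Gamma_{BR}$ meets $e$. Since each $e_i$ ($i<k$) is trivial and therefore disjoint from red edges, any red arc of $\Gamma_{BR}$ that meets the interior of $R$ must enter and exit $R$ only through $e$. Among all intersections of red edges with $e$, choose one that is innermost on $e$: it cuts off a subdisk $R' \subset R$ whose boundary is a subarc $\alpha$ of $e$ together with a subarc $\beta$ of a red edge, with no further red--$e$ intersections inside $R'$. The image $\phi(R')$ is a bigon in $Y \setminus K$ (in fact in $Y \setminus L_{B,2}$ after removing the vertices) with $\alpha$ mapped into the blue surface and $\beta$ mapped into the red surface $R_2$. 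I would then use the fact that $B_2$ and $R_2$ are the checkerboard surfaces of the weakly generalized alternating, weakly prime, weakly blue twist reduced diagram $\pi(K_{B,2})$ (Proposition~\ref{prop:augment} and Lemma~\ref{lem:wBluTwistReduced}), together with Lemma~\ref{lem:opagree} on adjacent regions, to rule out such an innermost bigon: tracking where $\alpha$ and $\beta$ sit in the diagram forces either a blue--blue identification across a crossing, a disk violating weak primeness, or an essential curve on $F$ meeting $\pi(K_{B,2})$ too few times, contradicting $e(\pi(K_{B,2}),F)\ge 4$. This contradiction shows $e$ is disjoint from every red edge.

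Combining the two conclusions, $e$ is a blue arc disjoint from red edges whose endpoints lie at distinct vertices of $\Gamma_B$ mapping to the same crossing circle, so $e$ is trivial. The main obstacle is Step~2: the combinatorial first step is essentially automatic from the definition, but ruling out the innermost red--blue bigon requires carefully invoking the structural properties of $\pi(K_{B,2})$ (weak primeness, edge representativity at least $4$, weak blue twist reducedness) established in Section~4, rather than any single clean topological lemma.
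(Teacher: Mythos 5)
The paper itself supplies no proof here; it simply cites Lemma~4.11 of Lackenby--Purcell~\cite{ETS}, so the comparison has to be against the argument they rely on. Your two-part plan --- chain the trivial edges to place every vertex on $\partial R$ at a single crossing circle, then rule out red arcs on $e$ via an innermost-disk argument --- is the natural route, and as far as I can tell it reproduces the \cite{ETS} argument in spirit.

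Your Step~2 is correct, and once you have the innermost region $R'$ you do not need the elaborate case analysis you sketch: $R'$ has one side a subarc of $e$ (blue), one side a subarc of a red edge, and interior disjoint from blue and red (blue because $R$ is a region of $\Gamma_B$; red by the innermost choice), so it is precisely a blue-red bigon of $\Gamma_{BR}$, which the paper rules out in Lemma~\ref{lem:BlueRedBigon} using weak primality of $\pi(K_{B,2})$ together with $e(\pi(K_{B,2}),F)\ge 4$. That lemma appears later in Section~5 but is proved independently, so it can be cited directly; the appeal to Lemma~\ref{lem:opagree} and weak blue twist reducedness is not needed for this step.

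There is a genuine gap in Step~1. You assert that $v_0$ and $v_{k-1}$ are ``distinct vertices of $\Gamma_B$ since they are distinct points of $\partial R$,'' but distinct positions in the boundary walk of a face of a planar graph need not be distinct vertices: faces can revisit a vertex. If $v_0=v_{k-1}$ then $e$ is a loop at a single vertex of $\Gamma_B$, and the ``endpoints distinct vertices'' clause in the definition of a trivial edge would fail, so the conclusion does not follow from the chaining alone. This case has to be excluded. One route: after Step~2 establishes that $e$ is disjoint from red, a loop $e$ at a single vertex would have its two ends on the two sides of the corresponding crossing circle, forcing the two blue regions meeting that crossing circle to agree and contradicting Lemma~\ref{lem:opagree}(3) --- essentially the same argument the paper uses to prove $\Gamma_B$ has no monogons (Lemma~\ref{lem:nomono}). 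You should make this explicit rather than treating distinctness as automatic.
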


In particular, this means if one edge of a blue bigon is trivial, then the other one must be as well. This will allow us to divide families of adjacent bigons into two cases, trivial and non-trivial. Then, as shown in Lemma 2.6 of ~\cite{ETS}, $\Gamma_B$ must have more than $(N_{tw}/18)-1$ adjacent non-trivial bigons. So, if we can show that there must be less than $5\le (121/18)-1$, then we get a contradiction. As our only assumption was that $S_{B,2}$ was not essential, we will have that our twisted surface must be essential.

\begin{lem}[Lemma 2.6, ~\cite{ETS}]
\label{lem:minbig} 
The graph $\Gamma_B$ must have more than $(R_{tw}/18)-1$ adjacent non-trivial bigons, where $R_{tw}$ is the minimum number of crossings removed from a twist region.
\end{lem}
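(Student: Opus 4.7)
The plan is to adapt the Euler characteristic argument from Lemma 2.6 of ~\cite{ETS}. We work with $\Gamma_B$ viewed as a CW decomposition of a subdisk of $D$, and argue by contradiction: suppose that every maximal chain of adjacent non-trivial bigons in $\Gamma_B$ has at most $(R_{tw}/18)-1$ bigons, and derive that this is incompatible with the valence structure of $\Gamma_B$.

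First I would record the Euler formula $V - E + F = 1$, keeping boundary and interior vertices separated. The key combinatorial input, recalled earlier in this subsection from the discussion following Definition of $\Gamma_B$, is that each interior vertex corresponds to a crossing circle from which $2n_j$ crossings were removed, so its valence is a positive multiple of $2n_j \geq R_{tw}$, while each boundary vertex has valence $n_j+1$. The handshake identity $\sum_v \deg(v) = 2E$ therefore forces $E$ to grow at least linearly in $R_{tw}$ times the number of interior vertices. Next, I would partition the interior faces of $\Gamma_B$ into bigons and faces with at least three sides, and double-count edge--face incidences to obtain $2E \geq 2F_2 + 3F_{\geq 3}$ up to boundary corrections. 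Plugging this into Euler's formula isolates a lower bound for $F_2$ that is linear in $R_{tw}$. Invoking Lemma 4.11 of ~\cite{ETS} (already quoted), every edge of a trivial bigon is trivial, so trivial bigons segregate into their own chains and the same lower bound, after adjusting constants, applies to the number of non-trivial bigons.

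Finally, bigons assemble into maximal chains whose endpoint vertices are the places where at least three non-bigon edges diverge; the number of such endpoint-vertices, and hence of distinct chains, is controlled by $V$ together with the valence data. Under the contradictory hypothesis that every non-trivial chain has fewer than $R_{tw}/18$ bigons, pigeonholing the lower bound from the face count across the permissible number of chains produces strictly fewer non-trivial bigons than the lower bound allows, the desired contradiction. The hard step will be tracking the constant $18$: it emerges from a careful apportionment, at each interior vertex, between edges inside bigon chains and edges bounding larger faces, and from the accounting for boundary vertices of valence $n_j+1$. Crucially, every step of this argument is local to $\Gamma_B \subset D$ and does not reference the ambient manifold $Y$ or the projection surface $F$, so the proof of ~\cite{ETS} carries over verbatim once the stated valence and vertex conditions for $\Gamma_B$ are verified in our setting, which was done in the previous subsection.
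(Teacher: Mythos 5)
Your proposal is correct and arrives at the paper's proof by the same route: the ETS Lemma~2.6 argument is purely graph-theoretic, depends only on the combinatorics of $\Gamma_B$ as a graph in a disk with the stated valence conditions, and never references $Y$ or $F$, so it carries over unchanged once those conditions are verified in the new setting --- which is precisely what the paper's one-paragraph proof asserts. Your speculative reconstruction of the ETS internals as a direct Euler-formula count is not fully verified (you flag the constant~$18$ as the hard step), and the paper instead describes the ETS proof as a reduction via subdisks, subgraphs, and removal of trivial bigon families; but since both you and the paper ultimately defer to ETS for the actual derivation, this discrepancy in the sketch does not affect the soundness of the conclusion.
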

\begin{proof} The proof of Lemma 2..6 in ~\cite{ETS}
does not actually rely on either our knot or twisted surface, but is based purely on facts about the graph itself, and involves restricting to subdisks and subgraphs and removing trivial bigon families until we can get our result. As such, the proof and the lemma will continue to work  in our setting, as well.
\end{proof}

To continue recall that we must prove that $f:S_{B,2}\to Y\setminus \mathrm{int}(N(K))$ in the statement of Theorem ~\ref{thm:essential} 
is boundary $\pi_1$-injective: If not, then we can get a disk $\phi':D'\to Y\setminus \mathrm{int}(K)$, where $\bndry D'$ is a concatenation of two arcs, one mapped into $\bndry N(K)$ and the other essential in $S_{B,2}$ ~\cite[Lemma 2.2]{ETS}. 
\begin{defn} Define $\Gamma_B'$ to be the pull-back of  $f(S_{B,2}))$ via $\phi'$. That is,
 $\Gamma_B' = \phi'^{-1}(f(S_{B,2}))$.
\end{defn}

Once again the properties of  $\Gamma_B'$ remain the same if we replace $S^3\setminus K$ from~\cite{ETS} with
$Y\setminus K$. That is, the interior vertices of $\Gamma_B'$ have valence $2n_j$, while exterior vertices have valence $n_j+1$ (if they come from a crossing circle) or 1 (if the vertex maps to $\bndry N(K)$). 
In particular, the following lemma from ~\cite{ETS}  works also in our setting.

\begin{lem}[Lemma 2.7 ~\cite{ETS}]
\label{lem:mintri}
The graph $\Gamma_B'$ must have more than $(R_{tw}/18)-1$ adjacent non-trivial bigons or more than $(R_{tw}/18)-1$ adjacent triangles, where one edge of each triangle lies on $\phi'^{-1}(\bndry N(K))$.
\end{lem}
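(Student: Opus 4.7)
The plan is to reproduce the Euler-characteristic and face-counting argument that Lackenby and Purcell use to establish their Lemma 2.7 in~\cite{ETS}, checking as we go that each step uses only the combinatorics of $\Gamma_B'$ on the disk $D'$ and the local vertex-valence data, and therefore carries over verbatim to the present setting. The input we may invoke is exactly as in~\cite{ETS}: interior vertices of $\Gamma_B'$ have valence at least $R_{tw}$, crossing-circle boundary vertices have valence at least $R_{tw}/2+1$, and the boundary vertices created where blue edges terminate on $\phi'^{-1}(\bndry N(K))$ have valence $1$.

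I would argue by contradiction, assuming that both the longest chain of adjacent non-trivial bigons and the longest chain of adjacent triangles with one edge on $\phi'^{-1}(\bndry N(K))$ have at most $(R_{tw}/18)-1$ elements. The first reduction is to collapse every family of trivial bigons; by the trivial-edge lemma cited before Lemma ~\ref{lem:minbig} this does not introduce new small faces, and it leaves us with a graph all of whose bigons are non-trivial. Passing to an innermost subdisk that contains a maximal non-trivial bigon chain or a maximal chain of boundary triangles allows the argument to be iterated in standard fashion.

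Next, I would combine Euler's formula $V-E+F=1$ on the disk with the handshake identities $2E=\sum_{v}\deg(v)=\sum_{f}\deg(f)$. The vertex sum gives $2E$ at least a fixed fraction of $R_{tw}\cdot V$ once the few valence-one vertices are absorbed into the count. In the face sum, only the non-trivial bigons (contributing $2$) and the boundary triangles (contributing $3$) escape the bound $\deg(f)\ge 4$. Under the contradiction hypothesis, the total contribution of these small faces is of order at most $R_{tw}/9$, so Euler's formula forces $V$ and $E$ to be bounded in terms of $R_{tw}$ in a way that contradicts the lower bound on $2E$ coming from the vertex valences, provided $R_{tw}\ge 121$.

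The main obstacle, compared to Lemma ~\ref{lem:minbig}, is the presence of the valence-one vertices on $\phi'^{-1}(\bndry N(K))$; these force triangular, rather than purely bigonal, small faces and so require the bigon chain and the triangle chain bounds to be handled symmetrically and simultaneously in the induction. The required bookkeeping---updating which arcs of $\bndry D'$ remain on the knot boundary after removing a short chain of bigons or triangles, and verifying that the reduction keeps the valence-one vertex count under control---is exactly what is carried out in~\cite{ETS}, and at no point invokes a property particular to $S^3$ or $S^2$, so the conclusion transfers without modification.
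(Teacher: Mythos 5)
Your proposal matches the paper's approach: the paper simply cites Lemma 2.7 from \cite{ETS} without reproving it, relying on the observation (made explicit for Lemma~\ref{lem:minbig}) that the Lackenby--Purcell argument is purely combinatorial on the graph $\Gamma_B'$ in the disk and hence independent of whether $Y=S^3$ or $F=S^2$. Your reconstruction of that Euler-characteristic and face-counting argument, together with the correct identification of the valence data (interior vertices, crossing-circle boundary vertices, and valence-one vertices on $\phi'^{-1}(\bndry N(K))$) as the only inputs, is exactly the justification the paper leaves implicit.
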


\begin{rem}
There are a couple of important things to note about the last two lemmas. First, while $R_{tw}$ is not necessarily equal to $N_{tw}$, we do know that $R_{tw}\ge 2\lceil N_{tw}/2 \rceil -2$. With $N_{tw}\ge121$, we get $R_{tw}\ge 120$, so there must be more than 5 adjacent non-trivial bigons in $\Gamma_B$. Second, our choice of $N_{tw}\ge 121$ is not optimal for these lemmas. In fact, the proof that $S_{B,2}$ is essential only needs $N_{tw}\ge 91$, so we have at least 5 adjacent bigons, or 5 adjacent triangles (we will prove this in the next section). The choice of 121 comes from a modification that will get us our final proof, Theorem ~\ref{thm:homarcs}.
\end{rem}

\subsection{Bigons in $\Gamma_{BRG}$} 

Here we will study the structure and combinatorial properties of the graphs $\Gamma_B$, $\Gamma_{BR}$ and $\Gamma_{BRG}$ defined in the previous subsection.
From the last subsection we know that these graphs
have a certain number of adjacent non-trivial (blue) bigons, we will be looking at how the disk must intersect other surfaces near the blue surface. Our goal here is to study the possible configurations of these bigons and successively rule out several of then. By the end of this section, we will be left with only one outcome---the family of bigons must intersect the red surface at least twice.

Recall that $\Gamma_B$ is the pull back of $S_{B,2}$ of a certain map
$\phi: D\to Y\setminus K$ and that  $\Gamma_{BRG}$ is the pull-back of $S_{B,2}$, as well as the red surface $R_2$ and the green surface, of a map $\phi: D' \to Y\setminus L_{B,2}$ and that $\Gamma_{BR}$
is obtained by  deleting the green edges of $\Gamma_{BRG}$. Below we summarize some basic facts about these graphs that we will be using:
\begin{itemize}

\item As outside of the twisted parts, the twist surfaces are embedded, the edges of $\Gamma_B$ (where the disk meets the blue surface) can only meet each other at the twisted components (where the crossing circles of $L_{B,2}$ intersected our blue surface). As such, the vertices of $\Gamma_B$ are exactly the points that map to crossing circles of $L_{B,2}$. This means that, if two blue edges are adjacent to each other on a vertex, it must be that they are on opposite sides of the crossing circle (as they must meet on the twisted part). 

\item In addition to what we have for $\Gamma_B$, we also know that red edges can't intersect red edges, nor can green edges intersect green edges (as both red and green surfaces are embedded). 

\item Red edges can meet green edges (at the crossing disk). Likewise, green edges can meet blue edges at vertices of $\Gamma_B$ (where the crossing disk meets $S_{B,2}$), and red edges can meet blue edges at crossings of the diagram. In addition, as $D'$ crosses the projection surface when it meets a blue or red edge, regions of $D'\setminus \Gamma_{BR}$ are mapped above or below the projection surface, switching from one to the other when they meet one of these edges.

\item Using a complexity-minimizing argument, where complexity is measured by the number of vertices in each graph, we get that there are no green edges, disjoint from blue, with both endpoints on red; and no green edges, disjoint from red, with both endpoints on a blue edge and one endpoint not a vertex. In addition, green edges cannot have both of their endpoints on the same blue vertex. In all three of these cases, we can use the arcs in question, in addition to the crossing disk, to find a homotopy that reduces how many (non-blue) vertices are in our graphs. So as long as we assume our disk's image, $\phi(D')$, and the induced graphs are minimal, these cannot happen.
\end{itemize}

As before, the lemmas   are generalizations of the case when $F=S^2$ and $Y=S^3$. Most of the generalizations are relatively minor, with only a few complications brought in by the fact that we aren't working on a sphere anymore. For completeness sake, we will also talk about the proofs that still hold, although in much broader and looser terms. For the complete picture, refer to ~\cite{ETS}.

The configuration---a blue-red bigon---needs a separate proof. We deal with this as follows:

\begin{lem}
\label{lem:BlueRedBigon}
The graph $\Gamma_{BR}$ has no bigons with one blue side and one red, whether or not the bigon meets the green surface.
\end{lem}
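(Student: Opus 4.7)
The plan is to assume, for a contradiction, that $\Gamma_{BR}$ admits a blue-red bigon, chosen innermost among such bigons, so the interior of the corresponding subdisk $\Delta \subset D'$ is disjoint from the remaining edges of $\Gamma_{BR}$. The boundary $\partial \Delta$ consists of a blue arc $\beta$ and a red arc $\rho$ meeting at two vertices $v_1, v_2$. Since the blue surface $B_2$ and the red surface $R_2$ intersect precisely along the crossing arcs of $K_{B,2}$, each $v_i$ is mapped by $\phi'$ onto a crossing arc, and hence corresponds to a (possibly common) crossing $c_i$ of $\pi(K_{B,2})$.

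To convert the bigon into combinatorial data on $F$, I would homotope $\phi'(\beta)$ rel endpoints inside $B_2$ onto an embedded arc $\beta' \subset F$ lying in the closure of the blue regions of $F \setminus \pi(K_{B,2})$ and running from $c_1$ to $c_2$, and analogously homotope $\phi'(\rho)$ onto an arc $\rho' \subset F$ through the red regions. The union $\gamma = \beta' \cup \rho'$ is then a closed curve on $F$ meeting $\pi(K_{B,2})$ only at $c_1$ and $c_2$. Because $\gamma$ enters each $c_i$ through a blue quadrant and exits through a red one, a canonical small push-off across exactly one strand at each crossing produces a transversal curve $\tilde\gamma$ with $|\tilde\gamma \cap \pi(K_{B,2})| = 2$.

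The next step is to derive the contradiction from the hypotheses inherited by $K_{B,2}$ via Proposition~\ref{prop:augment}. Since $e(\pi(K_{B,2}),F) \ge 4 > 2$, the curve $\tilde\gamma$ cannot be essential on $F$ and so bounds a disk $D \subset F$; weak primeness of $\pi(K_{B,2})$ then forces $\pi(K_{B,2}) \cap D$ (or, if $F = S^2$, one of the two complementary sides of $\tilde\gamma$) to be a single arc of $\pi(K_{B,2})$. Tracking how this single arc meets the boundary pieces $\beta'$ and $\rho'$ at the crossings $c_1$ and $c_2$, I would then exhibit a blue region (or a red region) of $F \setminus \pi(K_{B,2})$ appearing on opposite sides of one of these crossings, directly violating Lemma~\ref{lem:opagree}(1) or (2).

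The main obstacle I expect is the case analysis in this last step: one must verify that the canonical push-off direction at each of the two crossings genuinely produces a transverse curve of the claimed type, and then extract the identification of opposite-color regions at a single crossing from the single-arc conclusion. Two edge cases also need individual attention. When $c_1 = c_2$, the construction collapses onto a monogon-type configuration, but the same intersection-count analysis still yields a contradiction with Lemma~\ref{lem:opagree}. When the interior of $\Delta$ meets green edges of $\Gamma_{BRG}$, the complexity-minimizing conventions recalled at the start of this section preclude green arcs with both endpoints on red, or with both endpoints on a single blue vertex; the remaining possibilities then allow a homotopy of $\phi'$ across the associated crossing disk that strictly reduces the vertex count of $\Gamma_{BRG}$, contradicting minimality and reducing us to the case where $\Delta$ is disjoint from the green surface.
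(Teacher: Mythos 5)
Your proposal shares the paper's basic strategy: glue the two sides of the bigon into a closed curve $\gamma$ on $F$ passing through the two crossings, use $e(\pi(K_{B,2}),F)\ge 4$ to conclude $\gamma$ bounds a disk, and then invoke weak primeness. That much is correct and matches the paper.

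Where you diverge is the endgame, and this is where I think there is a gap. The paper observes that because the bigon region of $D'\setminus\Gamma_{BR}$ maps entirely above (or below) the projection surface, the disk $D$ bounded by $\gamma$ contains a crossing on its interior side---this is the force of the phrase ``with a crossing on either side.'' Weak primeness is then violated \emph{directly}: $\pi(K_{B,2})\cap D$ contains a crossing and so cannot be a single embedded arc. You instead take the conditional in the opposite direction: you \emph{grant} that $\pi(K_{B,2})\cap D$ is a single arc and then try to extract a violation of Lemma~\ref{lem:opagree}(1) or (2). But the single-arc configuration you are left with---the arc running between the two crossings, bounded by the blue region carrying $\beta'$ on one side and the red region carrying $\rho'$ on the other---does not, on its face, force a blue region to agree with itself on opposite sides of a crossing. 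Lemma~\ref{lem:opagree} concerns diagonally opposite regions at a single crossing, and your arc-tracking description gives no mechanism that identifies $B$ with its diagonal partner $B'$ (or $R$ with $R'$). You flag this as the main obstacle, and I agree: that step is not justified, and it is the crux, so the proposal as written is incomplete.

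The right fix is to instead establish the paper's geometric claim that the disk $D$ lies on the side of $\gamma$ containing a crossing. This comes from the fact that the bigon lives in a single chunk (above or below $F$) and its boundary arcs approach the crossing arcs from that same side, so the shadow of the bigon necessarily sweeps over the over- (resp.\ under-) strand at each crossing endpoint. Once $D\supset\{c_i\}$ for some $i$, weak primeness is violated immediately and there is no need for the detour through Lemma~\ref{lem:opagree}. Your extra care with the innermost choice, the $c_1=c_2$ degenerate case, and the green edges is reasonable (the lemma statement explicitly allows the bigon to meet green), though the paper handles these implicitly since green edges do not change the projection $\gamma$; these additions are not where the difficulty lies.
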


\begin{proof}
If there is a bigon with only one red side and one blue side, then it must be mapped completely above or completely below the projection surface. Without loss of generality, we may assume it is mapped above. Then the union of the two sides of our bigon will form a simple closed curve $\gamma$ meeting $\pi(K_{B,2})$ exactly twice with a crossing on either side. If this curve bounds a disk, we contradict $\pi(K_{B,2})$ being weakly prime. Because we are not working on the sphere, though, we are not guaranteed this. Instead, we must use the fact that the edge representativity (see Definition ~\ref{def:rep}) of $\pi(K_{B,2})$ is at least four. If $\gamma$ does not bound a disk, then we have an essential curve intersecting our knot transversely exactly twice, a contradiction of ~\ref{lem:edgerep}. So then $\gamma$ must bound a disk with a crossing on either side, and so we get our desired contradiction.
\end{proof}

Putting our observations above together, we get two important results. First, our disk $D$ only meets the blue surface in ``interesting'' places---that is, near or through the crossing disks. Second, there are no monogons in $\Gamma_B$: any that existed couldn't meet either the red or green surfaces, as mentioned above, and so would have to connect two opposite sides of the crossing circle. However, by Lemma ~\ref{lem:opagree}(3), this can't happen. While this later is not used immediately, we will use it later on, and so state it as a lemma:

\begin{lem}[ Lemma 4.7 ~\cite{ETS}]
\label{lem:nomono}
The graph $\Gamma_B$ has no monogons.
\end{lem}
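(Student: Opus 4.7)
My plan is a proof by contradiction. Suppose $\Gamma_B$ contains a monogon and choose one whose bounded disk $\Delta\subset D$ is innermost. Let $e$ be its edge and $v$ the vertex where both endpoints of $e$ meet, corresponding to a crossing circle $c$ of $L_{B,2}$. The first step is to show that $\Delta$ may be taken to contain no red or green edges of $\Gamma_{BRG}$ in its interior. Indeed, any red edge in $\Delta$ with both endpoints on $e$ would form a blue--red bigon, contradicting Lemma~\ref{lem:BlueRedBigon}, while the complexity-minimizing observations at the start of this subsection rule out the remaining configurations of green edges (both endpoints on red disjoint from blue, both endpoints on a blue edge disjoint from red, or both endpoints at the vertex $v$). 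Repeated innermost arguments then allow all such edges to be removed from $\Delta$ without destroying the monogon.

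Next I would analyze how the two ends of $e$ approach $v$. By Lemma~2.1 of~\cite{ETS} and the construction of the twisted surface $S_{B,2}$, adjacent edges at $v$ are always on opposite sides of the crossing disk of $c$, as noted in the bulleted observations of this subsection. Since the two ends of the monogon's edge occupy consecutive positions in the cyclic order around $v$ (they bound $\Delta$), they exit $v$ on opposite sides of the crossing disk; equivalently, $\phi(e)$ has its two ends in the two blue regions $R_1, R_2$ of $F\setminus \pi(K_{B,2})$ meeting the crossing circle $c$.

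Finally, with $\Delta$ free of red and green edges, $\phi(e)$ is an arc on the blue checkerboard surface of $K_{B,2}$ that crosses no crossing of $\pi(K_{B,2})$ (crossings are precisely where the blue surface meets the red surface) and meets no crossing circle other than $c$, which it touches only at $\phi(v)$. Since the blue checkerboard surface decomposes into its blue regions joined by twisted bands at the crossings of $\pi(K_{B,2})$, any such arc is necessarily confined to a single blue region. Hence $R_1 = R_2$, directly contradicting Lemma~\ref{lem:opagree}(3) and completing the proof. The main obstacle is the first step: one must iterate the minimality reductions carefully so that the interior of $\Delta$ is emptied of red and green edges while preserving the monogon structure.
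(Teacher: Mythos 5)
Your proof is correct and follows essentially the same route as the paper: rule out red and green edges inside the monogon disk (via Lemma~\ref{lem:BlueRedBigon} and the complexity-minimality observations), observe that the monogon's blue edge therefore stays in a single blue region while its two ends lie on opposite sides of the crossing disk, and derive a contradiction with Lemma~\ref{lem:opagree}(3). The paper's version is just a terser statement of the same argument, attributed to Lemma~4.7 of~\cite{ETS}.
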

 
As we want to work eventually with non-trivial bigons, it's important that we understand how trivial arcs and bigons work. As stated along with the definition, if one edge of a bigon is trivial, so is the other, giving us our trivial bigon families. In addition to this, we also find that trivial arcs have some restrictions to them---they must have at least one endpoint on the boundary of our disk, but cannot be a subset of the boundary. With that, we will set this aside to start looking at non-trivial bigons. 

\begin{lem}
\label{lem:BBigIntRorG}
In the graph $\Gamma_{BRG}$, there are no non-trivial bigons with two blue sides, disjoint from red and green edges.
\end{lem}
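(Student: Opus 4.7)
Assume for contradiction that $\Gamma_{BRG}$ contains a non-trivial bigon with two blue sides disjoint from red and green edges. The two blue edges end at vertices corresponding to crossing circles $c_1$ and $c_2$; were $c_1=c_2$, both edges would be trivial by definition (being blue arcs disjoint from red whose endpoints are distinct vertices at the same crossing circle), so $c_1 \neq c_2$, and these encircle distinct twist regions of $K_{B,2}$.

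Since the interior of the bigon is disjoint from $B_2$, $R_2$, and the green crossing disks---whose union forms a fence separating the two sides of $F$ in $Y \setminus L_{B,2}$ (blue and red checkerboards cover $F$ away from crossings, twisted bands of $B_2 \cup R_2$ cover non-encircled crossings, and green disks cover the encircled ones)---the bigon maps entirely to one side of $F$, say above. After homotoping the two blue arcs within $S_{B,2}$ to push off the twisted bands at any non-encircled crossings they traverse, their projections together with short arcs near $c_1, c_2$ close up to a simple closed curve $\gamma \subset F$. By the combinatorics of $\Gamma_B$ at a crossing-circle vertex (adjacent blue edges lie on opposite sides of the crossing disk), $\gamma$ enters each twist region $c_i$ through one blue corner and exits through the opposite blue corner.

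We now split into two cases. If $\gamma$ bounds a disk $D \subset F$, then $\partial D$ has sides on the blue checkerboard surface and meets $\pi(K_{B,2})$ in exactly two crossings, one at $c_1$ and one at $c_2$. By Lemma~\ref{lem:wBluTwistReduced}, $\pi(K_{B,2})$ is weakly blue twist reduced, so either $D$ or its complementary disk on $F$ must consist of a string of red bigons of a single twist region. This forces $c_1$ and $c_2$ into the same twist region, contradicting $c_1 \neq c_2$. If instead $\gamma$ is essential on $F$, then pushing the bigon's boundary down from $S_{B,2}$ to the blue regions of $F$ and capping off the two cusp boundary arcs by meridian disks of the crossing circles $c_1, c_2$ produces a compression disk for $F$ in $Y$ whose boundary is isotopic to $\gamma$. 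Because the transition between opposite blue corners at a twist region forces $\gamma$ to cross both strands, the geometric intersection number of $\gamma$ with $\pi(K)$ in its homotopy class on $F$ equals exactly four (two per twist region). After an isotopy of the compression disk realizing this minimum, we obtain $r(\pi(K), F) \leq 4$, contradicting the hypothesis $r(\pi(K), F) > 4$ (which carries over to $K_{B,2}$ by Proposition~\ref{prop:augment}).

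The main obstacle is the careful manipulation needed to project the bigon and then identify it with a compression disk in the essential case: verifying rigorously that the surfaces $B_2, R_2$, and the crossing disks truly form a fence separating the two sides of $F$ (crucial at non-encircled twist regions, where only twisted bands, rather than flat sheets, provide the separation), and checking that the bigon's boundary can be pushed down to yield a closed curve on $F$ whose homotopy class realizes geometric intersection exactly four with $\pi(K)$ so that representativity provides a clean contradiction.
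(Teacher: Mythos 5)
Your proof is correct and takes essentially the same route as the paper: close the two blue arcs of the bigon into a simple closed curve $\gamma$ on $F$, dispose of the essential case via the representativity bound (the bigon is a candidate compression disk, which forces $r(\pi(K),F)\le 4$), and dispose of the inessential case via twist-reduction. The only real deviations are cosmetic. Where you invoke $\pi(K_{B,2})$ being \emph{weakly blue twist reduced} (Lemma~\ref{lem:wBluTwistReduced}), the paper instead passes back to the original diagram $\pi(K)$ and uses that it is weakly twist reduced; both routes give the ``same twist region'' contradiction. Your extra step of ``homotoping the two blue arcs within $S_{B,2}$ to push off the twisted bands at any non-encircled crossings they traverse'' is unnecessary and slightly misleading: because the face's interior is disjoint from $R_2$ as well, its blue edges in fact cannot pass over any twisted band at a non-encircled crossing in the first place (doing so would force the face to abut a red edge), which is why the paper simply asserts ``the blue edges of the bigon remain disjoint from any crossings.'' Finally, where you directly build a compression disk by pushing the bigon to one side and capping with meridian disks of the crossing circles, the paper phrases this as a two-stage elimination (``$\gamma$ bounds a compression disk'' vs.\ ``$\gamma$ doesn't'') but the underlying reasoning is identical.
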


\begin{proof}
The proof is essentially the same as that of Lemma 4.12 in ~\cite{ETS}: Suppose we had such a non-trivial bigon. Because the bigon is non-trivial, the two vertices must correspond to separate crossing circles. When we put back in the crossings to get $\pi(K)$, we get a simple closed curve $\gamma$ from the bigon. The blue edges of the bigon remain disjoint from any crossings, and connect across where the crossing disks met the projection surface. In particular, $\gamma$ intersects $\pi(K)$ in exactly two crossings. We want to show that $\gamma$ bounds a disk in our projection surface $F$---this will allow us to appeal to $\pi(K)$ being twist reduced and will give us a contradiction. So suppose $\gamma$ doesn't bound a disk in $F$. Then the curve must be essential.

If $\gamma$ bounds a compression disk, and $r(\pi(K),F)>4$, then, after a small isotopy to take $\gamma$ away from crossings, we get a curve bounding a compression disk that intersects our diagram only four times, a contradiction. So now we need to see what happens if $\gamma$ doesn't bound a compression disk.

As $\gamma$ bounds a bigon disjoint from red and green edges, it must bound a disk in a region in the pre-image of the blue surface in our disk. By our assumptions, $\gamma$ can't bound a disk in $F$, nor can it bound a compressing disk for $F$. Therefore, the region it bounds must pass through our projection surface. But as our bigon is disjoint from red, this cannot happen (regions of the disk can only pass through the projection surface at blue or red edges, neither of which can be in our bigon). So then we get a contradiction of $\gamma$ not bounding a compression disk.

Putting it all together, we see that we have a simple closed curve $\gamma$ which must bound a disk on $F$. But then, as $\gamma$ meets $\pi(K)$ in exactly two crossings and bounds a disk, because $\pi(K)$ is weakly twist reduced, we must have both of the crossings correspond to the same twist region, a contradiction of the bigon being non-trivial.
\end{proof}

So blue bigons must have some other colored edge intersecting them. A bigon intersected by another edge gives us two triangles, so it is natural to consider how adjacent triangles might act. As red cannot intersect red and green cannot intersect green, we find ourselves with four possible adjacent triangles, constructed as follows:
\begin{enumerate}
	\item Blue-Red bigon intersected by green (this can't happen, as there are no blue-red bigons);
	\item Red-Green bigon intersected by blue (we will prove this can't happen below);
	\item Blue bigon intersected by green (this can't happen, as we can't have a blue-blue-green triangle by minimality, by ~\cite{ETS} Lemma 4.16);
	\item Blue bigon intersected by red.
\end{enumerate}

\begin{figure}[!htb]
	\centering
	\def\svgwidth{\columnwidth}
	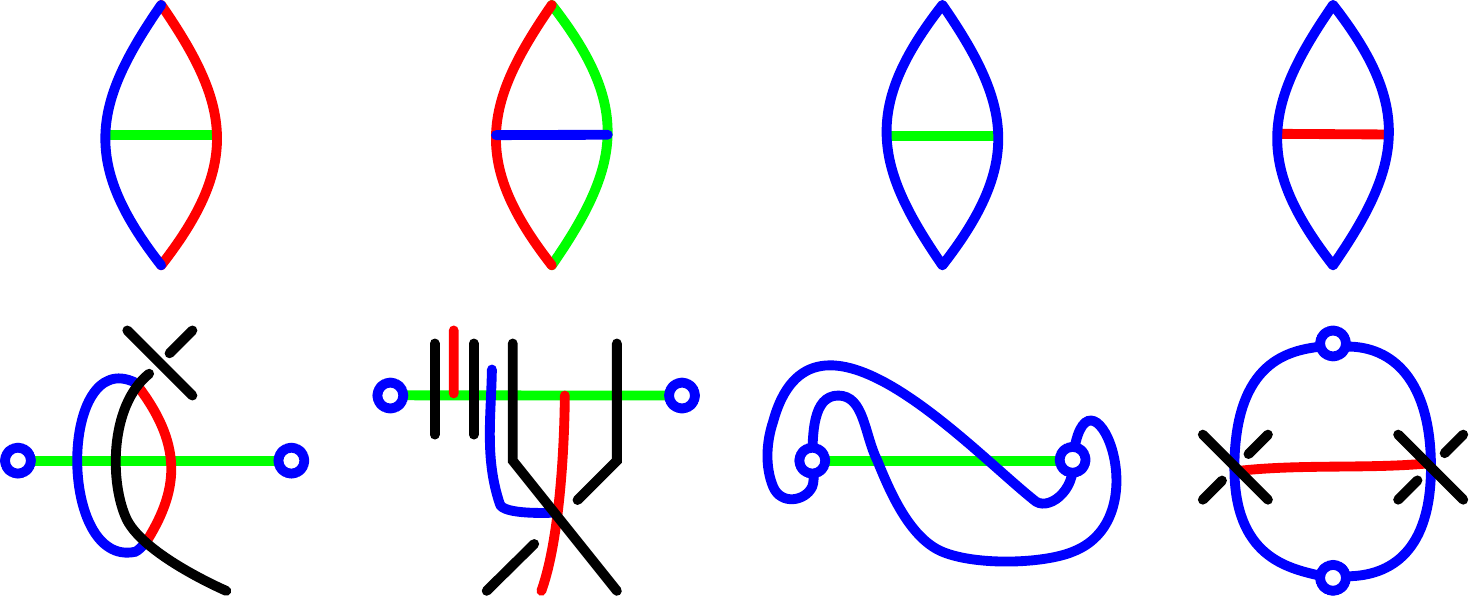
	\caption{\textit{(Top)} The four different types of adjacent triangles coming from a bigon. \textit{(Bottom)} How the bigons might look in the diagram. Note that, in the red-green bigon case, the two red edges in the diagram are, in fact, the same edge, and must ``wrap around'' the diagram to create the bigon.}
\end{figure}

\begin{lem}
In $\Gamma_{BRG}$, there are no pairs of red-green-blue triangles adjacent across a blue edge. More generally, no pair of green edges can be added to $\Gamma_{BR}$ in such a way that the result is a pair  of triangles adjacent across a blue edge.
\end{lem}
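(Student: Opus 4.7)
The plan is to argue by contradiction, using the complexity-minimizing framework underlying Section 6.1 (which already rules out, e.g., green edges with both endpoints at a single blue vertex). Suppose such a pair of triangles $T_1, T_2$ exists, sharing a blue edge $B$, with $T_i$ having edges $B$, $R_i$, $G_i$. First I would determine the types of the three vertices of each triangle. In a triangle whose edges have the three distinct colors blue, red, green, the vertices are of three forced types: a blue-red vertex (at a crossing of the diagram $K_{B,2}$), a blue-green vertex (at a vertex of $\Gamma_B$, where $D$ meets a crossing circle of $L_{B,2}$), and a red-green vertex (where the red surface meets the crossing disk, at a crossing inside a twist region of $L_{B,2}$). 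Consistency on the shared blue edge then forces one endpoint $p$ of $B$ to be the blue-red vertex (where $R_1$ and $R_2$ emanate on opposite sides of $B$, as the two arms of the red strand passing transversally through the crossing $p$) and the other endpoint $q$ to be a vertex of $\Gamma_B$ (where $G_1$ and $G_2$ emanate on opposite sides of $B$). In particular, both green arcs lie in the preimage under $\phi'$ of the crossing disk $C$ bounded by the crossing circle $c$ corresponding to $q$.

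Next I would combine $G_1$ and $G_2$ into a single path on $C$. The image $\phi'(G_1) \cup \phi'(G_2)$ is a path in $C$ from $\phi'(r_1)$ through the boundary point $\phi'(q) \in \partial C$ to $\phi'(r_2)$, where $r_1, r_2$ are the red-green vertices of the two triangles. Since $C$ is an embedded disk with $\phi'(q)$ on its boundary and $\phi'(r_1), \phi'(r_2)$ in its interior, this path is homotopic rel endpoints, inside $C$, to a path lying entirely in the interior of $C$. Equivalently, taking $G_1, G_2$ to be an innermost pair of green arcs at $q$, the sector of $C$ between $\phi'(G_1)$ and $\phi'(G_2)$ at $\phi'(q)$ is an embedded disk $\delta \subset C$ bounded by the two green arcs together with a short subarc of $\partial C$ near $\phi'(q)$, with interior disjoint from $\phi(D)$.

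The disk $\delta$ supplies the reduction. An ambient isotopy of $\phi$ supported in a regular neighborhood of $\delta$ pushes $\phi(D)$ across $C$ and off $c$ at $\phi(q)$, removing exactly that one intersection point of $D$ with $c$. Hence the number of vertices of $\Gamma_B$ strictly decreases, contradicting the minimality of $\phi$ used throughout Section 6.1.

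The main obstacle is verifying that this reduction is genuine. I would need to check that the innermost sector $\delta$ exists with interior meeting neither the red surface, nor the blue surface along its arc of intersection with $C$, nor any other component of $\phi'^{-1}(C)$ (which follows from the innermost choice together with generic position), and that the ambient isotopy does not introduce compensating intersections elsewhere that would spoil the lexicographic complexity. Additional care is required in the degenerate subcase $\phi'(r_1) = \phi'(r_2)$ on $C$, which arises when the twist region enclosed by $c$ has only one remaining crossing; there, $\phi'(G_1) \cup \phi'(G_2)$ already forms a closed loop on $C$ bounding an embedded subdisk, and attaching this subdisk to $\phi'(T_1 \cup T_2)$ along the two green arcs produces a disk in $Y \setminus L_{B,2}$ with boundary lying entirely on $R_2$. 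In that subcase, essentiality of $R_2$, justified via Proposition \ref{prop:augment} and Theorem \ref{thm:wga_main}, together with irreducibility of $Y \setminus L_{B,2}$, yields the required complexity reduction.
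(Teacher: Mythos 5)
Your proposal takes a genuinely different route from the paper. The paper's proof is a direct primality argument: after passing to an innermost pair of triangles, it observes that the two red edges both meet the single blue-red vertex (a crossing of $K_{B,2}$) and both run, via the green edges, to the same crossing circle. It then concatenates the two red arcs --- jumping once at that crossing and once at the crossing circle --- to produce a simple closed curve on $F$ that meets $\pi(K_{B,2})$ exactly twice with a crossing on either side. Edge representativity $\geq 4$ forces this curve to bound a disk, and weak primality then gives the contradiction. Your proof instead tries to exhibit a complexity-reducing isotopy and invoke minimality of $\phi$; you never construct the primality-violating curve that is the heart of the paper's argument.

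There are concrete gaps in the isotopy approach. First, the sector $\delta$ is not well-defined in the non-degenerate case: $G_1'$ and $G_2'$ run from points on $\partial C$ near $\phi'(q)$ to the red-green vertices $\phi'(r_1),\phi'(r_2)$ in the interior of $C$, so $G_1'\cup G_2'\cup(\text{short arc of }\partial C)$ does not close up into a circle unless you also supply an arc of $R_2\cap C$ joining $\phi'(r_1)$ to $\phi'(r_2)$; you only address the closure issue when $\phi'(r_1)=\phi'(r_2)$. Second, and more seriously, even granting $\delta$, the claimed reduction does not obviously work: pushing $\phi(D)$ across a subdisk of the crossing disk $C$ modifies the green arcs but does not by itself remove the transverse intersection point of $\phi(D)$ with $c$ (a single transverse crossing of a disk with a curve cannot be cancelled in isolation; such intersections cancel in pairs), and the isotopy would also have to be compatible with the $2n_j$ (or $n_j+1$) blue edges radiating from $q$, which you do not address. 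Third, the minimality principle invoked in Section~6.1 is set up for reductions of \emph{non-blue} vertices using specific moves; a new move that purports to lower the blue-vertex count would need its own justification. The paper sidesteps all of this with a short curve-counting argument, and I would recommend you pursue that route instead.
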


\begin{proof}
We will prove the second statement. If such an edge could be added, we can find the innermost pair of triangular regions, so there are no additional red or blue edges in the triangles. Then one of these triangles must be mapped above the projection plane, and the other below. As there is only one place where red meets blue, there is only one crossing in the image of this diagram, so both red edges must meet this crossing, and meet no other crossing in our diagram.

Looking at the green edges, we see that both meet the same blue edge at the same point. This means that these green edges are related to the same crossing circle. As our two red edges meet these green edges, our red edges must both run back to the same crossing circle. So, we can create a simple closed curve with crossings on either side by following along the red edges, jumping from one to the next at either the crossing or the crossing circle. This curve intersects our knot exactly twice, so by the representativity and edge representativity of our knot, must bound a disk. However, this contradicts weak primality, so this cannot happen. 
\end{proof}

The other two triangles we need to care about are blue-blue-green triangles (the third case listed above), and blue-blue-red triangles (the fourth case). In fact, with no change in proof from ~\cite[Lemma 4.12]{ETS}, there cannot be any blue-blue-green triangles in $\Gamma_{BRG}$. The reason we can do this here is because we don't have to worry about any of the curves we create from the blue and green edges not bounding a disk on $F$---otherwise we would have a curve intersecting our knot only twice and not bounding a disk. We will focus, then, on blue-blue-red triangles.

\subsection{Blue-Blue-Red Triangles}

Our ultimate goal is to show that there can only be so many adjacent blue bigons in $\Gamma_B$, contradicting Lemma ~\ref{lem:minbig} that says we need at least five. As a red edge must intersect any family of non-trivial blue bigons, we can look at blue-blue-red triangles constructed from the bigons and the red edge intersecting them.

One of the ways we will go about generalizing the results of ~\cite{ETS} is by using a sort of ``local'' property---most of the proofs take place in a small subsection of the diagram, and so we don't necessarily need to know what's happening beyond that for a given proof. To take advantage of this for generalized surfaces, however, we will need to show that the subsections we care about are similar enough to subsections on a sphere. Namely, we will need to show that these small parts don't wrap around the surface, and instead lie on a disk. Once we have this, most of the proofs follow naturally.

First up, one of the more common cases we will need to deal with is where we have a triangle with two blue edges and a red edge. That such triangles induce a disk on $F$ is split into two parts below, depending on how the red edge might pass through the crossing circle of the vertex. One important thing to note that will come up several times is that we don't actually need our triangle, or any of our future shapes, to bound a disk. In fact, looking at the case when the red edge passes through the crossing circle, this would be impossible. Instead, we just want it to lie within a disk, and hence induce a disk on $F$.

\begin{lem}
A blue-blue-red triangle in $\Gamma_{BR}$ which, in the image on $F$, has none of its edges passing through the crossing circle of the blue vertex induces a disk on the projection surface $F$.
\end{lem}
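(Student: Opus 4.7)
The plan is to project the boundary of the triangle via $\phi'$ and then further onto $F$ to obtain a closed curve $\gamma\subset F$, and then to show that $\gamma$ bounds a disk in $F$ using the representativity conditions inherited by $\pi(K_{B,2})$.

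First, I would describe $\gamma$ carefully. The two blue edges of the triangle project to arcs lying in blue regions of $F\setminus\pi(K_{B,2})$; since the blue vertex maps into the crossing circle and projects onto a single point of the small circle encircling the twist region on $F$, both blue arcs share that endpoint. The red edge projects to an arc lying in a red region, with its two endpoints at the two crossings that are the non-blue vertices of the triangle. Concatenating these three arcs gives a closed curve $\gamma$ in $F$.

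Next, I would bound the number of transverse intersections of $\gamma$ with $\pi(K_{B,2})$. The assumption that no edge of the triangle passes through the crossing circle means that on $F$ the two blue arcs remain outside the small circle around the twist region, so they avoid crossing any strand of $\pi(K_{B,2})$ coming out of the twist region. Since the blue arcs lie entirely in blue regions and the red arc lies entirely in a red region, the only transverse crossings of $\gamma$ with $\pi(K_{B,2})$ occur at the two non-blue vertices of the triangle: at each such vertex, $\gamma$ transitions between a blue and a red region and thus meets one strand of $\pi(K_{B,2})$. Hence $\gamma$ meets $\pi(K_{B,2})$ transversely in at most two points.

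Then, I would invoke Lemma~\ref{lem:edgerep} together with Proposition~\ref{prop:augment}, which give $e(\pi(K_{B,2}),F)\ge 4$ and $r(\pi(K_{B,2}),F)>4$. Since $\gamma$ meets $\pi(K_{B,2})$ in fewer than four points, $\gamma$ cannot be essential in $F$; since $r(\pi(K_{B,2}),F)>4$ rules out $\gamma$ bounding a compression disk for $F$ in $Y$, it follows that $\gamma$ bounds a disk $D_F\subset F$. The triangle's image on $F$, having boundary $\gamma$, is contained in $D_F$, so the triangle induces a disk on $F$, as claimed. The main obstacle is the local analysis at the crossing circle vertex: one must verify that the two blue arcs adjacent to the vertex can be closed up into a topological loop without introducing extra transverse intersections with $\pi(K_{B,2})$. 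The hypothesis that the edges do not pass through the crossing circle is precisely what guarantees this, for then both blue arcs approach the vertex from the exterior of the small circle around the twist region, and the corner of $\gamma$ at the vertex can be resolved by a short arc that stays clear of the twist region's strands.
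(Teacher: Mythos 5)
Your overall plan — project the triangle's boundary to a closed curve $\gamma$ on $F$ and then use the representativity hypotheses — is the paper's approach, but your proposal contains a genuine error in the intersection count that causes the final step to fail.

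You assert that both blue edges ``share'' an endpoint on $F$ because the blue vertex projects to a single point. But the paper explicitly notes (in the bulleted observations of Section 5.2) that two blue edges adjacent to the same blue vertex must lie \emph{on opposite sides of the crossing circle}, since they meet on the twisted part of $S_{B,2}$. On $F$, the two blue arcs therefore end on opposite sides of the twist region. To close $\gamma$, one must adjoin an arc that runs across the crossing disk, and this arc necessarily crosses both strands of the knot passing through the twist region. The correct count is therefore four transverse intersections of $\gamma$ with $\pi(K)$: one at each of the two crossings where the red edge ends, plus two across the crossing disk. (The hypothesis that the red edge does not pass through the crossing circle has nothing to do with the blue arcs sharing a projected endpoint; it only ensures there is a \emph{single} joining arc, making $\gamma$ a simple closed curve, as opposed to the two-curve construction in the subsequent lemma.)

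With four intersections, the inequality $e(\pi(K_{B,2}),F)\ge 4$ no longer rules out $\gamma$ being essential, which is precisely why the paper's proof needs more than a straightforward representativity count. The paper's argument splits into two cases: first, $r(\pi(K),F)>4$ rules out $\gamma$ bounding a compression disk because $\gamma$ meets the knot only four times; second, if $\gamma$ were essential in $F$ without bounding a compression disk, then the triangular region that $\gamma$ bounds inside the disk $D'$ would have to pass through the projection surface $F$, which can only happen along a red or blue edge of $\Gamma_{BR}$ — but the triangle's interior contains no such edges, a contradiction. That ``interior must cross $F$ through a colored edge'' step is the idea your proposal is missing, and it is what makes the argument go through at the borderline intersection count of four. (Incidentally, if the count really were two, your invocation of $r>4$ would be redundant, since $e\ge 4$ alone already forces $\gamma$ to bound a disk in $F$.)
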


\begin{figure}[!htb]
	\centering
		\def\svgwidth{.5\columnwidth}
		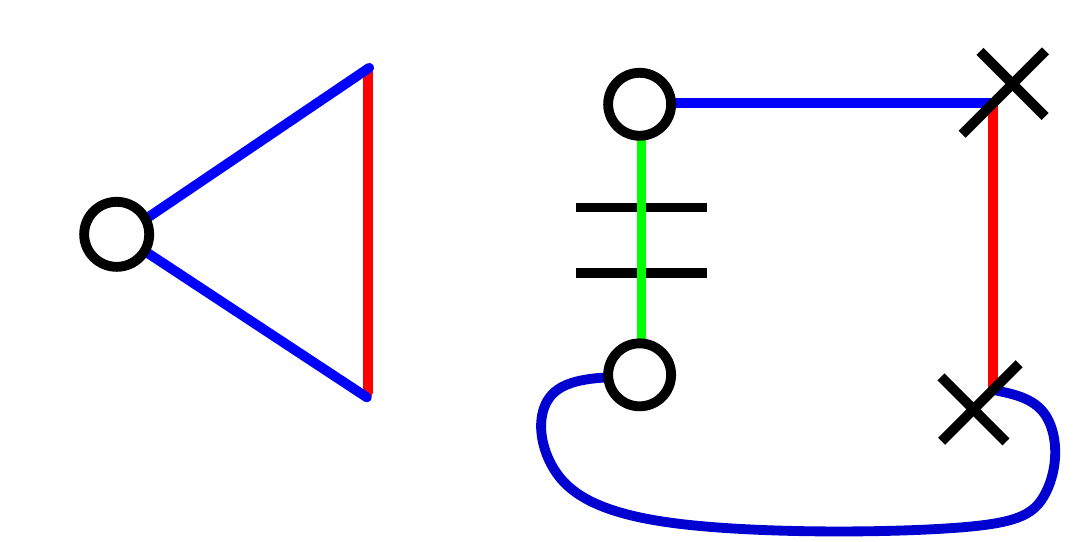
\caption{\textit{(Left)} A blue-blue-red triangle as viewed on the graph $\Gamma_{BR}$. \textit{(Right)} The same triangle as viewed on the knot diagram, $\pi(K_{B,2})$.}
\end{figure}

\begin{proof}
First, note that the triangle gives us a simple closed curve on $F$: let $\gamma$ be the curve that follows our triangle along the edges, and joins the blue edges across the crossing disk associated to the vertex where the blue edges meet. By Lemma 4.18 in ~\cite{ETS}, the triangle meets two distinct crossings, and so then $\gamma$ must also meet two distinct crossings. We want to show that $\gamma$ bounds a disk on $F$. If it doesn't, there are two possibilities we will consider.

The first case: $\gamma$ could bound a compression disk. If it does, then note when we put crossings back in, that $\gamma$ intersects our knot in exactly four places: once across each of the crossings, and twice across the crossing disk of the vertex. But then we have a curve bounding a compression disk meeting our knot four times, contradicting the assumption that $r(\pi(K), F)>4$.

So then if $\gamma$ is essential and doesn't bound a compression disk, we can look at the triangular region $\gamma$ bounds in the disk. Because $\gamma$ bounds neither a compression disk nor a disk on $F$, the triangular region must pass through the projection surface. But then our region must have a red or blue edge in it, as it can only pass through the projection surface through an edge of $\Gamma_{BR}$. Then we don't have a blue-blue-red triangle, and so we are done.
\end{proof}

\begin{lem}
A blue-blue-red triangle in $\Gamma_{BR}$ with the image of the red edge passing through the crossing circle of the vertex induces a disk on the projection surface $F$.
\end{lem}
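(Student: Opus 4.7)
The plan is to adapt the strategy of the preceding lemma, but to account for the extra topological feature that the red edge itself passes through the crossing disk. I will exhibit a simple closed curve $\gamma$ on $F$, built from a portion of the triangle together with a short arc running along the crossing circle, and then argue that $\gamma$ must bound a disk on $F$ using the hypotheses $r(\pi(K),F) > 4$ and $e(\pi(K),F) \ge 4$ together with weak primality of $\pi(K)$. The triangle will then be forced to lie inside the resulting disk, up to isotopy.

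Setup: let $b_1, b_2$ be the two blue sides of the triangle, meeting at the vertex $v$ on opposite sides of the corresponding crossing circle $c$, and let $r$ be the red side whose endpoints at crossings $x_1, x_2$ of $\pi(K_{B,2})$ lie on $b_1, b_2$ respectively. Since the image of $r$ on $F$ enters the bigon region bounded by the projection of $c$, split $r$ at its first entry point $p_1$ into an initial arc $r_1$ running from $x_1$ to $p_1$ and the remaining portion. Form $\gamma$ by concatenating $b_1$, $r_1$, and an arc $\alpha$ running from $p_1$ along the crossing circle back to $v$, yielding a simple closed curve on $F$ after a small general-position perturbation.

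By construction $\gamma$ meets $\pi(K_{B,2})$ only at $x_1$ together with the at most two encircled crossings that $r_1$ can possibly traverse before entering the crossing disk; the blue arc $b_1$ lies in a blue region of the diagram and contributes no intersections, while $\alpha$ can be chosen along the crossing circle so as to cross only the two strands exiting the twist region. When the twists encircled by $c$ are restored to recover $\pi(K)$, the arc $\alpha$ still meets only those two strands, so $\gamma$ meets $\pi(K)$ in a uniformly bounded number of points (at most four after restoration). Now I apply the familiar three-way dichotomy. If $\gamma$ bounds a disk on $F$, this disk contains the triangle and we are done. If $\gamma$ bounds a compression disk of $F$ but no disk, the intersection count contradicts $r(\pi(K), F) > 4$. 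Finally, if $\gamma$ is essential on $F$ and bounds no compression disk, the triangular region in $D'$ must cross the projection surface along some additional blue or red edge, contradicting the fact that our triangle is a pure blue-blue-red triangle with no interior edges.

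The main obstacle is the bookkeeping near the crossing circle, since the number of encircled crossings (one or two in $\pi(K_{B,2})$, many after restoration) affects how $r_1$ and $\alpha$ can be positioned; one must verify that the arc $\alpha$ genuinely admits a choice avoiding excessive winding around the twist region in both subcases. A symmetric construction with $b_2$ and the terminal arc $r_2$ of $r$ disposes of any edge case in which the first choice of $\gamma$ degenerates, and then the conclusion follows as above.
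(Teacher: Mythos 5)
Your single-curve construction cannot finish the argument. The curve $\gamma$ you build (one blue side $b_1$, the initial red segment $r_1$, and the arc $\alpha$ along the crossing circle) is essentially the paper's $\gamma_1$, and it does indeed meet $\pi(K_{B,2})$ only twice — once at the crossing of $b_1$ with the red edge and once where it crosses the crossing disk — so it bounds a disk by edge-representativity alone; you do not need the compression-disk case here. But you then assert that this disk ``contains the triangle,'' and that step fails: the other blue side $b_2$ and the terminal portion of the red edge lie on the opposite side of the crossing circle from $b_1$ and $r_1$, and so lie outside the disk bounded by your $\gamma$. The disk you obtain captures only half the picture.

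This is precisely why the paper constructs two curves $\gamma_1$ and $\gamma_2$ (one from each blue side plus its half of the red edge plus the green edge), shows each bounds a disk using $e(\pi(K_{B,2}),F)\ge 4$, and then glues the two disks along a neighborhood of the single intersection point where red meets green. The nontrivial step you omit is verifying that $\gamma_1\cap\gamma_2$ is exactly one point (red cannot meet red, blue edges at the same vertex map to opposite sides of the crossing circle), which is what guarantees the union is a disk rather than an annulus. Your closing sentence gestures at ``a symmetric construction with $b_2$,'' but you treat it as a fallback for a degenerate case instead of recognizing it as an essential second half, and you never carry out the gluing argument. You also work with intersection counts for the restored $\pi(K)$ rather than $\pi(K_{B,2})$, which adds unnecessary bookkeeping and muddies the clean count of two.
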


\begin{proof}
As above, we know that the triangle has to meet two distinct crossings. But now, as our red edge must cross through the green edge, we are not guaranteed a single simple closed curve. Instead, we shall construct two separate disk , and then glue them together. Call our blue edges $b$ and $c$ and our red edge $j$. Let $\gamma_1$ be the curve that follows $b$, then $j$ until we get to the crossing circle, and then the green edge back to $b$. Likewise, let $\gamma_2$ be the curve that follows $c$, then $j$ until we get to the crossing circle, and then jumps back to $c$ along the green edge.

First, each of these curves must intersect our diagram exactly twice - once at the crossings, and another time through the crossing disk. This allows us to immediately say that both $\gamma_1$ and $\gamma_2$ bound disks, as $e(\pi(K), F)\ge 4$. Next, note that $\gamma_1$ intersects $\gamma_2$ at exactly one point---where red meets green. This is because a red edge cannot intersect itself, and blue edges can only meet at vertices. But, as blue edges adjacent to the same vertex, they must map to different sides of the crossing region, and so are disjoint. Thus the only point they can meet is the one they share---where the crossing circle meets the red edge.

Now, we can finally construct our entire disk. Let $D$ be the union of the disk bounded by $\gamma_1$, the disk bounded by $\gamma_2$, and a small neighborhood of the intersection of the crossing circle and red edge. As $\gamma_1$ doesn't interfere with $\gamma_2$ outside of this neighborhood, we do in fact have a disk as opposed to an annulus, and so are done.
\end{proof}

We can use these disks to construct even larger disks, with some caveats---we need to be careful that when we glue disks together, we aren't creating an annulus.

\begin{lem}\label{lem:bbrtrng}
Families of blue-blue-red triangles, adjacent along blue edges, bound a disk.
\end{lem}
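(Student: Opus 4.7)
The plan is to proceed by induction on the number of triangles in the family. The base case of a single triangle is exactly the content of the two preceding lemmas, which establish that a blue-blue-red triangle always induces a disk on $F$, regardless of whether its red edge passes through the crossing circle at the blue vertex.

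For the inductive step, I would suppose that every family of $n$ adjacent blue-blue-red triangles induces a disk on $F$, and consider a family $\mathcal{F}$ of $n+1$ such triangles. Because the adjacency relation on $\mathcal{F}$ is tree-like (the triangles are faces of the planar graph $\Gamma_{BR}$ inside the source disk), there is at least one ``leaf'' triangle $T$ attached to the remaining family $\mathcal{F}' = \mathcal{F}\setminus\{T\}$ along a single blue edge $e$. By induction, $\mathcal{F}'$ induces a disk $D$ on $F$, and by the preceding two lemmas, $T$ induces a disk $D_T$ on $F$. In both cases $e$ (together with, when the red edge of $T$ crosses the crossing circle, the local green neighborhood already accounted for in the prior lemma) appears as a sub-arc of the boundaries of both $D$ and $D_T$.

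The heart of the argument is to check that the union $D\cup D_T$ is again a disk. I would first verify that $D\cap D_T$ consists exactly of this shared arc. This uses the fact that outside the small neighborhoods of crossing circles the blue surface is embedded, that the red and green surfaces are embedded throughout, and that the complexity-minimizing position of $\phi(D')$ rules out spurious intersections of distinct edges of $\Gamma_{BR}$ in $F$. Once this is established, gluing two disks along a single proper boundary arc automatically yields a disk, completing the induction.

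The main obstacle is ruling out the possibility that the interiors of $D$ and $D_T$ overlap beyond the shared arc, which would in principle produce an annulus rather than a disk. Any such extra overlap would yield a simple closed curve in $F$ meeting $\pi(K)$ in only two or four transverse points; the hypotheses $e(\pi(K),F)>2$ and $r(\pi(K),F)>4$ combined with weak primeness and the weakly-blue-twist-reduced property (Lemma \ref{lem:wBluTwistReduced}) force this curve to bound a disk in a way that contradicts the non-triviality of the configuration, exactly as in the proofs of the two preceding lemmas. This is the one step requiring genuine care, and it is where the surface-with-genus setting diverges from the $S^2$ case of~\cite{ETS}.
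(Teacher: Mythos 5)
Your high-level strategy is the same as the paper's (build up the disk one triangle at a time by gluing), but you have glossed over the case analysis that actually makes the gluing work, and in doing so you have missed the one configuration that must be \emph{ruled out} rather than handled.

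The point is that a single blue-blue-red triangle does not literally ``bound'' a disk in $F$: when its red edge passes through the crossing circle of the blue vertex, the preceding lemma produces a disk only by constructing \emph{two} sub-disks $D_1, D_2$ bounded by blue-red-green curves and gluing them at a small neighborhood of the crossing circle. So when you attach a leaf triangle $T$ to $\mathcal{F}'$ along a blue edge $e$, the arc along which the two disks meet is not just $e$; it is $e$ together with the green edge (or a portion thereof), and the precise shape of this overlap depends on whether $T$'s red edge, the neighboring triangle's red edge, neither, or both pass through the crossing circle. The paper splits into exactly these cases. The subtle one is the \emph{third} case, where both adjacent triangles have red edges passing through the crossing circle: here the naive gluing would fail, and the paper shows instead that the configuration is impossible, because only one red region can pass through a given crossing disk, so the two red edges would lie in the same region, forcing opposite red sides of the shared crossing to agree, contradicting Lemma~\ref{lem:opagree}. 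Your argument never considers this case and never invokes Lemma~\ref{lem:opagree}, so it has a genuine hole.

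Additionally, your mechanism for ruling out ``extra overlap'' --- asserting that any overlap would produce a curve meeting $\pi(K)$ in two or four points and then appealing to weak primeness and representativity --- is not what the paper does and is not obviously true as stated: there is no reason an arbitrary overlap of two immersed disks in $F$ is controlled by a curve of bounded intersection number with $\pi(K)$. The paper avoids this issue by tracking combinatorially, in each case, exactly which boundary sub-arcs of the two disks coincide (part of the green edge, the rest of the green edge plus $e$, etc.), so that the gluing is along a single boundary arc by construction. If you want to run your more abstract argument, you need to actually establish that the intersection is a single arc; the appeal to primeness does not substitute for the case analysis.
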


\begin{proof}
\begin{figure}[!htb]
	\centering
		\def\svgwidth{.75\columnwidth}
		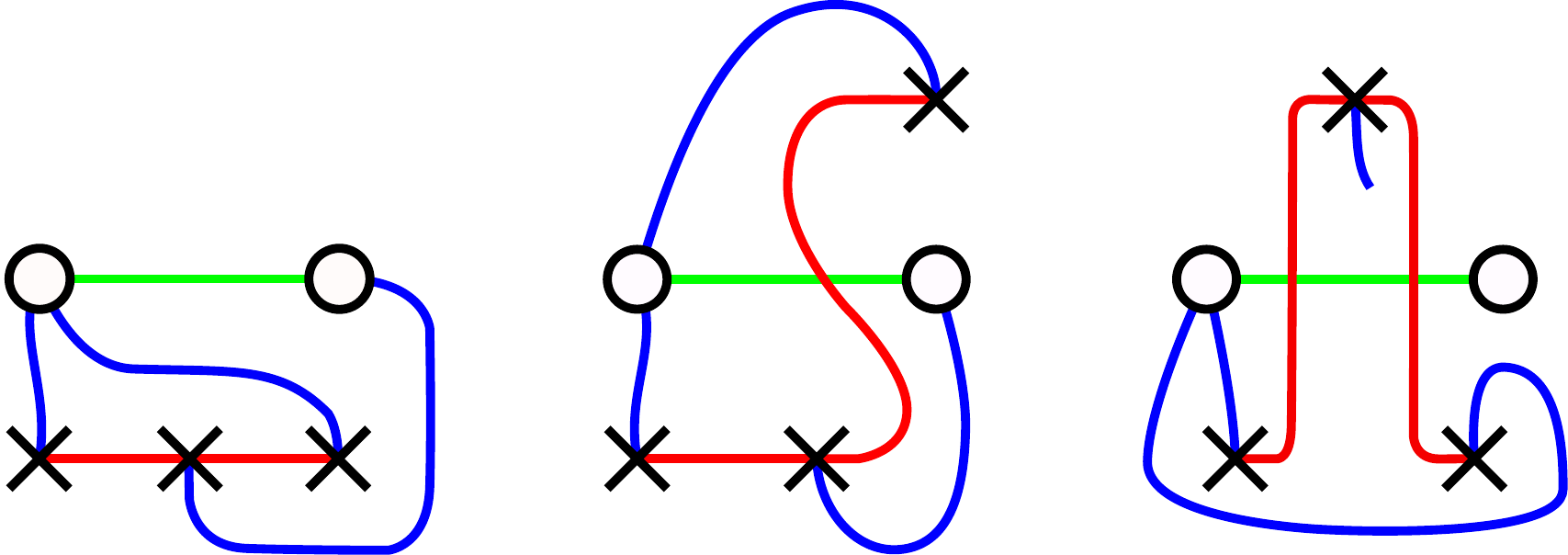
	\caption{The three possible ways two blue-blue-red triangles can have their images glued together.}
		\label{fig:twotri}
\end{figure}
If we have a single blue-blue-red triangle, we are done by above. So we will suppose we have a group of them, and consider what happens when we want to add one more. As blue edges can only intersect at vertices, and red edges can never intersect, we only have to worry about two triangles---the one we are adding and the triangle that shares an edge with it. There are three cases we have to worry about, as shown in Figure ~\ref{fig:twotri}. First, both triangles have their red edges not passing through the crossing circle. Then both triangles bound a disk, and share a single continuous segment---their common blue edge and the green edge---so we are done.

Second, one of the triangles doesn't pass through the crossing circle, but the other does. In this case, for the triangle that does pass through the crossing circle, we get two separate disks, say $D_1$ and $D_2$. The other triangle gives us a third disk, $D_3$. We will union these all together in the right order to get what we want. First, note that $D_1$ and $D_3$ agree only on a portion of the green edge, so gluing them together will still give us a disk. Then the boundary of $D_2$ only agrees with the boundary of this new disk in one continuous segment---the remainder of the green edge and the common blue edge. So we can add $D_2$ in and get our whole disk.

Finally, we could have both red edges pass through the crossing disk. Only one red region can pass through the a crossing disk, by definition. So for both red edges to pass through the crossing disk, they must be in the same region. But then at least one crossing, the one that meets both red edges, has opposite (red) sides agreeing, contradicting Lemma ~\ref{lem:opagree}. So this case cannot happen, and we are done.

\end{proof}

\begin{lem}\label{lem:bluerec}
A blue rectangle in $\Gamma_{BR}$ with two vertices that correspond to the same crossing disk and a red diagonal has both crossings associated to the crossing circle of the vertices of the rectangle.
\end{lem}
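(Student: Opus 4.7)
The plan is to label the corners of the rectangle in cyclic order as $V_1, X, V_2, Y$, where $V_1, V_2$ are the two blue vertices of $\Gamma_B$ mapping to the common crossing disk of the circle $\tilde C$, and $X, Y$ are the red-blue crossings at which the red diagonal meets the boundary; the rectangle together with its diagonal then decomposes as two blue-blue-red triangles $T_1 = V_1XY$ and $T_2 = V_2XY$, sharing the red edge $XY$, each having its apex at a vertex mapping to $\tilde C$.

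First, I will apply the two preceding BBR-triangle lemmas (treating separately the cases where the red edge does and does not pass through the apex's crossing circle) to conclude that each $T_i$ induces a disk $D_i$ on $F$ bounded by the blue arcs $V_i X, V_i Y$, the shared red arc $XY$, and a short arc across the crossing disk at $\tilde C$. Then, following the strategy used in the proof of Lemma ~\ref{lem:bbrtrng}, I will glue $D_1$ and $D_2$ along the red arc $XY$ to obtain a single disk $D \subset F$. The gluing splits into subcases analogous to the three configurations of Figure ~\ref{fig:twotri}, according to how the red arc interacts with the crossing disk at $\tilde C$ on each side.

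The resulting boundary $\gamma = \partial D$ is a simple closed curve on $F$ made of the four blue arcs $V_1 X, V_2 X, V_1 Y, V_2 Y$, closed up through the crossing disk at $\tilde C$, with both of its local sides lying on the blue checkerboard surface and meeting $\pi(K_{B,2})$ transversely only at the crossings $X$ and $Y$. Applying Lemma ~\ref{lem:wBluTwistReduced} to the disk $D$ (or to its complementary disk, as appropriate) then gives that this disk bounds a string of red bigons from a single twist region of $\pi(K_{B,2})$, with $X$ and $Y$ as the two endpoint crossings of that string. Since $\gamma$ passes through the crossing disk at $\tilde C$, the twist region in question is forced to be the one associated with $\tilde C$, yielding the desired conclusion.

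The main obstacle will be controlling the gluing of $D_1$ and $D_2$ into a disk, particularly in the subcase where the red arc $XY$ passes through the crossing disk at $\tilde C$, and ensuring that the closing arcs through $\tilde C$ introduce no extra intersections of $\gamma$ with $\pi(K_{B,2})$. Similarly, ruling out that the bigon-string belongs to a twist region $\tilde C' \ne \tilde C$ will require exploiting the edge representativity bound $e(\pi(K_{B,2}), F) \ge 4$, weak primality (Definition ~\ref{def:wprime}), and the opposite-sides structure of blue edges at a vertex of $\Gamma_B$ (Lemma ~\ref{lem:opagree}).
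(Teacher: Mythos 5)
Your proposal matches the paper's argument: the paper also splits the rectangle-with-diagonal into the two blue-blue-red triangles sharing the red diagonal $i$, shows each of the two resulting closed curves on $F$ bounds a disk (using $e(\pi(K_{B,2}),F)\ge 4$ and $r(\pi(K_{B,2}),F)>4$, exactly the argument in the two preceding BBR-triangle lemmas you cite), and glues the two disks along $i$ after checking they meet in only one boundary segment. The one place you go beyond the paper is the last step: the paper simply asserts that obtaining the glued disk with boundary $a$-$f$-$e$-$b$ finishes the argument, whereas you make the final deduction explicit via Lemma~\ref{lem:wBluTwistReduced}. That is a reasonable way to fill the gap, but be careful with the disjunction in the definition of weakly blue twist reduced: it may produce a different disk $D'$ rather than $D$ itself bounding the string of red bigons, in which case the argument that "$\gamma$ passes through the crossing disk at $\tilde C$" does not immediately pin down the twist region; you would need a short additional argument (e.g.\ via the green arc in $\partial D$ and Lemma~\ref{lem:opagree}) to rule out $D'$ lying in a different twist region. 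Also note that the closed curves pass through the crossing disk, so they a priori may meet $\pi(K_{B,2})$ at four points rather than only at $X$ and $Y$; the paper asserts two, and your write-up should justify why the closing arcs across the crossing disk can be chosen to avoid $\pi(K_{B,2})$.
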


\begin{proof}
\begin{figure}[!htb]
	\centering
		\def\svgwidth{.75\columnwidth}
		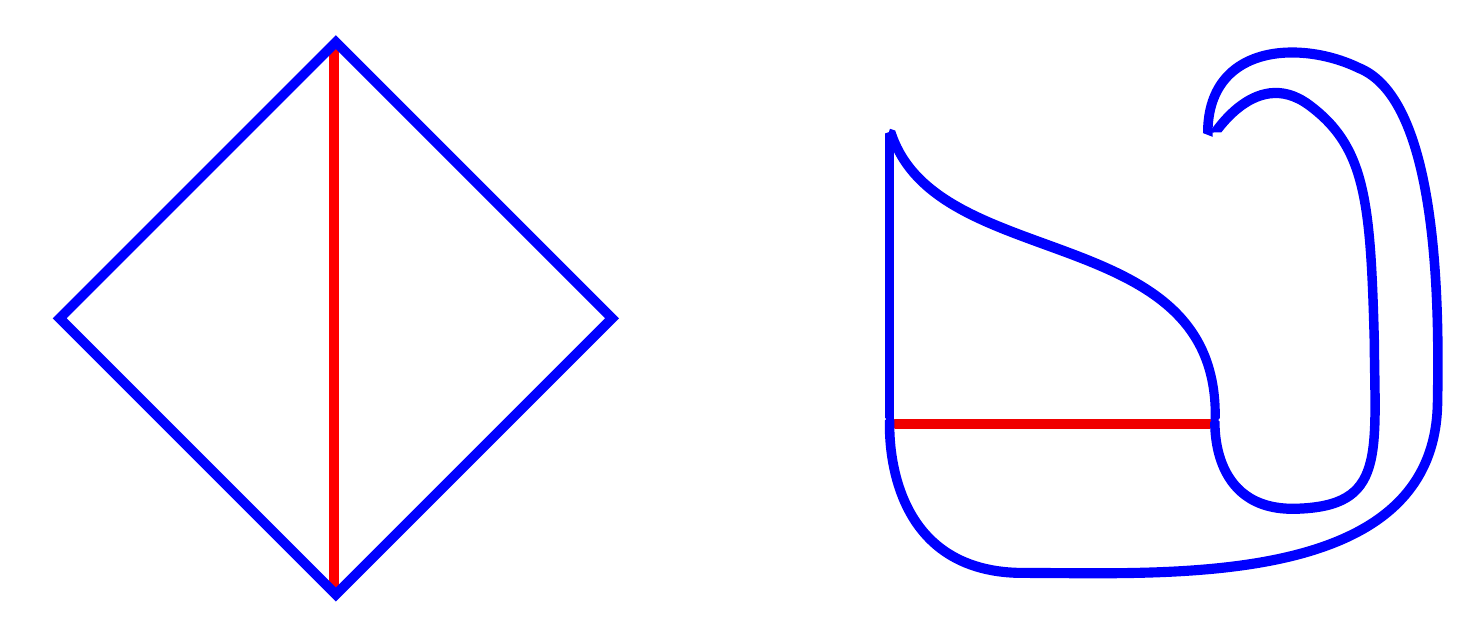

	\caption{\textit{(Left)} A blue rectangle with a red diagonal in the graph $\Gamma_{BR}$. \textit{(Right)} A blue rectangle as viewed on the diagram, where both vertices represent the same crossing circle.}
	\label{fig:bluerec}
	
\end{figure}
Our image has to look like one of the above---the red edge tells us we need at least two crossings. In either case, look at the two curves $a$-$i$-$f$ and $b$-$i$-$e$. Each of these curves intersects our knot transversely exactly twice. This means, then, as both the representativity and the edge representativity (defined in Definition ~\ref{def:rep}) are at least four, each of these curves must bound a disk. Now, though, we have two disks who agree at only one segment of the boundary. As such, we can glue the disks together without risking an annulus, and thus giving us another disk, this one with boundary $a$-$f$-$e$-$b$, and so we are done.
\end{proof}

To make sure we are on the right track, it is important to look at why we are studying these blue-blue-red triangles. We know that we have to have so many adjacent non-trivial blue bigons. As it turns out, given these blue bigons, we will eventually show that a red edge must pass through these bigons. If only one red passes through, we get a two collections of blue-blue-red triangles, with the families adjacent across the red edges. So it's important to see what happens here.

Before we begin studying these in earnest, there is an additional lemma that we will need here. In summary, Lemma 4.17 from ~\cite{ETS} tells us that, if we have three adjacent blue-blue-red triangles that are adjacent at the vertex of the blue edges, two properties must hold. First, no green edge can intersect an interior blue edge (so, in figure ~\ref{fig:adjtri}, edges $b$ or $c$). Second, if a green edge meets the interior red edge (so edge $j$), the green edge must run to the common vertex of the triangles. As we have families of blue-blue-red triangles bounding a disk by Lemma ~\ref{lem:bbrtrng}, the exact same proof as in ~\cite{ETS} works. With this in mind, we can move on to looking at our triangles.

\begin{lem}
\label{lem:adjtri}
The graph $\Gamma_{BR}$ cannot contain two pairs of three adjacent blue-blue-red triangles adjacent across the red edges.
\end{lem}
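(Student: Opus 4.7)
The plan is to prove Lemma~\ref{lem:adjtri} by contradiction, following the strategy of the corresponding statement in~\cite{ETS} and replacing planarity of $S^2$ with the disk-induced arguments already developed earlier in this section. Suppose $\Gamma_{BR}$ contains two fans $\mathcal{F}_A$ and $\mathcal{F}_B$ of three adjacent blue-blue-red triangles each, meeting at common blue vertices $v_A$ and $v_B$, which correspond to crossing circles $C_A$ and $C_B$ of $L_{B,2}$. The three red edges $r_1,r_2,r_3$ are shared between the two fans, so the combined configuration has $v_A$ and $v_B$ as two ``poles'' joined by four blue arcs through four red-blue vertices $u_1,u_2,u_3,u_4$, with $r_i$ running from $u_i$ to $u_{i+1}$.

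The first step is to show that $\mathcal{F}_A\cup \mathcal{F}_B$ induces a disk on the projection surface $F$. By Lemma~\ref{lem:bbrtrng}, each fan separately induces a disk on $F$. The two fan-disks meet only along the three red edges and the four outer vertices, with no cyclic identifications (since red edges cannot self-intersect and each $u_i$ appears once), so the union is again a disk on $F$. Pushing this disk into the diagram $\pi(K_{B,2})$, each $r_i$ corresponds to a crossing $c_i$ of $K_{B,2}$, and both $C_A$ and $C_B$ sit inside the induced disk.

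Next, I would apply the extension of~\cite[Lemma 4.17]{ETS}, which is valid for each fan and which restricts green edges: no green edge meets an interior blue edge, and any green edge meeting an interior red edge must run to the common blue vertex of that fan. Combining this constraint for $\mathcal{F}_A$ and $\mathcal{F}_B$ simultaneously forces a very rigid arrangement of the crossing disks of $C_A$ and $C_B$ relative to the three crossings $c_1,c_2,c_3$, ultimately showing that either $c_1,c_2,c_3$ all lie in a single twist region of $\pi(K)$ or that a pair of crossings are identified in a way which forces opposite blue or red regions to agree.

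The contradiction then follows in one of two ways. If the rigid arrangement forces a simple closed curve $\gamma$ on $F$ crossing $\pi(K)$ at most four times (built from segments of blue edges and arcs on $C_A$, $C_B$), then by $e(\pi(K),F)\ge 4$ and $r(\pi(K),F)>4$ the curve $\gamma$ must bound a disk on $F$. Weak primality (Definition~\ref{def:wprime}) and weak blue twist reducedness (Lemma~\ref{lem:wBluTwistReduced}) then place $c_1,c_2,c_3$ in a single twist region, which forces $C_A$ and $C_B$ to be the crossing circle of that region, contradicting the fact that they are distinct. Otherwise, the arrangement directly contradicts Lemma~\ref{lem:opagree}, since opposite blue or red regions at one of the $c_i$ would have to agree. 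The main obstacle is the combinatorial bookkeeping needed to keep track of how the two fans overlap on $F$ and to isolate the specific curve $\gamma$ or the specific forced region agreement; this is precisely where the surface hypotheses $e(\pi(K),F)\ge 4$ and $r(\pi(K),F)>4$ play the role that primeness of the alternating diagram plays in~\cite{ETS}.
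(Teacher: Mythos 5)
Your proposal is an outline rather than a proof: it names the right tools (Lemma~\ref{lem:bbrtrng}, the extension of Lemma~4.17 of~\cite{ETS}, weak primality, edge/representativity bounds, Lemma~\ref{lem:opagree}) and correctly anticipates that the eventual contradiction lives in the fact that $K_{B,2}$ has at most two crossings associated with any crossing circle, but the actual combinatorial argument --- which you acknowledge leaving to ``bookkeeping'' --- is precisely the content of the lemma. In particular, the proposal never addresses the first and hardest sub-case in the paper's argument: the possibility that the two blue vertices $v_A$ and $v_B$ correspond to the \emph{same} crossing circle. The paper handles this via Lemma~\ref{lem:bluerec} (the blue-rectangle-with-red-diagonal lemma), applied twice to the rectangles $a$-$e$-$f$-$b$ and $b$-$f$-$c$-$g$, followed by a separate argument ruling out diagrams of $\pi(K_{B,2})$ with only two crossings. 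Lemma~\ref{lem:bluerec} does not appear in your outline, and without it the ``vertices agree'' case is unhandled.

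Two smaller points. First, your claim that the two fan-disks from Lemma~\ref{lem:bbrtrng} glue along $i\cup j\cup k$ to give a single disk on $F$ needs justification: the ``induced disk'' on $F$ is not the image of the fan but a disk bounded by the image of its boundary, and two disks on a higher-genus $F$ with a common boundary arc need not union to a disk unless one first argues they lie on opposite sides of that arc. The paper avoids this global step entirely and works locally, which is a cleaner route. Second, your proposed final contradiction (``forces $C_A$ and $C_B$ to be the crossing circle of that region, contradicting the fact that they are distinct'') is not quite the one in the paper: the paper shows that the three crossings $x$, $y$, $z$ are pairwise distinct and are \emph{all} associated with the crossing circle of $v_A$, contradicting the construction of $K_{B,2}$ in which every twist region has at most two crossings. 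Reaching that requires the intermediate analysis of how the red edges $i$ and $j$ may meet the two crossing disks, and the construction of four explicit simple closed curves meeting $\pi(K_{B,2})$ twice each --- steps your outline does not carry out.
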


\begin{proof}
Suppose it did. Then we look at such a group of triangles, as pictured in Figure ~\ref{fig:adjtri} (left). First, we need to show that the two vertices of the triangle group can't agree (that is, can't be related to the same crossing circle). If the vertices do agree, then the blue edges $a$, $f$, and $c$ must be in the same region, and the blue edges $e$, $b$, and $g$ must be in the same region.
\begin{figure}[!htb]
	\centering
		\def\svgwidth{.75\columnwidth}
		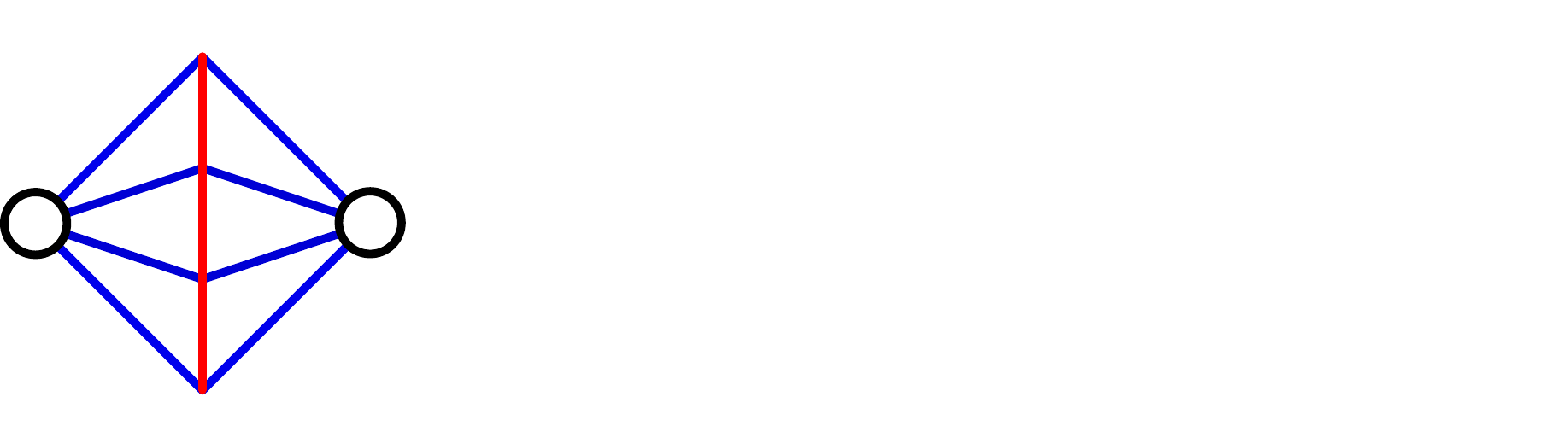

		\caption{\textit{(Left)} Two pairs of three adjacent blue-blue-red triangles. \textit{(Center and Right)} The two possibilities if the vertices map to the same crossing circle, one with two crossings and one with three.}
		\label{fig:adjtri}
\end{figure}
From there, we want to show that the endpoints at each of the edges are all associated with the crossing circle of the vertices. First, look at the $a$-$e$ crossing and the $b$-$f$ crossing. As $a$-$e$-$f$-$b$ forms a blue rectangle with two vertices and a red edge down the diagonal, we get a disk with boundary blue-blue-green. We can assume the boundary is $a$-$e$-green. But then, by definition, the crossing at $a$-$e$ must be associated with the crossing disk. Then, by two applications of Lemma ~\ref{lem:bluerec} (one with $a$-$e$-$f$-$b$ and one with $b$-$f$-$c$-$g$), we get that the crossings of $b$-$f$ and of $c$-$g$ must be associated to the crossing circle as well.\\

As we are working with $L_{B,2}$, this does not automatically give us a contradiction---the diagram can still work if we have only two distinct crossings, such as in Figure ~\ref{fig:adjtri}. However, suppose there are only two distinct crossings for our triangle. Then we can get several curves that bound disks in $F$ that intersect our diagram exactly twice: $c\cup i\cup f$, $e\cup i \cup b$, $f \cup a \cup j$, and $b\cup j\cup g$, as follows.
To get the last curve, look at the rectangle $b$-$f$-$c$-$g$. It's disk must contain the $j$ edge, so take the sub disk bounded by $b\cup j\cup g$. Then, for the curve $f\cup a \cup j$, note that $a$ must be parallel to $c$, so use the sub disk bounded by $f\cup j\cup c$, and replace $c$ by $a$. In similar fashion, but working with the rectangle $a$-$e$-$f$-$b$ and the edge $i$, we can get the other two curves. But then, as $\pi(K_{B,2})$ is weakly prime (see Definition ~\ref{def:wprime}, we get that there are only two crossings on $F$.\\

We now have two possible options. First, $\pi(K_{B,2})$ can't be a single component knot, as any attempt to connect two crossings will result in either the unknot (which can't happen, as $e(\pi(K_{B,2}),F)\ge 4$) or a non-alternating knot on $F$. On the other hand, if $\pi(K_{B,2})$ is a link, by following the link around the outside, we can find a curve that completely encompasses the link. As $e(\pi(K_{B,2}),F)\ge 4$, this curve must bound a disk on $F$. But then we can find an essential curve that does not intersect our knot diagram at all, a contradiction. So there cannot be only two crossings for $\pi(K_{B,2})$ on $F$, and so the two vertices of our group of blue-blue-red triangles cannot agree.\\

Now we want to see how the red edges might intersect the crossing disks. First, the red edge $j$ cannot intersect either crossing disk by Lemma 4.17 from ~\cite{ETS}, as if it did, we would have to have a green edge intersecting either $b$ or $c$, contradicting the lemma. Now, looking at $i$, we'll see that it can only intersect one of our crossing disks. Otherwise, if the crossing disk corresponding to $e$, $f$, and $g$ intersects $i$, then we must have a green edge running from $i$ to $a$. This means that $f$ and $a$ are in the same region. On the other hand, if the crossing disk corresponding to $a$, $b$, and $c$ intersects $i$, we have a green edge running from $i$ to $e$, telling us that $b$ and $e$ are in the same region. Then we can create two curves, one from $a$, $b$, and green, and the other from $e$, $f$, and green. Each intersects our knot transversely at the $a$-$e$ crossing, but also intersects our knot at either the green edge corresponding to their respective crossing circles. But then we have a single crossing associated to two distinct crossing circles, which cannot happen. So then $i$ can only intersect, at most, one of the crossing disks. Assume, then, it doesn't intersect the crossing disk associated to $a$, $b$, and $c$.\\

Next, we need to look at the crossings at the endpoints of $a$ and $e$, which we'll call $x$, and at the endpoints of $c$ and $g$, which we'll call $y$. We know that $x$ must be distinct from the crossing at the endpoints of $b$ and $f$, and $y$ must be distinct from this third crossing as well. So we now want to show that $x$ and $y$ are distinct from each other. If they are not, then look at the curve $i\cup j$. This will give us a closed curve meeting both crossing disks, a contradiction to the crossing disk at $a$, $b$, and $c$ not meeting either $i$ or $j$.\\

\begin{figure}[!htb]
	\centering
	\def\svgwidth{.75\columnwidth}
	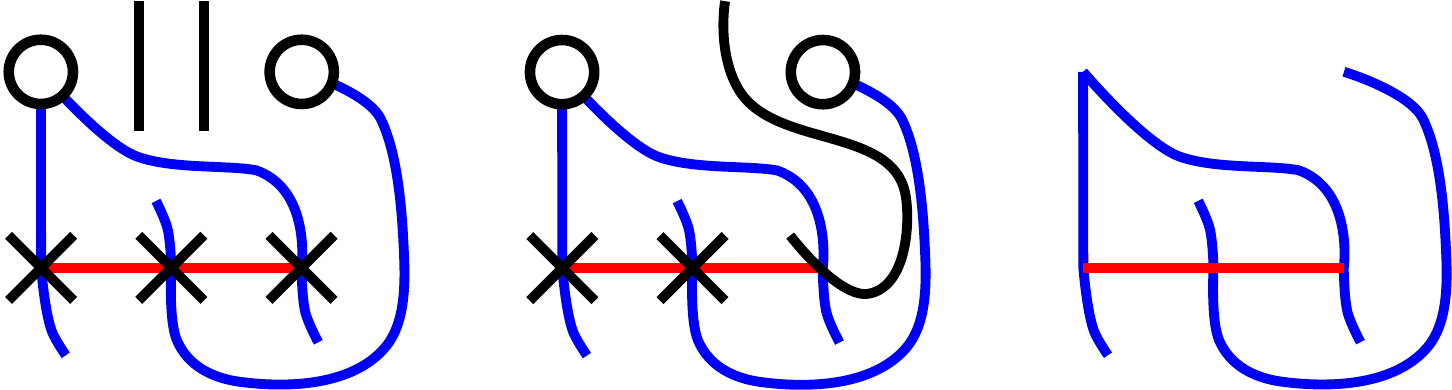
	\caption{\textit{(Left)} Focusing on the crossing circle of $a$, $b$, and $c$, our diagram must look something like this. \textit{(Center)} Once we know that $y$ is related to the crossing circle, we can draw in some extra information about our knot. \textit{(Right)} The dotted lines are disk-bounding curves that will allow us to show $x$ and $z$ are also related to the crossing circle.}
	\label{fig:adjtri2}

\end{figure}

We want to show that this can't happen by showing the twist region associated to the shown crossing circle has three crossings---namely, the three drawn crossings. Note that edges $g$ and $e$ must lie in the same blue region, as they share a vertex (the crossing circle of $e$, $f$, and $g$) and are mapped to the same side of that vertex, but also $g$ is bounded by $b\cup j \cup c\cup$ the green edge of the crossing disk, while $e$ lies outside of it, so we should figure out how they meet. As $g$ and $c$ are at opposite sides of a crossing, by Lemma ~\ref{lem:opagree}, $g$ and $e$ are not in the same region as $c$, so they can't cross over $c$. Because $j$ is red, neither $g$ nor $e$ can meet it in the graph. So the only remaining options are that one of $g$ or $e$ meets the blue edge $b$ or the crossing circle associated to $a$, $b$, and $c$. In either case, this will give us that $g$ and $e$ are in the same region as $b$.

Now we will show that the crossings $x$ (at $a$-$e$), $y$ (at $c$-$g$), and $z$ (at $h$-$f$) are all associated to the same crossing circle. As we have already shown that each of these crossings are distinct, we will get our desired contradiction. First, we show that $y$ is associated with the crossing circle of $a$, $b$, and $c$. To do this, we create a curve $\gamma$ starting at $y$, following along $c$, using the crossing disk's green edge to get to $b$, and then switching over from $b$ to $g$ to get back to $y$. Because $g$ is bounded by $b\cup j\cup c\cup$ the green edge, we can take $\gamma$ to lie inside the disk on $F$ bound by the triangle $b\cup j\cup c$ so that $\gamma$ too bounds a disk on $F$. But now, when we put crossings back in to get $K$, we have a disk on $F$ whose boundary intersects our knot at exactly two crossings, and so both must be related to the same crossing circle - namely, the crossing circle of $a$, $b$, and $c$. So then $y$ is associated to this crossing circle.\\

In order to work with the crossings $x$ and $z$, we will need to know a bit more about $f$---namely, that it is in the same region as $a$ and $c$. Clearly, if $f$ crosses $a$ or $c$, then they must be in the same region as $f$. So then let's suppose it doesn't. Then note that the edge $f$ is bounded by $a\cup c\cup j\cup i$. As $f$ is blue, it can't cross the red edges $i$ or $j$. Likewise, neither $e$ nor $g$ can cross $i$ or $j$. So, in order for these three blue edges to form the triangles in our graph, the crossing disk of $e$, $f$, and $g$ must enclose either $i$ or $j$. By the same reason the crossing disk of $a$, $b$, and $c$ can't meet $j$, the crossing disk of $e$, $f$, and $g$ can't meet $j$ either. So then the crossing disk must intersect $i$. But then, as shown in Lemma 4.17 of ~\cite{ETS}, we must have the green edge of the crossing circle intersect $a$. Now, with the green edge how it is, we must have $f$ be in the same region as $a$ and $c$.\\

Our last step is to show that $x$, $y$, and $z$ connect to form bigons, thus are three distinct crossings in $\pi(K_{B,2})$ with the associated to the same crossing circle, a contradiction. To do this, note that we can form four closed curves intersecting our knot exactly twice: $i \cup \frac{1}{2}b\cup \frac{1}{2} e$; $i\cup \frac{1}{2} f\cup \frac{1}{2} a$; $j\cup \frac{1}{2}g\cup\frac{1}{2}b$; and $j\cup \frac{1}{2}c\cup\frac{1}{2}f$, as seen in Figure ~\ref{fig:adjtri2} (right). As we are assuming that $e(\pi(K_{B,2}), F)\ge 4$, each of these curves must bound a disk. But then, because $\pi(K_{B,2})$ is a weakly prime diagram, and because there are crossings outside of these disks, the intersection of the disks and $\pi(K_{B,2})$ must be a single embedded arc. But then, drawing the result, we see that these three crossings form bigons, and so must be associated to the same crossing circle. As we are working with $K_{B,2}$, and at most two crossings can be associated with a crossing circle, we're done.
\end{proof}

As mentioned earlier, our want to study adjacent families of blue-blue-red triangles came from a red edge intersecting blue bigons. As this can't happen, we can't have just one red edge intersect our bigons. So, the next case to consider is two or more red edges intersecting the bigons.

\section{Triangles and Squares}
To review where we are, we know that any non-trivial blue bigon families we have must be intersected by at least two (parallel) red edges. We can then break these bigons up into blue-blue-red triangles and red-blue-red-blue rectangles. The triangles come from the ends of the bigons, and must meet the vertex of the bigon, while the squares are from the interior, and have a red edge on either end. In this section, we will focus on adjacent triangles and squares. Our goal now is to show that we can only have so many of these triangle-square pairs stacked on each other:

\begin{restatable}{lem}{fivepairs}
\label{lem:fivepairs}
There cannot be five adjacent triangle-square pairs for the diagram $\pi(L_{B,2})$.
\end{restatable}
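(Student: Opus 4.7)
The plan is to argue by contradiction. Suppose $\Gamma_{BR}$ contains five adjacent triangle-square pairs. First I would pin down the exact combinatorial picture: these come from a stack of five blue bigons, all sharing blue edges, cut across by two parallel red arcs, so that near a common blue vertex we get a chain of blue-blue-red triangles, and between the two red arcs we get a chain of red-blue-red-blue squares, with each triangle adjacent to a square across a blue edge. I would first show, as a preliminary reduction, that the two common blue vertices (top and bottom of the stack) correspond to distinct crossing circles of $L_{B,2}$, adapting the vertex-equality argument from the proof of Lemma ~\ref{lem:adjtri}.

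Next, I would use the disk-building lemmas already in place (Lemma ~\ref{lem:bbrtrng} for families of blue-blue-red triangles and Lemma ~\ref{lem:bluerec} for blue rectangles with a red diagonal) to assemble a disk on $F$ whose boundary is a controlled concatenation of blue and green arcs enclosing all crossings coming from the configuration. The delicate part is ruling out the possibility that the candidate boundary curves are essential on $F$ or bound compression disks; here I would lean on $e(\pi(K_{B,2}),F)\geq 4$ and $r(\pi(K_{B,2}),F)>4$, both preserved by Proposition ~\ref{prop:augment}, precisely as in the higher-genus case analyses of Lemmas ~\ref{lem:BlueRedBigon}, ~\ref{lem:BBigIntRorG}, and ~\ref{lem:adjtri}.

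Once the disk on $F$ is in hand, I would extract from the configuration several simple closed curves on $F$, each meeting $\pi(K_{B,2})$ transversely in exactly two crossings. Weak primality (Definition ~\ref{def:wprime}) together with weak blue twist reducedness (Lemma ~\ref{lem:wBluTwistReduced}) then forces all the crossings appearing in the boundary of the configuration to lie in a single twist region of $\pi(K_{B,2})$ bounded only by red bigons. A stack of five triangle-square pairs contributes at least six distinct crossings, whereas by construction of $L_{B,2}$ every augmented twist region retains at most two crossings; this is the desired contradiction.

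The main obstacle will be the casework on how the two red arcs meet the green crossing disks at the top and bottom vertices, mirroring the analysis in Lemma ~\ref{lem:adjtri}: one must rule out red edges threading a crossing disk in ways that would force a green edge to land on an interior blue edge, which is forbidden by Lemma 4.17 of ~\cite{ETS}. The higher-genus setting adds the further burden of excluding essential non-disk-bounding candidate curves at each step, which I expect to handle uniformly by the edge representativity and representativity hypotheses, exactly as done throughout Sections 5--6.
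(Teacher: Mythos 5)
Your outline catches the right preliminaries (reducing to a disk on $F$, invoking $e(\pi(K_{B,2}),F)\ge 4$ and $r(\pi(K_{B,2}),F)>4$ to kill essential or compressing candidate curves), but the final step---``weak primality together with weak blue twist reducedness then forces all the crossings appearing in the boundary of the configuration to lie in a single twist region''---does not follow and is not what happens. Weak blue twist reducedness (the definition preceding Lemma~\ref{lem:wBluTwistReduced}) only applies to disks whose boundary meets the diagram in \emph{exactly two} crossings. The boundary of a stack of five triangle-square pairs is a long blue-red curve passing through many crossings (each red arc crosses all six blue edges, and the outer blue edges end at crossings), so this hypothesis is never available at the level of the whole configuration. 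Moreover, since the bigons are non-trivial, their two vertices correspond to \emph{distinct} crossing circles, so the crossings of the configuration cannot all lie in a single twist region to begin with; the conclusion you want is false as stated, not merely unjustified. Your ``at least six crossings versus at most two per twist region'' count therefore does not apply.

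What the paper actually does is more delicate. After Lemma~\ref{lem:brrect} (not Lemma~\ref{lem:bluerec}, which treats a rectangle with a red diagonal, a different shape) establishes that a \emph{triangle-square pair} with interior disjoint from vertices induces a disk on $F$, the paper proves two replacement lemmas for \cite[Lemma~3.2(4)]{ETS} (the unnamed lemma and Lemma~\ref{lem:324subTriangles}). Each of these controls a \emph{single} red-blue crossing at a time, showing that under certain incidence conditions between a pair of blue edges and the vertex, the crossing they meet is associated with the vertex's crossing circle. With those substitutes in hand, the paper invokes \cite[Lemmas~5.1--5.7]{ETS} essentially verbatim: for three adjacent triangle-square pairs, exactly three of the four red-blue corner crossings are associated to the vertex, and they cannot all be adjacent (either the top two and the bottom one, or the bottom two and the top one). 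Chaining these parity constraints across four and then five adjacent pairs is what produces the contradiction in \cite[Lemma~5.7]{ETS}. So while your overall strategy (disks on $F$, primality, $L_{B,2}$ having at most two crossings per encircled region) points in the right direction, the actual engine is this careful crossing-by-crossing bookkeeping, not a one-shot application of weak blue twist reducedness to the whole stack. You would need to reconstruct the substitute lemmas and then carry out (or at least faithfully invoke) the \cite{ETS} combinatorics.
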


Once we have proved this, we can combine it with Lemma ~\ref{lem:minbig}, which tells us we must have at least five adjacent non-trivial blue bigons, to get a contradiction and show that $S_{B,2}$ is essential. To do this, we will first prove that these triangle-square pairs have to sit inside a disk on $F$, and hence can't have any topology to them. Next, we prove ~\cite[Lemma 3.2(4)]{ETS} in a few specific cases, showing that if two blue sides of a crossing meet the same crossing circle, then that crossing must be associated to that crossing circle. With these, we can then use the proof of ~\cite[Section 5]{ETS}, with little modifications to prove Lemma ~\ref{lem:fivepairs}.

As before, where, when working with triangles, we saw that graphs induced disks on the projection surface, we will want to do something similar when working with triangle-square pairs, that is, when we glue one edge of a triangle to an edge of a square

\begin{lem}\label{lem:brrect}
A blue-red rectangle with interior disjoint from the vertices of $\Gamma_B$ induces a disk on the projection surface $F$. Furthermore, a triangle-square pair, with interior disjoint from the vertices of $\Gamma_B$ induces a disk on the projection surface $F$.
\end{lem}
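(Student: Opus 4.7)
The plan is to form the natural closed curve $\gamma$ on $F$ from the image of $\partial T$, count $|\gamma \cap \pi(K_{B,2})|$, and use the representativity hypothesis to force $\gamma$ to bound a disk on $F$. First I would fix a blue-red rectangle $T$ with cyclic sides $b_1, r_1, b_2, r_2$ meeting at four corners $c_1, c_2, c_3, c_4$, each of which must be a crossing of $\pi(K_{B,2})$ (since that is the only place a blue edge of $\Gamma_{BR}$ can meet a red edge). Away from the corners, each $b_i$ projects into a single blue region of $F$ and each $r_i$ into a single red region, and at each corner the projected curve $\gamma$ must cross one strand of the knot to jump between adjacent differently-colored regions. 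Hence $\gamma$ meets $\pi(K_{B,2})$ transversely in exactly four points.

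Next I would argue that $\gamma$ bounds a disk on $F$. By hypothesis the interior of $T$ contains no vertex of $\Gamma_B$, and by construction it contains no blue or red edge; the complexity-minimizing observations at the start of Section 5 prevent green edges from providing any extra passage through $F$ inside $T$. Consequently the interior of $T$ maps into $Y$ entirely on one side of $F$, producing an embedded disk in $Y$ whose boundary is $\gamma \subset F$ and whose interior is disjoint from $F$. If $\gamma$ were essential on $F$, this disk would be a compression disk for $F$ meeting $\pi(K_{B,2})$ in only four points, contradicting $r(\pi(K_{B,2}),F) > 4$. Therefore $\gamma$ bounds a disk in $F$, which is the required induced disk.

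For the triangle-square pair, I would split along the shared red edge into a blue-blue-red triangle $\Delta$ and a blue-red rectangle $Q$. The earlier BBR-triangle lemmas give a disk $D_\Delta \subset F$ induced by $\Delta$ (distinguishing, where necessary, whether the red side of $\Delta$ passes through the crossing disk at the blue vertex of $\Delta$, in which case $D_\Delta$ is itself assembled from two subdisks joined across the crossing disk), and the first half of the present lemma supplies a disk $D_Q \subset F$ induced by $Q$. The disks $D_\Delta$ and $D_Q$ share the arc on $F$ that is the image of the common red edge. To conclude that $D_\Delta \cup D_Q$ is a disk rather than an annulus, I would mimic the case analysis of Lemma~\ref{lem:bbrtrng}: any overlap beyond the shared red arc would yield a simple closed curve on $F$ meeting $\pi(K_{B,2})$ in at most four points and bounding (in $Y$) a disk or compression disk, and such a curve is ruled out by weak primeness, weak twist-reducedness, and the hypotheses $e(\pi(K_{B,2}),F) \geq 4$ and $r(\pi(K_{B,2}),F) > 4$.

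The main obstacle will be the bookkeeping at the blue vertex of $\Delta$. When the triangle's red edge runs through the crossing disk at that vertex, $D_\Delta$ is not bounded by a single simple closed curve but is cut-and-paste from two subdisks; one must then check that none of these pieces overlaps the rectangle-side disk $D_Q$ beyond the prescribed shared arc, in direct parallel with the three-case analysis performed for pairs of adjacent blue-blue-red triangles in Lemma~\ref{lem:bbrtrng}.
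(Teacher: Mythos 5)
Your argument for the base case is correct and matches the paper's first step, but it does not prove the lemma as stated. The hypothesis of the lemma is only that the \emph{interior of the rectangle is disjoint from the vertices of $\Gamma_B$}; it does not exclude interior blue or red edges. Your claim that ``by construction it contains no blue or red edge'' is not justified by the hypotheses, and in fact the lemma is applied elsewhere in Section~6 (e.g.\ in Lemma~\ref{lem:324subTriangles}) to triangle-square pairs that are only assumed to have vertex-free interior. If an interior red (or blue) edge is present, your single curve $\gamma$ still meets $\pi(K_{B,2})$ only four times, but the corresponding region of the disk need not map to a single side of $F$ --- it can cross at the interior edges --- so your ``the disk is a compression disk'' step fails.

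The paper handles exactly this by two nested inductions: first it proves the case with interior disjoint from both blue and red (your base case, with the same representativity argument), then it observes that any interior red edge must run straight across the rectangle (a crossing in the interior would produce a single region of $D$ mapped both above and below $F$), splits the rectangle along that red edge into two smaller rectangles, invokes the inductive hypothesis, and glues the two disks along the common arc; it then runs an analogous induction on interior blue edges. To repair your proof you would need to supply both induction steps. Your treatment of the triangle-square pair by gluing along the shared red edge is in line with the paper, and your extra caution about the union possibly being an annulus is reasonable (the paper asserts the gluing gives a disk without further comment); but the missing interior-edge inductions for the rectangle are the substantive gap.
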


\begin{proof}
We will prove this by starting with the simplest case, a blue-red rectangle with interior disjoint from blue and red edges, and expand from there. So assume our rectangle has interior disjoint from blue and red edges. Then, as we have done previously, we can form a simple closed curve $\gamma$ by taking the boundary of the rectangle, as below.

\begin{figure}[!htb]
	\centering
	\def\svgwidth{.75\columnwidth}
	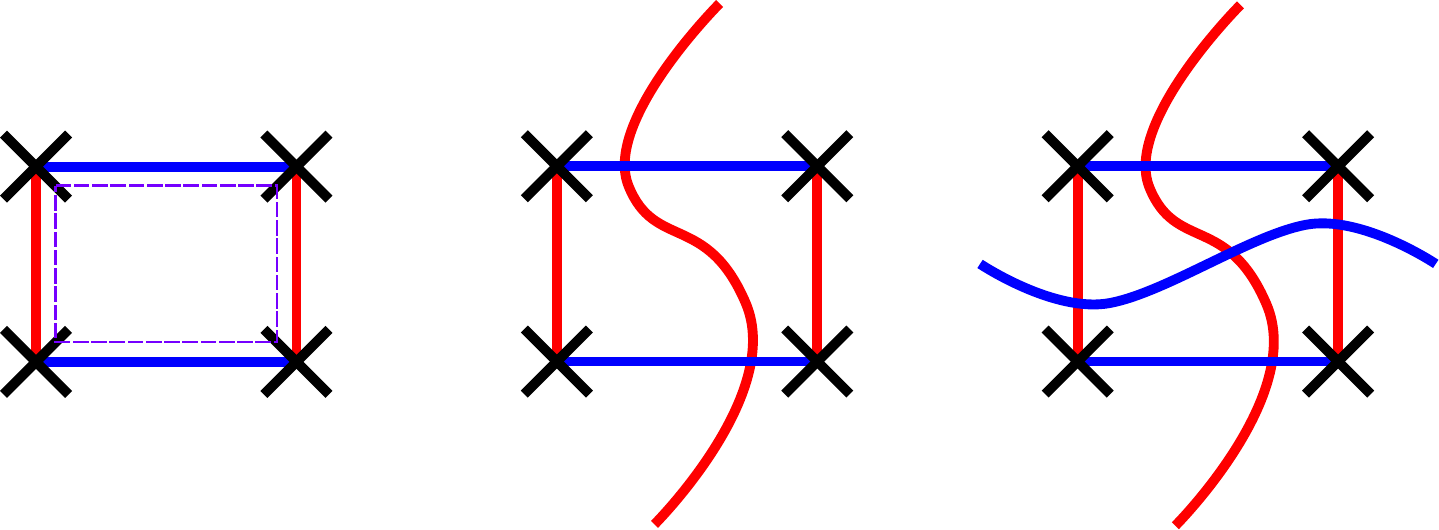
	\caption{\textit{(Left)} A blue-red rectangle disjoint from all other edges. The purple curve represents our $\gamma$. \textit{(Center)} A blue-red rectangle that might intersect a red edge, but is disjoint from blue edges. \textit{(Right)} A blue-red rectangle that might intersect any number of blue or red edges.}
\end{figure}

As red edges can't intersect red edges, neither of our two red edges can cross any additional crossings. Likewise, as blue edges can only meet blue edges at vertices, neither of our blue edges can meet additional crossings. So then $\gamma$ intersects our knot exactly four times. As $r(\pi(K), F)$ is strictly greater than four, $\gamma$ cannot bound a compression disk of $F$. On the other hand, if $\gamma$ is an essential curve, the disk it bounds must pass through the projection surface $F$. But such a disk can only pass through $F$ at a red or blue edge, which our rectangle is disjoint from. So then $\gamma$ must bound a disk on $F$.

Next, suppose our rectangle has interior disjoint from blue edges, but not necessarily from red edges. We first want to show that our red edges must run entirely through the rectangle, without any crossings in the interior. First, if any of the red edges meets a crossing, it must then either meet a blue edge at that crossing or meet no other edges at that crossing (as red edges can't meet red edges, and can only meet green at crossing circles). If it meets a blue edge, as our interior is disjoint from blue edges, that means the crossing must be on the boundary blue edge, and we are good. If it meets no other edges, then our crossing is in the interior. But then, we can go from one side of the red edge to the other by going around the crossing, with this path never intersecting our graph. But then, as the projection of the disk switches from above to below the projection plane at blue and red edges, we have a single region of our disk that must be both above and below this plane, which cannot happen. So any crossings our red edge meets must be on the boundary, and we may proceed.

Now we will proceed by induction. We already know how to proceed if we have no red edges. Now suppose we can get a disk if our rectangle has at most $n-1$ red edges in the interior. Label the blue boundary edges $e,f$ and the red boundary edges $i,l$. Given a rectangle with $n$ red edges in the interior, pick one of them and label it $\alpha$. As red edges cannot intersect red edges, this red edge must be parallel to all of the other red edges, including those on the boundary. Then we can come up with two smaller blue-red rectangles, each with less than $n$ red edges in the interior, by splitting along $\alpha$. The boundaries of these rectangles will be $\gamma_1 = i\cup \frac{1}{2} e\cup \alpha\cup \frac{1}{2}f$ and $\gamma_2 = \alpha\cup\frac{1}{2} e\cup l\cup \frac{1}{2} f$. Each of these curves, by induction, will give us disks on $F$. To get a disk on $F$ bounded by $i\cup e\cup l\cup f$, we can glue our two smaller disks along their common edge $\alpha$, and we are done with this case.

Finally, we deal with the broad case of a rectangle that is only disjoint from the vertices of $\Gamma_B$. As $\Gamma_B$ is a connected graph, and blue edges only meet blue edges at vertices/crossing circles, our blue edges must run all the way from one boundary (red) edge to the other. As it turns out, this doesn't actually matter for our proof - we just need the blue edges to run from one red edge to the next. But if the blue edges didn't run all the way from one end to the other, we could find a region where our disk was both above and below the projection plane, a contradiction.

Once again, we proceed by induction. If our rectangle has no blue edges in the interior, we can use what we've shown above to find a disk. Now suppose we can get a disk if our rectangle has at most $n-1$ blue edges, and suppose our rectangle has $n$ blue interior edges. Pick a blue edge and call it $\beta$. As blue edges can only meet blue edges at vertices, $\beta$ must run parallel to both the boundary edges $e$ and $f$, as well as the other interior blue edges. So we can construct two smaller blue-red rectangles with boundary $\gamma_1 = e\cup \frac{1}{2}l \cup \beta \cup \frac{1}{2}i$ and $\gamma_2 = \beta\cup \frac{1}{2}l\cup f\cup\frac{1}{2} i$, with each rectangle having less than $n$ blue edges. By induction, we can then find disks on $F$ with boundaries $\gamma_1$ and $\gamma_2$. To get a disk with boundary $e\cup i \cup f\cup l$, we can glue the disks for $\gamma_1$ and $\gamma_2$ along $\beta$, and we are done.

Finally, we will show that a triangle-square pair with no interior $\Gamma_B$ vertices induces a disk on $F$. We can split the triangle-square pair into a blue-blue-red triangle and a blue-red rectangle, whose boundaries share a red edge. The triangle bounds a disk by ~\ref{lem:bbrtrng}, while the square bounds a disk by above. So we can form a disk for the triangle-square pair by gluing these two disks together along their common red side.
\end{proof}

Now we will examine what three adjacent triangle-square pairs might look like. As we now know that the triangle-square pairs must all lie on a disc, and thus there is no genus involved, we don't have to worry about possibilities beyond the four drawn in Figure 17 of the paper. Likewise, many of the lemmas proven for $F=S^2$ will also hold for a general projection surface. As usual, we only have to be careful when we are referencing ~\cite[Lemma 3.2(4)]{ETS}, as this part of the lemma relied on the whole projection surface as opposed to just a small (local) portion of it. So now we will prove ~\cite[Lemma 3.2(4)]{ETS} for the two cases in a triangle-square pair .

Before we begin, there is something that must be noted. When these lemmas are applied in ~\cite{ETS}, they are to families of adjacent bigons. As such, the interior of these bigons will not have any vertices---if not, then any interior vertex must have a blue edge extending to either vertex of the bigon, breaking the fact that we have adjacent bigons. So the condition that the triangle-square pairs are disjoint from vertices holds for the ones we care about, allowing us to apply the following two lemmas in place of ~\cite[Lemma 3.2(4)]{ETS}:

\begin{lem}
Suppose that, in a triangle-square pair with interior disjoint from vertices, two blue edges, $x$ and $y$, meet such that the other endpoint of $x$ is the vertex of the triangle-square pair, and $y$ is in the same region as the crossing circle associated to the vertex. Then the crossing corresponding to this endpoint must be associated to the crossing circle.
\end{lem}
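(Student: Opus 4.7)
The plan is to build a simple closed curve $\gamma$ on $F$ that passes through exactly two crossings of $\pi(K_{B,2})$---the given crossing $c$ where $x$ meets $y$ and a suitable crossing $c'$ in the twist region $T$ corresponding to the vertex $v$ of the triangle-square pair---and then to invoke the weakly twist reduced property of $\pi(K_{B,2})$ to force $c$ and $c'$ to lie in the same twist region. Since $c' \in T$, this will yield $c \in T$, which is exactly the conclusion that $c$ is associated to the crossing circle $C$ corresponding to $v$.

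First I would invoke Lemma ~\ref{lem:brrect} to get a disk $D_0 \subset F$ induced by the triangle-square pair (using the hypothesis that the interior is disjoint from vertices of $\Gamma_B$). By Lemma ~\ref{lem:opagree}(3) the two blue regions $R_1, R_2$ of $F \setminus \pi(K_{B,2})$ adjacent to $C$ are distinct, and without loss of generality $x \subset R_1$. Since $T$ contains one or two crossings after augmentation, and each crossing of $T$ has one blue corner in $R_1$ and the opposite blue corner in $R_2$, I can select such a crossing $c' \in T$. Then I would build $\gamma$ by concatenating: (i) the edge $x$ from $c$ to its endpoint on $C$; (ii) a simple arc in $R_1$ from that endpoint to the $R_1$-corner of $c'$; (iii) a short arc through $c'$ from its $R_1$-corner to its $R_2$-corner; (iv) a simple arc in the blue region containing $y$ (which is $R_1$ or $R_2$, by hypothesis) from $c'$ to the blue corner of $c$ where $y$ sits; and (v) a short arc near $c$ from $y$'s corner through $c$ to $x$'s corner. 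By construction $\gamma$ meets $\pi(K_{B,2})$ only at the crossings $c$ and $c'$.

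The critical step will be showing that $\gamma$ bounds a disk on $F$. The arcs in pieces (ii)--(iv) can be routed close to the boundary of $D_0$, since both $x$ and the neighborhood of $v$ lie in $D_0$, the crossing $c'$ lies in the twist region $T$ adjacent to $v$, and the connecting arcs stay in blue regions adjacent to $C$. A slight enlargement of $D_0$ containing these arcs is still a disk in $F$, so $\gamma$ bounds a disk $D_\gamma$ in $F$. This should be cross-checked by noting that $\gamma$ cannot bound a compression disk of $F$: such a disk would have boundary meeting $\pi(K)$ in at most the four transverse intersections arising from the two crossings of $\gamma$, contradicting $r(\pi(K),F) > 4$. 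Ruling out an essential $\gamma$ is more delicate, since $e(\pi(K),F) \geq 4$ permits exactly four intersections, so here the use of $D_0$ is indispensable.

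Finally, with $\gamma = \partial D_\gamma$ meeting $\pi(K_{B,2})$ at exactly the two crossings $c$ and $c'$, the weakly blue twist reduced property (Lemma ~\ref{lem:wBluTwistReduced}) applied to $D_\gamma$ implies that either $D_\gamma$ itself or a complementary disk $D' \subset F \setminus D_\gamma$ with boundary meeting the same two crossings contains only bigon faces of $F \setminus \pi(K_{B,2})$. Such a chain of bigons lies inside a single twist region whose extreme crossings are precisely $c$ and $c'$, and since $c' \in T$ we conclude $c \in T$, as desired. The main obstacle in this plan is the disk-bounding step, where the structural information furnished by $D_0$ must be leveraged to overcome the fact that the representativity hypotheses alone do not suffice to rule out essentiality of $\gamma$.
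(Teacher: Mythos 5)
Your proposal takes a genuinely different route: instead of constructing a curve through the single crossing $c$ and the crossing circle and then invoking the \emph{definition} of a crossing being associated with a crossing circle, you build a curve through two crossings $c$ and $c'$ and attempt to invoke weakly blue twist reducedness. The paper's proof does the former: let $z$ be the blue edge adjacent to $x$ at the vertex and $w$ the red edge between $x$ and $z$; observe that $y$ and $z$ must lie in the same blue region; take $\gamma$ to run along $x$, across the crossing disk to $z$, then jump from $z$ to $y$ and return through $c$. The key point is that three of the four pieces of this $\gamma$ lie in $D_0$ by construction, and only the jump from $z$ to $y$ may leave $D_0$; that single escape is controlled by an auxiliary closed curve $\alpha$ (built from the jump arc and the red edge $w$) which meets the knot exactly twice, hence bounds a disk by $e(\pi(K_{B,2}),F)\geq 4$, and the $\alpha$-disk is glued onto $D_0$ to obtain a disk containing all of $\gamma$.

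The genuine gap in your write-up is the disk-bounding step, which you flag as the main obstacle but never close. You assert that the arcs in pieces (ii)--(iv) ``can be routed close to the boundary of $D_0$,'' but nothing in the hypotheses guarantees this: piece (iv), the arc in the blue region of $y$ from $c'$ back to $c$, is precisely the sort of arc that may be forced to run far from $D_0$ (around genus of $F$), and in your construction pieces (ii) and (iii) are also not a priori contained in $D_0$. By contrast, the paper's $\gamma$ is designed so that all pieces except one short jump are already inside $D_0$, and the single escape is handled by an auxiliary two-intersection curve where edge representativity actually applies. A secondary issue: your invocation of weakly blue twist reducedness concludes that $c$ and $c'$ lie in the same twist region, but the alternative in Definition~\ref{def:wtwred} allows the bigon string to lie in the complementary disk $D'$ rather than $D_\gamma$, and you do not address why the chain of bigons must be the one containing $c'$; nor do you address the degenerate possibility $c=c'$ or the case where $T$ has a single crossing. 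These are fixable, but the routing of $\gamma$ into a disk is the substantive hole, and your own closing sentence acknowledges it remains open.
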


\begin{proof}
First, note that, by Lemma ~\ref{lem:brrect}, our triangle-square pair bounds a disk on $F$. Let $z$ be a blue edge adjacent to $x$ that also has an endpoint on the vertex, and $w$ the red edge from $x$ to $z$. Then, as $x$ and $z$ must be in distinct blue regions, and $x$ and $y$ must be in distinct blue region, we must have that $y$ and $z$ are in the same blue region. 

There is a simple closed curve connecting the crossing at the endpoint of $x$ and $y$ and the crossing circle. Let $\gamma$ be the curve that follows $x$ from the crossing to the crossing circle, goes through the crossing circle to $z$, and then jumps from $z$ to $y$ to make it back to the crossing. We claim that $\gamma$ bounds a disk. If $\gamma$ is contained entirely within the disk corresponding to the triangle-square pair, then we are done. Otherwise, at some point, we would have to leave this disk. By construction, the only time we could have done this is the jump from $z$ to $y$. That would mean that, although $z$ and $y$ are in the same blue region, we have to leave the triangle-square pair to realize this.
 
While this could be a problem, we are going to show that we can add to our triangle-square pair disk, and get a disk that includes all of $\gamma$. Let $\alpha$ be the simple closed curve that follows the jump from $z$ to $y$, then the red edge $w$ back to $z$, and then follows $z$ back to where it started.

\begin{figure}[!htb]
	\centering
	\def\svgwidth{.75\columnwidth}
	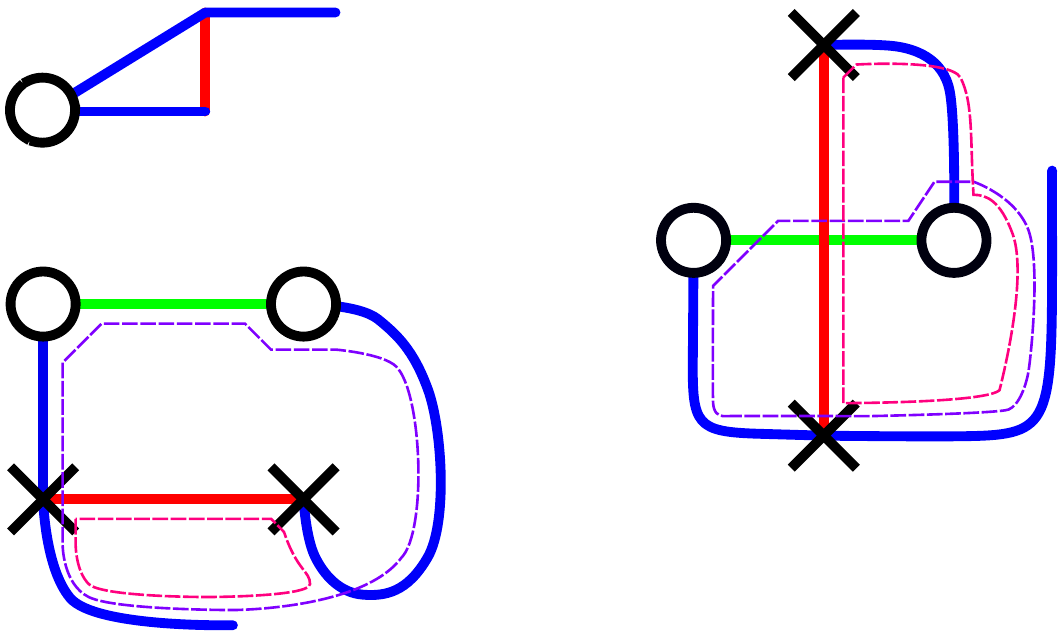
	\caption{\textit{(Upper Left)} The graph of the case we are considering. \textit{(Lower Left and Right)} The possible diagram configurations of this case. The purple curve following the crossing disk is $\gamma$, while the pink curve following $w$ is $\alpha$.}
\end{figure}

Then $\alpha$ must intersect our knot in exactly two places, and so bounds a disk. Further, this disk must have two of its edges entirely within our triangle-square pair disk, and the other edge exiting it. This will allow us to glue the $\alpha$ disk to the triangle-square pair disk, and still get a disk. This, then, will give us $\gamma$ living inside a disk, and so then must bound a disk, and we are done by definition.
\end{proof}

\begin{lem}
\label{lem:324subTriangles}
Suppose that, in a group of triangle-square pairs with interior disjoint from vertices, two blue edges, $x$ and $y$, both have an endpoint at the vertex, and in the image of the knot, have their other endpoint on opposite sides of the same crossing. Then that crossing is associated to the crossing circle of the vertex.
\end{lem}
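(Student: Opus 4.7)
The plan is to adapt the construction from the preceding lemma to this simpler setting, in which both $x$ and $y$ share an endpoint at the blue-blue vertex so that no auxiliary curve is required.

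First, by Lemma ~\ref{lem:brrect}, the group of triangle-square pairs induces a disk on $F$, and both $x$ and $y$ lie in this disk. Since $x$ and $y$ are adjacent blue edges at the same vertex, they sit on opposite sides of the associated crossing disk, and their images on $F$ occupy the two distinct blue regions meeting the crossing circle. I would construct a simple closed curve $\gamma$ on $F$ by starting at $c$, traversing $x$ to the vertex, passing through the crossing disk to the opposite side, and returning along $y$ back to $c$.

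Next, after perturbing $\gamma$ to general position, it meets $\pi(K_{B,2})$ transversely in exactly two points: one from the transition at $c$ between the two opposite blue regions, and one from the strand piercing the crossing disk. Since $e(\pi(K_{B,2}), F)\ge 4$ by Lemma ~\ref{lem:edgerep}, the curve $\gamma$ cannot be essential in $F$ and therefore bounds a disk $D_\gamma\subset F$. By the weak primality of $\pi(K_{B,2})$ (Lemma ~\ref{lem:redalt}), the intersection $\pi(K_{B,2})\cap D_\gamma$ is a single embedded arc, replacing $D_\gamma$ by $F\setminus D_\gamma$ if $F=S^2$.

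This arc connects a strand at $c$ directly to the strand passing through the crossing disk, so, combined with the weakly blue twist reduced condition (Lemma ~\ref{lem:wBluTwistReduced}), the crossing $c$ must lie in the twist region enclosed by the crossing circle associated to the vertex. Hence $c$ is associated to this crossing circle, completing the argument.

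The main obstacle will be verifying that $\gamma$ is indeed embedded and intersects $\pi(K_{B,2})$ in exactly two points after perturbation, which requires carefully tracking how $\gamma$ turns at $c$ and at the vertex; it is crucial here that the crossing disk can be traversed at the vertex without accumulating extra intersections with $\pi(K_{B,2})$. Once this is confirmed, the remaining steps parallel the preceding lemma but are simpler, avoiding the auxiliary $\alpha$-curve patching required there.
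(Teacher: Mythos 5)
Your construction of $\gamma$ and your use of Lemma~\ref{lem:brrect} to place everything on a disk follow the same plan as the paper, but the intersection count is wrong, and this invalidates the steps you build on top of it. Since $x$ and $y$ terminate in \emph{opposite} blue regions at the crossing $c$, closing $\gamma$ up there forces it past the crossing and across both strands meeting at $c$, contributing two intersections rather than one. Likewise, both strands of the twist region pierce the crossing disk, so the green arc meets $\pi(K_{B,2})$ twice, giving $\gamma$ two more intersections at the vertex. In total $\gamma$ meets $\pi(K_{B,2})$ four times (the paper states this explicitly). Consequently, $e(\pi(K_{B,2}),F)\ge 4$ does not show that $\gamma$ is inessential — that hypothesis only rules out essential curves meeting the diagram at most three times — and weak primality, which requires a disk whose boundary meets the diagram exactly twice, is not available either. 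The appeal to weak blue twist reduced at the end is likewise not applicable, since its hypothesis concerns a disk boundary meeting the diagram in two crossings, and $\gamma$ passes through only one crossing of $\pi(K_{B,2})$ (the second pair of intersections lies on the crossing disk, not at a crossing).

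The way to close the gap is in fact already present in your opening sentence but not used: by Lemma~\ref{lem:brrect} the whole triangle-square group, including $x$, $y$, and the green arc of the crossing disk, sits inside a disk on $F$, and $\gamma$ is a simple closed curve in that disk, so it bounds a subdisk automatically. No representativity or primality argument is needed. Once $\gamma$ bounds a disk on $F$ passing through the crossing $c$ and across the crossing circle, that is precisely what it means for $c$ to be associated to that crossing circle, which is how the paper concludes. So: drop the edge-representativity and weak-primality steps, derive the disk directly from Lemma~\ref{lem:brrect}, and stop there.
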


\begin{proof}
As with above, we can use Lemma ~\ref{lem:brrect} to get a disk that the triangle-square pair lives on. In particular, both $x$ and $y$ live on the disk, as does the green edge associated with the crossing circle. Also, as $x$ and $y$ are on opposite sides of a crossing, they must also be on opposite sides of the crossing circle associated to the vertex. So we will create a simple closed curve that intersects our knot twice at the crossing circle and twice at the crossing of $x$ and $y$ by taking $\gamma$ to be the curve that follows $x$ from the crossing to the crossing circle, then the green edge across, then $y$ back down to the crossing. As $x$ and $y$ live in distinct blue regions, they cannot intersect except at the crossing, so $\gamma$ is simple. And, as $\gamma$ lives in a disk, it must also then bound a disk, and we are done.
\end{proof}

Now, the proofs of Lemmas 5.1 through 5.5 of ~\cite{ETS} will all work as intended, where we use the above two lemmas in place of ~\cite[Lemma 3.2 (4)]{ETS}. To summarize, these lemmas tell us how three adjacent triangle-square pairs must be placed in our diagram. Of particular note, the last three of these, Lemmas 5.3 through 5.5, state that, of the four red-blue crossings of the triangles, three of them must correspond to the vertex of the crossing circle, and they cannot all be adjacent to each other. That is, either the two top crossings and the bottom crossing are associated to the vertex, or the bottom two and the top crossing are. 

Now, by starting with three adjacent triangle-square pairs, we can start adding additional triangle-square pairs, and keep track of if the crossings are associated to the vertex of the crossing circle or not. As it turns out, as illustrated in ~\cite[Lemma 5.7]{ETS}, we get a contradiction when we try to add a fifth triangle-square pair, thus proving Lemma ~\ref{lem:fivepairs}. Using this, we can then get:

\begin{prop}
\label{lem:NoFiveBigons}
The graph $\Gamma_B$ cannot contain five or more adjacent non-trivial bigons.
\end{prop}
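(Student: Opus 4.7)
The plan is to argue by contradiction: assume that $\Gamma_B$ contains five or more adjacent non-trivial bigons, and show that this forces one of the configurations ruled out in the earlier lemmas. Suppose, then, that $B_1, \dots, B_5$ is a family of five adjacent non-trivial bigons in $\Gamma_B$. By Lemma \ref{lem:BBigIntRorG}, none of these bigons can be disjoint from both the red and green edges of $\Gamma_{BRG}$. Hence each $B_i$ is crossed by some non-blue edge, and the union $B_1\cup\cdots\cup B_5$ (a disk in $D$) must contain in its interior a collection of red and/or green arcs.

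First I would dispose of the green contributions. Any green arc entering a blue bigon must have an endpoint on a blue edge (not at a vertex) or on a red edge. The minimality assumptions recalled at the start of Section 6 forbid green arcs with both endpoints on red, and forbid green arcs with both endpoints on the same blue vertex or on blue edges with one endpoint away from vertices. Combining these with the ruling out of blue-blue-green triangles (the analogue of \cite[Lemma 4.16]{ETS}) and Lemma \ref{lem:adjtri}, no green edge can be present in the interior of this bigon family. So every intersection of the bigons with non-blue edges comes from red arcs.

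Next I count how many red arcs cross the family. Since red cannot meet red, any red arcs crossing the bigon family must be parallel to each other across the bigons. If exactly one red arc crosses the five bigons, then it splits each bigon into two blue-blue-red triangles meeting across the red arc. Grouping the three consecutive triangles on either side gives precisely the configuration of Lemma \ref{lem:adjtri} (two pairs of three adjacent blue-blue-red triangles meeting across a red edge), which is forbidden. Hence at least two parallel red arcs must cross the family.

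With at least two parallel red arcs present, the bigons are cut into blue-blue-red triangles at the outer ends (meeting the two vertices of the bigons) and red-blue-red-blue rectangles in the middle. In each bigon the triangle on the vertex side, paired with the adjacent square, forms a triangle-square pair in the sense of Lemma \ref{lem:brrect}, and adjacency of the bigons gives adjacency of these pairs. Thus from five adjacent non-trivial bigons we obtain five adjacent triangle-square pairs in $\Gamma_{BR}$, contradicting Lemma \ref{lem:fivepairs}. The main obstacle will be confirming that the reduction to triangle-square pairs is clean; in particular one needs to verify that no interior vertex of $\Gamma_B$ can sit inside the bigon family (which follows from the definition of an adjacent bigon family, since such an interior vertex would force a blue edge splitting one of the bigons), so the interiors of the triangle-square pairs are vertex-free and Lemma \ref{lem:brrect} and Lemma \ref{lem:fivepairs} apply as stated. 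This completes the contradiction and proves Proposition \ref{lem:NoFiveBigons}.
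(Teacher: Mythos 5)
Your proposal follows essentially the same route as the paper's proof: invoke Lemma~\ref{lem:BBigIntRorG} to force intersection with red or green, rule out the green-only case, conclude red arcs must cross the bigons, then split into two cases (a single red arc giving adjacent blue-blue-red triangles, ruled out by Lemma~\ref{lem:adjtri}; two or more red arcs giving triangle-square pairs, ruled out by Lemma~\ref{lem:fivepairs}). The only place where your argument departs slightly from the paper's is in the handling of green: you assert that \emph{no} green edge can be present in the interior of the bigon family, but the minimality conditions and the exclusion of blue-blue-green triangles do not rule out a green arc with one endpoint on a blue vertex and the other on a red edge, so green arcs may well still be present. The paper makes the weaker (and correct) claim that the bigons cannot be disjoint from red --- because a green arc disjoint from red, trapped in a blue bigon with no blue-green bigons or blue-blue-green triangles allowed, would have to run vertex-to-vertex, making the bigon trivial --- and then never needs green to be absent, since Lemmas~\ref{lem:brrect} and~\ref{lem:fivepairs} only require the interiors of the triangle-square pairs to be free of blue \emph{vertices}, not free of green edges. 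Your final observation about vertex-free interiors is exactly the right check and matches the paper; you just don't need the stronger green-exclusion step.
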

\begin{proof}
Suppose $\Gamma_B$ contains more than five adjacent non-trivial bigons. Then, by Lemma ~\ref{lem:BBigIntRorG}, the bigons cannot be disjoint from green and red. If the bigons are disjoint red, then they must intersect a green edge. However, as we cannot have blue-green bigons or blue-blue-green triangles (~\cite[Lemmas 4.4, 4.16]{ETS}), so the green edge must go from one vertex to another. If the green edge meets only one vertex, we have a green monogon, which cannot happen (~\cite[Lemma 4.16]{ETS} says we can remove such monogons), while if it meets two distinct vertices of our bigon, then the blue edges must be trivial by definition, a contradiction. So the blue bigons must meet the red surface.

As there are no blue-red bigons (Lemma ~\ref{lem:BlueRedBigon}), any red edge must run straight through all five bigons, giving us a collection of blue-blue-red triangles. By Lemma ~\ref{lem:adjtri}, we can't have three or more adjacent such triangles, so another red edge must run parallel, giving us adjacent triangles and squares instead. However, by Lemma ~\ref{lem:fivepairs}, we cannot have five adjacent triangle-square pairs, so we get a contradiction, and thus $\Gamma_B$ cannot have five adjacent non-trivial bigons.
\end{proof}

\section{Completing the Proof of Theorem 2.3}
Now we can get that the map $f:S_{B,2}\to Y\setminus K$ is $\pi_1$-injective:

\begin{thm}
\label{thm:pi_injective}
Let $f:S_{B,2}\to Y\setminus K$ be the immersion of $S_{B,2}$ into $Y\setminus K$. Then this immersion is $\pi_1$-injective, provided $N_{tw}\ge 121$.
\end{thm}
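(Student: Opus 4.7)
The plan is to derive the theorem as an immediate consequence of the combinatorial framework built up through Sections 4--6, combined via contradiction. Specifically, I will suppose that $f \colon S_{B,2} \to Y \setminus K$ fails to be $\pi_1$-injective, produce the relevant pull-back graph $\Gamma_B$ as constructed at the start of Section 5.1, and then show that the two key counting results we have in hand are incompatible.

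First, under the assumption of non-injectivity, we get from Lemma 2.2 of \cite{ETS} an essential loop $l$ on $S_{B,2}$ bounding a disk $\phi \colon D \to Y \setminus K$ with $\phi|_{\partial D} = f \circ l$, and we form $\Gamma_B = \phi^{-1}(f(S_{B,2}))$. We may take the picture to be complexity-minimal, as in the discussion preceding Lemma~\ref{lem:BlueRedBigon}, so that the structural results of Sections 5 and 6 apply. The two inputs I will combine are: (i) Lemma~\ref{lem:minbig}, which forces $\Gamma_B$ to contain strictly more than $(R_{tw}/18) - 1$ adjacent non-trivial bigons; and (ii) Proposition~\ref{lem:NoFiveBigons}, which forbids five or more such adjacent non-trivial bigons.

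Next I would translate the hypothesis $N_{tw} \ge 121$ into the bound on $R_{tw}$ noted in the remark after Lemma~\ref{lem:mintri}, namely $R_{tw} \ge 2\lceil N_{tw}/2 \rceil - 2 \ge 120$. Substituting this into Lemma~\ref{lem:minbig} gives strictly more than $120/18 - 1 > 5$ adjacent non-trivial bigons in $\Gamma_B$. This directly contradicts Proposition~\ref{lem:NoFiveBigons}, so no disk $\phi$ of the supposed form can exist, and $f$ is $\pi_1$-injective.

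In short, the argument in this section is a packaging step rather than a new piece of geometry: all the conceptual work lies in having established Proposition~\ref{lem:NoFiveBigons} (via the elimination of blue-blue bigons disjoint from red/green in Lemma~\ref{lem:BBigIntRorG}, the ruling out of three adjacent blue-blue-red triangle pairs in Lemma~\ref{lem:adjtri}, and the five-triangle-square bound of Lemma~\ref{lem:fivepairs}) and in having proved Lemma~\ref{lem:minbig}. Consequently, I anticipate no real obstacle here; the only subtlety to flag is the arithmetic converting $N_{tw} \ge 121$ to a strict excess over the threshold of five adjacent non-trivial bigons, which is exactly why the constant $121$ was chosen in the hypothesis.
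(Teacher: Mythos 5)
Your proof is correct and follows the same route as the paper: contradiction via Lemma~\ref{lem:minbig} forcing more than five adjacent non-trivial bigons when $R_{tw}\ge 120$, against Proposition~\ref{lem:NoFiveBigons} forbidding that. (Minor citation note: the disk and loop from a failure of $\pi_1$-injectivity come from Lemma~2.1 of \cite{ETS}, not Lemma~2.2, which is the boundary version.)
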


\begin{proof}
If not, then we can get a disk $\phi:D\to Y\setminus K$, such that $\phi|_{\bndry D} = f\circ \ell$, for some essential loop $\ell$, and graph $\Gamma_B=\phi^{-1}(f(S_{B,2})$, with vertices corresponding to crossing circles. These vertices have valence $2n_j$ if in the interior, or $n_j+1$ in the boundary, where $2n_j$ is the number of crossings removed from the twist region when constructing $S_{B,2}$. In particular, $2n_j \ge 120$. By Lemma ~\ref{lem:minbig}, as the minimum number of crossings removed is $R_{tw} \ge 2\lceil N_{tw}/2\rceil -2 \ge 120$, we must then have more than five adjacent non-trivial bigons. This, however, contradicts Lemma ~\ref{lem:NoFiveBigons} above, and so $f$ must be $\pi_1$-injective.
\end{proof}

Likewise, the proof of boundary-$\pi_1$-injectivity can be proved as in ~\cite{ETS}. As before, instead of reproving every lemma we need, we will instead prove ~\cite[Lemma 3.2(4)]{ETS} in the specific case we need, and replace the uses of Lemma 3.2(4) with this new one:

\begin{lem}
Suppose a triangle with two blue edges and one edge on $\bndry N(K)$ does not meet the red edge. Then the two blue edges must be on opposite sides of a crossing, and that crossing must be associated to the crossing circle of the vertex.
\end{lem}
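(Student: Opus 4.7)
The plan is to produce a simple closed curve $\gamma$ on the projection surface $F$ from the triangle and then appeal to the edge representativity, representativity, and weakly (blue) twist-reduced hypotheses for $\pi(K_{B,2})$. Let $v$ denote the vertex of the triangle corresponding to the crossing circle, let $b_1,b_2$ be the two blue edges of the triangle running from $v$ to boundary points $p_1,p_2 \in \phi'^{-1}(\partial N(K))$, and let $\tau$ be the remaining edge mapped into $\partial N(K)$. Because blue edges meet only at vertices of $\Gamma_B$, the arcs $\phi'(b_1),\phi'(b_2)$ leave $v$ on opposite sides of the crossing disk at $v$; and because the triangle is disjoint from the red edges, each $\phi'(b_i)$ lies on a single side of $F$ and projects to a simple arc in a blue region of $F \setminus \pi(K_{B,2})$ from the crossing disk to a point $\pi(p_i) \in \pi(K_{B,2})$.

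I next construct $\gamma$. The boundary arc $\phi'(\tau) \subset \partial N(K)$ projects, via the product structure $F \times I$, to an arc lying in a regular neighborhood of $\pi(K_{B,2})$ joining $\pi(p_1)$ to $\pi(p_2)$; after normalizing $\phi'$ to minimize complexity (in the spirit of the bullet-point normalizations at the start of the colored-graph discussion), this projection is a simple subarc essentially tracking $\pi(K_{B,2})$. Concatenating $\phi'(b_1)$, this projected image, $\phi'(b_2)$, and a short arc across the crossing disk at $v$ yields a closed curve $\gamma \subset F$. After a small perturbation to make $\gamma$ transverse to $\pi(K_{B,2})$, direct bookkeeping shows $\gamma$ meets $\pi(K_{B,2})$ in exactly two points: once across the crossing disk at $v$, and once at a common crossing $c$ where the two blue regions containing $\phi'(b_1)$ and $\phi'(b_2)$ meet.

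Since $e(\pi(K_{B,2}),F) \geq 4$, the curve $\gamma$ cannot be essential on $F$; likewise $r(\pi(K_{B,2}),F) > 4$ rules out $\gamma$ bounding a compression disk, so $\gamma$ bounds a disk $D_\gamma \subset F$. Because $\partial D_\gamma$ meets only blue regions of $F \setminus \pi(K_{B,2})$ apart from its two transverse intersections, the weakly blue twist-reduced property (Lemma~\ref{lem:wBluTwistReduced}) together with Lemma~\ref{lem:opagree} forces the two crossings inside $D_\gamma$ to lie in a common twist region. Since one of them is the crossing of the crossing circle at $v$, the other crossing $c$ is associated to that crossing circle, as claimed. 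Finally, since $\phi'(b_1)$ and $\phi'(b_2)$ leave $v$ on opposite sides of the crossing disk, their endpoints $\pi(p_1),\pi(p_2)$ sit on the two distinct strands meeting at $c$, which gives the remaining conclusion.

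The main obstacle is justifying the two-intersection count for $\gamma \cap \pi(K_{B,2})$. A priori the arc $\phi'(\tau)$ on the torus $\partial N(K)$ could wind longitudinally and introduce extra intersections on projection, and the arcs $\phi'(b_i)$ could pass through twisted bands of $S_{B,2}$ at crossings of $K_{B,2}$ other than $c$. Both pathologies are eliminated by a complexity-minimizing homotopy of $\phi'$ of the same flavor as the arguments used earlier to dispense with superfluous green edges; once $\phi'$ is normalized, the two-intersection count is forced and the disk argument above completes the proof.
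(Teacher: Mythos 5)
Your intersection count for $\gamma$ is wrong, and this is a genuine gap. The two blue arcs $\phi'(b_1)$ and $\phi'(b_2)$ lie in \emph{opposite} blue regions at the crossing $c$ (Lemma~\ref{lem:opagree}), so when the projected curve $\gamma$ is pushed off the knot near $c$ it must cross the diagram through \emph{both} strands at $c$, contributing two intersection points, not one. Likewise, $b_1$ and $b_2$ leave the vertex on opposite sides of the crossing disk (this is exactly what forces them into different blue regions), so the short arc across the crossing disk crosses both strands of that twist region, contributing two more. The total is four intersections with $\pi(K_{B,2})$, not two. With four intersections, $e(\pi(K_{B,2}),F) \ge 4$ does \emph{not} rule out $\gamma$ being essential: a curve realizing the minimal edge representativity hits the diagram exactly four times. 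So the sentence ``Since $e(\pi(K_{B,2}),F) \ge 4$, the curve $\gamma$ cannot be essential on $F$'' does not follow, and the proof collapses at that point.

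The paper handles the four-intersection count by a two-step argument that you do not reproduce. First, $r(\pi(K_{B,2}),F) > 4$ rules out $\gamma$ bounding a compression disk. Second, the remaining possibility --- $\gamma$ essential in $F$ but not compressible --- is excluded by observing that the triangle region of $D'$ bounded by $\gamma$ is disjoint from both red and blue edges of $\Gamma_{BR}$, hence lies entirely on one side of $F$; a disk on one side of $F$ that is not a compression disk and does not lie on $F$ would have to pass through $F$, but it can only do so at a red or blue edge, which are absent. Only after this does the conclusion ``$\gamma$ bounds a disk on $F$'' follow, and then weak primality forces the crossing $c$ into the twist region of the crossing circle. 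You mention $r > 4$ in passing, but as a redundancy to the (incorrect) two-intersection argument rather than as the load-bearing step, and you do not supply the ``disjoint from red and blue, so it cannot pass through $F$'' observation at all. Finally, your appeal to Lemma~\ref{lem:wBluTwistReduced} is mismatched: weakly blue twist reduced concerns a disk whose boundary meets $\pi(K)$ in two \emph{crossings} (four intersection points), not two intersection points, and moreover here one of the two ``passes'' is through the crossing disk of a crossing circle, which is not a crossing of $\pi(K_{B,2})$; the conclusion comes directly from weak primality and the meaning of ``associated to the crossing circle,'' not from twist-reducedness.
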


\begin{proof}
As the two blue edges of the triangle meet at the same vertex, they must map opposite sides of the crossing circle. However, as the third side is the knot strand, and the triangle does not meet red, the strand must run through a single crossing, proving the first part. To see that the crossing is associated to the crossing circle, we can construct a closed curve intersecting the knot exactly four times: Take one of the blue edges from the crossing circle down to the crossing, then follow the other blue edge up to the crossing circle, and pass over to the start through the green edge. We know immediately that this curve cannot be a compressing disk---$e(\pi(K_{B,2}),F)>4$. So the other option is that it is essential. However, if this curve is essential and not a compression disk, it must pass through the projection surface. It can only do this where red and blue meet. As the triangle is disjoint from red, this cannot happen, so the curve must bound a disk on the projection surface. But then we have a disk whose boundary passes only through the crossing in question and the crossing circle, so the crossing is related to the crossing circle, and we are done.
\end{proof}

From here, the remaining lemmas follow nicely. To summarize, first, by looking at how blue-blue-$\bndry N(K)$ triangles can be placed in the diagram, we get that, if we have three adjacent triangles in $\Gamma_B$, they must meet the red surface. This is the lemma that references ~\cite[Lemma 3.2(4)]{ETS}, and so we can use the above lemma in place of it. Further, by careful examination, ~\cite[Lemma 6.2]{ETS} tells us that three adjacent such triangles must meet the red surface at least twice. This, then, gives us blue-red triangle square pairs, and we can use Lemma ~\ref{lem:fivepairs} to say that we can't have five adjacent triangle-square pairs, and thus, can't have five adjacent blue-blue-$\bndry N(K)$ triangles.

Now, we can prove the $\bndry-\pi_1$-injective:

\begin{thm}
\label{thm:bndry_pi_injective}
Let $f:S_{B,2}\to Y\setminus \mathrm{int}(N(K))$ be the immersion of $S_{B,2}$ into $Y\setminus \mathrm{int}(N(K))$. Then this immersion is $\bndry-\pi_1$-injective, provided $N_{tw}\ge 121$.
\end{thm}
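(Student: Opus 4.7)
The plan is to mimic the argument for Theorem~\ref{thm:pi_injective}, replacing the bigon analysis of $\Gamma_B$ with the dichotomy of Lemma~\ref{lem:mintri} for $\Gamma_B'$. Assume for contradiction that $f$ is not boundary-$\pi_1$-injective. By Lemma~2.2 of~\cite{ETS}, there is a disk $\phi':D'\to Y\setminus \mathrm{int}(N(K))$ whose boundary is the concatenation of an arc mapped into $\bndry N(K)$ and an arc mapped via $f$ to an essential arc of $S_{B,2}$. Set $\Gamma_B'=\phi'^{-1}(f(S_{B,2}))$. Its interior vertices have valence $2n_j$ and its exterior vertices have valence $n_j+1$ (at crossing circles) or $1$ (at $\phi'^{-1}(\bndry N(K))$). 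Since $N_{tw}\ge 121$ forces $R_{tw}\ge 120$, Lemma~\ref{lem:mintri} gives more than $(R_{tw}/18)-1>5$ adjacent non-trivial bigons in $\Gamma_B'$, or more than five adjacent triangles with one side on $\phi'^{-1}(\bndry N(K))$.

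First I would dispatch the bigon case. Nothing in the proof of Proposition~\ref{lem:NoFiveBigons} used the specific nature of the boundary of the disk $D$; it was purely a local analysis of how non-trivial blue bigons in the pulled-back graph must meet the red and green surfaces. Therefore the same argument — ruling out blue-blue bigons disjoint from red and green via Lemma~\ref{lem:BBigIntRorG}, excluding blue-red bigons via Lemma~\ref{lem:BlueRedBigon}, then analyzing the induced blue-blue-red triangles and triangle-square pairs via Lemmas~\ref{lem:adjtri} and~\ref{lem:fivepairs} — applies verbatim to $\Gamma_B'$ and rules out five adjacent non-trivial bigons.

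The remaining case is that $\Gamma_B'$ contains more than five adjacent blue-blue-$\bndry N(K)$ triangles. I would handle this in three steps, following the pattern of Section~6 of~\cite{ETS}. First, I would show that three adjacent such triangles cannot all be disjoint from the red surface: if they were, the last lemma proved in the excerpt (the replacement for~\cite[Lemma~3.2(4)]{ETS} for triangles with one edge on $\bndry N(K)$) would force the associated crossings to be related to the crossing circles of each vertex, and the resulting configuration would contradict Lemma~\ref{lem:wBluTwistReduced} together with the edge representativity and representativity hypotheses, as in~\cite[Lemma~6.1]{ETS}. Second, I would argue that three adjacent such triangles must in fact meet the red surface at least twice, upgrading the statement exactly as in~\cite[Lemma~6.2]{ETS}; the proof is a case analysis of how a single red edge could puncture the triangles, and all local configurations used there are now available through Lemma~\ref{lem:BlueRedBigon}, Lemma~\ref{lem:bbrtrng} and Lemma~\ref{lem:324subTriangles}. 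Third, once two parallel red edges cross the triangles, the interior triangles decompose into triangle-square pairs on $\bndry N(K)$, and Lemma~\ref{lem:fivepairs} directly prohibits five adjacent such pairs.

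Combining the three steps yields a contradiction with the existence of five adjacent blue-blue-$\bndry N(K)$ triangles, so both branches of Lemma~\ref{lem:mintri} are impossible, and $f$ must be boundary-$\pi_1$-injective. The main obstacle is the first of the three steps: unlike in~\cite{ETS}, we cannot directly invoke primeness of $K_2$ to force the crossings at the tips of the triangles to cluster at a single twist region. Instead, I would use the combination of weak primeness (Definition~\ref{def:wprime}), weak blue twist reducedness (Lemma~\ref{lem:wBluTwistReduced}), and the quantitative bounds $e(\pi(K),F)\ge 4$ and $r(\pi(K),F)>4$, exactly as we did in Lemma~\ref{lem:adjtri}, to show that the configurations forced by three adjacent triangles disjoint from the red surface would violate one of these conditions.
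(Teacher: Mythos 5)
Your proposal takes essentially the same approach as the paper: contradiction via Lemma 2.2 of \cite{ETS} to get the boundary disk and graph $\Gamma_B'$, then Lemma~\ref{lem:mintri} forces either five adjacent non-trivial bigons (ruled out by Proposition~\ref{lem:NoFiveBigons}) or five adjacent blue-blue-$\bndry N(K)$ triangles (ruled out by the Section~6 argument of \cite{ETS}, adapted using the replacement lemma for \cite[Lemma~3.2(4)]{ETS} and Lemma~\ref{lem:fivepairs}). The three-step outline you give for the triangle branch matches the discussion the paper places immediately before the theorem, so no genuinely different route is involved.
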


\begin{proof}
If not, then we can get a disk $\phi:D\to Y\setminus \mathrm{int}(N(K))$, such that $\bndry D$ is a concatenation of two arcs, one mapped into $\bndry N(K)$, and the other an essential arc in $S_{B,2}$ ~\cite[Lemma 2.2]{ETS}. By Lemma ~\ref{lem:mintri}, as the minimum number of crossings removed is $R_{tw} \ge 2 \lceil N_{tw}/2 \rceil -2 \ge 120$, we must have more than five adjacent non-trivial bigons or five adjacent triangles with an edge on $\phi^{-1}(\bndry N(K))$. However, as mentioned above, by ~\cite[Lemma 6.3]{ETS}, we cannot have five adjacent such triangles. Also, by Lemma ~\ref{lem:NoFiveBigons}, we cannot have five adjacent non-trivial bigons. This gives us a contradiction, so $f$ must be $\bndry-\pi_1$-injective.
\end{proof}

Putting Theorems ~\ref{thm:pi_injective} and ~\ref{thm:bndry_pi_injective} together, we get:

\essentialthm*

\begin{proof}
We first prove that $f:S_{B,2}\to Y\setminus K$ is $\pi_1$-injective. If not, then, by ~\cite[Lemma 2.1]{ETS} (and the remarks at the beginning of section 5 about why this carries over to our general case), we get a map of a disk $\phi:D\to Y\setminus K$ with $\phi|_{\bndry D} = f\circ \ell$ for some essential loop $\ell$ in $S_{B,2}$, with $\Gamma_B = \phi^{-1}(f(S_{B,2}))$ a collection of embedded closed curves and an embedded graph in $D$. Each vertex in the interior of $D$ has valence a non-zero multiple of $2n_j > 121$, where $2n_j$ is the number of crossings removed, and each vertex in the exterior has valence $n_j+1 > 60$. Then, as we are working with a graph with exactly the same properties as the graph in Section 2 of ~\cite{ETS}, we can get that $\Gamma_B$ has more than $(N_{tw}/18)-1>5$ adjacent non-trivial bigons. 
\end{proof}

\subsection{Properties of Twisted Surfaces}

Now that we have proven that our twisted surfaces are essential surfaces in $Y\setminus K$, we can get a bit more out of them as well. Ultimately, we want to show the following:

\homotopicarcs*

This theorem is proven in a series of lemmas, all but one of which require little to no alteration in statement and proof. We will go through each of the lemmas, and give a sketch of why they still hold for our case. Recall that \textit{the subsurface associated to a twist region of $K_2$} is the intersection of a surface with a regular neighborhood of the twist region.

First in our proof of Theorem ~\ref{thm:homarcs} is to get a disk to work on. 

\begin{lem}
\label{lem:ETS7.2}
Suppose homotopically distinct essential arcs $a_1$ and $a_2$ in $S_{B,2}$ map by $f:S_{B,2}\to Y\setminus K$ to homotopic arcs $e_1$ and $e_2$ in $Y\setminus K$. Then there is a map of a disk $\phi:D\to Y\setminus \mathrm{int}(N(K))$ with $\bndry D$ expressed as four arcs, with opposite arcs mapping by $\phi$ to $e_1$ and $e_2$, and the other two arcs mapping to $\bndry N(K)$.
Moreover, $\Gamma_B=\phi^{-1}(f(S_{B,2}))$ forms a graph on $D$ whose edges have endpoints either at vertices where $\phi(D)$ meets a crossing circle, or on $\phi^{-1}(\bndry N(K))$ on $\bndry D$. Each vertex has valence either a multiple of $2n$ (if in the interior of $D$), $n+1$ (if in the interior of an arc on $\bndry D$ that maps to $S_{B,2}$), or 1 (if on an arc that maps to $\bndry N(K)$), where $2n$ is the number of crossings removed from the twist region.
\end{lem}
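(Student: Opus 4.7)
\emph{Plan.} The plan is to build $\phi$ directly from the given homotopy and then put it in general position with respect to the immersed twisted surface $f(S_{B,2})$. Since $a_1, a_2$ are essential arcs in $S_{B,2}$, their endpoints lie on $\bndry S_{B,2} \subset \bndry N(K)$, so the hypothesis that $e_1 = f(a_1)$ and $e_2 = f(a_2)$ are homotopic in $Y\setminus K$ is naturally read as properly homotopic, i.e., through a family of arcs whose endpoints slide in $\bndry N(K)$. Such a proper homotopy is exactly a map $\phi\colon D = I\times I \to Y\setminus \mathrm{int}(N(K))$ with $\phi(\{0\}\times I) = e_1$, $\phi(\{1\}\times I) = e_2$, and $\phi(I\times\{0\}), \phi(I\times\{1\}) \subset \bndry N(K)$, which gives the four boundary arcs demanded by the first conclusion.

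Next I would perturb $\phi$ keeping its boundary fixed so that on $\mathrm{int}(D)$ it is transverse to $f(S_{B,2})$ away from the self-intersection locus $\Sigma$, and meets $\Sigma$ transversely at isolated points. Here $\Sigma \subset Y\setminus K$ is the union of the cores of the filled-in crossing circles of $L_{B,2}$; along each component of $\Sigma$ exactly $n$ sheets of $S_{B,2}$ meet, arising from the $n$ chords that connect opposite boundary punctures in each meridional cross-section of the twisted tube, as described in Section 2.3. I would then minimize the combinatorial complexity of $\phi^{-1}(f(S_{B,2})) \cap \mathrm{int}(D)$ within the proper-homotopy class of $\phi$—for instance, the number of transverse components together with the number of preimages of $\Sigma$. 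Any innermost closed curve of $\phi^{-1}(f(S_{B,2}))$ bounding a disk in $D$ is killed using the $\pi_1$-injectivity of $f$ from Theorem~\ref{thm:pi_injective}: its image is a loop on $f(S_{B,2})$ bounding a disk in $Y\setminus K$, hence lifts to a null-homotopic loop on $S_{B,2}$, and that null-homotopy supplies the ambient homotopy of $\phi$ removing the curve. A minimizer therefore contains no such closed curves, so $\Gamma_B := \phi^{-1}(f(S_{B,2}))$ is a genuine graph whose vertices are precisely the preimages of $\Sigma$ together with the intersections of the boundary arcs mapping to $\bndry N(K)$ with $\bndry f(S_{B,2}) = f(\bndry S_{B,2})$.

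Finally, the valence counts follow from a local model calculation at each vertex type. At an interior vertex $v$ mapping to a point of $\Sigma$, a neighborhood of $\phi(v)$ in $Y\setminus K$ meets $f(S_{B,2})$ in $n$ sheets meeting transversely along $\Sigma$; a single local branch of $\phi(D)$ transverse to $\Sigma$ crosses each sheet in an arc through $v$ and so contributes $2n$ half-edges, with the valence being a (possibly larger) multiple of $2n$ when several local branches of $\phi(D)$ pass through $v$. At a vertex $v$ on a boundary arc mapping into $S_{B,2}$ (so lying on $e_i$), two of the incident half-edges are segments of the boundary arc itself, contained in the single sheet carrying $e_i$, and each of the remaining $n-1$ sheets contributes one half-edge going into $\mathrm{int}(D)$, yielding valence $2 + (n-1) = n+1$. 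At a vertex on a boundary arc mapping into $\bndry N(K)$, only the single sheet whose boundary hits $\bndry N(K)$ at that point contributes an edge, giving valence $1$. The main technical difficulty is the careful local picture along $\Sigma$ and verifying that the complexity-minimization argument rules out any stray features of $\Gamma_B$; since the whole analysis is local in $Y\setminus K$, the proof of~\cite[Lemma 2.1]{ETS} goes through in our setting without essential change.
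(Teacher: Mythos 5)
Your proposal takes essentially the same approach as the paper: construct $\phi$ from the homotopy (properly, so the ends map into $\bndry N(K)$), put it in general position with respect to the immersed surface and the crossing circle cores, and read off the valences from a local model of the twisted surface along a crossing circle. Your valence computations at interior vertices ($n$ sheets giving $2n$ half-edges per local branch) and at vertices in the interior of the $e_i$ arcs (two half-edges from the boundary arc's own sheet plus one each from the other $n-1$ sheets, totalling $n+1$) match the paper's picture of a transverse meridian disk, respectively a half-meridian disk, cut by the $n$ diameters.

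Two small deviations worth noting, neither a gap. First, the paper is a bit more careful in constructing $\phi$ on a neighborhood of $\bndry D$: it lifts $e_1,e_2$ to the orientable double cover of $S_{B,2}$ and uses that $\bndry N(K)$ is two-sided to get a consistent push-off before extending over the interior; your proposal just asserts the extension, which is fine in spirit but glosses over the orientability subtlety (the twisted surfaces can contain M\"obius bands). Second, you insert a complexity-minimization step to kill innermost closed curves using Theorem~\ref{thm:pi_injective}; the paper does not do this in the proof of this lemma (it simply acknowledges that $\Gamma_B$ also contains a collection of simple closed curves alongside the graph), so this extra work is harmless but not needed for the stated conclusion. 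Also, the analogue in~\cite{ETS} being generalized here is their Lemma 7.2 rather than Lemma 2.1 (the latter is the analogue for the $\pi_1$-injectivity disk); the constructions are parallel but the citation is slightly off.
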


\begin{proof}
As $e_1$ and $e_2$ are homotopic, we immediately get a disk $\phi:D\to Y\setminus \mathrm{int}(N(K))$ with the correct edges. Wee then modify this disk to get the remaining properties.

First, we lift $e_1$ and $e_2$ to the orientable double cover of $S_{B,2}$ By pushing $e_1$ and $e_2$ in the transverse direction, and using the fact that $\bndry N(K)$ is transversely orientable, we can get the map of $\phi$ on a neighborhood of $\bndry D$. Then we can extend $\phi$ over the rest of $D$, making it transverse to all crossing circles and to $f(S_{B,2}$.

Now look at $\Gamma_B = \phi^{-1}(f(S_{B,2}))$. As $S_{B,2}$ is embedded in $Y\setminus K$ except as crossing circles, $\Gamma_B$ must consist of embedded closed curves (where $S_{B,2}$ intersects $\phi(D)$ away from crossing circles or $\bndry N(K)$), arcs with endpoints on vertices (where $S_{B,2}$ intersects a crossing circle), and arcs with endpoints on $\bndry N(K)$.

As a vertex of $\Gamma_B$ in the interior of $D$ comes from where $\phi(D)$ intersects a crossing circle, the valence is equal to the number of crossings removed, by construction of the twisted surface $S_{B,2}$. If we have a vertex on an arc in $\bndry D$, it maps either to $S_{B,2}$ or $\bndry N(K)$. If it maps to $S_{B,2}$, then a neighborhood of the arc must map to half a meridian disk of a crossing circle, and so has valence $n+1$, where we removed $2n$ crossings from the corresponding twist region in our construction of $S_{B,2}$. If it maps to $\bndry N(K)$, however, because the arc must be transverse to $S_{B,2}$, the vertex must have valence 1. 
\end{proof}

We can also take our disk to be minimal, in the sense that it is a disk that gives us the fewest number of vertices of $\Gamma_B$ (the graph coming from how the disk intersects our blue surface) and $\Gamma_{BR}$ (the graph coming from how the disk intersects both our blue and red surfaces). Next, by examining graphs in a disk that satisfy the properties above, we get an analogue to Lemma ~\ref{lem:mintri}:

\begin{lem}
\label{lem:ETS7.5}
Suppose $\Gamma_B$ contains at least one blue vertex. Then either $\Gamma_B$ has more than $(R_{tw}/24)-1$ adjacent non-trivial bigons, or more than $(R_{tw}/24)-1$ adjacent triangles, each with one arc on $\phi^{-1}(\bndry N(K))$.
\end{lem}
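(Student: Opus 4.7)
The plan is to imitate the proof of Lemma \ref{lem:mintri} (which establishes the analogous bound with constant $R_{tw}/18$ in the case where $\bndry D$ has only two boundary arcs), modifying the combinatorial count to account for the four-arc boundary produced by Lemma \ref{lem:ETS7.2}.

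First I would take $\phi$ to minimize the number of vertices of $\Gamma_B$. As in Sections 4--6, minimality implies that $\Gamma_B$ has no monogons (by an argument analogous to Lemma \ref{lem:nomono}) and that we may inductively peel off trivial bigon families without altering the sizes of any non-trivial bigon family or any family of triangles with one side on $\phi^{-1}(\bndry N(K))$. After these reductions, each interior vertex of $\Gamma_B$ has valence at least $R_{tw}$, each vertex lying on the interior of an $S_{B,2}$-arc of $\bndry D$ has valence at least $(R_{tw}/2)+1$, and each vertex on an $\bndry N(K)$-arc has valence exactly $1$.

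Next I would run the standard discharging argument on $D$: assign each face of $\Gamma_B$ an initial charge of $(\text{number of sides}) - 2$, each vertex a charge depending on its valence, and each corner of $\bndry D$ an appropriate boundary contribution, so that the total charge equals $2\chi(D) = 2$. Assuming for contradiction that every maximal family of adjacent non-trivial bigons and every maximal family of adjacent triangles with one side on $\phi^{-1}(\bndry N(K))$ has size at most $(R_{tw}/24)-1$, one bounds the number of bigon and triangle faces incident to each blue interior vertex, then redistributes the negative charge of interior vertices outward. The bound on family sizes prevents enough negative charge from being absorbed by bigons and $\bndry N(K)$-triangles, forcing the total charge to fall strictly short of $2$, the desired contradiction.

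The main obstacle is the precise bookkeeping at the four corners of $\bndry D$. Compared to Lemma \ref{lem:mintri}, there are now two extra corners where an $S_{B,2}$-arc meets an $\bndry N(K)$-arc, each contributing additional boundary defect; moreover, the relevant triangles may split across two distinct $\bndry N(K)$-arcs rather than being forced into a single family on a single arc. The weaker conclusion (constant $24$ in place of $18$) reflects exactly these two effects: the extra corner defect from the doubled boundary, together with the doubling of the number of independent ``reservoirs'' into which families of $\bndry N(K)$-triangles can split, worsens the constant by a factor of $4/3$, changing $R_{tw}/18$ to $R_{tw}/24$.
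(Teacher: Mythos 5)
Your proposal takes a genuinely different route from the paper's. The paper does not run a discharging argument directly on $\Gamma_B$. Instead it first isolates a clean combinatorial statement: for a graph on $I\times I$ with no monogons, valence-one vertices only on $I\times\partial I$, interior vertices of valence at least $R$, and vertices on $\partial I\times(I\setminus\partial I)$ of valence at least $R/2+1$, there must be more than $R/8-1$ adjacent bigons or more than $R/8-1$ adjacent triangles with an edge on $I\times\partial I$. It then modifies $\Gamma_B$ — by passing to subdisks, \emph{doubling along edges}, and collapsing certain bigons and triangles — so that the modified graph satisfies the hypotheses of this lemma with $R=R_{tw}$. The factor of $24$ arises because the collapse associates at most three bigons or triangles of the original $\Gamma_B$ to each bigon or triangle of the modified graph, i.e.\ $24 = 8\times 3$.

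Your explanation of the constant does not match this. You attribute the change $18\to 24$ to a $4/3$ penalty coming from "extra corner defect" and "doubled reservoirs" at the four boundary corners, but the paper's $24$ is obtained from the intermediate bound $R/8$ (which is actually \emph{tighter} than $R/18$) multiplied by the $3$-to-$1$ collapse factor, not by degrading $R/18$. More substantively, your proposal omits the doubling-along-edges step entirely. That step is what converts the four-arc boundary structure of Lemma~\ref{lem:ETS7.2} into a graph on $I\times I$ where the Euler-characteristic accounting is uniform; without it, a direct discharging argument on $\Gamma_B$ would have to simultaneously handle the trivial-bigon families, the mixed $S_{B,2}$/$\partial N(K)$ boundary, \emph{and} track how the collapse of trivial families changes valences and face counts — none of which your sketch actually carries out. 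Your remark that one can "inductively peel off trivial bigon families without altering the sizes of any non-trivial bigon family" roughly corresponds to the paper's collapsing step, but you do not explain how that operation contributes the factor of $3$ that is central to the paper's derivation of the constant. As it stands, the proposal describes a plausible-sounding strategy but does not demonstrate how it would land on $R_{tw}/24-1$ rather than some other constant.
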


The proof of this lemma is a graph theoretic proof, and so does not rely on the ambient space $Y$ or the twisted surfaces $S_{B,2}$ or $S_{R,2}$. 

\begin{sproof}
First, suppose we have a graph $\Gamma$ on $I\times I$ with no monogons and only valence one vertices on $I\times \bndry I$, with interior vertices with valence at least $R$ and vertices on $\bndry I\times (I-\bndry I)$ have valence at least $R/2+1$. Then $\Gamma$ must have more than $R/8 - 1$ adjacent bigons or more than $R/8-1$ adjacent triangles with an edge on $I\times \bndry I$. 

Next, we can modify our graph $\Gamma_B$ coming from Lemma ~\ref{lem:ETS7.2} by looking at subdisks, doubling along edges, and collapsing certain bigons and triangles so that we satisfy the above statement, with $R=R_{tw}$ is the minimal number of crossings removed from a twist region. When we collapsed bigons or triangles, we made it so at most three bigons or triangles in $\Gamma_B$ are associated to a bigon or triangle in this new graph. As such, we get $(R_{tw}/24)-1$ adjacent non-trivial bigons in $\Gamma_B$, or more than $(R_{tw}/24)-1$ adjacent triangles, and are done.
\end{sproof}

Using this, we can prove:

\begin{lem}[Lemma 7.6 ~\cite{ETS}]
\label{lem:ETS7.6}
If $N_{tw}\ge 121$, then the graph $\Gamma_B$ contains no blue vertices. That is, $\phi(D)$ meets no crossing circles.
\end{lem}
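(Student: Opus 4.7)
The plan is to argue by contradiction and reduce to the bigon/triangle analysis already established in Sections 5 and 6. Suppose $\Gamma_B$ contains a blue vertex. Since $N_{tw} \ge 121$ forces $R_{tw} \ge 2\lceil N_{tw}/2\rceil - 2 \ge 120$, we have $(R_{tw}/24)-1 \ge 4$, so Lemma ~\ref{lem:ETS7.5} produces either (A) at least five adjacent non-trivial blue bigons in $\Gamma_B$, or (B) at least five adjacent triangles in $\Gamma_B$ with one edge on $\phi^{-1}(\bndry N(K))$. The strategy is to show each option contradicts the combinatorial facts proved in the preceding sections.

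For case (A), I would observe that the entire chain of lemmas leading to Proposition ~\ref{lem:NoFiveBigons} (no blue-red bigons, no blue-blue bigon disjoint from red and green, no three adjacent blue-blue-red triangles, and no five adjacent triangle-square pairs) was formulated in terms of the local combinatorics of a planar graph sitting inside a disk, with vertices coming from crossing circles of $L_{B,2}$ and edges coming from intersection with the twisted surfaces. Nothing in those arguments uses the specific source disk of Section 5; they depend only on the vertex-valence structure (multiples of $2n_j$ in the interior, $n_j+1$ on a blue boundary arc) together with the augmented diagram of $K_{B,2}$, and both these properties hold for the graph $\Gamma_B$ produced by Lemma ~\ref{lem:ETS7.2}. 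Applying those lemmas to this $\Gamma_B$ rules out five adjacent non-trivial bigons and disposes of case (A).

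For case (B), I would imitate the argument used in Theorem ~\ref{thm:bndry_pi_injective}: three adjacent blue-blue-$\bndry N(K)$ triangles must meet the red surface (using the replacement for ~\cite[Lemma 3.2(4)]{ETS} proved in Section 6), and in fact must meet red in at least two parallel edges, producing a sequence of blue-red triangle-square pairs. Lemma ~\ref{lem:fivepairs} then forbids five consecutive such pairs. The only new element here is that two of the four boundary arcs of $D$ now map to $\bndry N(K)$ rather than only one; however, all the relevant lemmas (Lemmas ~\ref{lem:brrect}, ~\ref{lem:324subTriangles}, and those of Section 5) take place in the interior of a triangle/square with the $\bndry N(K)$-edge playing the role of the single knot strand, so the arguments transfer without modification. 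This rules out case (B).

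The main obstacle, and the one that needs care rather than cleverness, is verifying that every local lemma invoked above remains valid in the present setting: the combinatorial conclusions of Sections 4--6 must be applied to a graph whose boundary now contains two arcs on $\phi^{-1}(\bndry N(K))$ instead of one. The key point, which I would emphasize in the proof, is that the bigon/triangle-square configurations analyzed in those sections are always contained in a small subdisk away from other boundary components, so the extra boundary arc does not interfere. Once this is noted, cases (A) and (B) are excluded simultaneously, contradicting the existence of a blue vertex and completing the proof.
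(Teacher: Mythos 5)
Your proposal is correct and follows essentially the same route the paper takes: after using $N_{tw}\ge 121$ to show $R_{tw}\ge 120$, you invoke Lemma~\ref{lem:ETS7.5} to obtain five adjacent non-trivial bigons or five adjacent triangles with an arc on $\phi^{-1}(\bndry N(K))$, then rule out the bigon case via Proposition~\ref{lem:NoFiveBigons} and the triangle case by the triangle-square pair analysis culminating in Lemma~\ref{lem:fivepairs} (the paper just cites \cite[Lemma 6.3]{ETS} for the triangle case, but your sketch unpacks that citation correctly). The one small thing the paper does first that your write-up leaves tacit: it notes that $\Gamma_B$ has no monogons (Lemma~\ref{lem:nomono}) and that any trivial arcs in $\bndry D$ can be removed by sliding $\bndry D$ along the disk of $S_{B,2}$ they cut off --- these preliminary normalizations are hypotheses of the graph-theoretic count in Lemma~\ref{lem:ETS7.5} and should be stated, though they require no new ideas.
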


\begin{proof}
As shown in Lemma ~\ref{lem:nomono}, $\Gamma_B$ has no monogons. If $\Gamma_B$ has any trivial arcs in $\bndry D$, then that arc must bound a disk $E$ that is a subset of $S_{B,2}$. By sliding $\bndry D$ along $E$, we can remove such an arc, so we may assume that $\Gamma_B$ has no trivial arcs in $\bndry D$. So any blue vertex of $\Gamma_B$, so we can use Lemma ~\ref{lem:ETS7.5}, and get more than $((N_{tw}-1)/24)-1$ adjacent non-trivial bigons or adjacent triangles with an arc on $\phi^{-1}(\bndry N(K))$. When $N_{tw}\ge 121$, we get at least 5 such bigons or triangles. By Lemma ~\ref{lem:NoFiveBigons}, we cannot have five such bigons, and ~\cite[Lemma 6.3]{ETS}, we cannot have five adjacent triangles, and so $\Gamma_B$ cannot have any blue vertices. 
\end{proof}

So now we can start looking at how exactly the surfaces $S_{B,2}$ and $R_2$ (the red surface arising from $K_2$) intersects our disk. First, with no blue vertices, $\Gamma_B$ consists only of blue arcs with end points on the north and south edges of the disk, corresponding to $\bndry N(K)$. If both of the edges are on the same edge, by Theorem ~\ref{thm:bndry_pi_injective}, because the disk is $\bndry \pi_1$-injective, the blue edge is trivial and we can find a disk that doesn't meet that edge. So blue edges must run from north to south. Likewise, the red edges of $\Gamma_{BR}$ are only arcs that go from one distinct edge to another, from Lemma ~\ref{lem:BlueRedBigon} (no blue red bigons), and the fact that the red checkerboard surface $R_2$ is essential.

Now we need to show that the red arcs run along opposite edges, either east to west ($e_1$ to $e_2$), or north to south ($\bndry N(K)$ to $\bndry N(K)$):

\begin{lem}
\label{lem:ETS7.8}
The graph $\Gamma_{BR}$ consists of red and blue arcs with endpoints on opposite sides of $D$ (north-south or east-west). Further, blue arcs run north to south.
\end{lem}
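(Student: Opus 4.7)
The plan is to eliminate, via innermost-arc reductions, every edge of $\Gamma_{BR}$ whose endpoints do not lie on opposite sides of $\bndry D$. I would fix $\phi$ of minimal complexity, first minimizing the number of vertices of $\Gamma_B$ and then the number of edges of $\Gamma_{BR}$. By Lemma \ref{lem:ETS7.6} the first count is already zero, so $\Gamma_B$ is a collection of arcs with endpoints on $\phi^{-1}(\bndry N(K))$, i.e. on the north or south arcs of $\bndry D$. As a preliminary step, any simple closed curve of $\Gamma_{BR}$ can be removed by an innermost-disk argument: an innermost such curve bounds a subdisk of $D$ whose interior is disjoint from both $S_{B,2}$ and $R_2$, and incompressibility of $S_{B,2}$ (Theorem \ref{thm:essential}) or of $R_2$ (via Theorem \ref{thm:wga_main}, noting that $R_2$ differs from the essential red checkerboard surface of $K$ only by meridional punctures coming from the blue crossing circles) lets us slide $\phi$ across it, contradicting minimality.

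To show every blue arc runs north to south, I would suppose for contradiction that $\alpha$ is a blue arc with both endpoints on the same horizontal side of $\bndry D$, chosen outermost on that side. Then $\alpha$ cobounds with a subarc of $\phi^{-1}(\bndry N(K))$ a subdisk $E\subset D$ whose interior contains no blue arcs. The map $\phi|_E$ is a disk in $Y\setminus\mathrm{int}(N(K))$ whose boundary is a concatenation of an arc in $S_{B,2}$ and an arc in $\bndry N(K)$. By the boundary $\pi_1$-injectivity of $f\colon S_{B,2}\to Y\setminus\mathrm{int}(N(K))$ established in Theorem \ref{thm:essential}, $f(\alpha)$ is boundary-parallel in $S_{B,2}$, so a homotopy of $\phi$ through $E$ removes $\alpha$ and lowers the complexity, contradicting minimality. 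Exactly the same argument, with the roles of the colors swapped (and using that $R_2$ is essential and boundary $\pi_1$-injective in $Y\setminus K$), rules out red arcs with both endpoints on the same horizontal side.

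It remains to rule out red arcs whose endpoints lie both on a single vertical side of $\bndry D$, or on two adjacent sides. For the vertical case, an outermost such red arc $\beta$ cobounds with a subarc $\gamma$ of $e_1$ (say) a subdisk $E$ with interior disjoint from red arcs; since blue arcs only end on $\bndry N(K)$, $E$ is automatically disjoint from blue arcs as well, so $\phi|_E$ is a blue-red bigon, contradicting Lemma \ref{lem:BlueRedBigon}. The main obstacle is the adjacent-side case: if $\beta$ has endpoints on, say, the north and east sides of $\bndry D$, then $\beta$ together with corner subarcs bounds a triangular subdisk $T\subset D$, and after the earlier reductions we may arrange that $T$ has interior disjoint from all of $\Gamma_{BR}$, so $\phi(T)$ is an embedded triangle with one side on $\bndry N(K)$, one on $S_{B,2}$, and one on $R_2$. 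Here no single injectivity statement applies directly, which is the hard point; to conclude, I would collapse the $S_{B,2}$-side of this triangle into a crossing arc of $K$ using a regular neighborhood of the corresponding crossing, converting $\phi(T)$ into a boundary-compressing disk for $R_2$. The boundary $\pi_1$-injectivity of $R_2$ then forces $\phi(\beta)$ to be boundary-parallel in $R_2$, and a final combinatorial check on $\pi(K)$, using the hypotheses $e(\pi(K),F)>2$ and $r(\pi(K),F)>4$, produces a reduction of $\phi$ that again contradicts minimality.
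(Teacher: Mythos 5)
Your preliminary reductions match the paper exactly: blue arcs must run north--south by boundary $\pi_1$-injectivity of $S_{B,2}$ (this is established in the discussion just before the lemma), red arcs with both endpoints on north or south are killed by essentiality of $R_2$, and red arcs with both endpoints on $e_1$ or $e_2$ give a blue--red bigon, ruled out by Lemma~\ref{lem:BlueRedBigon}. You also correctly identify the red arc joining adjacent sides of $\bndry D$ --- producing a triangle with one side on $\bndry N(K)$, one blue, one red --- as the real content of the lemma.

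The gap is in how you handle that triangle. You propose to ``collapse the $S_{B,2}$-side of this triangle into a crossing arc of $K$'' and thereby turn $\phi(T)$ into a boundary-compressing disk for $R_2$. That collapse is not available for free: the blue side of $T$ is an a priori arbitrary arc in $S_{B,2}$ from $\bndry N(K)$ to the crossing where blue meets red, and the claim that it is parallel (rel endpoints, in $S_{B,2}$) to a crossing arc is precisely what the combinatorics of the diagram must supply --- you cannot assume it in order to set up the $R_2$ boundary-compression. Likewise, even granting the boundary-parallelism of $\phi(\beta)$ in $R_2$, the ``final combinatorial check'' that is supposed to produce the complexity reduction is exactly where the substance lives, and it is left unspecified. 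The paper does not route through boundary $\pi_1$-injectivity of $R_2$ at all here. Instead it works directly on $F$: since the interior of the triangular face is disjoint from $\Gamma_{BR}$, $\phi(T)$ lies on one side of $F$; one joins the blue and red arcs at their free endpoints (either sliding along $K$ or connecting across a crossing) to obtain a simple closed curve on $F$ meeting $\pi(K)$ transversely exactly twice; $e(\pi(K),F)\ge 4$ then forces this curve to bound a disk in $F$; and weak primality constrains the interior so that one either recovers a blue--red bigon (contradicting Lemma~\ref{lem:BlueRedBigon}) or finds that both arcs are homotopic into $\bndry N(K)$, so the disk $D$ can be homotoped to lower complexity. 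That diagram-level argument --- not boundary-compression of $R_2$ --- is what your sketch needs to make explicit to close the adjacent-side case.
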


\begin{proof}
In the case of blue edges, we are done, as we know the arcs run from north to south as discussed above. For red arcs, if they don't have endpoints on opposite sides, then they must form a triangle with one endpoint on either the north or south side, and the other on the east or west side. This gives us a blue-red-$\bndry N(K)$ triangle. The blue and red arcs must meet at a crossing of our arc, so, assuming the triangle region maps above the projection plane, there are two ways this could happen. Either the red edge runs from one crossing (where it meets the blue arc) to near another crossing, or runs from one crossing to $\bndry N(K)$, with the blue arc on the opposite side of this meeting.

\begin{figure}[!htb]
	\centering
	\def\svgwidth{.75\columnwidth}
	
	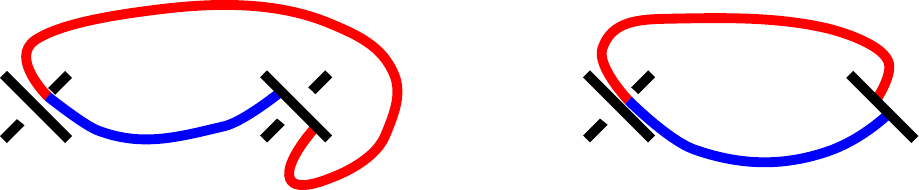
	\caption{Two ways a triangle with one red on red, one edge on blue, and one edge on the knot can sit in the diagram. \label{fig:rbk_tri}}
\end{figure}

Before we can use primality arguments to show neither of the cases can happen, we need to show that we can get a disk from this triangle. By joining the blue and red arcs together at the endpoints that don't meet, either by sliding along $K$ or connecting across a crossing, we get a closed curve that intersects our knot diagram twice. As $e(\pi(K),F)\ge 4$, this curve cannot be essential, and so must bound a disk on $F$.

From here, the remainder of the proof follows as in ~\cite{ETS}. If we are in the left case of Figure ~\ref{fig:rbk_tri}, connecting the endpoints will give us a blue-red bigon. But then, as in the proof of Lemma ~\ref{lem:BlueRedBigon}, we can get a contradiction to primality. On the other hand, if we are in the other case, because $K_{B,2}$ is weakly prime, there must be no crossings in the interior of the closed curve we constructed. As such, both the red and blue arcs must be homotopic to a portion of $\bndry N(K)$. We can then use a homotopy to slide our disk $D$ away from where the blue and red arc intersect. This will give us one less vertex on $D$, and so contradicts the minimality of $D$. In either situation, we get a contradiction, so the red arcs must run either north-south or east-west.
\end{proof}

As red arcs cannot intersect red arcs, we must also have that all red arcs run in the same direction---either all north-south or all east west. We consider the first case now:

\begin{lem}
\label{lem:ETS7.9}
If the graph $\Gamma_{BR}$ cuts $D$ into a subrectangle with two opposite sides mapped to $\bndry N(K)$, one side on blue, one side on red, and the interior disjoint from blue and red, then the blue and red sides of that rectangle are homotopic to the same crossing arc, and the homotopies can be taken to lie entirely in $S_{B,2}$ and $R_2$.
\end{lem}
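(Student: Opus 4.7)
The plan is to push the rectangle $Q$ down onto the projection surface $F$ and then exploit primality of $\pi(K)$. First, since $\mathrm{int}(Q)$ is disjoint from both $S_{B,2}$ and $R_2$, and since by Lemma~\ref{lem:ETS7.6} the disk $\phi(D)$ meets no crossing circle, the image $\phi(\mathrm{int}(Q))$ lies in a single component of $Y\setminus(S_{B,2}\cup R_2\cup \bndry N(K))$. Away from the twist regions of $K_{B,2}$ these components are exactly the ``above-$F$'' and ``below-$F$'' sides of the projection surface, so I may assume that $\phi(Q)$ lies on one side of $F$. Composing $\phi$ with the vertical projection to $F$ then yields a map $\psi:Q\to F$.

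Next, I would analyze the boundary of $\psi(Q)$. The two $\bndry N(K)$-arcs $\alpha_N$ and $\alpha_S$ project to arcs running along $\pi(K)$; the blue side $\beta$ projects to an arc lying in a single blue region of $F\setminus \pi(K)$ with endpoints on $\pi(K)$; the red side $\rho$ projects similarly to an arc in a single red region. The concatenation $\gamma=\psi(\bndry Q)$ is therefore a closed curve on $F$. After perturbing $\psi(\alpha_N)$ and $\psi(\alpha_S)$ slightly off the knot (into the blue or red side, as appropriate), $\gamma$ intersects $\pi(K)$ transversely at a controlled, uniformly bounded number of points near the four corners of $Q$.

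Using the hypotheses $e(\pi(K),F)>2$ and $r(\pi(K),F)>4$, the curve $\gamma$ cannot be essential in $F$ nor bound a compression disk of $F$, so $\gamma$ must bound a disk $E\subset F$. The disk $E$ then meets $\pi(K)$ in a pattern determined entirely by the endpoints of $\beta$ and $\rho$. Applying the weakly prime condition (Definition~\ref{def:wprime}) together with the weakly twist-reduced condition (Definition~\ref{def:wtwred}), and using Lemma~\ref{lem:opagree} to rule out the configurations in which the two blue, or two red, regions on opposite sides of a crossing agree, one concludes that $E$ is a neighborhood in $F$ of a single crossing arc of $\pi(K)$. It follows that $\beta$ is homotopic, inside $S_{B,2}$, to this crossing arc, and $\rho$ is homotopic, inside $R_2$, to the same crossing arc; the homotopies are obtained by lifting $E$ to the blue and red checkerboard structures respectively and never leave the corresponding surfaces.

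The hard step will be verifying that $\gamma$ bounds a disk on $F$ rather than being essential or bounding a compression disk. This is where the full strength of the hypotheses $e(\pi(K),F)>2$ and $r(\pi(K),F)>4$ is genuinely used; in the original argument of Lackenby--Purcell with $F=S^2$ the issue never arises, because every closed curve on $S^2$ already bounds a disk. Care is also needed in counting precisely the intersections of $\gamma$ with $\pi(K)$ near the corners of $Q$, as the local geometry near a corner where $\beta$ meets $\alpha_N$ differs from that near a corner where $\rho$ meets $\alpha_N$.
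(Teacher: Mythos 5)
Your broad strategy---push the rectangle to one side of $F$, read off a closed curve $\gamma$ from its boundary, use the representativity hypotheses to force $\gamma$ to bound a disk, then apply primality---is the same as the paper's, but two of the steps you flag as ``hard'' or ``needing care'' are in fact genuine gaps that the paper's argument resolves by a case analysis you have omitted.

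First, the count of intersections: the paper establishes that $\gamma$ meets $\pi(K)$ \emph{exactly twice}, not merely ``a controlled, uniformly bounded number of times.'' This is essential, since $e(\pi(K),F)\ge 4$ only rules out essential curves meeting $\pi(K)$ fewer than four times, and the weakly prime condition only applies to disks whose boundary meets $\pi(K)$ exactly twice. The count of two comes from noting that the blue and red sides lie in single regions of $F\setminus\pi(K)$ (so they do not meet $\pi(K)$ in their interiors) and that joining them at each $\bndry N(K)$ end produces exactly one transverse crossing of $\pi(K)$, whether that end sits at a crossing or over a plain strand.

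Second, and more seriously, your conclusion that weak primality (plus Lemma~\ref{lem:opagree}) forces $E$ to be a neighborhood of a single crossing arc is simply not true without a case analysis on how the two ends of the rectangle sit relative to crossings. The paper distinguishes three cases: both ends straddle over-crossings, exactly one does, or neither does. In the first case $\gamma$ is a blue--red bigon, which Lemma~\ref{lem:BlueRedBigon} already forbids---so one gets a contradiction, not a crossing-arc neighborhood. In the third case $\gamma$ bounds a disk whose intersection with $\pi(K)$ is a crossing-free strand; this is \emph{not} a crossing arc, and indeed the desired conclusion fails outright; instead one must appeal to minimality of the disk $D$ (as in Lemma~\ref{lem:ETS7.8}) to homotope the rectangle away entirely. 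Only in the middle case does one obtain the stated homotopy to a crossing arc. Weak twist-reducedness plays no role here; the load-bearing ingredients are the no-blue-red-bigon lemma, the minimality of $D$, and the intersection count of two.
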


\begin{proof}
First, note that such a rectangle, with interior disjoint from both blue and red, must be mapped either completely above or below the projection surface. Suppose it is above. Then there are three cases to consider. First, the blue and red arcs could straddle over crossings at both ends. Second, one set of endpoints could straddle an over crossing, while the other set lies on opposite sides of a strand. Third, both sets of endpoints lie on opposite ends of strands.

\begin{figure}[!htb]
	\centering
	\def\svgwidth{.75\columnwidth}
	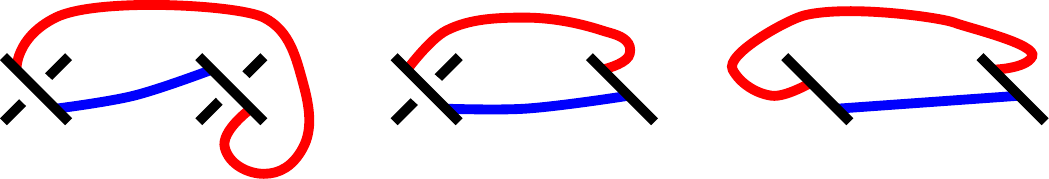
	\caption{The three ways a red-blue rectangle can sit in our diagram}
\end{figure}

Note that in all three of these cases, by connecting the red and blue arcs, we get a curve that intersects our knot exactly twice and so, as before, must bound a disk on $F$. The first case, with two over crossings, as in Lemma ~\ref{lem:BlueRedBigon} (there are no blue-red bigons), violates the fact that our diagram is Weakly Prime. The third case, with two strands, can be homotoped away from as in Lemma ~\ref{lem:ETS7.8}. That leaves us with just our second case. In this case, both the blue and red arc are homotopic to the same crossing arc (the over crossing they straddle), and the homotopies lie entirely in their corresponding surfaces, either $S_{B,2}$ or $R_2$, so we are done.
\end{proof}

From here, with no change to proof except to use $e(\pi(K),F)\ge 4$ to show the curves in question bound a disk, by carefully examining how rectangles with two colored sides (either both blue, both red, or blue and red) and two $\bndry N(K)$ sides can give us homotopies, we get the following result:

\begin{lem}
\label{lem:ETS7.10}
If the graph $\Gamma_{BR}$ consists of disjoint red and blue arcs on $D$, all running north to south, then $e_1$ and $e_2$ are each homotopic in the blue surface to arcs in the same subsurface associated with a twist region of $K_2$.
\end{lem}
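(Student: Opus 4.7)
The plan is to decompose $D$ into vertical subrectangles using the north-south arcs of $\Gamma_{BR}$, apply Lemma \ref{lem:ETS7.9} (together with its monochromatic analogues) strip by strip, and then chain the resulting homotopies into the subsurface of a single twist region.

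Since every arc of $\Gamma_{BR}$ in $D$ runs north to south and the arcs are mutually disjoint, they cut $D$ into an ordered sequence of vertical subrectangles $R_1,\ldots,R_k$, numbered west to east. Each $R_i$ has its north and south sides on $\phi^{-1}(\bndry N(K))$, its east and west sides on colored arcs of $\Gamma_{BR}$, and interior disjoint from blue and red edges. Label the vertical sides $\alpha_0,\alpha_1,\ldots,\alpha_k$ in order, so that $\alpha_{i-1}$ and $\alpha_i$ are the west and east sides of $R_i$. Then $\alpha_0=e_1$ and $\alpha_k=e_2$ are blue, while each intermediate $\alpha_i$ is either blue or red.

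For each $R_i$ with one blue and one red vertical side, apply Lemma \ref{lem:ETS7.9} to produce a crossing arc $c_i$ of $K_2$ to which both $\alpha_{i-1}$ and $\alpha_i$ are homotopic, with the blue side's homotopy lying in $S_{B,2}$ and the red side's homotopy lying in $R_2$. For each $R_i$ whose vertical sides have the same color, the same three-case analysis used in the proof of Lemma \ref{lem:ETS7.9}---combined with weak primality of $\pi(K)$ and the hypothesis $e(\pi(K),F)\ge 4$ to force the candidate closed curve on $F$ to bound a disk---yields a crossing arc $c_i$ homotopic to both $\alpha_{i-1}$ and $\alpha_i$ inside $S_{B,2}$ (respectively $R_2$). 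Now for each interior index $i$, the two crossing arcs $c_i$ and $c_{i+1}$ are each homotopic to $\alpha_i$ inside the single surface that contains $\alpha_i$, so $c_i$ and $c_{i+1}$ are homotopic in that checkerboard surface. A homotopy between distinct crossing arcs in a twisted checkerboard surface sweeps across a chain of monochromatic bigon faces, and weak twist reducedness (Lemma \ref{lem:wBluTwistReduced} and its red analogue) forces all crossings involved to lie in a common twist region of $K_2$.

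Chaining these conclusions, $c_1,\ldots,c_k$ all lie in the subsurface of $S_{B,2}$ associated with a single twist region $T$ of $K_2$. Since $e_1$ is homotopic in $S_{B,2}$ to $c_1$ and $e_2$ is homotopic in $S_{B,2}$ to $c_k$, both $e_1$ and $e_2$ are homotopic in $S_{B,2}$ into the subsurface of $T$, as required. The main obstacle is establishing the monochromatic analogue of Lemma \ref{lem:ETS7.9} when $F\ne S^2$: one must verify that the closed curves built from two same-colored vertical sides bound disks on $F$ rather than being essential or compressing, which is precisely where the hypotheses $e(\pi(K),F)\ge 4$ and $r(\pi(K),F)>4$ are needed to rule out the bad configurations.
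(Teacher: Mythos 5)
Your overall plan mirrors the paper's: decompose $D$ into vertical subrectangles along the north-south arcs, analyze each subrectangle according to the colors of its two vertical sides, and chain the resulting homotopies. However, your treatment of the monochromatic subrectangles is imprecise in a way that would leave a real gap if taken literally. For a blue-blue subrectangle the correct argument is not a replay of Lemma~\ref{lem:ETS7.9}'s three-case analysis producing a single crossing arc~$c_i$. Instead, one shows that both north and south sides must run over crossings (else minimality of~$D$ is violated); the subrectangle, being disjoint from all red and blue edges, lives entirely above or below~$F$, so its boundary closes up to a curve meeting $\pi(K_{B,2})$ at two crossings; $e(\pi(K),F)\ge 4$ forces this curve to bound a disk on~$F$; and then it is \emph{weak blue twist reducedness} (Lemma~\ref{lem:wBluTwistReduced}), not weak primality, that shows the disk is a chain of red bigons, so the two blue arcs are homotopic into the same twist-region subsurface, possibly through \emph{different} crossings. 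Your proposal asserts a single crossing arc homotopic to both sides, which is false when the two crossings are distinct. The same caveat applies to red-red subrectangles, where the paper notes the argument needs extra care because weak red twist reducedness is not established. This imprecision then infects your chaining step: rather than comparing crossing arcs $c_i$, $c_{i+1}$, the correct chaining (as in the paper) says that for each subrectangle the two vertical sides are homotopic into the subsurface of one twist region, and since each interior $\alpha_i$ cannot lie essentially in two different twist-region subsurfaces, the twist region is constant along the chain. In short: right framework, wrong granularity — the invariant to chain is the twist region, and the lemma that pins it down for monochromatic subrectangles is weak blue twist reducedness, not a variant of Lemma~\ref{lem:ETS7.9}.
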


\begin{sproof}
Our arcs split our disk into several sub-rectangles, with east and west sides colored either red or blue, and the north and south side on $\bndry N(K)$. If both colored sides are blue, the subrectangle must be mapped to a single side of the projection surface, and the north and south sides must run over crossings (or else we could homotope $D$ away to remove intersections). This means our subrectangle gives us a simple closed curve that meets our diagram at two crossings with the subrectangle on only one side of the projection surface, and thus gives us a disk. Because $K_{B,2}$ is weakly blue-twist reduced, this disk bounds a collection of red bigons, and so we can isotope the blue arcs to lie in the same subsurface. In a similar fashion, having to be a bit more careful as we don't know $K_{B,2}$ is weakly red-twist reduced, we get the same result for when both sides are red.

If the two sides are colored differently (one red, one blue), then Lemma ~\ref{lem:ETS7.9} allows us to homotope the two sides into the same subsurface. Putting it all together, two successive arcs are homotopic into the same subsurface, in either $S_{B,2}$ or $R_2$, as appropriate. Expanding this out, we get that $e_1$ and $e_2$ must be homotopic in the blue surface into the same subsurface.
\end{sproof}

For the last lemma, we need an analogue to the polyhedron decomposition of alternating knots. We will use the chunk decomposition of Howie and Purcell ~\cite{WGA}:

\begin{defn}
A \textit{chunk} $C$ is a compact, oriented, irreducible 3-manifold with boundary $\bndry C$ containing an embedded non-empty graph with all vertices having valence at least 3. The graph separates $\bndry C$ into regions called \textit{faces}. If the region is disjoint from the graph, it is called an \textit{exterior face}, otherwise, it is an \textit{interior face}.

A \textit{truncated chunk} is a chunk where a regular neighborhood of each vertex of the edge graph has been removed. This leaves a \textit{boundary face} surrounded by \textit{boundary edges}.

A \textit{chunk decomposition} for a 3-manifold $M$ is a decomposition of $M$ into chunks, such that $M$ is obtained by gluing chunks by homeomorphisms of non-exterior, non-boundary faces with edges mapping to edges homomorphically.
\end{defn}

Howie and Purcell showed in Proposition 3.1 in the same paper that weakly generalized alternating knots admit a chunk decomposition:

\begin{thm}
\label{thm:WGA_ChunkDecomp}
Let $Y$ be a compact, orientable, irreducible 3-manifold with no boundary components, containing a generalized projection surface $F$. Let $K$ be a weakly generalized alternating knot with a diagram $\pi(K)$ on $F$. Then $Y\setminus K$ can be decomposed into pieces such that:
\begin{enumerate}
	\item Pieces are homeomorphic to components of $Y\setminus N(F)$, except each piece has a finite set of points removed from $\bndry(Y\setminus N(F))$, namely the ideal vertices below.
	\item On each copy of $F$, there is an embedded graph with vertices, edges, and regions identified with the diagram graph $\pi(K)$. All vertices are ideal and 4-valent.
	\item To obtain $Y\setminus K$, glue pieces as follows. Each region of $F\setminus \pi(L)$ is glued to the corresponding region on the opposite copy of $F$ by a homeomorphism that is the identity composed with a rotation along the boundary. The rotation takes an edge of the boundary to the nearest edge in the direction of that boundary component's orientation.
	\item Edges correspond to crossing arcs, and are glued in fours. At each ideal vertex, two opposite edges are glued together.
\end{enumerate}
\end{thm}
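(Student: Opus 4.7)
The plan is to adapt Menasco's polyhedral decomposition of alternating link complements in $S^3$ to the present setting. The starting point is to isotope $K$ within $F \times I \subset Y$ so that, away from the crossings, the knot lies entirely on $F$, while at each crossing the overstrand and understrand are joined by a short vertical arc (the crossing arc) in $F \times I$. I would then cut $Y \setminus K$ along $F$, suitably modified near the crossings so that the cut passes exactly through the crossing arcs. The resulting pieces are homeomorphic to components of $Y \setminus N(F)$, with some additional punctures corresponding to where the crossing arcs have been removed.

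Next, I would read off the ideal structure on the boundary of each piece. The boundary consists of one or two copies of $F$ (depending on whether $F$ separates $Y$) together with annular pieces coming from $\partial N(K)$. Collapsing the annular pieces from $\partial N(K)$ to ideal edges and crossing arcs to ideal points, each boundary copy of $F$ acquires a graph canonically identified with $\pi(K)$: the regions of $F \setminus \pi(K)$ become the faces, the arcs of $\pi(K)$ between crossings become the edges, and the crossings themselves become ideal $4$-valent vertices. This gives conclusions (1) and (2).

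The third step is to describe the gluing. Faces that are not exterior or boundary faces are glued by the identity on the corresponding region of $F$, but twisted by a rotation along $\partial$. The rotation is forced by the alternating (equivalently, checkerboard) structure: at each crossing the overstrand and understrand meet along the crossing arc in a way that offsets the ``top'' edge from the ``bottom'' edge by one step in the direction of the orientation. Consequently, at each ideal vertex, the four incoming edges are glued in pairs, two opposite edges corresponding to the overstrand being identified to one crossing arc and the other two opposite edges corresponding to the understrand being identified to a different crossing arc, yielding conclusions (3) and (4).

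The main obstacle will be to verify that these local instructions assemble into a coherent global decomposition and that each piece really is a chunk in the sense of the definition, i.e.\ a compact, oriented, irreducible $3$-manifold whose boundary graph has all vertices of valence at least $3$. Compactness and orientability are clear from the construction; irreducibility of the pieces follows from the hypothesis that $Y \setminus N(F)$ is irreducible; the valence condition is immediate once $\pi(K)$ has at least one crossing, which holds by Definition~\ref{def:wgak}. The remaining combinatorics is a careful bookkeeping exercise, essentially identical to the classical Menasco argument once the local model near a crossing is set up correctly. The only subtlety beyond the $F = S^2$, $Y = S^3$ case is that regions of $F \setminus \pi(K)$ need not be disks a priori, but this is immaterial for the chunk decomposition statement itself.
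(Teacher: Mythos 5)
The paper does not prove this theorem: it quotes the statement verbatim as Proposition~3.1 of Howie and Purcell~\cite{WGA} and defers the proof entirely to that reference, so there is no in-paper proof to compare your attempt against. Your sketch (isotope $K$ so that it lies in $F$ away from crossings with a short vertical crossing arc at each crossing, cut $Y\setminus K$ along $F$ through the crossing arcs, collapse crossing arcs to ideal vertices, and read off the face-gluing with a one-edge rotation forced by the alternating/checkerboard structure) is indeed the standard adaptation of Menasco's decomposition, and is what Howie and Purcell do. Your closing remark that regions of $F\setminus\pi(K)$ need not be disks, and that this is immaterial because chunks (unlike polyhedra) do not require disk faces, is both correct and precisely the reason the chunk framework is used here.

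One place to tighten your argument if you intend it to stand on its own: you justify irreducibility of the pieces by appeal to ``the hypothesis that $Y\setminus N(F)$ is irreducible.'' That is not a hypothesis of the theorem as stated (only $Y$ is assumed irreducible), and irreducibility of $Y$ does not automatically pass to $Y\setminus N(F)$ for an arbitrary embedded surface $F$. In the Howie--Purcell setting this is absorbed into the standing conditions on a generalized projection surface, but a self-contained proof needs either to cite those conditions explicitly or to verify the irreducibility of the pieces directly.
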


With this in mind, we can generalize the following lemma, initially proven by Lackenby ~\cite{CVAK}:

\begin{lem}
\label{lem:EsntlSqrIntersect}
Let $C$ be a chunk in a chunk decomposition for $Y\setminus K$, where $K$ is a weakly generalized alternating knot on $F$ with $e(\pi(K),F)\ge 4$. Then, if $S$ and $T$ are essential squares, isotoped to minimize $|S\cap T|$, they intersect either zero or two times. If they intersect two times, then the points of intersection lie in distinct regions of the diagram with the same color.  
\end{lem}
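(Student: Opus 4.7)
The plan is to adapt Lackenby's original argument from the polyhedral setting to the chunk decomposition of our weakly generalized alternating knot complement. An essential square in a chunk has a boundary cycling through four faces and four edges of the chunk, alternating in the checkerboard coloring, and chunks are irreducible by definition. The same combinatorial framework is therefore available, but I must re-verify the face-by-face analysis, since our chunks can have more complicated topology than polyhedra in $S^3$.

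First I would isotope $S$ and $T$ to minimize $|S\cap T|$ and observe that $S\cap T$ is then a disjoint union of essential arcs. Closed components can be eliminated by innermost-disk moves using the essentiality of $S$ and $T$ and the irreducibility of $C$, and an innermost bigon between $\partial S$ and $\partial T$ would reduce the intersection, so none can occur. The endpoints of the arcs are exactly the transverse intersection points of $\partial S$ and $\partial T$ on $\partial C$.

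Next I would use $e(\pi(K),F)\ge 4$ together with Lemma ~\ref{lem:disk_regions} to conclude that every region of $F\setminus\pi(K)$, and hence every face of $C$, is a disk. Each boundary arc of $\partial S$ or $\partial T$ sits in a single face of $C$, so each point of $\partial S\cap\partial T$ lies in a face common to both. Two properly embedded arcs in a disk face intersect, after minimizing, at most once, and do so precisely when their endpoints interleave on the boundary circle of the face. Hence each common face contributes at most one intersection point. Since $\partial S$ and $\partial T$ each visit exactly four faces, $|S\cap T|$ has at most two arcs.

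To rule out the two-arc case, I would argue combinatorially: each of the two arcs in $S$ has its endpoints on two different sides of the quadrilateral $\partial S$, and since disjoint arcs in a disk cannot connect opposite sides of a quadrilateral without crossing, both arcs must cut off pairs of opposite corners. Applying this simultaneously to $T$, the strip in $S$ between the two arcs and the strip in $T$ between them would glue along their common boundary to form an embedded sphere in the irreducible chunk $C$. This sphere bounds a ball, which yields an isotopy of $S$ reducing $|S\cap T|$ by two, contradicting minimality. For the remaining single-arc case, if the two endpoints of the arc $\alpha$ were on adjacent sides of $S$, meeting at some edge of the chunk, I would extract a small reducing disk from the corner to again contradict minimality; thus the endpoints must lie on opposite sides of $S$. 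The alternating red-blue-red-blue coloring of the four faces around $\partial S$ then places those two endpoints in faces of the same color, as required. The main obstacle will be the two isotopy-reduction arguments above, which require a careful identification of matching subdisks of $S$ and $T$ and essential use of the irreducibility of the chunk.
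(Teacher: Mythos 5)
Your broad framework -- normal squares alternating red/blue through disk faces, at most one crossing point per common face, parity forcing an even number of intersection points -- matches the paper's, but two of your reduction steps do not actually work, and in both places the paper leans on hypotheses that your argument never touches.

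First, your ruling out of the four-point case is wrong. Gluing the strip of $S$ between the two arcs to the strip of $T$ between them along the two common intersection arcs produces an \emph{annulus}, not a sphere: the remaining boundary consists of two sub-arcs of $\partial S$ and two sub-arcs of $\partial T$ lying on $\partial C$, and these are not identified. You cannot invoke irreducibility directly. The paper handles this case by observing that with one intersection on each of the four edges, $S$ and $T$ are parallel normal squares running through the same four faces and can simply be isotoped off each other.

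Second, and more importantly, your ``corner reducing disk'' for the one-arc, adjacent-faces (opposite-color) case is not a proof; it asserts a reducing move without identifying the disk or justifying why it exists. This is precisely the step where the paper uses the two hypotheses of the lemma that your proposal never invokes: edge representativity and weak primeness. The paper's argument goes through the diagram on $F$: the two sub-arcs of $\partial S\setminus\partial T$ and $\partial T\setminus\partial S$ meeting $\pi(K)$ once each glue to a simple closed curve on $F$ meeting $\pi(K)$ exactly twice; since $e(\pi(K),F)\ge 4$ this curve bounds a disk on $F$, and weak primeness forces the disk to contain only a crossing-free arc of $\pi(K)$, giving the isotopy that reduces $|S\cap T|$. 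Your proposal replaces this projection-and-primeness argument with a generic irreducibility argument for $C$, but irreducibility alone cannot detect the opposite-color configuration; the diagrammatic structure of $\pi(K)$ is essential here, and without weak primeness the claimed reduction does not follow.
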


\begin{proof}
We can color the faces of our chunk blue or red, depending on how it meets the checkerboard coloring of $F\setminus K$. An essential square intersects the knot projection at four distinct points, and so the square must have two blue edges and two red edges, each opposite each other. As $S$ and $T$ have been isotoped to have minimal intersection, there are at most four points of intersection, one at each edge. If there are four points of intersection, then $S$ and $T$ are isotopic to each other, and can be pushed off to have no intersection. In addition, they cannot intersect an odd number of times, as both bound disks on $F$. If they do, look at how $T$ as a curve intersects $S$. It must cross the boundary of $S$ exactly three times. As $T$ bounds a disk on $F$, however, this is impossible. So $S$ and $T$ intersect either zero or two times.

Suppose they intersect exactly twice, and one of those intersection points happens on red, and the other on blue. Because $S$ and $T$ must intersect the knot where blue meets red, if we look at the arcs of $S\setminus T$ and $T\setminus S$, they must intersect our knot either once or three times. Look at the two arcs that intersect the knot once, each, and glue them together to get a curve intersecting our knot exactly twice. As $e(\pi(K),F)\ge 4$, this curve must bound a disk on $F$. Then, as $K$ is weakly prime, because the curve intersects the knot exactly twice and bounds a disk, the disk must intersect our knot in a single arc with no crossings. We can then use this arc to homotope $S$ and $T$ away from each other, contradicting the fact that our intersection was minimal. So, if $S$ and $T$ intersect, they must intersect in distinct regions of the same color.
\end{proof}

As a consequence to this, we get the following result:

\begin{lem}
\label{lem:EsntlSqrVertex}
Let $C$ be a chunk in a chunk decomposition for $Y\setminus K$, where $K$ is a weakly generalized alternating knot on $F$ with $e(\pi(K),F)\ge 4$. Let $S$ and $T$ be essential squares in $P$, moved by normal isotopy to minimize $|S\cap T|$. Suppose $S$ and $T$ pass through the same red face $W$, and that edges $S\cap W$ and $T\cap W$ differ by a single rotation of $W$. Then exactly one of the following two conclusions holds:
\begin{enumerate}
	\item Each of $S$ and $T$ cuts off a single ideal vertex in $W$, and $S$ and $T$ are disjoint.
	\item Neither $S$ nor $T$ cuts off a single vertex in $W$. The two essential squares intersect in $W$ and in another face $W'$.
\end{enumerate}
\end{lem}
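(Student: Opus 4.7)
The plan is to work combinatorially in the polygonal face $W$, observing that $S\cap W$ and $T\cap W$ are normal arcs joining two distinct boundary edges of $W$, and that a rotation of $W$ by one vertex shifts such an arc's endpoints by one position in each direction along $\partial W$. So if $S\cap W$ joins edges $e_i$ and $e_j$, then $T\cap W$ joins $e_{i+1}$ and $e_{j+1}$. The property that an arc cuts off a single ideal vertex amounts to its two endpoints lying on edges meeting at a common vertex of $W$, and this property is preserved under a simultaneous one-step shift of both endpoints. Hence either both arcs cut off a single vertex or neither does, which shows the two cases in the statement are exhaustive and mutually exclusive.

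For Case 1, both arcs cut off a single vertex, and since rotation shifts by one vertex, they cut off two distinct adjacent vertices $v,v'$ of $W$. I would first realize $S\cap W$ and $T\cap W$ disjointly inside $W$, as corner-clippings at $v$ and $v'$. To promote this to global disjointness in the chunk $P$, I would use Theorem~\ref{thm:WGA_ChunkDecomp}: at the $4$-valent ideal vertex $v$, opposite edges are identified in pairs, so a normal square that cuts off $v$ in $W$ is forced to cut off $v$ in each of the other three faces it meets, making it a canonical vertex-linking square for the crossing at $v$. The analogous conclusion holds for $T$ and $v'$. Because $v\neq v'$, the two vertex-linking squares can be isotoped into disjoint regular neighborhoods of $v$ and $v'$ in $P$, so the minimal intersection number is zero, giving conclusion~(1).

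For Case 2, neither arc cuts off a single vertex, so $j \neq i\pm 1$ and the four endpoints appear in the cyclic ordering of $\partial W$ as $e_i, e_{i+1}, e_j, e_{j+1}$, which is interleaved. Arcs with interleaved endpoints in a disk must cross at least once, and this intersection cannot be removed by normal isotopy inside $W$. Hence $|S\cap T|\ge 1$. By Lemma~\ref{lem:EsntlSqrIntersect} the number of intersections is either $0$ or $2$, with the two points lying in distinct regions of the same color. Since one intersection already sits in the red face $W$, there must be exactly one more, lying in a distinct red face $W'$, yielding conclusion~(2).

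The main obstacle I expect is the global disjointness claim in Case~1: knowing the arcs are disjoint inside $W$ only rules out intersections in that face, whereas a priori Lemma~\ref{lem:EsntlSqrIntersect} leaves open that $S$ and $T$ could meet twice in two other same-colored faces. The resolution is the rigidity statement that a normal square cutting off a single ideal vertex is determined up to normal isotopy by that vertex; this rests on the rotational gluing prescribed in Theorem~\ref{thm:WGA_ChunkDecomp}, and once it is established, vertex-linking squares at distinct vertices are manifestly disjoint. Verifying this rigidity by tracing the boundary arc around the $4$-valent ideal vertex through the prescribed gluings is the technical heart of the argument.
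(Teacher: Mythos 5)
Your Case~2 argument is correct and matches the paper's: interleaving forces a crossing inside $W$, and Lemma~\ref{lem:EsntlSqrIntersect} then locates the required second crossing in a distinct red face $W'$. The gap is in Case~1. The rigidity you propose --- that a normal square cutting off the ideal vertex $v$ in $W$ must cut off $v$ in each of the other three faces it traverses --- is not true, and your appeal to the gluing pattern of Theorem~\ref{thm:WGA_ChunkDecomp} does not establish it. The gluings in that theorem identify faces of the chunk decomposition across $F$; they do not constrain the boundary arcs of a normal disk living inside a single chunk. Concretely, if $S\cap W$ runs from edge $f_1$ to edge $f_2$ at the corner $v$, the continuation of $\partial S$ into the adjacent blue face may run from $f_2$ to any edge of that face, not just the one on the other side of $v$. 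Worse, if a square really were vertex-linking at $v$ in all four faces, it would be normally isotopic into a regular neighbourhood of the ideal vertex and hence boundary-parallel, contradicting the hypothesis that $S$ is essential. So the mechanism you propose for global disjointness both fails and, if it succeeded, would undercut the hypotheses.

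The paper's Case~1 argument avoids this entirely and makes essential use of $e(\pi(K),F)\ge 4$, which your proposal never invokes. Since $S\cap W$ and $T\cap W$ clip adjacent corners, they are disjoint in $W$, and then Lemma~\ref{lem:EsntlSqrIntersect} says any remaining intersections would have to occur in two faces of the same colour; since $W$ is already ruled out, they would have to lie in two blue faces shared by $S$ and $T$. The edge-representativity condition guarantees $W$ meets any given blue face along at most one edge (otherwise a curve through two such shared edges would cross $\pi(K)$ exactly twice, contradicting either $e\ge 4$ or weak primeness), and from this one sees $S$ and $T$ share at most one blue face, so the two required intersection points cannot exist. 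You should replace your vertex-linking step with this combinatorial count of shared faces.
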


\begin{proof}
First, if $S$ cuts off a single ideal vertex, then so must $T$, and so do not intersect in $W$. By Lemma ~\ref{lem:EsntlSqrIntersect}, then, if $S$ and $T$ intersect, it must be in a blue face. As $e(\pi(K),F)\ge 4$, $W$ can only meet a given blue face once (otherwise we could find an essential curve on $F$ intersecting our knot exactly twice). Then, if $S$ and $T$ do intersect in a blue face (and thus in two blue faces), then $S\cap W$ and $T\cap W$ must be parallel to each other. We could then isotope $S$ and $T$ away from each other in the blue faces so that $S$ and $T$ are disjoint.

If $S$ does not cut off a single vertex, then neither does $T$. As $S\cap W$ and $T\cap W$ differ by a single rotation, they must intersect in $W$. By Lemma ~\ref{lem:EsntlSqrIntersect}, then, they must intersect in two red faces, $W$ and $W'$, and so we are done.
\end{proof}

We can use this to get our final needed lemma:

\begin{lem}
\label{lem:ETS7.12}
Let $e_1$ and $e_2$ denote the boundary arcs on the blue surface in an essential product disk for the blue checkerboard surface of $K_{B,2}$. Then there is a subsurface associated with a twist region of the diagram of $K_2$, and arcs $a_1$ and $a_2$ in that subsurface, such that $e_1$ is homotopic in $S_{B,2}$ to $a_1$, and $e_2$ is homotopic in $S_{B,2}$ to $a_2$.
\end{lem}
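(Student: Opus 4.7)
The plan is to place the essential product disk $D$ in normal form with respect to the chunk decomposition of $Y \setminus K_{B,2}$ supplied by Theorem ~\ref{thm:WGA_ChunkDecomp}. Its boundary decomposes as $e_1 \cup \gamma_1 \cup e_2 \cup \gamma_2$, with $e_1, e_2$ on the blue surface and $\gamma_1, \gamma_2$ on $\bndry N(K_{B,2})$. After isotoping $D$ to minimize first $|D \cap F|$ and then the number of disk components, each component of $D \cap C$ in each chunk $C$ becomes an essential normal square, with two opposite sides on blue faces of $\bndry C$ (coming from sub-arcs of $e_1, e_2$) and the other two sides on red faces of $\bndry C$.

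Next, I would trace the assembly of these squares. They form a sequence $S_1, \ldots, S_n$ in alternating chunks, glued across red faces $W_1, \ldots, W_{n-1}$. The gluing prescription in Theorem ~\ref{thm:WGA_ChunkDecomp} forces $S_i \cap W_i$ and $S_{i+1} \cap W_i$ to differ by a single rotation of $W_i$, so Lemma ~\ref{lem:EsntlSqrVertex} applies to each consecutive pair: either both $S_i$ and $S_{i+1}$ cut off a single ideal vertex of $W_i$, or they intersect in two distinct red faces. The latter would yield an isotopy reducing $|D \cap F|$, contrary to minimality. Hence each $S_i$ cuts off a single ideal vertex at each of its red sides, so every $S_i$ lies in a small neighborhood of a single crossing arc of $K_{B,2}$ and every $W_i$ is a red bigon.

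Finally, the chain of red bigons $W_1, \ldots, W_{n-1}$ meets along consecutive crossings of $\pi(K_{B,2})$, so $\pi(K_{B,2})$ being weakly blue twist reduced (Lemma ~\ref{lem:wBluTwistReduced}) forces this chain to lie inside the bigons of a single twist region of $\pi(K_{B,2})$, which in turn is contained in a twist region of $\pi(K_2)$. The arcs $e_1, e_2$ then homotope in the blue surface into the blue subsurface associated with this twist region, giving the desired $a_1, a_2$. The main obstacle I foresee is ruling out the second alternative of Lemma ~\ref{lem:EsntlSqrVertex} together with the possibility that a normal square ``wraps around'' a handle of $F$; both are controlled by the edge representativity hypothesis $e(\pi(K),F) \geq 4$ (preserved under augmentation by Lemma ~\ref{lem:edgerep}), which ensures that any reducing closed curve one produces on $F$ must bound a disk and hence yield an honest isotopy, as in the $S^2$-case of ~\cite{ETS}.
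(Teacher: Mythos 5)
Your overall route is the paper's route: chunk decomposition from Theorem~\ref{thm:WGA_ChunkDecomp}, normal squares, Lemma~\ref{lem:EsntlSqrVertex}, and the weakly-blue-twist-reduced property to conclude that the bigon chain lies in a single twist region. But there is a genuine gap at the step where you rule out the second alternative of Lemma~\ref{lem:EsntlSqrVertex}. You claim that if $S_i$ and $S_{i+1}$ intersect in two red faces then this ``would yield an isotopy reducing $|D\cap F|$, contrary to minimality.'' That does not follow. The two squares being compared in Lemma~\ref{lem:EsntlSqrVertex} are not both pieces of $D$ sitting literally in the same chunk: $S_{i+1}$ has to be transported across the gluing homeomorphism (the paper's $\bar{E}_2$) before it can be compared with $S_i$ inside one chunk. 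An intersection between $S_i$ and this transported copy is not a self-intersection of the embedded disk $D$, nor does it produce extra points of $D\cap F$, so minimality of $|D\cap F|$ gives you nothing here. The paper's mechanism for killing the second alternative is an induction whose base case is geometric, not combinatorial: the outermost piece $E_1$ has one side on $\bndry N(K)$, which necessarily runs past an ideal vertex and can be pushed into the adjacent red face so that it cuts off a single vertex there. Then Lemma~\ref{lem:EsntlSqrIntersect} (intersections come in pairs, in same-colored faces) shows $E_1$ and $\bar{E}_2$ cannot intersect in both red faces, so alternative (2) of Lemma~\ref{lem:EsntlSqrVertex} is impossible and alternative (1) propagates down the stack. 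Without seeding the induction at the $\bndry N(K)$ end and without the transport step, your argument does not start.

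Two further, smaller omissions: you assert that every component of $D\cap C$ is an essential normal square, but the two end pieces each have one side on $\bndry N(K)$ rather than on a red face, so they must first be massaged (pushing the $\bndry N(K)$ side across the ideal vertex into a red face) before the square lemmas apply; and you go straight to the case where red arcs run between $e_1$ and $e_2$, omitting the reduction via Lemma~\ref{lem:ETS7.8} and the handoff to Lemma~\ref{lem:ETS7.9} when the red arcs instead run between the two $\bndry N(K)$ sides. Your closing remark that edge representativity $\geq 4$ controls the handle-wrapping issue is the right intuition for why the various closed curves that arise bound disks on $F$, but it does not by itself supply the missing induction.
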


\begin{proof}
Let $E$ be the essential product disk. If it is disjoint from the red surface, then the blue edges of $E$ are homotopic to arcs in a subsurface associated to one twist region of $K_{B,2}$. If $E$ does meet the red surface, then, by Lemma ~\ref{lem:ETS7.8}, either the red edges all run north-south ($\bndry N(K)$ to $\bndry N(K)$) or east-west (from blue edge to blue edge). In the former case, Lemma ~\ref{lem:ETS7.9} gives us our result. So we may assume we are in the latter case, and red edges run from blue edge to blue edge, cutting the rectangle into blue-red-blue-red sub-rectangles.

\begin{figure}[!htb]
	\centering
	\def\svgwidth{.8\columnwidth}
	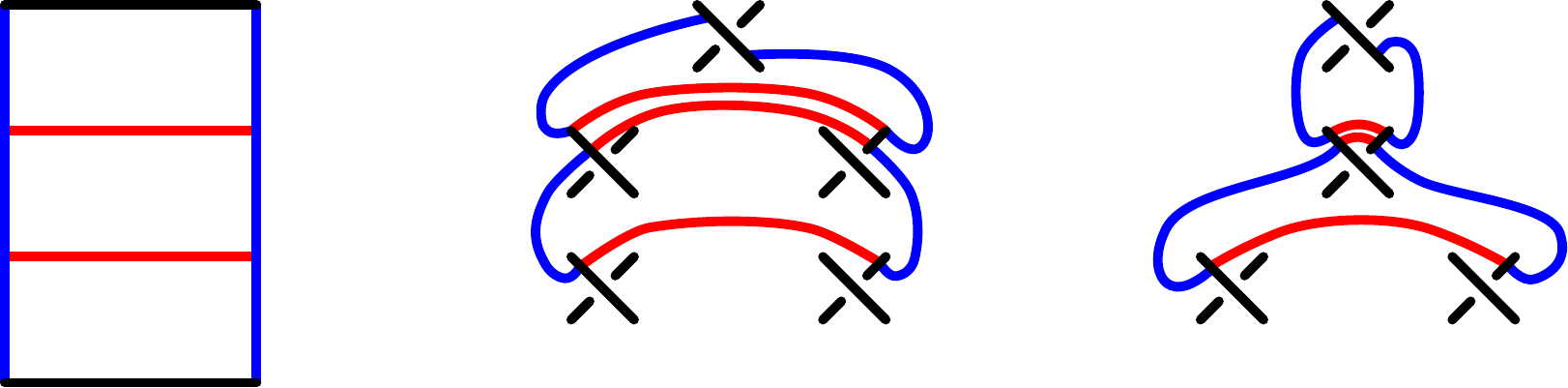
	\caption{\textit{(Left)} An essential product disk with blue sides $e_1$ and $e_2$ intersected horizontally by red edges. \textit{(Right)} Two sub-rectangles whose shared red edge meets two separate crossings. \textit{(Center)} Two sub-rectangles whose share red edge meets a single crossing. We want to prove that this is what happens for all adjacent sub-rectangles.}
\end{figure}

By Theorem ~\ref{thm:WGA_ChunkDecomp}, we get a chunk decomposition for our knot, with pieces homeomorphic to components of $Y\setminus N(F)$, except for ideal vertices corresponding to crossings. Each face of the chunk can be colored either red or blue, depending on what color it meets on $F$. As $F$ is a connected surface, $Y\setminus N(F)$ has either one or two components. Each of our sub-rectangles that make up $E$ must be contained entirely in a single chunk. Further, if two rectangles glue to each other along a red edge, the rectangles are either not in the same chunk, if we have two chunks, or meet two separate red faces that glue to each other, if there is only one chunk. We proceed based on whether we have only one chunk or two.

If we only have a single chunk, then look at the first two rectangles in $E$, $E_1$ the north most one that meets $\bndry N(K)$, and $E_2$, the one below that which glues to the red edge, $r$ of $E_1$. Looking at the side of $E_1$ that meets $\bndry N(K)$, it must run through an ideal vertex of the chunk decomposition, and so we can push it off into the red face so that it cuts off a single vertex. Consider the other side of $E_1$ now, which glues by a clockwise or counterclockwise turn to $E_2$. While the red side of $E_1$ and the red side of $E_2$ are not on the same face, we can create a new rectangle $\bar{E_2}$ by copying $r$ in $E_2$ into the same face of $r$ in $E_1$, after rotating by the corresponding turn, and then copying the rest of $E_2$ over. By Lemma ~\ref{lem:EsntlSqrIntersect}, if the two copies of $r$ (for $E_1$ and $\bar{E_2}$) intersect, then $E_1$ and $\bar{E_2}$ must also intersect in the other red face. However, as the other edge of $E_1$ cuts of an ideal vertex, we can homotope it so it does not intersect the other edge of $\bar{E_2}$. This means, then, that $E_1$ and $\bar{E_2}$ cannot intersect in a red face. By Lemma ~\ref{lem:EsntlSqrVertex}, then, both $r$ in $E_1$ and in $\bar{E_2}$ must cut off a single vertex, and so $r$ in $E_2$ must also cut off a single vertex. Continuing by induction, each rectangle making up $E$ must have sides in the red faces cutting off a single vertex.

Now suppose we have two chunks. Once again, we start by looking at the first two rectangles of $E$, $E_1$ and $E_2$, which glue by a single turn along a red edge $r$. Like before, we can homotope the north end of $E_1$ so that it cuts off a single vertex. As $E_1$ and $E_2$ are in separate chunks, we can impose $E_2$ into the same chunk as $E_1$ by copying $r$ in $E_2$ into the chunk containing $E_1$ under a clockwise turn, and continuing for the rest of $E_2$. We call this new rectangle $\bar{E_2}$. With the same argument as in the single chunk case, we get that $r$ of both $E_1$ and $E_2$ must cut off a single vertex, and, by induction, each red edge of $E$ must cut off a single ideal vertex.

In either case, each rectangle of $E$ maps to a region of the diagram meeting $\pi(K_{B,2})$ exactly four times, adjacent to two crossings (one for each red edge). As $\pi(K_{B,2})$ is weakly blue twist reduced by Lemma ~\ref{lem:wBluTwistReduced}, this region must bound a string of red bigons, and so the boundaries all lie in the same twist region. By applying this to all the rectangles in $E$, we get that the two blue edges must lie in a neighborhood of the same twist region, and so are done.
\end{proof}

This is done by looking at how the red-blue-red-blue rectangles lie in the polyhedral decomposition induced by the checkerboard surfaces. By using a chunk decomposition instead, we can get the same result in our general case. In particular, this tells us that, we can find arcs $a_1$ and $a_2$ in the same subsurface of $K_2$ such that $e_1$ and $e_2$ are homotopic to $a_1$ and $a_2$, respectively, in $S_{B,2}$. Putting this together with the above lemma, we get our main result for the section.

\begin{proof}[Proof of Theorem ~\ref{thm:homarcs}]
Call our two arcs $e_1$ and $e_2$. Then, by Lemma ~\ref{lem:ETS7.2}, we get a disk $\phi:D\to Y\setminus K$ and a graph $\Gamma_B$. Then, by Lemma ~\ref{lem:ETS7.6}, there are no blue vertices in our graph. Next, Lemma ~\ref{lem:ETS7.8} tells us that blue edges must run north to south, while red edges either run all north to south or all east to west. If they all run north to south, then Lemma ~\ref{lem:ETS7.10} tells us $e_1$ and $e_2$ are homotopic into the same subsurface. If the red edges all run east to west, then Lemma ~\ref{lem:ETS7.12} gives us the same result, and we are done.
\end{proof}

\bibliography{WGA_final}{}
\bibliographystyle{amsplain}
\end{document}